\newcommand{\N}{\mathbb{N}}
\newcommand{\Z}{\mathbb{Z}}
\newcommand{\R}{\mathbb{R}}
\newcommand{\F}{\mathcal{F}}
\newcommand{\G}{\mathcal{G}}
\newcommand{\Ai}{\mathcal{A_{\infty}}}
\newcommand{\deffct}[5]{#1: \begin{array}{ccc}
    #2 & \to & #3  \\
     #4 & \mapsto & \displaystyle #5
\end{array}}
\newcommand{\trajb}[1]{\overline{\mathcal{L}}(#1)}
\newcommand{\trajbi}[2]{\overline{\mathcal{L}}_{#1}(#2)}
\newcommand{\traji}[2]{\mathcal{L}_{#1}(#2)}
\newcommand{\traj}[1]{\mathcal{L}(#1)}
\newcommand{\wb}[2]{\overline{W}^{#1}(#2)}
\newcommand{\wbi}[3]{\overline{W}^{#1}_{#2}(#3)}
\newcommand{\functrajb}[2]{\overline{\Mtraj}_{#1}(#2)}
\newcommand{\Mtraj}{\mathcal{M}}
\newcommand{\fibration}{E \overset{\pi}{\to} X}
\newcommand{\Crit}{\textup{Crit}}
\newcommand{\Or}{\textup{Or}}
\newcommand{\Id}{\textup{Id}}
\newcommand{\End}{\textup{End}}
\newcommand{\EZ}{\textup{EZ}}
\newcommand{\ev}{\textup{ev}}
\newcommand{\Tor}{\textup{Tor}}
\newcommand{\Ker}{\textup{Ker}}
\newcommand{\Image}{\textup{Im}}
\newcommand{\Y}{\mathcal{Y}}
\newcommand{\Xq}{X/\Y}
\newcommand{\ftimes}[2]{\hspace{0.1em} _{#1}\!\times_{#2}}
\newcommand{\limproj}{\displaystyle \lim_{\longleftarrow}}
\newcommand{\limun}{\textup{lim}^1}
\newcommand{\CS}{\textup{CS}_{DG}}
\renewcommand{\qed}{\hfill$\blacksquare$}
\renewenvironment{proof}{\begin{addmargin}[1em]{0em}\begin{newproof}}{\end{newproof}\end{addmargin}\qed}
\definecolor{darkgreen}{RGB}{0,168,0}
\newtheorem{thm}{Theorem}[section]
\newtheorem{notation}[thm]{Notation}
\newtheorem{defi}[thm]{Definition}
\newtheorem{rem}[thm]{Remark}
\newtheorem{prop}[thm]{Proposition}
\newtheorem{cor}[thm]{Corollary}
\newtheorem{lemme}[thm]{Lemma}
\newtheorem{universalprop}[thm]{Universal property}
\newtheorem{theorem}{Theorem}
\newtheorem{proposition}[theorem]{Proposition}
\title{A Fibration Theorem and Chas-Sullivan product for Morse-Novikov Homology with differential graded coefficients}
\author{Robin Riegel \footnote{email : r.riegel@math.unistra.fr} \\
IRMA, Université de Strasbourg}
\date{}
\begin{document}
\selectlanguage{english}
\renewcommand{\contentsname}{Table of Contents}
\renewcommand{\proofname}{Proof}
\renewcommand{\refname}{References}
\renewcommand{\figurename}{Figure}
\setlength{\parindent}{0pt}

\maketitle

\begin{center}
    \textbf{Abstract}
\end{center}

Given an oriented, closed and connected manifold $X$ and a nonzero cohomology $u\in H^1(X,\R)$, we extend the constructions of Morse Homology with differential graded coefficients of \cite{BDHO23} and of the Chas-Sullivan product described on this model in \cite{Rie24} to Morse-Novikov Homology with differential graded coefficients. More precisely, we will construct a Morse-Novikov complex with differential graded coefficients and prove that, given a fibration $\fibration$ such that $\pi^*u = 0 \in H^1(E,\R)$ and the fiber is locally path-connected, there exists a Morse-Novikov model for a "Novikov completion" $H_*(E,u)$ of the singular homology of $E.$ Moreover, we will prove that if the fibration is endowed with a morphism of fibrations on its fibers, then there exists a Chas-Sullivan-like product on $H_*(E,u)$ that can be described within this model.

\tableofcontents

\section{Introduction}

\subsection{Context and goals}

The goal of this paper is to extend to the Morse-Novikov setting the construction of Morse Homology with differential graded coefficients (also referred to as enriched Morse Homology)  and extend two theorems stated in this setting. One states that enriched Morse Homology recover the homology of total spaces of fibrations \cite[Theorem 7.2.1]{BDHO23} and the other states that there exists a Morse model for string topology operations on total spaces of fibrations such as the Chas-Sullivan product \cite[Theorem 7.1]{Rie24}

\subsubsection{Morse-Novikov theory}

The extension of Morse theory for 1-forms $\alpha \in \Omega^1(X)$ such that $[\alpha] \in H^1(X,\Z)$ has been first formalized by Novikov in \cite{Nov81} by considering a $\Z$-covering $\hat{X} \overset{p}{\to} X$ such that $p^*\alpha = dS \in \Omega^1(\hat{X})$ in order to reduce the problem to the study of a Morse function $f = \exp{(iS)} : X \to S^1$.
This approach has then been generalized by Sikorav in \cite{Sik87} for a real cohomology class $[\alpha] = u \in H^1(X,\R)$ by considering the Morse-Novikov complex both as a \textbf{projective limit} of singular chains on the universal cover $\tilde{X} \overset{\tilde{\pi}}{\to} X$ relative to sublevel sets $\tilde{f} \leq c$ where $d\tilde{f} = \tilde{\pi}^*\alpha$, and as a \textbf{cell complex} with local coefficients in the \textbf{Novikov ring}
$$\Lambda_u = \left\{ \sum_{g \in \pi_1(X)} n_g g, \textup{ for every } c\in \R, \textup{ the number of } g \in \pi_1(X) \textup{ such that } n_g \neq 0  \textup{ and } u(g) >c \textup{ is finite}\right\}.$$

To the best of the author's knowledge the definition and study of the moduli space of trajectories associated with a pseudo-gradient adapted to a 1-form $\alpha \in \Omega^1(X)$ date back to Latour \cite{Lat94}, where the author defines the Morse-Novikov complex as a \textbf{Morse complex} with local coefficients in $\Lambda_u$ in order to characterize cohomology classes $u \in H^1(X,\R)$ that can be represented by a 1-form with no critical points. 

These moduli spaces of trajectories do not behave as nicely as their Morse counterpart. One of the main differences of interest for this paper is that, given $x,y \in \Crit(\alpha)$, the space of trajectories $\traji{\alpha}{x,y}$ between $x$ and $y$ may contain arbitrarily long flow lines with respect to the length induced by $\alpha$ and therefore cannot be compactified.

\begin{defi}\label{def : length by alpha}
    The \textbf{length of a continuous path} $\gamma : I \to X$ is given by $$L(\gamma) = \int_{\gamma} \alpha.$$
\end{defi}

To address this problem, Latour considers trajectories of bounded length $\traji{\alpha}{x,y,A},$ where $A >0$.\\

\subsubsection{Morse homology with differential graded coefficients}

From an oriented, closed and connected manifold $X^n$ and a preferred basepoint $\star \in X$, the authors of \cite{BDHO23} construct its \textbf{Morse complex with coefficients in a differential graded (DG) module} $(\F_*, \partial_{\F})$ over the differential graded algebra (DGA) $R_* = C_*(\Omega X)$ of the cubical complex of the space of loops in $X$ based at $\star$. This is also referred to as an \textbf{enriched Morse complex}. Given a Morse function $f : X \to \R$ and $\xi$, a pseudo-gradient adapted to $f$, this complex writes 

$$C_*(X,\F_*) = \F_* \otimes \Z \Crit(f),$$

where the differential is twisted by a family of chains called, in reference to the seminal paper \cite{BC07}, \textbf{Barraud-Cornea twisting cocycle}

$$\left\{ m_{x,y} \in C_{|x|-|y|-1}(\Omega X), x,y \in \Crit(f) \right\}.$$

The cocycle $(m_{x,y})_{x,y}$ is obtained by evaluating in $\Omega X$ representatives of the fundamental classes of the moduli spaces of Morse trajectories $\trajb{x,y}$. The twisted differential is given by

$$\partial (\alpha \otimes x) = \partial_{\F} \alpha \otimes x + (-1)^{|\alpha|} \sum_y \alpha \cdot m_{x,y} \otimes y.$$

In \cite{Rie24}, we make use of their construction and, in particular, of one of their main results, the Fibration Theorem, in order to build a Morse theoretic Chas-Sullivan product $$\CS : H_*(E)^{\otimes 2} \to H_*(E),$$ for any (Hurewicz) fibration $\fibration$ endowed with a morphism of fibrations $m : E \ftimes{\pi}{\pi} E \to E$.\\

\textbf{Fibration Theorem} (\cite[Theorem 7.2.1]{BDHO23})
    \emph{Let $\fibration$ be a fibration with model fiber $F=\pi^{-1}(\star)$ and equip $C_*(F)$ with the $C_*(\Omega X)$-module structure induced by a transitive lifting function $\Phi : E \ftimes{\pi}{\ev_0} \mathcal{P}X \to E$ associated with this fibration (see \cite[Section 7.1]{BDHO23}). Then, there exists a quasi-isomorphism denoted $$\Psi_E : C_*(X, C_*(F)) \to C_*(E).$$}

\textbf{DG Chas-Sullivan product }(\cite[Theorem 7.1]{Rie24})

\emph{Let $\fibration$ be a fibration endowed with a morphism of fibrations $m : E \ftimes{\pi}{\pi} E \to E$. Let $F = \pi^{-1}(\star)$ and endow $\F = C_*(F)$ with a DG right $C_*(\Omega X)$-module structure induced by a transitive lifting function. There exists a degree $-n$ product}
$$\CS : H_*(X, \F)^{\otimes 2} \to H_*(X, \F)$$

\emph{that is associative if $m_*$ is associative in homology, graded commutative if $m_*$ is commutative in homology, admits a neutral element if there exists a unit section $s : X \to E$ and this product satisfies the Functoriality and Spectral sequence properties.}

\emph{Moreover, this product corresponds in homology, via the Fibration Theorem, to the product $\mu_* : H_i(E) \otimes H_j(E) \to H_{i+j-n}(E)$ defined by Gruher-Salvatore in \cite{GS07}. In particular, if the fibration is the loop-loop fibration $\Omega X \hookrightarrow \mathcal{L}X \to X$, then $\CS$ corresponds to the Chas-Sullivan product.}\\

The goal of this paper is to extend the construction of Morse Homology with differential graded coefficients and the previous two results to the case where the underlying topological data does not come from trajectories associated with a Morse function, but from trajectories associated with a Morse 1-form. \textbf{We thus consider Morse-Novikov theory instead of Morse theory.} \\

\subsection{Morse-Novikov homology with differential graded coefficients} 

In our construction, we prevent the trajectories $\lambda$ between two critical points $x,y \in \Crit(\alpha)$ from being arbitrarily long by prescribing the endpoints of their lift $\tilde{\lambda} \in \traj{\tilde{x}, g\tilde{y}}$ in the universal cover $\tilde{X}$ by fixing $g \in \pi_1(X).$

Given $x,y \in \Crit(\alpha)$ and $g \in \pi_1(X)$, the space $\traji{g}{x,y}$ of such trajectories can be compactified and the evaluation into loops of a representative of its fundamental class will give rise to a chain 
$$m^g_{x,y} \in C_{|x|-|y|-1}(\Omega^gX)$$
in the space of based loops of homotopy class $g \in \pi_1(X).$ Therefore, the natural DGA $R_*$ in which to consider the twisting cocycle
$$m_{x,y} = \sum_g m^g_{x,y} \in R_{|x|-|y|-1}$$
is the "Novikov completion" of $C_*(\Omega X)$,
$$R_*=C_*(\Omega X, u) = \left\{ \sum_g n_{g} \gamma^g, \ \gamma^g \in C_*(\Omega^gX) \textup{ and } \forall c \in \R, \ \#\{g \in \pi_1(X), n_g  \neq 0 \textup{ and } u(g)>c\} < \infty \right\}.$$

We denote by $H_*(\Omega X,u)$ its homology. Notice that $H_0(\Omega X,u) = \Lambda_u.$  We define an enriched Morse-Novikov complex with coefficients in a right $C_*(\Omega X,u)$-module $(\F^u_*, \partial_{\F^u})$ by

$$C_*(X,\F^u) = \F^u_* \otimes_{\Z} \Z \Crit(\alpha),$$ endowed with the differential 

$$\partial (\sigma \otimes x) = \partial_{\F^u} \sigma \otimes x + (-1)^{|\alpha|} \sum_y \sigma \cdot m_{x,y} \otimes y.$$

We will denote the homology of this complex  $H_*(X,\F^u_*)$, or $H_*(X,\F^u_*,u)$ if we need to be explicit about the chosen class $u\in H^1(X,\R)$.

We prove invariance properties with regard to all the choices needed to build such a complex. By choosing a well-suited data set, we prove the following:

\begin{proposition}[Corollary \ref{cor : DG Morse Novikov homotopy equivalent to DG Morse}]\label{Proposition A}
    For all right $C_*(\Omega X,u)$-module $\F^u$, the Morse-Novikov complex $C_*(X,\F^u,u)$ with coefficients in $\F^u$ is chain homotopy equivalent to the Morse complex $C_*(X,\F^u)$ where $\F^u$ is here considered a $C_*(\Omega X)$-module.
\end{proposition}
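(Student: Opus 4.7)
The plan is to use the invariance properties of the Morse-Novikov complex with DG coefficients proved earlier in the paper, together with those of the Morse complex from \cite{BDHO23}, in order to reduce the comparison to a single, carefully chosen, data set on which the two complexes agree on the nose.

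First I would fix a Morse function $f : X \to \R$ together with a Morse-Smale negative gradient $\xi$ for some auxiliary metric, and then a Morse $1$-form $\alpha$ representing $u$, chosen within the cohomology class $u$ so that $\Crit(\alpha)=\Crit(f)$ with matching Morse indices, $\alpha=df$ on a neighbourhood of $\Crit(f)$, and $\alpha(\xi)<0$ on $X\setminus\Crit(f)$. The existence of such an $\alpha$ relies on the openness of the condition $\alpha(\xi)<0$ together with the cohomological freedom in choosing a representative: starting from any closed $1$-form $\alpha_0$ with $[\alpha_0]=u$, one bends $\alpha_0$ by adding an exact form supported away from $\Crit(f)$ until the conditions above hold.

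With this well-suited data in hand, the set of $\xi$-flow lines from $x$ to $y$ is the same geometric object regardless of whether one reads $\xi$ as a pseudo-gradient for $f$ or for $\alpha$. The Morse-Smale property for $f$ forces the compactified moduli space $\trajb{x,y}$ to contain only finitely many homotopy classes $g\in\pi_1(X)$ of connecting trajectories, so the Morse twisting cocycle $m^{\textup{Morse}}_{x,y}=\sum_g m^g_{x,y}$ is a \emph{finite} sum living in $C_*(\Omega X)$. The Morse-Novikov twisting cocycle $m_{x,y}=\sum_g m^g_{x,y}$ has exactly the same terms, now read through the canonical inclusion $C_*(\Omega X)\hookrightarrow C_*(\Omega X,u)$. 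Since the right $C_*(\Omega X,u)$-module structure on $\F^u$ restricts to the $C_*(\Omega X)$-module structure used in the Morse construction, the two twisted differentials coincide and the chain complexes $C_*(X,\F^u,u)$ and $C_*(X,\F^u)$ are literally equal for this choice, hence trivially chain homotopy equivalent.

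To conclude for arbitrary data I would invoke invariance twice: any Morse-Novikov complex is chain homotopy equivalent to the one built from the well-suited data, which equals the corresponding Morse complex, which is itself chain homotopy equivalent to any Morse complex built from other choices. I expect the main obstacle to be precisely the construction of the compatible triple $(\alpha,f,\xi)$ --- simultaneously ensuring the Morse-Smale property, the coincidence $\Crit(\alpha)=\Crit(f)$ with matching indices, and the local equality $\alpha=df$ near critical points --- which requires a careful cut-off and gluing argument exploiting the cohomological flexibility in choosing $\alpha$ inside the class $u$, and a brief discussion of how to handle degenerate classes $u$ for which the most natural representative (e.g.\ a nowhere-vanishing $1$-form on $S^1$) has no critical points.
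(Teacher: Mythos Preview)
Your strategy is exactly the paper's ``Latour Trick'' (Proposition~\ref{prop : Latour Trick}): manufacture a pair $(\alpha,\xi)$ for which $\xi$ is simultaneously a pseudo-gradient for a Morse function $h$ and for the Morse $1$-form $\alpha$, so that the trajectory spaces coincide and one can invoke invariance on both sides. Two points, however, deserve correction.

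First, your construction of $\alpha$ is imprecise. Adding an exact form supported \emph{away} from $\Crit(f)$ leaves $\alpha=\alpha_0$, not $\alpha=df$, near the critical points. The paper's construction is cleaner and avoids any analysis: choose $g$ with $\alpha_0=dg$ on a neighbourhood $U$ of $\Crit(h)$, set $\alpha_1=\alpha_0-dg$ (which vanishes on $U$), and take $\alpha=\epsilon\alpha_1+dh$ for small $\epsilon>0$. Then $\alpha=dh$ on $U$ and, since $dh(\xi)$ is bounded away from zero on the compact set $X\setminus U$, one has $\alpha(\xi)<0$ there for small $\epsilon$. This lands $\alpha$ in $\epsilon u$ rather than $u$, which is harmless because $C_*(\Omega X,u)=C_*(\Omega X,\epsilon u)$ and the enriched complex is invariant under positive rescaling of the class. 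Your worry about ``degenerate classes'' with nowhere-vanishing representatives is a red herring: one always chooses a Morse representative, and the existence of a nonvanishing one in the class is irrelevant.

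Second, and more substantively, the two complexes are \emph{not} literally equal for this data set. The Morse-Novikov evaluation maps $q_{x,y}$ parametrise trajectories by the values of a primitive of $\tilde\pi^*\alpha$ on $\tilde X$, which here is $h\circ\tilde\pi+\epsilon\tilde h$ (with $d\tilde h=\tilde\pi^*\alpha_1$), whereas the Morse twisting cocycle of \cite{BDHO23} parametrises by the values of $h$ alone (cf.\ Remark~\ref{rem : param by an exact 1-form}). The representing chain systems agree, but the evaluation maps differ, so the cocycles $m_{x,y}$ do too. The paper closes this gap by invoking \cite[Proposition~6.6]{Rie24}, which shows that the enriched Morse complex is independent, up to chain homotopy equivalence, of the parametrising function. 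You should not claim equality on the nose; only chain homotopy equivalence holds even for the well-suited data.
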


This generalizes the proof of Latour \cite[Théorème 2.18]{Lat94} that Morse-Novikov homology is isomorphic to the homology of $X$ with local coefficients in $\Lambda_u$. For this reason, we call our result the  \textbf{Latour Trick} (see Section \ref{subsection : Latour Trick}).

\subsection{Morse-Novikov Fibration theorem}

As in enriched Morse Homology, the main source of such coefficients are given by (Hurewicz) fibrations.
Given a fibration $F \hookrightarrow E \overset{\pi}{\to} X$, there exists a \textbf{transitive lifting function} $\Phi : E \ftimes{\pi}{\ev_0} \mathcal{P}X \to E$ that lifts all paths in $X$ and respects concatenation in the sense that lifting a concatenation of two paths is the same as lifting them one after the other : $\Phi(\Phi(e,\gamma),\delta) = \Phi(e,\gamma \#\delta).$
The restriction $\Phi : F \times \Omega X \to F$, also referred to as the \textbf{holonomy map} associated with $\Phi$, induces a $C_*(\Omega X)$-module structure on the cubical complex $C_*(F)$.

\subsubsection{Primitive and Novikov completion}

Suppose now that $F \hookrightarrow\fibration$ is such that $\pi^*u = 0 \in H^1(E,\R)$ and $F$ is locally path-connected. We prove in Proposition \ref{prop : existence of primitive} that in this case there exists a notion of "\textbf{primitive}" of $\pi^*\alpha$ for any $\alpha \in \Omega^1(X)$ representing $u$, even if $E$ is not a manifold. 

More precisely, we will prove in Lemma \ref{lemme : pi*u=0 iff factors through integration cover} that a fibration $\fibration$ satisfies $\pi^*u = 0$ if and only if the fibration factors through the integration cover $\hat{X}_u$ of $u \in H^1(X,\R).$

$$\xymatrix{
E \ar[dr]_{\pi} \ar[r]^{\varphi_u}& \hat{X}_u \ar[d]^{\hat{\pi}} \\
& X.
}$$

Since $\hat{\pi}^*u = 0 \in H^1(\hat{X}_u,\R)$, for every $\alpha \in u$, there exists a primitive $\hat{f} : \hat{X}_u \to X$ of $\hat{\pi}^*\alpha$.

\begin{defi}[Definition \ref{def : primitive of pi alpha}]
    We will refer to the map $f = \hat{f} \circ \varphi_u : E \to \R$, as a \textbf{primitive of} $\boldsymbol{\pi^*\alpha}.$  
\end{defi}

\begin{proposition}[Proposition \ref{prop : existence of primitive}]\label{proposition B}
    Assume that $F$ is locally path-connected. Let $\star_E \in \pi^{-1}(\star)$ and $\alpha \in u$. If $\pi^*u = 0$, all primitives $f : E \to \R$ of $\pi^*\alpha$ are continuous and differ only by a constant. Moreover, any primitive satisfies 
    \begin{equation*}
        \forall (e,\gamma) \in E \ftimes{\pi}{ev_0} \mathcal{P} X, \ f(\Phi(e,\gamma)) - f(e) = \int_{\gamma} \alpha.
    \end{equation*}
\end{proposition}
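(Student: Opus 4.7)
The approach is to transport every statement from $E$ to the integration cover $\hat{X}_u$ via the continuous factorization $\pi = \hat{\pi} \circ \varphi_u$, and to reduce each claim to a standard fact about the smooth primitive $\hat{f}$ on the connected manifold $\hat{X}_u$. For continuity, $f = \hat{f} \circ \varphi_u$ with $\hat{f}$ smooth, so it suffices to know that $\varphi_u$ is continuous, which is exactly the output of Lemma \ref{lemme : pi*u=0 iff factors through integration cover}: the hypothesis $\pi^*u = 0$, together with local path-connectedness (inherited from $F$), lets the standard covering-space lifting criterion produce a continuous lift of $\pi$ across $\hat{\pi} : \hat{X}_u \to X$.

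For uniqueness up to a constant, there are two potential sources of non-uniqueness in $f = \hat{f} \circ \varphi_u$. First, the primitive $\hat{f}$ of $\hat{\pi}^*\alpha$ on the connected manifold $\hat{X}_u$ is defined up to an additive constant, since any two primitives of the same closed form on a connected manifold differ by a constant. Second, the lift $\varphi_u$ of $\pi$ is defined up to a deck transformation $g$ of $\hat{X}_u \to X$, i.e.\ an element of $\pi_1(X)/\ker u$. The defining property of the integration cover gives the equivariance $\hat{f}(g\cdot \hat{x}) = \hat{f}(\hat{x}) + u(g)$, so changing $\varphi_u$ by $g$ shifts $f$ by the global constant $u(g)$. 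Both contributions are thus constants.

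For the transport identity, set $\lambda(t) = \Phi(e, \gamma_t)$ for $t \in [0,1]$, where $\gamma_t(s) := \gamma(ts)$ is the reparametrized restriction of $\gamma$ to $[0,t]$. The identity axiom $\Phi(e, c_{\pi(e)}) = e$ combined with continuity of $\Phi$ makes $\lambda : [0,1] \to E$ a continuous path with $\pi\circ\lambda = \gamma$, $\lambda(0) = e$, and $\lambda(1) = \Phi(e,\gamma)$. Post-composing with $\varphi_u$ then yields a continuous lift $\varphi_u \circ \lambda$ of $\gamma$ to $\hat{X}_u$, so that
$$f(\Phi(e,\gamma)) - f(e) = \hat{f}(\varphi_u(\lambda(1))) - \hat{f}(\varphi_u(\lambda(0))) = \int_{\varphi_u\circ \lambda} \hat{\pi}^*\alpha = \int_\gamma \alpha,$$
the middle equality being the fundamental theorem of calculus for the smooth primitive $\hat{f}$ along the lifted path, and the last equality reflecting $\hat{\pi}\circ\varphi_u\circ\lambda = \gamma$.

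The main subtlety is guaranteeing that $\varphi_u$ is continuous in this generality, i.e.\ without $E$ being a manifold, which is precisely the content of the lifting lemma invoked above; once $\varphi_u$ is in hand, everything else is a direct unwinding of definitions together with the deck-equivariance of $\hat{f}$ and the axioms of the transitive lifting function $\Phi$.
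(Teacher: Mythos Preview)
Your proof is correct and close in spirit to the paper's, but the organization differs. The paper works throughout with the explicit formula $f(e) = \int_{\pi\circ\gamma_e}\alpha$: uniqueness up to a constant is obtained by concatenating $\gamma_\star \in \mathcal{P}_{\star'_E\to\star_E}E$ with $\gamma_e$ and computing $f'(e)-f(e)=\int_{\pi\circ\gamma_\star}\alpha$ directly; the transport identity is likewise read off by concatenating $\gamma_e$ with the lifted path $\eta(s)=\Phi(e,\gamma|_{[0,s]})$. You instead push everything up to $\hat{X}_u$ and invoke deck-equivariance of $\hat{f}$ and the fundamental theorem of calculus along $\varphi_u\circ\lambda$. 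Both are fine; the paper's version is slightly more elementary in that it never needs to argue that $\varphi_u\circ\lambda$ is the (smooth) covering-lift of $\gamma$.

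One caution about your gloss on Lemma~\ref{lemme : pi*u=0 iff factors through integration cover}: it is \emph{not} the standard covering-space lifting criterion. That criterion would require $E$ itself to be locally path-connected, which is neither assumed nor obviously inherited from $F$ for a general Hurewicz fibration. The paper instead defines $\varphi_u$ explicitly and proves continuity by a sequential argument that uses the transitive lifting function $\Phi$ together with local path-connectedness of the fiber $F$. Since you only cite the lemma for its conclusion this does not damage your argument, but the parenthetical ``inherited from $F$'' is unjustified as stated.
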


In this case, the $C_*(\Omega X)$-module structure on $C_*(F)$ extends to a $C_*(\Omega X,u)$-module structure on the projective limit $C_*(F,u) = \limproj C_*(F, f{\lvert_F} \leq c)$ of the cubical complexes of $F$ relative to the sublevel sets of $f\lvert_F$.

\begin{theorem}[\textbf{Morse-Novikov Fibration Theorem}, Theorem \ref{thm: Fibration Theorem Morse Novikov}]\label{theorem C}
    Let $u \in H^1(X,\R)$ and let
    
    $F \hookrightarrow \fibration$ be a fibration such that $\pi^*u = 0$ and $F$ is locally path-connected. Let $\alpha \in u$ and let $f : E \to \R$ be a primitive of $\pi^*\alpha$. Denote $$C_*(E,u) = \limproj C_*\left(E, f \leq c \right).$$ There exists a quasi-isomorphism 

    $$\Psi_E : C_*(X,  C_*(F,u)) \to C_*(E, u).$$

    Moreover, these complexes depend, up to chain homotopy equivalence, only on $u$ i.e. neither on $\alpha$ nor on the chosen primitive $f : E \to \R$ of $\pi^*\alpha$ nor on the choice of transitive lifting function $\Phi$ for the fibration $\fibration$. 
\end{theorem}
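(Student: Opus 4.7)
The plan is to reduce Theorem \ref{theorem C} to a level-wise application of the Morse Fibration Theorem of \cite{BDHO23}, pulled back to the integration cover $\hat{X}_u$, then pass to the projective limit.

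First, I would mimic the construction of $\Psi_E$ from \cite{BDHO23}, but sum over homotopy classes. For each critical point $x \in \Crit(\alpha)$, decompose the compactified unstable-type moduli space according to $g \in \pi_1(X)$ and choose chain representatives $s_x^g$. For a generator $\sigma \otimes x$ with $\sigma \in C_*(F,u)$, set
$$\Psi_E(\sigma \otimes x) = \sum_{g \in \pi_1(X)} \Phi_*(s_x^g \times \sigma).$$
By Proposition B, $f(\Phi(e,\gamma)) = f(e) + \int_\gamma \alpha$, so the image of the $g$-piece lies in a sublevel set whose $f$-level depends linearly on $u(g)$; only finitely many $g$ therefore contribute below each fixed level, and the sum is a coherent element of $C_*(E,u)$.

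Next, I would show that $\Psi_E$ is a filtered chain map between the two inverse systems. The chain-map property is verified as in \cite{BDHO23}, using the structure of the 1-dimensional parts of $\trajbi{g}{x,y}$; the filtration property is exactly the estimate above. Thus $\Psi_E$ induces, at each level $c$, a quotient map $\bar\Psi_E^c$ between finite-dimensional pieces of the two systems. To show each $\bar\Psi_E^c$ is a quasi-isomorphism, I would pull the picture back to $\hat{X}_u$: the fibration $\pi$ factors as $\hat\pi \circ \varphi_u$, the map $\varphi_u : E \to \hat{X}_u$ is a fibration with the same fiber $F$, and $\alpha$ pulls back to an exact form $d\hat f$. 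The truncations $\{\hat f \leq c\} \subset \hat{X}_u$ then put us in a relative Morse situation, and $\bar\Psi_E^c$ identifies with the relative Morse Fibration map; the quasi-isomorphism conclusion follows from a relative version of \cite[Theorem 7.2.1]{BDHO23}.

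For the projective limit, the transition maps in both inverse systems are surjective, so Mittag-Leffler holds and $\limun = 0$. Milnor's exact sequence then upgrades the level-wise quasi-isomorphisms to a quasi-isomorphism in the limit. The invariance with respect to the choices has three pieces: replacing $\alpha$ by $\alpha + dh$ shifts $f$ by $\pi^* h$, a bounded function on $E$ (as $X$ is compact), so the two filtrations are cofinal and the projective limits are canonically identified; Proposition B says two primitives of the same $\pi^*\alpha$ differ by a constant, merely shifting the filtration index; and two transitive lifting functions are homotopic through a transitive lifting function on $E \times I$, which yields a filtered chain homotopy via the parameterized version of the same construction.

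The main difficulty will be the relative Morse Fibration Theorem on $\{\hat f \leq c\} \subset \hat{X}_u$: unlike the setting of \cite{BDHO23}, this truncation is a non-compact manifold with boundary, so one must check that the pseudo-gradient is transverse to the boundary and that the compactification of the Morse-Novikov moduli spaces $\trajbi{g}{x,y}$ on $X$ matches, after pullback to $\hat{X}_u$, the compactification used in \cite{BDHO23}. A secondary difficulty is that the chain homotopies used in the invariance arguments must themselves be made filtered, so that they descend to the projective limit through the Mittag-Leffler argument.
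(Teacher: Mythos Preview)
Your strategy has two genuine gaps that the paper's proof avoids.

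\textbf{The construction of $\Psi_E$.} Your formula $\Psi_E(\sigma\otimes x)=\sum_g \Phi_*(s_x^g\times\sigma)$ presupposes a decomposition of the compactified unstable cell $\wb{u}{x}$ by homotopy classes $g\in\pi_1(X)$. But unlike the trajectory spaces $\trajb{x,y}$, whose evaluation lands in $\Omega X$ and therefore carries a natural $\pi_1$-grading, the evaluation of $\wb{u}{x}$ lands in the \emph{path} space $\mathcal{P}_{\star\to X}X$, where no homotopy class is available. More seriously, for a generic Morse $1$-form the Latour cell $\wb{u}{x}$ is not compact and does not admit a fundamental chain at all, so there is nothing to split. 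The paper resolves this by first applying the Latour Trick (Proposition~\ref{prop : Latour Trick}): one chooses a set of data $\Xi^L$ with $\alpha=\epsilon\alpha_1+dh$ and $\xi$ a pseudo-gradient for $h$, so that the flow lines are those of the Morse function $h$. Then each $\wb{u}{x}$ is compact, the $\wb{u}{x}$ form a genuine CW decomposition of $X$, and one obtains a single finite chain $m_x\in C_{|x|}(\mathcal{P}_{\star\to X/\Y}X/\Y)$ with $\partial m_x=\sum_y m'_{x,y}\cdot m_y$ (Paragraph ``Latour cells''). No sum over $g$ is needed, and $\Psi_E(\sigma\otimes x)=\Phi'_*(\sigma\otimes m_x)$ is well-defined after checking a single uniform bound on $\int_{\theta\circ\gamma}\alpha$ over the compact set $q_x(\wb{u}{x})$.

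\textbf{The level-wise quasi-isomorphism.} Your plan to show that each quotient map $\bar\Psi_E^c$ is a quasi-isomorphism by invoking a ``relative Morse Fibration Theorem'' on the non-compact truncation $\{\hat f\le c\}\subset\hat X_u$ is not supported by \cite{BDHO23}, which treats only closed manifolds; even the formulation of the domain-side quotient (the quotient of $C_*(X,\Xi,C_*(F,u))$ by ``level $c$'') is delicate, since $C_*(F,F_c)$ is not the chain complex of the fiber of any fibration. The paper bypasses this entirely: rather than filtering by the $f$-level, it filters both sides by the \emph{index/skeleton}: $F_p C_*(X,\Xi,C_*(F',u'))=\bigoplus_{j\le p}C_*(F',u')\otimes\Z\Crit_j(\alpha)$ and $F_p C_*(E',u')=C_*(\pi^{-1}(Sk_p),u')$. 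Proposition~\ref{Prop: qiso cellules into Sk Novikov} then shows that $\Psi$ induces an isomorphism on first pages by a cell-by-cell local trivialisation argument; the projective-limit aspect is handled \emph{inside} Step~2 of that proposition (the maps $\chi_x^C,\chi_x^D,\psi_x^D$ shift the $f$-level by a uniformly bounded amount), so no Milnor sequence or level-wise comparison is ever needed.

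Your invariance arguments for $\alpha$ and the primitive $f$ match the paper (Lemma~5.5). For the lifting function $\Phi$, the paper does not use a homotopy through lifting functions but instead deduces it (Corollary~\ref{cor : independance of the transitive lifting function}) from the morphism-of-fibrations machinery of Section~\ref{subsection : Morphism of fibrations}, applied to $\varphi=\Id:E\to E$.
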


\subsubsection{Examples}

This homology theory can be used to study total spaces of fibrations over the integration cover $\hat{X}_u$ of a cohomology class $u \in H^1(X,\R)$.

Let us give a more concrete example. Let $u \in H^1(X,\R)$.
Consider the fibration $E = \mathcal{P}_{\star \to X} X \overset{\ev_1}{\to} X$ of the Moore based paths $\mathcal{P}_{\star \to X} X = \{ \gamma : [0,a] \to X  \textup{ continuous, } \gamma(0) = \star \}$ to $X$ given by the evaluation at the endpoint. The model fiber of this fibration $F = \ev_1^{-1}(\star) = \Omega X$ is locally path-connected and consider the transitive lifting function $$\deffct{\Phi}{\mathcal{P}_{\star \to X} X \ftimes{\ev_1}{\ev_0} \mathcal{P}X}{\mathcal{P}_{\star \to X} X}{(\gamma,\tau)}{\gamma\tau}.$$
Since $\mathcal{P}_{\star \to X}X$ is contractible, in particular $H^1(\mathcal{P}_{\star \to X}X,\R) = \{0\}$ and $\ev_1^*u = 0$. Let $f : E \to \R$ be a primitive of $\ev_1^*\alpha$.
Theorem \ref{theorem C} states that $$H_*(X,C_*(\Omega X,u)) \simeq H_*(\mathcal{P}_{\star \to X}X,u),$$

the homology of the cubical complex $$C_k(\mathcal{P}_{\star \to X}X,u) = \left\{ \sum_{i \geq 0} n_i \omega_i, \ \omega_i \in C_k\left(\mathcal{P}_{\star \to X} X\right) \textup{ and } \forall c \in \R, \ \#\{i \geq 0, \ n_i \neq 0, f\circ\omega_i > c\} < \infty \right\}.$$

This homology is expected by the author to be useful to study the homology of the space $$\mathcal{M}_u = \left\{ \gamma : [0, + \infty) \to X, \ \gamma(0) = \star \textup{ and } \lim_{t \to + \infty}\int_{0}^t \gamma^* \alpha = - \infty  \right\}$$ that is defined and used by Latour in \cite{Lat94} to give a necessary and sufficient condition for a cohomology class $u \in H^1(X,\R)$ to have a representative $\alpha \in \Omega^1(X)$ with no critical points.

\subsection{Chas-Sullivan product in enriched Morse-Novikov Homology}

Let $F \hookrightarrow \fibration$ be a fibration endowed with a morphism of fibrations $m : E \ftimes{\pi}{\pi} E \to E$.
In enriched Morse Homology, the degree $-n$ product $\CS : H_*(X, C_*(F))^{\otimes 2} \to H_*(X, C_*(F))$ is defined, up to sign, as the composition of three maps

$$\underbrace{H_*(X,C_*(F))^{\otimes 2}}_{\simeq H_*(E)^{\otimes 2}} \overset{K}{\to} \underbrace{H_*(X^2,C_*(F^2))}_{\simeq H_*(E \times E)} \overset{\Delta_!}{\to} \underbrace{H_{*-n}(X,\Delta^*C_*(F^2))}_{\simeq H_{*-n}(E \ftimes{\pi}{\pi} E)} \overset{\tilde{m}}{\to} H_{*-n}(X,C_*(F)),$$

where: \begin{itemize}
    \item $K : C_*(X,\Xi,C_*(F))^{\otimes 2} \to C_*(X^2,\Xi_{X^2},C_*(F^2))$ is the DG cross-product defined in \cite[Section 6.3]{Rie24},
    \item $\Delta_! : C_*(X^2,\Xi_{X^2},C_*(F^2)) \to C_{*-n}(X,\Xi,\Delta^*C_*(F^2))$ is the shriek map of the diagonal $\Delta : X \to X^2$ defined in \cite[Sections 9 and 10]{BDHO23}.
    \item $\tilde{m} : C_{*-n}(X,\Xi,\Delta^*C_*(F^2)) \to C_{*-n}(X,\Xi,C_*(F))$ is the map induced by the morphism of fibrations $m : E \ftimes{\pi}{\pi} E \to E$ defined in \cite[Section 5]{Rie24}.\\
\end{itemize} 

Let $u = [\alpha] \in H^1(X,\R)$ and assume that $\pi^*u =0$ and $F$ is locally path-connected. Let $\Xi$ be a set of DG Morse data.
We will build Morse-Novikov extensions of each of these maps with the same properties:\\

\begin{itemize}
    \item[$\bullet$] \textbf{The DG cross-product} $K : C_*(X,\Xi,C_*(F,u))^{\otimes 2} \to C_*(X^2,\Xi_{X^2},C_*(F^2,u))$ in Section \ref{subsection : The cross product K}. We have the following extension of \cite[Theorem 6.14]{Rie24}:\\
    
\begin{theorem}[\textbf{DG Morse-Novikov Künneth formula} Theorem \ref{thm : Morse-Novikov Künneth formula}] \label{theorem D}
Let $(Y, \star_Y)$ be an oriented, closed and connected manifold and let $v \in H^1(Y,\R)$. Let $\Xi_Y$ be a set of DG Morse data. Let $G \hookrightarrow E_Y \overset{\pi_Y}{\to} Y$ be a fibration such that $\pi_Y^*v = 0$.
There exists a \textbf{Künneth twisting cocycle} on the product $X \times Y$ $$\left\{m^{K,(g,h)}_{z,w} \in C_{|z|-|w|-1}(\Omega^g X \times \Omega^h Y)\right\}$$ such that:

\begin{itemize}
    \item[1)] The twisting cocycle $m^K$ computes the same homology $H_*(X \times Y, C_*(F \times G, u + v))$ as the Barraud-Cornea cocycle.
    \item[2)] The map $$\deffct{K}{C_*(X, \Xi, C_*(F,u)) \otimes C_*(Y, \Xi_Y, C_*(G,v))}{C_*(X \times Y, m^K_{z,w}, C_*(F \times G,u+v))}{(\alpha \otimes x) \otimes (\beta \otimes y)}{(-1)^{|\beta||x|}(\alpha , \beta) \otimes (x,y)}$$ is a quasi-isomorphism of complexes.\\
\end{itemize}

\end{theorem}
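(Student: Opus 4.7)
The plan is to extend the construction of the DG Morse Künneth formula \cite[Theorem 6.14]{Rie24} to the Morse-Novikov setting, preserving its architecture while tracking homotopy classes $(g,h) \in \pi_1(X)\times\pi_1(Y) = \pi_1(X\times Y)$ and verifying Novikov finiteness at every step.

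First, I would set up product data: fix representatives $\alpha \in u$, $\beta \in v$ and pseudo-gradients $\xi_X, \xi_Y$ extracted from $\Xi, \Xi_Y$. Then $\pi_X^*\alpha + \pi_Y^*\beta$ is a Morse 1-form on $X\times Y$ representing $u+v$, with critical set $\Crit(\alpha)\times\Crit(\beta)$ and $|(z,w)| = |z|+|w|$, and $\xi_X + \xi_Y$ is an adapted pseudo-gradient. Lifts of trajectories to the universal cover $\widetilde{X\times Y} \simeq \tilde{X}\times\tilde{Y}$ decouple, so the moduli space $\traji{(g,h)}{(z,w),(z',w')}$ of bounded trajectories is, after quotienting by the diagonal $\R$-action, homeomorphic to a fibered product of $\traji{g}{z,z'}$ and $\traji{h}{w,w'}$ along an interval, exactly as in the Morse case.

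Second, I would define $m^{K,(g,h)}_{(z,w),(z',w')}$ by evaluating a representative of the fundamental class of this moduli space into $\Omega^g X \times \Omega^h Y$, yielding a chain of degree $|(z,w)|-|(z',w')|-1$ built from $m^g_{z,z'}$ and $m^h_{w,w'}$ in the same way as in \cite[Section 6]{Rie24}. That $m^K$ belongs to $C_*(\Omega(X\times Y), u+v)$ follows because $(u+v)(g,h) = u(g) + v(h)$ and each summand is bounded below by the length control provided by the adapted pseudo-gradient on the corresponding factor, so only finitely many $(g,h)$ contribute above any prescribed threshold for fixed critical points. The cocycle identity for $m^K$ is then inherited directly from those for $m^g$ and $m^h$. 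For part (1), I would then build a chain homotopy between $m^K$ and the Barraud-Cornea cocycle of a generic adapted pseudo-gradient on $X \times Y$ via a parametrized moduli space associated with a 1-parameter interpolation of pseudo-gradients; the Morse-Novikov invariance machinery developed in Section \ref{subsection : Latour Trick} then shows that both cocycles compute the same homology. For part (2), the definition of $m^K$ makes $K$ a chain map by inspection, and to prove that it is a quasi-isomorphism I would apply Proposition \ref{Proposition A} (the Latour Trick) to both source and target, reducing them up to chain homotopy equivalence to the corresponding DG Morse complexes, and concluding by the DG Morse Künneth formula \cite[Theorem 6.14]{Rie24}.

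The hard part will be the Novikov finiteness bookkeeping for parametrized objects. The chain homotopy comparing $m^K$ to the standard Barraud-Cornea cocycle, and the Latour Trick comparisons between Morse-Novikov and Morse complexes, both involve moduli spaces depending on auxiliary data such as interpolating pseudo-gradients. Securing uniform length bounds on these parametrized moduli spaces, so that all arising sums converge in $C_*(\Omega(X\times Y), u+v)$, is the main technical content; once this is in place, the algebraic skeleton of the argument reproduces the non-Novikov proof verbatim.
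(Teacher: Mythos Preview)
Your outline is roughly on track in that you correctly identify \cite[Section 6]{Rie24} as the template and track the $(g,h)$-indexing, but you mislocate the actual content of the argument and this leads to a gap.

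The key step, which the paper carries out explicitly, is the construction of a \emph{special} representing chain system on $\trajbi{(g,h)}{(x,y),(x',y')}$ whose projection to $\trajbi{g}{x,x'} \times \trajbi{h}{y,y'}$ vanishes whenever $x \neq x'$ and $y \neq y'$. Because the evaluation map $q_{(x,y),(x',y')}$ factors through this projection, the resulting K\"unneth cocycle $m^K_{(x,y),(x',y')}$ is nonzero only when $x=x'$ or $y=y'$, and in those cases equals $(\pm)(\star, m^h_{y,y'})$ or $(m^g_{x,x'}, \star)$. This sparse form is precisely what makes $K$ a chain map: without it, the differential on the target produces cross terms (both coordinates changing) that have no counterpart in $\partial\bigl((\alpha \otimes x)\otimes(\beta \otimes y)\bigr)$. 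Your phrase ``$K$ is a chain map by inspection'' hides exactly this point; evaluating a \emph{generic} fundamental class representative does not give a cocycle for which $K$ intertwines the differentials.

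Conversely, once $m^K$ has this sparse form, the Novikov finiteness you flag as the hard part is essentially free: the only nonzero contributions to $m^K_{(x,y),(x',y')}$ are indexed either by $g \in \pi_1(X)$ alone (with $h=1$) or by $h \in \pi_1(Y)$ alone (with $g=1$), and each factor already lies in its completion. No uniform bounds on parametrized moduli spaces are needed.

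For part (1), your proposed interpolation of pseudo-gradients is heavier than what the paper does. Both $m^K$ and the Barraud-Cornea cocycle $m^0$ are defined for the \emph{same} pair $(\alpha+\beta,\xi_X+\xi_Y)$, differing only in the choice of representing chain system; the paper compares them directly via the chain homotopy equivalence of \cite[Proposition 6.5]{Rie24} (Proposition~\ref{prop : dg Kunneth m^0 and m^1 quasi-iso MNDG}), with no interpolation. Your Latour Trick reduction for part (2) is a legitimate alternative route, but it then requires checking that $K$ commutes with continuation maps (this is the content of Lemma~\ref{lemme : commutativité K avec direct et shriek M-N} for $\varphi = \psi = \Id$), whereas the paper simply observes that once the sparse $m^K$ is in hand the proof from \cite[Theorem 6.14]{Rie24} goes through verbatim.
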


\item[$\bullet$] \textbf{The shriek map} $\Delta_! : C_*(X^2,\Xi_{X^2},C_*(F^2,u)) \to C_{*-n}(X,\Xi,\Delta^*C_*(F^2,u))$ in Section \ref{Section : Functoriality in the DG Morse-Novikov theory}. 
We have the following extension of \cite[Theorem 8.1.1]{BDHO23}:\\

\begin{proposition}[\textbf{DG Morse-Novikov Functoriality}, Proposition \ref{prop : Functoriality in DG Morse-Novikov}]\label{Proposition E}
Let $X^n,Y^k,Z^{\ell}$ be oriented, closed and connected manifolds.
Let $u \in H^1(Y,\R)$ and $\G_*$ be a DG right $C_*(\Omega Y, u)$-module.
A continuous map $\varphi: X \to Y$ induces in homology a direct map
$$\varphi_*: H_*(X,\varphi^*\G, \varphi^*u) \to H_*(Y, \G,u)$$
and a shriek map
$$\varphi_!: H_*(Y,\G,u) \to H_{*+n-k}(X,\varphi^*\G, \varphi^*u)$$ with the following properties:

\begin{enumerate}
    \item \textup{(IDENTITY)} We have $\Id_* = \Id_!= \Id: H_*(Y,\G,u) \to H_*(Y,\G,u)$.

    \item \textup{(COMPOSITION)} Given continuous maps $X \overset{\varphi}{\to} Y \overset{\psi}{\to} Z$, $u_Z \in H^1(Z,\R)$ and $\F$ a DG right $C_*(\Omega Z, u_Z)$-module, 

    $$(\psi \circ \varphi)_* = \psi_* \circ \varphi_*: H_*(X,\psi^*\varphi^*\F, \psi^*\varphi^*u_Z) \to H_*(Z, \F, u_Z)$$ and
    $$(\psi \circ \varphi)_! = \varphi_! \circ \psi_!: H_*(Z,\F,u_Z) \to H_{*+n-\ell}(X,\psi^*\varphi^*\F, \psi^*\varphi^*u_Z).$$
    \item \textup{(HOMOTOPY)} Two homotopic maps induce the same direct and shriek maps.

    \item \textup{(SPECTRAL SEQUENCE)} The direct and shriek maps are limit of morphisms between the spectral sequences associated with the corresponding enriched complexes, given at the second pages by 
    $$\varphi_{p,q,*} : H_p(X,\varphi^*H_q(\G)) \to H_p(Y,H_q(\G))$$ 
    and 
    $$\varphi_{p,q,!} : H_p(Y,H_q(\G)) \to H_{p+n-k}(X,\varphi^*H_q(\G))$$ the usual direct and shriek maps in homology with local coefficients.\\
\end{enumerate}
\end{proposition}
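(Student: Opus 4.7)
The plan is to parallel the construction of \cite[Theorem 8.1.1]{BDHO23} in the enriched Morse setting, together with the Latour Trick (Proposition \ref{Proposition A}). Two observations make the extension go through. First, the induced map $\varphi_\ast : \pi_1(X) \to \pi_1(Y)$ satisfies $(\varphi^\ast u)(g) = u(\varphi_\ast g)$, so the map on cubical chains of based loops extends to a map of Novikov completions
\[
\varphi_\ast : C_\ast(\Omega X, \varphi^\ast u) \to C_\ast(\Omega Y, u)
\]
that respects the Novikov filtrations and intertwines the respective right module structures on $\varphi^\ast\G$ and $\G$. Second, the Latour Trick identifies $C_\ast(Y,\G,u)$ with a Morse complex whose twisting cocycle takes values in $C_\ast(\Omega Y)$ (and similarly on $X$), so any chain map and chain homotopy constructed in the enriched Morse setting transports canonically to the Morse-Novikov setting.

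With these ingredients, the chain-level definitions of $\varphi_\ast$ and $\varphi_!$ follow the same pattern as in \cite{BDHO23}. For $\varphi_\ast$: pick Morse-Novikov data $\Xi, \Xi_Y$ on $X$ and $Y$, and for each pair $(x, y) \in \Crit(\alpha_X) \times \Crit(\alpha_Y)$ form moduli spaces of correspondences consisting of a negative half trajectory in $X$ from $x$, composed with $\varphi$, glued to a positive half trajectory in $Y$ ending at $y$; the fundamental classes of these spaces, evaluated in $\Omega^{\varphi_\ast g \cdot h} Y$ and summed over $(g,h) \in \pi_1(X) \times \pi_1(Y)$, give the chain-level map. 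For $\varphi_!$: use submanifold Morse-Novikov data around the graph $\Gamma_\varphi \subset X \times Y$ and construct the shriek map following \cite[Sections 9--10]{BDHO23}. The chain map property in each case is obtained from the standard boundary analysis of $1$-dimensional moduli spaces, now bookkept by homotopy class. The four properties (IDENTITY, COMPOSITION, HOMOTOPY, SPECTRAL SEQUENCE) are verified by standard cobordism arguments: identity reduces to transversality of self-correspondences; composition via stacking and a deformation of parameters; homotopy invariance from a $1$-parameter family of correspondences giving a chain homotopy; the spectral sequence statement by compatibility with the filtration by Morse index, which on the $E_2$ page recovers the usual direct and shriek maps in local-coefficient homology.

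The main obstacle is to ensure that all chain-level sums converge in the Novikov sense, i.e.\ that for every $c \in \R$ only finitely many homotopy classes contribute to the evaluation in any fixed degree above $c$. For the direct map this follows because the correspondence trajectories in $X$ have length (in the sense of Definition \ref{def : length by alpha} applied to $\varphi^\ast\alpha$) bounded below on each homotopy class, and Latour's compactness for bounded-length moduli spaces \cite{Lat94} provides finitely many contributing classes. The identity $(\varphi^\ast u)(g) = u(\varphi_\ast g)$ then ensures that the image chain in $C_\ast(\Omega Y, u)$ satisfies the Novikov condition with respect to $u$. The shriek map requires the analogous finiteness for the submanifold moduli spaces, which is obtained by the same bounded-length argument applied to the graph-adapted Morse-Novikov data. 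Once convergence is established, naturality of the Latour Trick chain homotopies (verified on the nose from its construction via a cofinal system of bounded-length truncations) transports the four properties from the enriched Morse theorem of \cite{BDHO23} to the present setting.
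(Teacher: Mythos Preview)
Your proposal is correct in substance and shares the paper's key ingredient, the Latour Trick, but the organisation differs and it is worth pointing out what each route buys.

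The paper's argument is shorter and more formal. It takes Definition~\ref{defi : functoriality DG Morse-Novikov with Latour trick} as the \emph{definition} of $\varphi_*$ and $\varphi_!$: conjugate the direct and shriek maps already constructed in \cite[Sections~9--10]{BDHO23} (for the DG Morse complexes $C_*(X,\Xi^L_X,\varphi^*\G)$ and $C_*(Y,\Xi^L_Y,\G)$ obtained from the Latour Trick) by the continuation maps of Theorem~\ref{thm : invariance}. With this definition the four properties are inherited essentially for free from \cite[Theorem~8.1.1]{BDHO23}, since continuation maps compose up to homotopy and satisfy $\Psi_{00}\simeq\Id$. No new compactness or Novikov-convergence argument is needed because the Latour data have Morse-type (hence compact) moduli spaces. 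The chain-level moduli-space description you give is then treated separately (Section~\ref{subsection : Sketch of a construction at the chain level}) and shown to agree with the Latour-Trick definition; the paper uses the direct ``second definition'' of \cite[Section~10]{BDHO23} via $\overline W^s(x)\cap\varphi^{-1}(\overline W^u(y))$ rather than the graph approach you sketch, but either works.

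Your route reverses the emphasis: you define the maps by moduli spaces first, establish Novikov convergence via bounded-length compactness (this is the content of Propositions~\ref{prop : compactness of moduli spaces for direct maps} and~\ref{prop : compactness of moduli spaces for shriek maps}), and then invoke the Latour Trick to import the functorial properties. This is also valid, and has the virtue of producing an explicit chain-level model without passing through an auxiliary Latour datum; the price is that the convergence checks and the ``naturality of the Latour Trick chain homotopies'' (i.e.\ that your chain-level maps commute with continuation maps up to homotopy) have to be verified, whereas in the paper's order they are bypassed or postponed.
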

    
    \item[$\bullet$] The morphism $\tilde{m} : C_{*-n}(X,\Xi,\Delta^*C_*(F^2, u)) \to C_{*-n}(X,\Xi,C_*(F,u))$ induced by the \textbf{morphism of fibrations} $m : E \ftimes{\pi}{\pi} E \to E$ in Section  \ref{subsection : Morphism of fibrations}. We will prove the following extension of \cite[Theorem 5.8]{Rie24}:\\
    
    \begin{theorem}[\textbf{Morphism of fibrations in DG Morse-Novikov theory}, Theorem \ref{thm : morphisme induit commute avec iso Morse-Novikov}] \label{theorem F}
        Let $F_1 \hookrightarrow E_1 \overset{\pi_1}{\to} X$ and $F_2 \hookrightarrow E_2 \overset{\pi_2}{\to} X$ be fibrations such that $\pi_2^*u =0$ and $F_2$ is locally path-connected. Let $\varphi : E_1 \to E_2$ be a morphism of fibrations.

    \begin{itemize}
        \item[i)] There exists a sequence of maps $\left\{\varphi_{n+1} : [0,1]^n \times F_1 \times \Omega X^{n-1} \times \mathcal{P}_{\star \to X} X \to E_2\right\}$ called a \textbf{coherent homotopy} for $\varphi$ that induces a morphism of complexes $$\tilde{\varphi} : C_*(X,\Xi, C_*(F_1,f_1)) \to C_*(X,\Xi, C_*(F_2,f_2)).$$

        \item[ii)] A coherent homotopy induces a chain homotopy $v : C_*(X,\Xi,C_*(F_1,f_1)) \to C_{*+1}(E_2,f_2)$ between $\Psi_2 \circ \tilde{\varphi}$ and $\varphi_* \circ \Psi_1$, where the morphisms $\Psi_1$ and $\Psi_2$ are the quasi-isomorphisms given by the Fibration Theorem. In other words, the following diagram commutes up to chain homotopy
    \end{itemize}
    \[
    \xymatrix{
    C_*(X, C_*(F_1,u)) \ar[r]^-{\Tilde{\varphi}} \ar[d]^{\Psi_1} & C_*(X, C_*(F_2,u)) \ar[d]_{\Psi_2} \\
    C_*(E_1,u) \ar[r]_{\varphi_*} & C_*(E_2,u).
    }
    \]
\end{theorem}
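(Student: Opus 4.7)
The strategy is to adapt the proof of \cite[Theorem 5.8]{Rie24} from the Morse setting to the Morse-Novikov one, the key new input being the primitive control provided by Proposition \ref{proposition B}. Since $\varphi$ is a morphism of fibrations, $\pi_2 \circ \varphi = \pi_1$, hence $\pi_1^*u = \varphi^*\pi_2^*u = 0$, so primitives $f_1 : E_1 \to \R$ and $f_2 : E_2 \to \R$ both exist, and we may normalize them so that $f_2 \circ \varphi = f_1$.

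For part i), I would construct $\{\varphi_{n+1}\}$ by induction on $n$ in the same way as in \cite[Section 5]{Rie24}: the base case $\varphi_1$ is obtained from $\varphi|_{F_1}$ followed by $\Phi_2$-lifting along $\tau$, and the inductive step uses the homotopy lifting property of $\pi_2$ to fill in coherences measuring the failure of $\varphi$ to intertwine the transitive lifting functions $\Phi_1$ and $\Phi_2$. The Morse-Novikov specific verification is that the resulting maps satisfy
\[
f_2 \circ \varphi_{n+1}(t, x, \gamma_1, \dots, \gamma_{n-1}, \tau) = f_1(x) + \sum_{i=1}^{n-1} \int_{\gamma_i} \alpha + \int_\tau \alpha,
\]
which is automatic: the image of $\varphi_{n+1}$ consists of points obtained by $\Phi_2$-lifting the concatenation $\tau \# \gamma_1 \# \cdots \# \gamma_{n-1}$ from $\varphi(x)$, so Proposition \ref{proposition B} yields the identity. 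Decomposing the sum defining $\tilde{\varphi}$ by homotopy classes $g \in \pi_1(X)$ of the $\gamma_i$ and using this identity, only finitely many $g$ contribute below any given sublevel of $f_2$. Hence $\tilde{\varphi}$ extends to the projective limits $C_*(F_i, f_i)$, and the coherence relations of the $\varphi_{n+1}$ make it a chain map of DG Morse-Novikov complexes.

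For part ii), I would define $v$ by the same formula as in \cite[Section 5]{Rie24}, evaluating the $\varphi_{n+1}$ against pseudo-chain representatives of the fundamental classes of the bounded-length Morse-Novikov moduli spaces $\traji{g}{x,y,A}$ and summing over $g \in \pi_1(X)$. Convergence in $\limproj C_{*+1}(E_2, f_2 \leq c)$ again follows from the primitive identity above. The chain-homotopy relation $dv + vd = \Psi_2 \circ \tilde{\varphi} - \varphi_* \circ \Psi_1$ then follows from the codimension-$1$ boundary stratification of $\trajbi{g}{x,y,A}$: the "face" strata produce the terms predicted by the coherent-homotopy equations, while the broken-trajectory strata reassemble into $\Psi_2 \circ \tilde{\varphi}$ and $\varphi_* \circ \Psi_1$. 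The main obstacle is not any new geometric content but the systematic bookkeeping required to make every step of the \cite{Rie24} construction compatible with the filtration by sublevel sets of $f_1$ and $f_2$, so that all maps are well-defined on the projective limits and not merely on individual factors; Proposition \ref{proposition B} is invoked at each step to control how $f_2$ changes under the lifts used to build the $\varphi_{n+1}$ and $v$.
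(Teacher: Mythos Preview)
Your approach to part i) is essentially the same as the paper's: construct the coherent homotopy as in \cite[Lemma 5.9]{Rie24}, then verify the primitive identity (what the paper states as Lemma \ref{lemme : coherent homotopy preserves length}) so that the induced $A_\infty$-morphism $\{\phi_{n+1}\}$ lands in the Novikov-completed modules. One minor slip: you wrote that finitely many $g$ contribute ``below any given sublevel of $f_2$'', but the Novikov condition is the opposite---for each $c$ only finitely many terms have value \emph{above} $c$.

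For part ii), however, there is a genuine gap. The quasi-isomorphism $\Psi_E$ of the Fibration Theorem is not built from trajectory moduli spaces alone; it uses representing chains $m_x \in C_{|x|}(\mathcal{P}_{\star \to X/\mathcal{Y}}\, X/\mathcal{Y})$ for the \emph{Latour cells} $\overline{W}^u(x)$ (see paragraph \ref{para : Latour cells}). For a generic Morse $1$-form these cells are non-compact and do not give a CW decomposition of $X$, so the paper first invokes the Latour Trick (Proposition \ref{prop : Latour Trick}) to pass to data $\Xi^L$ for which the flow is that of an honest Morse function. The chain homotopy is then
\[
v(\sigma \otimes x) \;=\; \sum_{n,\; z_1,\dots,z_{n-1}} \pm\, \phi_{n+1}\bigl(\sigma \otimes m_{x,z_1} \otimes \cdots \otimes m_{z_{n-2},z_{n-1}} \otimes m_{z_{n-1}}\bigr),
\]
where the final factor is the \emph{cell} chain $m_{z_{n-1}}$, not a trajectory cocycle. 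After establishing the homotopy for $\Xi^L$, the general case follows by sandwiching with continuation maps (which commute with $\tilde{\varphi}$ up to homotopy). Your proposal to evaluate on ``bounded-length moduli spaces $\traji{g}{x,y,A}$'' cannot replace this: those spaces carry an artificial boundary stratum at length $A$ that does not correspond to any term in the chain-homotopy relation, and more fundamentally there is no construction of $\Psi_E$ for arbitrary Morse-Novikov data without the compact cell structure that only the Latour Trick provides. The missing idea in your sketch is precisely this reduction to the Latour data before writing down $v$.
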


\end{itemize} 

\begin{rem}
    Theorem \ref{theorem D}, Proposition \ref{Proposition E} and Theorem \ref{theorem F} are extensions of \cite[Theorem 6.14]{Rie24},  \cite[Theorem 8.1.1]{BDHO23} and \cite[Theorem 5.8]{Rie24}, respectively, since they correspond to the case $u=0$.
\end{rem}
These three maps define a Chas-Sullivan-type product in enriched Morse-Novikov theory.
$$\CS = (-1)^{n(n-j)} \tilde{m} \circ \Delta_! \circ K : H_{i}(X,C_*(F,u)) \otimes H_{j}(X,C_*(F,u))\to H_{i+j-n}(X,C_*(F,u)).$$

\begin{theorem}\label{theorem G}[\textbf{DG Chas-Sullivan product in Morse-Novikov theory}, Theorem \ref{thm : Chas-Sullivan product in Morse-Novikov}]
    The product $\CS$ has the properties of Associativity, Commutativity, Functoriality and Spectral Sequence.
\end{theorem}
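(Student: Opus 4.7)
The plan is to follow the blueprint of \cite[Theorem 7.1]{Rie24}, simply substituting each ingredient by its Morse-Novikov extension. By construction $\CS$ is the composition, up to sign, of $K$, $\Delta_!$ and $\tilde m$, and Theorem \ref{theorem D}, Proposition \ref{Proposition E} and Theorem \ref{theorem F} guarantee respectively that each of these three maps is a well-defined chain map on the completed enriched Morse-Novikov complexes. The first step is to check that this composition descends to a degree $-n$ product in homology; this reduces to compatibility of the Novikov completion with tensor products, which follows from Proposition \ref{proposition B} since the primitive of $\pi^*\alpha$ on a product fibration is the sum of the primitives on each factor, so the length filtration on $C_*(F^2,u+u)$ matches the tensor product of the length filtrations on $C_*(F,u)$.

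The four properties are then derived from the commutative square
\[
\xymatrix{
H_*(X,C_*(F,u))^{\otimes 2} \ar[r]^-{\CS} \ar[d]^{\Psi_E \otimes \Psi_E} & H_{*-n}(X,C_*(F,u)) \ar[d]^{\Psi_E} \\
H_*(E,u)^{\otimes 2} \ar[r]_-{\mu_*} & H_{*-n}(E,u)
}
\]
where $\mu_*$ is the natural Morse-Novikov extension of the Gruher--Salvatore product on $H_*(E,u)$. The vertical arrows are isomorphisms by Theorem \ref{theorem C}, and commutativity of the square is precisely the content of Theorem \ref{theorem D} (for $K$ vs.\ topological cross-product), of Proposition \ref{Proposition E}ii (for $\Delta_!$ vs.\ topological shriek) and of Theorem \ref{theorem F}ii (for $\tilde m$ vs.\ $m_*$). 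Associativity and graded commutativity of $\CS$ then transfer from those of $\mu_*$ under the hypotheses on $m_*$, exactly as in the Morse case. Functoriality is a direct consequence of Proposition \ref{Proposition E} combined with the naturality of $K$ and of $\tilde m$ with respect to morphisms of fibrations over a map $\varphi : X \to Y$.

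The Spectral Sequence property is obtained by equipping $C_*(X, C_*(F,u))$ with the filtration by indices of critical points of the Morse-Novikov data. The associated spectral sequence exists and is convergent because the Novikov completion makes the filtration exhaustive and Hausdorff. At the $E^2$-page each of the three building maps degenerates to its classical counterpart on homology with local coefficients in $H_*(F,u)$, and their composition recovers the classical Chas-Sullivan-type product, which yields the limit statement after passing to the inverse limit of spectral sequences along the length filtration.

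The main obstacle will be to verify that the composition $\tilde m \circ \Delta_! \circ K$ is Novikov-convergent: each factor preserves its own length filtration, but the composition involves sums over pairs of critical points of a Morse function on $X^2$ together with elements $(g,h) \in \pi_1(X)^2$, and one must show that for every $c \in \R$ only finitely many such contributions have $u(g)+u(h) > c$. This finiteness is the same compactness-and-completion mechanism that appears in the construction of the Künneth twisting cocycle $m^{K,(g,h)}_{z,w}$ in Theorem \ref{theorem D}, applied simultaneously to $K$, to the shriek $\Delta_!$ (whose construction from \cite[Sections 9--10]{BDHO23} must be revisited to certify that the finiteness is preserved under the void-fibered perturbation data), and to $\tilde m$ through the coherent homotopy of Theorem \ref{theorem F}. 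Once this bookkeeping is in place, the product $\CS$ is automatically well-defined and all four properties follow by the commutative square above.
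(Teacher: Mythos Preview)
Your overall plan --- follow \cite[Section 7]{Rie24} with each ingredient replaced by its Morse-Novikov extension --- matches the paper exactly. However, the mechanism you propose for deducing the four properties is not the one the paper uses, and the version you sketch has a gap.

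You want to prove Associativity and Commutativity by transferring them from a topological product $\mu_*$ on $H_*(E,u)$ through the commutative square with vertical arrows $\Psi_E$. The paper does \emph{not} do this. It never constructs $\mu_*$ on $H_*(E,u)$, never proves that $\mu_*$ is associative or commutative in the completed setting, and never establishes the full square you draw. In particular, your claim that ``commutativity of the square is precisely the content of Theorem~\ref{theorem D} and Proposition~\ref{Proposition E}.ii'' is incorrect: Theorem~\ref{theorem D} is a K\"unneth formula internal to the enriched complexes and says nothing about compatibility of $K$ with the topological cross-product via $\Psi_E$, and Proposition~\ref{Proposition E}.ii is a composition law for direct/shriek maps, not a comparison with a topological Gysin map. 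Only Theorem~\ref{theorem F}.ii actually gives a square of the type you need, and only for the $\tilde m$ factor.

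What the paper actually does is prove Associativity, Commutativity, Functoriality and the Spectral Sequence property \emph{internally}, by diagram-chasing among $K$, $\Delta_!$ and $\tilde m$ themselves, using three compatibility lemmas stated for this purpose: Lemma~\ref{prop : morphisme Ai commute avec direct et shriek MNDG} ($\tilde m$ commutes with direct/shriek), Lemma~\ref{lemme : commutativité K avec direct et shriek M-N} ($K$ commutes with direct/shriek up to sign), and Lemma~\ref{lemme : induced maps on a Cartesian product commutes with K MNDG} ($K$ commutes with morphisms of fibrations on products). These three lemmas are the direct Morse-Novikov analogues of the corresponding statements in \cite{Rie24}, and with them the argument of \cite[Section 7]{Rie24} goes through verbatim. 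No comparison to a topological $\mu_*$ is needed.

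Finally, the ``main obstacle'' you flag --- Novikov-convergence of the composition --- is not an obstacle at all once the three maps are constructed: each of $K$, $\Delta_!$, $\tilde m$ is already defined as a chain map between the completed complexes (this is the content of Sections~\ref{Section : Functoriality in the DG Morse-Novikov theory}, \ref{subsection : Morphism of fibrations}, \ref{subsection : The cross product K}), so their composition is automatically a chain map on the completed complex and no further finiteness check is required.
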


In general, this product does not have a neutral element. Indeed, such an element would have degree $n$ but for the universal cover $\pi_1(X) \hookrightarrow \tilde{X} \to X$, we would obtain a neutral element in $H_n(X,\Lambda_u) = H_n(X ; u)$. However, it is known that for the classical Morse-Novikov theory, $H_n(X;u)=0$ if $u \neq 0$ (see \cite[Lemme 4.1]{Lat94}). 
In \cite{Rie24}, we proved that if $\fibration$ admits a section $s : X \to E$ such that $m(s(\star),e) = m(e,s(\star)) = e$ for all $e \in F$, then $\CS : H_*(X,C_*(F))^{\otimes 2} \to H_*(X,C_*(F))$ admits a neutral element. However, if $\pi^*u = 0$ and there exists a section $ s : X \to E$ of this fibration, it would mean that $u=0$. This hypothesis is therefore not helpful in this case. \\

\subsection{Structure of this paper}
In Section \ref{section : Construction of the enriched Morse-Novikov complex}, we build the enriched Morse-Novikov complex and study in Section \ref{section : Spectral sequence and DG Morse-Novikov toolset} the associated spectral sequence as well as, in Section \ref{section : Invariance of DG Morse-Novikov homology}, prove Proposition \ref{Proposition A}. We then prove Proposition \ref{proposition B} the Morse-Novikov Fibration Theorem \ref{theorem C} in Section \ref{section : A Fibration Theorem for DG Morse-Novikov theory}. We define the direct and shriek maps in enriched Morse-Novikov theory in Section \ref{Section : Functoriality in the DG Morse-Novikov theory} and prove Proposition \ref{Proposition E}. We prove Theorems \ref{theorem D}, \ref{theorem F} and \ref{theorem G} in Section \ref{section : Chas-Sullivan type product in enriched Morse-Novikov homology}. Since we need some algebraic results about the relations between projective limits and homology, we added the Appendix \ref{Appendix : Homology and projective limit, algebraic properties}.

In this article, we generalize Morse homology with differential graded coefficients to the Morse-Novikov setting and generalize the Fibration Theorem \cite[Theorem 7.2.1]{BDHO23} and the DG Chas-Sullivan product \cite[Theorem 7.1]{Rie24} to this new setting.

\section{Construction of the enriched Morse-Novikov complex}\label{section : Construction of the enriched Morse-Novikov complex}

Let $(X,\star)$ be a pointed, oriented, closed, and connected manifold, and $u \in H^1(X,\R)$ a fixed cohomology class.
Following the algebraic setup \cite[Section 4.1]{BDHO23} for the DGA $R_* = C_*(\Omega X,u)$, we will define in this section a Morse-Novikov complex with coefficients in a right $C_*(\Omega X,u)$-module (Definition \ref{def : DG Morse-Novikov complex}).

\subsection{Morse-Novikov trajectories}

Morse theory studies the gradient trajectories associated with a Morse function $h: X \to \R$. More generally, Morse-Novikov theory explores the behavior of trajectories associated with a closed 1-form.

\begin{defi}

\begin{itemize}
    \item[$\bullet$] \textbf{A Morse 1-form} $\alpha \in \Omega^1(X)$ is a closed 1-form that, locally around its zeroes, is the differential of a Morse function $\alpha = dh$. We use the notation $x \in \Crit(\alpha)$ if $\alpha_x = 0$.
    
    \item[$\bullet$] The \textbf{index $|x|$ of a critical point} $x \in \Crit(\alpha)$ is the Morse index of the critical point $x \in \Crit(h).$

    \item[$\bullet$] A vector field $\xi: X \to TX$ is \textbf{a (negative) pseudo-gradient adapted to $\alpha$} if 
    \begin{itemize}
        \item For all $x \in X$, $\alpha_x(\xi_x) \leq 0$ with equality if and only if $x \in \Crit(\alpha).$
        \item For all $x \in \Crit(\alpha) = \Crit(h)$, there exists a Morse chart around $x$ such that $\xi$ coincides with $-\textup{grad}(h)$ in that Morse chart.
    \end{itemize}
    The same arguments as in Morse theory ensure that such a pseudo-gradient always exists (see \cite[2.1.c]{AD14}).

    \item[$\bullet$] Denote by $\phi^s$ the flow of a pseudo-gradient $\xi$ adapted to a Morse 1-form $\alpha$. The \textbf{orbit} of $a \in X$ is denoted by $\{\phi^s(a)\}_{s \in \R}.$
\end{itemize}
\end{defi}

 Fix a Morse 1-form $\alpha$ and an adapted pseudo-gradient $\xi$.
The (un)stable manifolds and spaces of trajectories can be defined in the same way as in Morse theory.

\begin{defi}
    Let $x \in \Crit(\alpha)$. The \textbf{unstable manifold of $x$} is the $|x|$-dimensional manifold $$W^u(x) = \left\{ a \in X, \lim_{s \to - \infty}\phi^s(a) = x \right\}.$$  Its \textbf{stable manifold} is the $n-|x|$-dimensional manifold $$W^s(x) = \left\{ a \in X, \lim_{s \to + \infty}\phi^s(a) = x \right\}.$$  
\end{defi}

The Kupka-Smale Theorem ensures that, for generic $\xi,$ the unstable manifold $W^u(x)$ and the stable manifold $W^s(y)$ are transverse for each pair of critical points $x,y \in \Crit(\alpha)$. We will always assume that $\xi$ satisfies this property. We will also say that the pair $(\alpha,\xi)$ is Morse-Smale.

\begin{defi}
    Let $x,y \in \Crit(\alpha)$, $x\neq y$. The \textbf{space of parametrized trajectories} $$\mathcal{M}(x,y) = W^u(x) \cap W^s(y)$$ is endowed with the $\R$-action given by $$s \cdot a = \phi^s(a).$$ \textbf{The space of (unparametrized) trajectories} between $x$ and $y$ is defined by  $$\traj{x,y}:= \faktor{\mathcal{M}(x,y)}{\R}$$ and it is a manifold of dimension $|x|-|y|-1$.
\end{defi}

In this section, we list the definitions and properties given by François Latour in \cite{Lat94} on which our construction will rely.

In Morse theory, any orbit $\gamma : I \to X$ of a Morse-Smale pair $(h,\xi)$ connects two critical points $x$ and $y$ of $h$ and has finite length $h(x) - h(y)$. In Morse-Novikov theory, there is nothing to prevent orbits to have infinite length (with respect to the length induced by $\alpha$, see Definition \ref{def : length by alpha}).

\begin{lemme}\cite[2.4]{Lat94}
    An orbit $\{\phi^s(a)\}$ has finite length if and only if it connects two critical points, \emph{i.e.} there exist $x,y \in \Crit(\alpha)$ such that $$\lim_{s \to -\infty}\phi^s(a) = x \textup{ and } \lim_{s \to -\infty}\phi^s(a) = y.$$
\end{lemme}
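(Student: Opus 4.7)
The plan is to exploit the fact that along any orbit $\gamma(s) := \phi^s(a)$ the integrand $\alpha(\xi_{\gamma(s)})$ is non-positive and vanishes exactly at critical points of $\alpha$. Hence the partial length $s \mapsto \int_0^s \alpha(\xi_{\gamma(t)})\,dt$ is monotone in $s$, and finiteness of the total length on the orbit is equivalent to convergence of this integral at $\pm\infty$. Both implications will follow by translating ``convergence of this integral'' into ``convergence of $\phi^s(a)$ to a critical point''.

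The ($\Leftarrow$) direction is the easy one. Assuming $\phi^s(a) \to x$ as $s \to -\infty$ and $\phi^s(a) \to y$ as $s \to +\infty$, I would split the orbit into three pieces: two tails, on which the orbit lies in Morse charts around $x$ and $y$ where $\alpha = dh_x$ and $\alpha = dh_y$, and one central piece parametrized by a compact interval. On each tail the integral telescopes to a difference of values of $h_x$ or $h_y$ and converges as the parameter tends to $\pm\infty$ because $\gamma$ converges to the critical point; on the central piece the length is bounded since $\alpha(\xi)$ is bounded on the compact manifold $X$.

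The ($\Rightarrow$) direction is the main one and proceeds in three steps. (i) Show that every $\omega$-limit point $y$ of the orbit lies in $\Crit(\alpha)$: if not, a neighborhood $V$ of $y$ satisfies $-\alpha(\xi) \geq \delta > 0$, and each time the orbit enters a nested neighborhood $V' \Subset V$ it must linger in $V$ for at least some time $\tau_0 > 0$ (bounded below in terms of $\operatorname{dist}(V',\partial V)$ and the uniform bound on $\|\xi\|$), contributing at least $\delta\tau_0$ to the length, which with infinitely many such visits contradicts finiteness. (ii) Show that the $\omega$-limit set is a singleton: otherwise the orbit shuttles between disjoint small neighborhoods of two distinct critical points, and on the compact complement of small neighborhoods of $\Crit(\alpha)$ the function $-\alpha(\xi)$ is bounded below by some $\delta_0 > 0$, so each shuttle costs at least $\delta_0 \tau_0'$ in length. (iii) Conclude that $\phi^s(a) \to y$ by the standard fact that a relatively compact forward orbit with a single $\omega$-limit point converges to it. The same argument applied to the reversed flow $\phi^{-s}$ handles the $\alpha$-limit and produces the other critical endpoint.

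The main technical obstacle will be making precise the ``each visit contributes a definite length'' estimate used in steps (i) and (ii). The delicate input is a uniform lower bound on the time the flow needs to traverse an annular region $V \setminus V'$, which relies on the compactness of $X$, the uniform bound on $\|\xi\|$, and a careful choice of nested neighborhoods. Once those quantitative estimates are in place, both directions reduce to bookkeeping of length contributions, matching the argument in \cite[2.4]{Lat94}.
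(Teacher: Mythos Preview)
The paper does not give its own proof of this lemma: it is stated with a citation to \cite[2.4]{Lat94} and no argument is supplied. So there is nothing in the paper to compare your proposal against.

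That said, your plan is correct and is the standard one. A couple of minor comments. For the easy direction you can shortcut the splitting argument: once the orbit is inside a Morse chart where $\alpha = dh$, the partial length from time $-T$ to $0$ is exactly $h(\gamma(0)) - h(\gamma(-T))$, which converges to $h(\gamma(0)) - h(x)$ by continuity of $h$ and convergence $\gamma(-T)\to x$; no separate bound on the central piece is needed beyond the trivial fact that it is a compact-time integral of a bounded integrand. For the harder direction, your steps (i)--(iii) are exactly right, and the quantitative ingredient you single out (a uniform lower bound on transit time through $V\setminus V'$, coming from $\sup_X\|\xi\|<\infty$) is the correct one; this is precisely what Latour uses.
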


We therefore place a particular importance on bounded orbits. In particular, considering only orbits of bounded length enables us to compactify the (un)stable manifolds and the space of trajectories.

\begin{defi}
    Let $x,y \in \Crit(\alpha)$.
    
    $\bullet$ Define $$\trajb{x,y} = \traj{x,y} \cup \bigcup_{\substack{k \geq 1 \\ z_1, \dots, z_{k}}} \traj{x,z_1} \times \traj{z_1,z_2} \times \dots \times \traj{z_{k},y}$$ the \textbf{spaces of broken trajectories.}\\

    $\bullet$ Define $$\wb{u}{x} = W^u(x) \cup \bigcup_{y \in \Crit(\alpha)}\trajb{x,y} \times W^u(y) \textup{ and } \wb{s}{x} = W^s(x) \cup \bigcup_{y \in \Crit(\alpha)} W^s(y) \times \trajb{y,x}.$$
\end{defi}

We can extend the notion of length to $\lambda =(\lambda_1, \dots, \lambda_{k}) \in \trajb{x,y}$ by $L(\lambda) = \sum_{i=1}^k L(\lambda_i).$

\begin{prop}\cite[Proposition 2.6]{Lat94}
For each pair $x,y\in \Crit(\alpha)$ and $A>0$, the space $$\trajb{x,y,A}:= \left\{\lambda \in \trajb{x,y}, \ L(\lambda) \leq A\right\}$$ of broken trajectories of length bounded by $A$ is a compact manifold of dimension $|x|-|y|-1$ with boundaries and corners.
\end{prop}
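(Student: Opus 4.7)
The proposition asserts two things: $\trajbi{A}{x,y}$ is a manifold with corners of dimension $|x|-|y|-1$, and it is compact. The length bound $A$ is essential only for compactness; the stratification and gluing arguments are local and length-insensitive. My strategy is to follow the proof in classical Morse theory and isolate the place where the compactness argument must be adapted to accommodate the absence of a global Lyapunov function on $X$.

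For the manifold structure on the open stratum $\traj{x,y}$, the Morse-Smale assumption gives $W^u(x)\pitchfork W^s(y)$, so $\mathcal{M}(x,y)$ is a submanifold of $X$ of dimension $|x|+(n-|y|)-n=|x|-|y|$, and quotienting by the free $\R$-action of the flow yields an $(|x|-|y|-1)$-manifold. A $k$-fold broken stratum $\traj{x,z_1}\times\cdots\times\traj{z_k,y}$ has dimension $|x|-|y|-k-1$, and the standard gluing procedure (as in \cite{AD14} for the Morse case, which applies verbatim here since it is purely local near each break) produces charts of the form $[0,\epsilon)^k\times U$ around each broken trajectory, where $U$ is an open neighborhood in the product of the open strata and the $k$ coordinates encode the smoothing parameters. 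For $\epsilon$ small these charts respect the length bound $A$, since each gluing parameter continuously controls the length of the newly created unbroken piece.

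For compactness, the key step is a uniform positive lower bound $\delta$ on the length of any unbroken trajectory. I would choose pairwise disjoint small Morse neighborhoods $U_p$ of the critical points such that on the compact complement $K=X\setminus\bigcup_p U_p$ one has $-\alpha(\xi)\geq c>0$ and $|\xi|\leq C$ for uniform constants $c,C>0$. Any trajectory from $p$ to $q$ with $p\neq q$ must traverse $K$ over a Riemannian distance at least $d(\partial U_p,\partial U_q)>0$, and a direct time-versus-distance estimate yields $L(\lambda)\geq\delta$ for a universal $\delta>0$ depending only on $(\alpha,\xi)$ and the chosen neighborhoods. Consequently every element of $\trajbi{A}{x,y}$ has at most $\lfloor A/\delta\rfloor$ breaks.

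Given a sequence $\lambda^n\in\trajbi{A}{x,y}$, I would pass to a subsequence with a fixed number of breaks $k$ and a fixed sequence of intermediate critical points $(z_1,\dots,z_k)$. Each piece $\lambda^n_i\in\traj{z_{i-1},z_i}$, reparametrized by $\alpha$-arclength on $[0,L^n_i]\subset[0,A]$, is equicontinuous by the bound on $|\xi|$, so Arzel\`a-Ascoli produces a uniform limit that is a (possibly further broken) flow line from $z_{i-1}$ to $z_i$. The main obstacle is to rule out accumulation of breaks within the limit of a single piece, but any such limit inherits the bound of at most $\lfloor A/\delta\rfloor$ breaks by the same length lower bound, so only finitely many new breaks can appear. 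Concatenating all the limits produces an element of $\trajbi{A}{x,y}$, and continuity of $L$ under uniform convergence of flow lines transfers the bound $L(\lambda^n)\leq A$ to the limit.
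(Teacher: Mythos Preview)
The paper does not prove this proposition; it is stated as a citation of \cite[Proposition 2.6]{Lat94} and no argument is given. So there is no ``paper's own proof'' to compare against beyond the reference to Latour.

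Your sketch follows the standard Morse-theoretic template and is essentially correct. One simplification you overlook: because $\alpha$ is closed, the length $L$ is locally constant on $\trajb{x,y}$. Indeed, lifting to the universal cover where $\tilde{\pi}^*\alpha=d\tilde{h}$, any $\lambda\in\traji{g}{x,y}$ has $L(\lambda)=\tilde{h}(\tilde{x})-\tilde{h}(g\tilde{y})$, which depends only on the homotopy class $g$ (the paper makes exactly this computation later, in the proof that $\trajbi{g}{x,y}$ is compact). Consequently $\trajb{x,y,A}$ is an open-and-closed subset of the full broken-trajectory space, and your concern about whether the gluing charts ``respect the length bound $A$'' evaporates: the constraint $L\leq A$ never cuts through a chart. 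For the same reason, the closing remark about ``continuity of $L$ under uniform convergence'' is unnecessary---the limit automatically lies in the same connected component as the tail of the sequence, hence has the same length.

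The remainder of your outline (transversality for the open strata, local gluing for corners, a uniform lower bound $\delta>0$ on the $\alpha$-length of any nonconstant connecting orbit to bound the number of breaks, and Arzel\`a--Ascoli for sequential compactness) is the expected approach and matches what one finds in Latour's original argument.
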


 If $a \in W^u(x)$, define $D_x(a) = \int_{-\infty}^0 \gamma^*\alpha = L(\gamma)$ where $\gamma: (-\infty,0] \to X$, $\gamma(s) = \phi^s(a).$ This function extends to $(\lambda,a) \in \trajb{x,y} \times W^u(y) \subset \wb{u}{x}$ by $D_x(\lambda,a) = L(\lambda) + D_y(a).$

 \begin{prop}\cite[Proposition 2.8]{Lat94}
     For each critical point $x \in \Crit(\alpha)$ and $A>0$, the space $$\wb{u}{x,A} = \left\{ a \in \wb{u}{x}, \ D_x(a) \leq A\right\} $$ is a compact manifold of dimension $|x|$ with boundaries and corners.
 \end{prop}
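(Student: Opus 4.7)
The plan is to mirror the classical Morse proof that compactified unstable manifolds are manifolds with corners, replacing the sublevel condition on a Morse function $h$ by the length bound $D_x \leq A$. I would break the argument into (i) topological compactness of $\wb{u}{x,A}$, and (ii) the local corner model of dimension $|x|$.

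For compactness, I would pass to a cover $p : \hat{X} \to X$ on which $p^*\alpha = d\hat{f}$ is exact (for instance the integration cover of $u$ alluded to later in the paper, or the universal cover). Fix a lift $\hat{x}$ of $x$ and a lifted pseudo-gradient $\hat{\xi}$. Because $\hat{f}$ strictly decreases along non-constant orbits of $\hat{\xi}$, and because trajectories starting at $x$ lift uniquely starting at $\hat{x}$, the bound $D_x \leq A$ translates to the slab $\hat{f}(\hat{x}) - A \leq \hat{f} \leq \hat{f}(\hat{x})$ upstairs. The slab contains only finitely many lifted critical points of $\alpha$ (each critical point of $\alpha$ lifts to an orbit under the deck group, and only finitely many deck translates land in a bounded $\hat{f}$-strip), so classical Morse--Smale compactness as in \cite[Chapter 3]{AD14} applies upstairs and gives that $\wb{u}{\hat{x}} \cap \{\hat{f} \geq \hat{f}(\hat{x}) - A\}$ is compact with the expected stratification by broken trajectories. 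The projection $p$ then identifies this upstairs compactum with $\wb{u}{x,A}$.

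For the manifold-with-corners structure, I would stratify $\wb{u}{x,A}$ by the number $k$ of breakings. The top stratum is an open subset of $W^u(x)$, of dimension $|x|$ by the Morse--Smale property of $(\alpha,\xi)$. A depth-$k$ stratum consists of configurations $(\lambda_1,\dots,\lambda_k,a)$ with $\lambda_i \in \traj{z_{i-1}, z_i}$, $z_0 = x$, and $a \in W^u(z_k)$, of dimension $\sum_{i=1}^k (|z_{i-1}|-|z_i|-1) + |z_k| = |x| - k$. The local corner model near such a configuration is produced by the standard gluing construction at each intermediate critical point $z_i$: in a Morse chart where $\alpha = dh_i$, the standard saddle normal form of the flow supplies a gluing parameter $\tau_i \in [0,\varepsilon)$. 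Concatenating these with a smooth chart on $W^u(z_k)$ yields a local homeomorphism $[0,\varepsilon)^k \times \R^{|x|-k} \to \wb{u}{x,A}$, which is the required corner chart. The condition $D_x \leq A$ is a closed, lower semi-continuous constraint, and cuts out a closed submanifold-with-corners inside $\wb{u}{x}$.

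The main obstacle, compared to the classical Morse case, is precisely the absence of a global primitive of $\alpha$ on $X$: without the length bound, bounded-dimension moduli are not a priori finite, trajectories could visit infinitely many critical points, and $\wb{u}{x}$ would fail to be compact. The length bound $A$ is exactly what switches this pathology off, and its cleanest use is upstairs on $\hat{X}$, where it becomes a genuine sublevel condition for a bona fide Morse function $\hat{f}$. Once this finite reduction is made, transversality, gluing, the corner structure and the dimension count are local and formally identical to the classical Morse picture, so the substantive work lies in the lift/descent step and in verifying that the various moduli constructions are equivariant under the deck action.
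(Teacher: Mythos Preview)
The paper does not supply its own proof of this proposition; it is quoted verbatim from Latour \cite[Proposition 2.8]{Lat94} as background input, with no argument given. So there is nothing in the present paper to compare your sketch against.

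That said, your outline is the standard route and matches Latour's. Lifting to a cover on which $\alpha$ becomes exact turns the bound $D_x \leq A$ into an honest sublevel condition for a Morse function, after which compactness and the corner charts reduce to the classical picture (as in \cite[Chapter 3]{AD14}). One small correction: $D_x$ is genuinely continuous on $\wb{u}{x}$, not merely lower semi-continuous, since on each stratum it is given by the $\alpha$-length of the parametrized (broken) half-trajectory and this varies continuously across breakings. A more substantive caveat is that the level $\{D_x = A\}$ introduces a new face, and for $\wb{u}{x,A}$ to be a manifold with corners one needs $A$ to be a regular value of $D_x$ on each stratum (equivalently, upstairs, $\hat{f}(\hat{x}) - A$ should be a regular value of $\hat{f}$ on the relevant lifted unstable manifolds). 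For a generic $A$ this holds; and for the uses made of this proposition in the present paper (extracting connected components $\trajbi{g}{x,y}$ and building representing chain systems) only compactness is actually needed, which does not require any regularity of $A$.
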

 
 \begin{rem}
     We refer to \cite[2.5]{Lat94} or \cite[3.2.a and 4.9.b]{AD14} for the definitions of the topologies on $\trajb{x,y}$ and $\wb{u}{x}.$
 \end{rem}

\subsection{DG Morse-Novikov data}

In order to define the enriched Morse-Novikov complex, we start by defining a \textbf{twisting cocycle} adapted to the Morse-Novikov setting. For that, we adapt the set of DG Morse data (\cite[Section 5.2]{BDHO23}) to the spaces of trajectories of a closed 1-form $\alpha \in u$.
Fix $\alpha \in u$ a closed 1-form and $\xi$ a pseudo-gradient adapted to $\alpha$.

In our construction, instead of considering trajectories of bounded length, we have chosen to consider trajectories whose lifts to the universal cover $\tilde{\pi} : \tilde{X} \to X$ have prescribed endpoints. This point of view describes spaces of trajectories with similar behaviour to that of spaces of trajectories of bounded length but I deemed more natural to index the spaces of trajectories between two critical points with $\pi_1(X)$ instead of $\R$ since loops are already of special interest in the enriched Morse setting.

Let $\Y$ a tree rooted in $\star$ for which the set of external vertices is $\Crit(\alpha)$. Choose a lift $\tilde{\star} \in \tilde{X}$ of $\star$. For every $x \in \Crit(\alpha)$, denote $\tilde{x} \in \tilde{X}$ the lift of $x$ obtained by lifting $\Y$ in $\tilde{X}$ at $\tilde{\star}$ (see Figure \ref{fig: lift of tree}).

\begin{figure}[h!]
    \centering
    \includegraphics[scale=0.25]{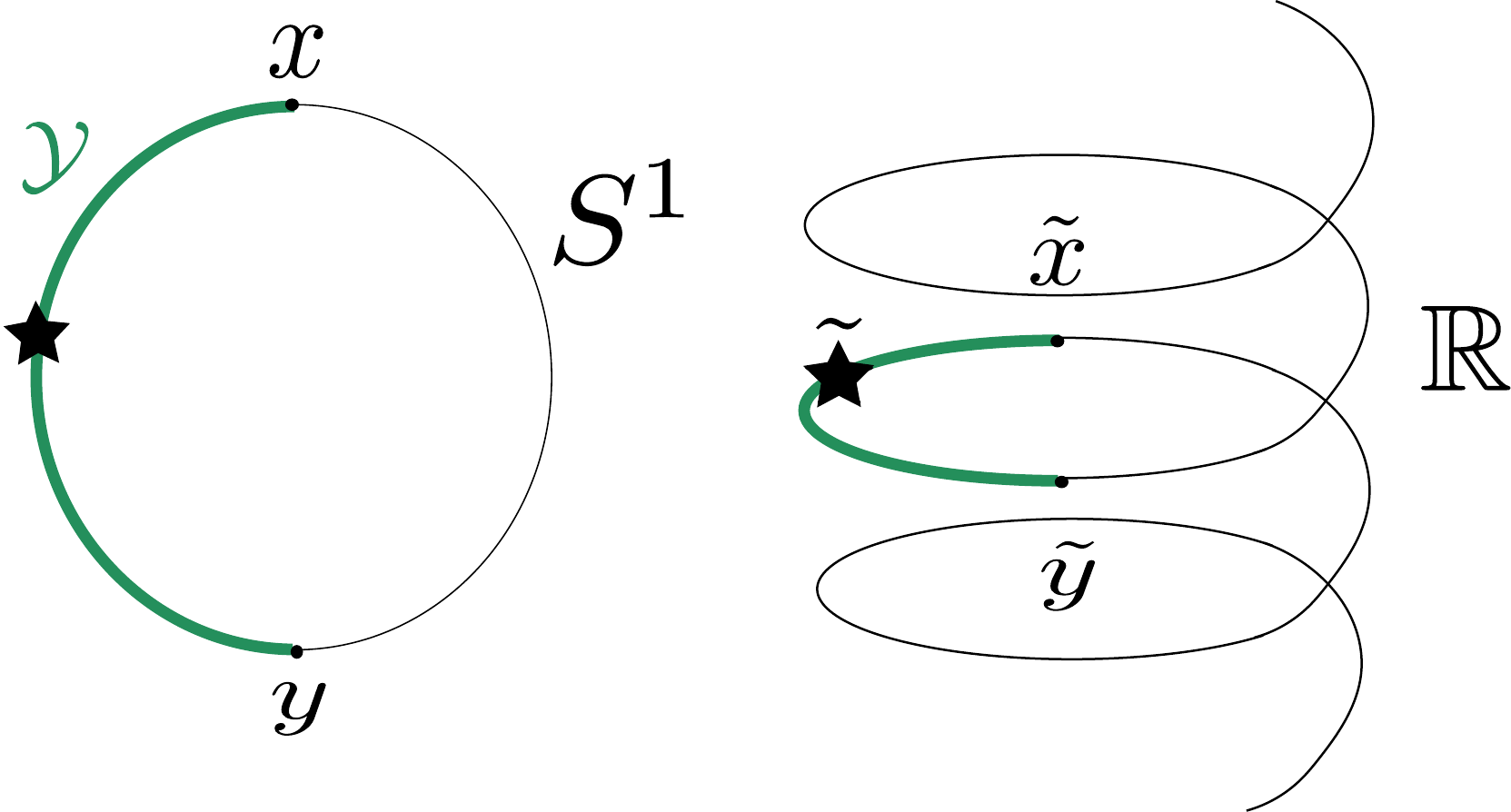}
    \caption{Choice of lifts}
    \label{fig: lift of tree}
\end{figure}

\subsubsection{Spaces of broken trajectories with prescribed endpoints in the universal cover}

\begin{defi}\label{defi: Lg(x,y)}
    Let $x,y \in \Crit(\alpha)$ be a pair of critical points. Let $\lambda \in \traj{x,y}$.
     Let $g(\lambda) \in \pi_1(X)$ such that the lift $\tilde{\lambda}$ of $\lambda$ starting at $\tilde{x}$ in $\tilde{X}$ ends at $g(\lambda)\tilde{y}$.
     For a given $g \in \pi_1(X),$ we denote 
     $$\traji{g}{x,y} = \left\{ \lambda \in \traj{x,y}, \ g(\lambda) = g \right\}$$
     and $$\trajbi{g}{x,y} = \traji{g}{x,y} \cup \bigcup_{\substack{k \geq 1, z_1, \dots,z_k \\ g_1\cdot \ \dots \  \cdot g_{k+1} = g}} \traji{g_1}{x,z_1} \times \dots \times \traji{g_{k+1}}{z_k,y}$$
     the \textbf{spaces of (broken) trajectories with prescribed endpoints in the universal cover}.

We endow it with the topology and with the orientation induced by $\trajb{x,y}$, orientation given by a choice $o$ of orientation for the Latour cells $\left(\wb{u}{x}\right)_x$.
\end{defi}

If $\alpha = df$ is exact, we can parametrize each trajectory $\lambda \in \traj{x,y}$ by the values of the Morse function $f : X \to \R$ using that $f$ is strictly decreasing along the trajectory and each level set $\{f = t\}$ intersects transversely the image of $\lambda$ exactly once for all $t \in [f(y), f(x)]$. For a generic closed 1-form $\alpha$, we use a primitive on the universal cover to parametrize the trajectories. Let $\tilde{h}: \tilde{X} \to \R$ be a primitive (in the usual sense) of $\tilde{\pi}^*\alpha \in \tilde{\pi}^*u \in H^1(\tilde{X},\R)  = \{0\}$. It satisfies the equation \begin{equation}\label{eq : primitive in universal cover}
   \forall g\in \pi_1(X), \ \forall \tilde{a} \in \tilde{X}, \  \tilde{h}(g\tilde{a}) = u(g) + \tilde{h}(a).
\end{equation}

\begin{lemme}\label{lemme: parametrization map}
 For each $x,y \in \Crit(\alpha)$, there exists a continuous \textbf{parametrization map} $$\Gamma_{x,y}: \trajb{x,y} \to \mathcal{P}_{x \to y}(X)$$ such that:
  $\Gamma_{x,y}(\lambda,\lambda') = \Gamma_{x,z}(\lambda)\#\Gamma_{z,y}(\lambda')$ for all $(\lambda, \lambda') \in \trajb{x,z} \times \trajb{z,y} \subset \trajb{x,y}.$
\end{lemme}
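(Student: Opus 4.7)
The plan is to parametrize trajectories using the values of a primitive on the universal cover, following the Morse case of \cite[Section 5.2]{BDHO23} with $\tilde h$ playing the role of the Morse function. Let $\tilde h : \tilde X \to \R$ be the primitive of $\tilde \pi^* \alpha$ from equation~(\ref{eq : primitive in universal cover}). For an unbroken trajectory $\lambda \in \traji{g}{x,y}$, I take its unique lift $\tilde\lambda$ from $\tilde x$ to $g\tilde y$. Since $d\tilde h(\xi) = \alpha(\xi) < 0$ off the critical set, $\tilde h$ is strictly decreasing along $\tilde\lambda$, with limits $\tilde h(\tilde x)$ and $\tilde h(g \tilde y) = \tilde h(\tilde y) + u(g)$ at its endpoints. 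I set $\ell(\lambda) := \tilde h(\tilde x) - \tilde h(g \tilde y) > 0$ and define $\Gamma_{x,y}(\lambda) : [0,\ell(\lambda)] \to X$ to be the Moore path sending $t \in (0,\ell(\lambda))$ to $\tilde\pi$ of the unique point of $\tilde\lambda$ whose $\tilde h$-value equals $\tilde h(\tilde x) - t$, with the conventions $0 \mapsto x$ and $\ell(\lambda) \mapsto y$. Continuity in $t$ is automatic from the flow description, and the extension to the endpoints uses that $\tilde\lambda$ converges to its critical limits there.

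Next I extend $\Gamma_{x,y}$ to $\trajb{x,y}$ by Moore concatenation: for $(\lambda_1, \ldots, \lambda_{k+1}) \in \traj{x,z_1} \times \cdots \times \traj{z_k,y}$, I set
\[
\Gamma_{x,y}(\lambda_1, \ldots, \lambda_{k+1}) := \Gamma_{x,z_1}(\lambda_1) \# \cdots \# \Gamma_{z_k,y}(\lambda_{k+1}).
\]
The concatenation identity stated in the lemma is then built into the definition. A short telescoping computation using equation~(\ref{eq : primitive in universal cover}) shows that if $\lambda_i \in \traji{g_i}{z_{i-1},z_i}$ and $g = g_1 \cdots g_{k+1}$, then the total Moore length $\sum_i \ell(\lambda_i) = \tilde h(\tilde x) - \tilde h(\tilde y) - \sum_i u(g_i) = \tilde h(\tilde x) - \tilde h(g \tilde y)$ matches the unbroken formula. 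In particular, $\ell$ is continuous on all of $\trajb{x,y}$.

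The remaining work is to verify continuity at the strata of broken trajectories. Given a convergent sequence $\lambda_n \to (\lambda_1,\ldots,\lambda_{k+1})$ in the topology on $\trajb{x,y}$ recalled in \cite[2.5]{Lat94}, I lift to $\tilde X$ and observe that $\tilde\lambda_n$ converges uniformly on compact subsets of the complement of the intermediate critical points $\tilde z_1, \ldots, \tilde z_k$ to the suitably shifted juxtaposition of the lifts $\tilde\lambda_i$. After reparametrizing all Moore paths to the common interval $[0,1]$, which is legitimate since the lengths converge, the local Morse model near each $\tilde z_i$ controls the long approach and exit segments: the $\tilde h$-reparametrization crushes them to intervals of vanishingly small $\tilde h$-drop, forcing uniform convergence of the reparametrized maps and hence convergence in the Moore path space.

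This last verification, where one has to show that the arbitrarily long near-critical segments collapse to arbitrarily short intervals once we reparametrize by $\tilde h$, is the main technical obstacle. It is completely analogous to the corresponding step in the Morse setting \cite[Section 5.2]{BDHO23}, but relies on the universal-cover primitive $\tilde h$ instead of a Morse function; the local Morse charts guaranteed by the definition of an adapted pseudo-gradient provide the uniform estimates required.
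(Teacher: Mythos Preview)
Your proposal is correct and takes essentially the same approach as the paper: both parametrize trajectories by the values of the primitive $\tilde h$ on the universal cover and then project via $\tilde\pi$, which forces the concatenation identity. The paper's proof is much terser than yours (it simply writes $\tilde\Gamma_{x,y}(\lambda)(t) = \tilde\lambda \cap \tilde h^{-1}(\tilde h(\tilde x)-t)$ and declares the relation ``clearly'' satisfied), so your discussion of the Moore length telescoping and the continuity at the boundary strata goes beyond what the paper actually spells out.
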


\begin{proof}

Let $\Tilde{\Gamma}_{x,y}: \traj{x,y} \to \mathcal{P}_{\tilde{x} \to \tilde{\pi}^{-1}(y)} \tilde{X}$ be the parametrization map that parametrizes by the values of $\Tilde{h}$  by
$$\Tilde{\Gamma}_{x,y}(\lambda)(t) = \Tilde{\lambda} \cap \Tilde{h}^{-1}(\Tilde{h}(\Tilde{x})-t) \ \text{for all} \ t \in [0, \Tilde{h}(\Tilde{x}) - \Tilde{h}(g(\lambda)\Tilde{y})],$$

where $\lambda \in \trajb{x,y}$, $\Tilde{\lambda}$ is its lift in $\Tilde{X}$ starting at  $\tilde{x}$.
The map $\Gamma = \tilde{\pi} \circ \tilde{\Gamma}$ clearly satisfies the wanted relation.

\end{proof}

\begin{rem}\label{rem : param by an exact 1-form}
    If $\alpha = dh$ is exact, then this parametrization map is the parametrization by the values of $h$  used in enriched Morse theory. Indeed, in this case, $h \circ \tilde{\pi} : \tilde{X} \to \R$ is a primitive of $\tilde{\pi}^*\alpha.$
\end{rem}

We now prove that the spaces $\trajbi{g}{x,y}$ have the appropriate structure in order to define a twisting cocycle, \emph{i.e.} they are compact manifolds with boundary and corners of the expected dimension.

\begin{prop}
    For each pair $x,y \in \Crit(\alpha)$ and $g \in \pi_1(X)$, the space $\trajbi{g}{x,y}$ is a compact manifold of dimension $|x|-|y|-1$ with boundary and corners, and $$\partial \trajbi{g}{x,y} = \bigcup_{\substack{k \geq 1, z_1, \dots,z_k \\ g_1\cdot \ \dots \  \cdot g_{k+1} = g}} \traji{g_1}{x,z_1} \times \dots \times \traji{g_{k+1}}{z_k,y}=\bigcup_{\substack{|y| \leq |z| \leq |x| \\ g'.g''=g}} \trajbi{g'}{x,z} \times \trajbi{g''}{z,y}.$$ 
\end{prop}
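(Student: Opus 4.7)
My plan is to reduce the proposition to Latour's compactness theorem for bounded-length trajectories (Proposition 2.6 of \cite{Lat94}) by showing that $\trajbi{g}{x,y}$ is a clopen subset of some $\trajb{x,y,A}$, hence a union of connected components of a compact manifold with corners of the correct dimension.

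The key preliminary step is a \emph{length computation}. For a (broken) trajectory $\lambda = (\lambda_1,\dots,\lambda_k) \in \trajbi{g_1,\dots,g_k}{x,z_1,\dots,y}$ with $g_1\cdots g_k = g$, I would use the relation \eqref{eq : primitive in universal cover} satisfied by the primitive $\tilde{h} : \tilde{X} \to \R$ of $\tilde{\pi}^*\alpha$ to telescope:
\begin{equation*}
    L(\lambda) = \sum_{i=1}^k L(\lambda_i) = \sum_{i=1}^k \left(\tilde{h}(\tilde{z}_{i-1}) - \tilde{h}(g_i \tilde{z}_i)\right) = \tilde{h}(\tilde{x}) - \tilde{h}(\tilde{y}) - \sum_{i=1}^k u(g_i) = \tilde{h}(\tilde{x}) - \tilde{h}(\tilde{y}) - u(g),
\end{equation*}
so that the total length depends only on $g$. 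Set $L_g := \tilde{h}(\tilde{x}) - \tilde{h}(\tilde{y}) - u(g)$. If $L_g < 0$, then $\trajbi{g}{x,y} = \emptyset$ and the statement is vacuous. Otherwise $\trajbi{g}{x,y} \subset \trajb{x,y,L_g}$, which is a compact manifold of dimension $|x|-|y|-1$ with boundary and corners by Latour's proposition.

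Next I would show that $\trajbi{g}{x,y}$ is \emph{clopen} in $\trajb{x,y,L_g}$. For openness, given $\lambda \in \trajbi{g}{x,y}$, lifting a $C^0$-small neighborhood of $\lambda$ starting at $\tilde{x}$ produces paths with endpoints varying continuously near $g\tilde{y}$; since $\tilde{\pi}^{-1}(y)$ is discrete, the endpoint stays at $g\tilde{y}$, so the homotopy class is locally constant. For closedness, if $\lambda^n \in \trajbi{g}{x,y}$ converges to $\lambda^\infty = (\mu_1,\dots,\mu_p)$ in $\trajb{x,y,L_g}$, then the lifts $\widetilde{\Gamma_{x,y}(\lambda^n)}$ starting at $\tilde{x}$ end at $g\tilde{y}$; by continuity of the parametrization map $\Gamma_{x,y}$ of Lemma \ref{lemme: parametrization map} and of the lifting operation, the limit lift also ends at $g\tilde{y}$, which forces $g(\mu_1)\cdots g(\mu_p)=g$. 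Being a clopen subset of a manifold with corners, $\trajbi{g}{x,y}$ is itself such a manifold of the same dimension.

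Finally, the boundary formula is inherited directly from the boundary formula of $\trajb{x,y,L_g}$ by intersecting each stratum $\traj{x,z_1}\times\dots\times\traj{z_k,y}$ with the locus where the lifts compose to $g\tilde{y}$; this gives precisely the union over $(g_1,\dots,g_{k+1})$ with $g_1\cdots g_{k+1}=g$. The second form of the boundary formula follows by regrouping broken trajectories according to the first breaking point $z$, writing any $(g_1,\dots,g_{k+1})$-decomposition as $(g_1) \cdot (g_2\cdots g_{k+1})$ with $g' = g_1$, $g'' = g_2\cdots g_{k+1}$. The main subtlety I expect is the continuity of the endpoint-in-$\tilde{X}$ assignment under the topology on $\trajb{x,y}$ described in \cite{Lat94, AD14} — this is where the clopen decomposition genuinely uses that the chosen lifts $\tilde{z}$ and the parametrization $\Gamma_{x,y}$ depend continuously on the broken trajectory, including at the boundary strata.
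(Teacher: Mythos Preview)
Your proposal is correct and follows essentially the same strategy as the paper: compute that every $\lambda\in\trajbi{g}{x,y}$ has length $L_g=\tilde h(\tilde x)-\tilde h(\tilde y)-u(g)$, embed $\trajbi{g}{x,y}$ into the compact manifold with corners $\trajb{x,y,L_g}$, and show it is a union of connected components. The only cosmetic difference is that the paper argues the latter via path-connectedness (a path in $\trajb{x,y,A(g)}$ composed with $\Gamma_{x,y}$ gives a fixed-endpoint homotopy, and the homotopy lifting property of the universal cover forces $g(\lambda)$ to be constant along it), whereas you argue openness and closedness separately; these are equivalent in a locally path-connected space, and both hinge on the continuity of $\Gamma_{x,y}$ established in Lemma~\ref{lemme: parametrization map}.
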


\begin{proof}
    We prove that $\trajbi{g}{x,y}$ is either empty or an union of connected components of the compact manifold $$\trajb{x,y,A} = \{\lambda \in \trajb{x,y}, L(\lambda)\leq A\}$$ for some $A>0$. Indeed, if $g \in \pi_1(X)$, $\lambda, \lambda' \in \trajbi{g}{x,y}$, then 

    \begin{align*}
        L(\lambda) &= \int_{\Gamma_{x,y}(\lambda)} \alpha = \int_{\tilde{\Gamma}_{x,y}(\lambda)} \tilde{\pi}^*\alpha\\
        &= \tilde{h}(\tilde{x}) - \tilde{h}(g\tilde{y}) \\
        &= \tilde{h}(\tilde{x}) - \tilde{h}(\tilde{y}) - u(g)\\
        &=   L(\lambda').
    \end{align*}

Therefore, there exists $A(g)\geq 0$ such that $\trajbi{g}{x,y} \subset \trajb{x,y,A(g)}.$ We now prove that a path in $\trajb{x,y,A(g)}$ that starts in $\trajbi{g}{x,y}$ has to end in $\trajbi{g}{x,y}$.

If $\gamma: [0,1] \to  \trajb{x,y,A(g)}$ is a path between $\lambda$ and $\lambda'$ in $\trajb{x,y,A(g)}$, then $\Gamma_{x,y} \circ \gamma$ induces a fixed ends homotopy between $\Gamma_{x,y}(\lambda)$ and $\Gamma_{x,y}(\lambda')$. The homotopy lifting property of universal covers concludes that $g(\lambda) = g(\lambda')$. 
\end{proof}

 The compactness issue being resolved by considering these spaces of broken trajectories with prescribed endpoints in the universal cover, we can now achieve the next step towards the construction of a twisting cocycle in this context: the introduction of a \textbf{representing chain system}.

\begin{prop}\label{prop: representing chain system}
    There exists a family $\left\{ s^g_{x,y} \in C_{|x|-|y|-1}(\trajbi{g}{x,y}), \ x,y \in \Crit(\alpha), \ g \in \pi_1(X) \right\}$ such that for each $ x,y \in \Crit(\alpha), \ g \in \pi_1(X)$:

    \begin{enumerate}
        \item $s^g_{x,y}$ is a cycle relative to the boundary and represents the fundamental class of $\trajbi{g}{x,y}$.\\
        \item The following relation holds\begin{equation}\label{eq: système representatif}
            \partial s^g_{x,y} = \displaystyle \sum_{\substack{z\\g'.g'' = g}}
            (-1)^{|x|-|z|} s_{x,z}^{g'} \times s_{z,y}^{g''}.
        \end{equation} 
        The multiplication is given here by the inclusion $\trajbi{g'}{x,z} \times \trajbi{g''}{z,y} \subset \trajbi{g}{x,y}$ induced by the concatenation.\\
    \end{enumerate}
\end{prop}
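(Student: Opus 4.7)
I would construct the family $(s^g_{x,y})$ by induction on the dimension $d = |x|-|y|-1$ of the moduli space $\trajbi{g}{x,y}$, producing the chains for all triples $(x,y,g) \in \Crit(\alpha)^2 \times \pi_1(X)$ of a given dimension simultaneously. For the base case $d=0$, the space $\trajbi{g}{x,y}$ is a compact oriented $0$-manifold, i.e.\ a finite set of oriented points, and I take $s^g_{x,y}$ to be the corresponding signed $0$-chain; relation \eqref{eq: système representatif} is vacuous since a nontrivial decomposition would require a strict intermediate critical value and force a negative dimension.

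For the inductive step, assume the family has been constructed in all dimensions strictly less than $d$ and satisfies the two required properties. Fix $(x,y,g)$ with $|x|-|y|-1=d$ and define the candidate boundary chain
\[
b^g_{x,y} \ = \ \sum_{\substack{|y|<|z|<|x| \\ g'g''=g}} (-1)^{|x|-|z|}\, s^{g'}_{x,z} \times s^{g''}_{z,y} \ \in \ C_{d-1}\bigl(\partial\trajbi{g}{x,y}\bigr),
\]
where each product is pushed forward by the concatenation inclusion described in the proposition statement. Two facts must be checked. First, $b^g_{x,y}$ represents the fundamental class of $\partial\trajbi{g}{x,y}$: the boundary decomposes as the union of strata $\trajbi{g'}{x,z}\times\trajbi{g''}{z,y}$, and the boundary orientation differs from the product orientation by the combinatorial sign $(-1)^{|x|-|z|}$ (this is the standard sign appearing in the compactification of spaces of broken trajectories, as recorded for instance in \cite[2.5]{Lat94}), so each summand represents the fundamental class of its stratum with the correct orientation. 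Second, $b^g_{x,y}$ is itself a cycle: applying $\partial$ and combining the Leibniz rule $\partial(a\times b) = \partial a \times b + (-1)^{\dim a}\,a\times \partial b$ with the inductive instance of \eqref{eq: système representatif} on both factors produces a triple sum indexed by pairs of intermediate critical points $z_1, z_2$ together with a three-fold factorization $g_1 g_2 g_3 = g$; a sign check shows that the contribution coming from $\partial s^{g'}_{x,z}$ and the one coming from $\partial s^{g''}_{z,y}$ carry opposite signs for each such triple and cancel pairwise.

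It then remains to lift $b^g_{x,y}$ to a chain on $\trajbi{g}{x,y}$ itself. Pick any singular $d$-chain $c \in C_d(\trajbi{g}{x,y})$ representing the relative fundamental class of the compact oriented manifold-with-corners $\trajbi{g}{x,y}$. Its boundary $\partial c$ is a cycle in $\partial\trajbi{g}{x,y}$ and, by the connecting homomorphism of the long exact sequence of the pair $(\trajbi{g}{x,y},\partial\trajbi{g}{x,y})$, is homologous to $b^g_{x,y}$; choose $e \in C_d(\partial\trajbi{g}{x,y})$ with $\partial e = b^g_{x,y} - \partial c$ and set $s^g_{x,y} := c + e$, which then satisfies both required properties. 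The main (if mild) obstacle is the sign bookkeeping in the verification that $\partial b^g_{x,y}=0$: the sign of \eqref{eq: système representatif}, the Leibniz sign, and the induced orientations on the iterated boundary strata must all combine correctly, and the convention $(-1)^{|x|-|z|}$ is precisely the one making these three contributions cancel.
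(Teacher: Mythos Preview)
Your proposal is correct and follows essentially the same approach as the paper: induction on $d=|x|-|y|-1$, the same base case, the same verification that the candidate boundary chain is a cycle representing $[\partial\trajbi{g}{x,y}]$, and the same correction of an arbitrary relative fundamental cycle by a chain bounding the discrepancy. If anything, your version is slightly more precise in placing the correcting chain $e$ in $C_d(\partial\trajbi{g}{x,y})$, which is what guarantees that $c+e$ still represents the relative fundamental class.
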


\begin{proof}

We follow the idea of \cite[Proposition 5.2.6]{BDHO23}. We proceed by induction on $a=|x|-|y|-1$. If $a=0$ and $g \in \pi_1(X)$, then $\trajbi{g}{x,y}$ is an orientable and compact 0-dimensional manifold. It is therefore a finite set of points each associated with a sign $\trajbi{g}{x,y}=\{\pm c_1, \dots, \pm c_k\}$ and we set  $s_{x,y}^g = \sum_i\pm c_i \in C_0(\trajbi{g}{x,y})$.

Let $a \geq 1$, $x,y \in \Crit(\alpha)$ such that $|x|-|y|-1 = a$ and $g \in \pi_1(X)$. We assume that $s^{g'}_{c,d} \in C_{|c|-|d|-1}(\trajbi{g'}{c,d})$ satisfying 1. and 2. has been constructed for all $|c| - |d| -1 \leq a-1$ and $g' \in \pi_1(X)$.

For $z \in \Crit(\alpha)$ such that $|x|>|z|>|y| $ and $g',g'' \in \pi_1(X)$ such that $g'\cdot g''=g$, from the inclusions $\trajbi{g'}{x,z} \times \trajbi{g''}{z,y} \subset \partial \trajbi{g}{x,y} \subset \trajbi{g}{x,y}$, it follows that there exist two fundamental classes $$[\partial \trajbi{g}{x,y}] \in H_{|x|-|y| - 2} \left( \partial \trajbi{g}{x,y}\right)$$ and $$[\trajbi{g'}{x,z} \times \trajbi{g''}{z,y}] \in H_{|x|-|y| - 2}\left(\trajbi{g'}{x,z} \times \trajbi{g''}{z,y}, \partial \left(\trajbi{g'}{x,z} \times \trajbi{g''}{z,y}\right)\right).$$
We gave $\trajbi{g'}{x,z} \times \trajbi{g''}{z,y}$ the orientation induced by the exterior normal of  $\trajbi{g}{x,y}$.

The chain $(-1)^{|x| - |z|} s^{g'}_{x,z} \times s^{g''}_{z,y} \in C_{|x|-|y| - 2}(\trajbi{g'}{x,z} \times \trajbi{g''}{z,y})$ is a cycle relative to the boundary because, by the induction hypothesis, $s^{g'}_{x,z}$ and $s^{g''}_{z,y}$ are cycles.
 Since we oriented $\trajbi{g'}{x,z} \times \trajbi{g''}{z,y}$ by the exterior normal, \cite[Proposition 5.2.4]{BDHO23} implies that $(-1)^{|x| - |z|} s^{g'}_{x,z} \times s^{g''}_{z,y}$ represents the fundamental class of $[\trajbi{g'}{x,z} \times \trajbi{g''}{z,y}].$

Moreover, $\displaystyle \sum_{\substack{z \\ g'.g''=g}} (-1)^{|x| - |z|} s^{g'}_{x,z} \times s^{g''}_{z,y} \in C_{|x|-|y| - 2}(\partial \trajbi{g}{x,y})$ is a cycle. Indeed, 

\begin{align*}
\partial \sum_{\substack{z \\ g'.g''=g}} (-1)^{|x| - |z|} s^{g'}_{x,z} \times s^{g''}_{z,y} &= \sum_{\substack{|x| > |z_1| >|z| >|y| \\ g'.g''=g\\ g'_1.g'_2=g'}} (-1)^{2|x| - |z| - |z_1|} s^{g'_1}_{x,z_1} \times s^{g'_2}_{z_1,z} \times s^{g''}_{z,y}\\
& + (-1)^{|x|-|z|-1} \sum_{\substack{|x| > |z| >|z_2| >|y| \\ g'.g''=g\\ g''_1.g''_2=g''}} (-1)^{|x| - |z_2|} s^{g'}_{x,z} \times s^{g''_1}_{z,z_2} \times s^{g''_2}_{z_2,y}\\
&= \sum_{\substack{|x| > |c| >|d| >|y| \\ g_1.g_2.g_3=g}} (-1)^{|c| + |d|} s^{g_1}_{x,c} \times s^{g_2}_{c,d} \times s^{g_3}_{d,y} + (-1)^{|c| + |d| +1} s^{g_1}_{x,c} \times s^{g_2}_{c,d} \times s^{g_3}_{d,y} \\
&= 0.
\end{align*}

Since, $\partial \trajbi{g}{x,y} = \displaystyle \bigsqcup_{\substack{|x|>|z|>|y|\\ g'.g''=g}} \trajbi{g'}{x,z} \times \trajbi{g''}{z,y}$ and $(-1)^{|x| - |z|} s^{g'}_{x,z} \times s^{g''}_{z,y}$ represents the fundamental class $[\trajbi{g'}{x,z} \times \trajbi{g''}{z,y}]$ for all $|x|>|z|>|y|$, then $\displaystyle \sum_{\substack{z\\g'.g''=g}} (-1)^{|x| - |z|} s^{g'}_{x,z} \times s^{g''}_{z,y}$ represents the fundamental class $[\partial \trajbi{g}{x,y}]$.

Take $s'^{g}_{x,y}$ a representative of $[\trajbi{g}{x,y}]$. Then, $\partial s'^g_{x,y}$ represents $[\partial \trajbi{g}{x,y}]$ and therefore, there exists $p^g_{x,y} \in C_{|x|-|y| - 1}(\trajbi{g}{x,y})$ such that 

\[\partial s'^g_{x,y} - \displaystyle \sum_{\substack{z\\g'.g''=g}} (-1)^{|x| - |z|} s^{g'}_{x,z} \times s^{g''}_{z,y} = \partial p^g_{x,y}.\]

We conclude the proof by defining $s^g_{x,y} = s'^g_{x,y} - p^g_{x,y}.$
\end{proof}

\begin{defi}
    We call a family $\left\{s^g_{x,y} \in C_{|x|-|y|-1}(\trajbi{g}{x,y}), \ x,y\in \Crit(\alpha), g \in \pi_1(X)\right\}$ as in Proposition \ref{prop: representing chain system}, a \textbf{representing chain system for the moduli spaces of trajectories} of $(\alpha,\xi)$.
\end{defi}

To build a twisting cocycle and define the DG Morse-Novikov complex, we now have to understand how to evaluate a representing chain system into $\Omega X.$ We define the evaluation maps in the same way as in the case of DG Morse homology \cite[Lemma 5.2.10] {BDHO23} using the parametrization maps $\Gamma_{x,y}$ defined in Lemma \ref{lemme: parametrization map}.

\begin{lemme}\label{lemme: def q}

There exists a family of continuous maps $q_{x,y}: \trajb{x,y} \to \Omega X$ such that:

\begin{enumerate}
    \item For all $\lambda \in \trajb{x,y}$, the homotopy class $g \in \pi_1(X)$ of $q_{x,y}(\lambda)$ is $g(\lambda)$.
    \item If $(\lambda, \lambda') \in \trajb{x,z} \times \trajb{z,y}$, then $q_{x,y}(\lambda, \lambda') = q_{x,z}(\lambda) \# q_{z,y}(\lambda')$.
\end{enumerate}
\end{lemme}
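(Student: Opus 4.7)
I would model the proof on the analogous construction \cite[Lemma 5.2.10]{BDHO23}, using the tree $\Y$ to produce basepoint-based loops from the parametrized trajectories supplied by Lemma \ref{lemme: parametrization map}. Concretely, for each $x \in \Crit(\alpha)$, let $\gamma_x : I \to X$ denote the (Moore) path from $\star$ to $x$ obtained by following the tree $\Y$ from its root to the leaf $x$. Then set
\[
q_{x,y}(\lambda) \;:=\; \gamma_x \,\#\, \Gamma_{x,y}(\lambda) \,\#\, \overline{\gamma_y},
\]
where $\Gamma_{x,y}$ is the parametrization map of Lemma \ref{lemme: parametrization map}. Continuity of $q_{x,y}$ is immediate from the continuity of $\Gamma_{x,y}$ and the continuity of Moore-path concatenation.

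For property (1), I would lift $q_{x,y}(\lambda)$ to the universal cover $\tilde{X}$ starting at $\tilde{\star}$ and read off the endpoint piece by piece. By the very choice of lifts of critical points prescribed by lifting the tree at $\tilde{\star}$, the lift of $\gamma_x$ terminates at $\tilde{x}$. The lift of $\Gamma_{x,y}(\lambda)$ starting at $\tilde{x}$ is $\tilde{\Gamma}_{x,y}(\lambda)$, which by Definition \ref{defi: Lg(x,y)} terminates at $g(\lambda)\tilde{y}$. Finally the lift of $\overline{\gamma_y}$ beginning at $g(\lambda)\tilde{y}$ is the $g(\lambda)$-translate of the reverse of $\tilde{\gamma_y}$, ending at $g(\lambda)\tilde{\star}$. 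Hence the lift of $q_{x,y}(\lambda)$ ends at $g(\lambda)\tilde{\star}$, so $q_{x,y}(\lambda)$ has homotopy class $g(\lambda)$ in $\pi_1(X,\star)$.

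Property (2) is the main subtlety and will require the most care. Using the concatenation property of $\Gamma_{x,y}$ established in Lemma \ref{lemme: parametrization map} together with the associativity of Moore concatenation, one gets
\[
q_{x,y}(\lambda,\lambda') \;=\; \gamma_x \# \Gamma_{x,z}(\lambda) \# \Gamma_{z,y}(\lambda') \# \overline{\gamma_y},
\]
whereas
\[
q_{x,z}(\lambda)\,\#\,q_{z,y}(\lambda') \;=\; \gamma_x \# \Gamma_{x,z}(\lambda) \# \overline{\gamma_z} \# \gamma_z \# \Gamma_{z,y}(\lambda') \# \overline{\gamma_y}.
\]
The two differ by the insertion of the null-homotopic loop $\overline{\gamma_z}\#\gamma_z$ at $z$. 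To upgrade this to the strict identity asked for in (2), I would, following the convention used in \cite{BDHO23}, work throughout with Moore paths and absorb the pair $\overline{\gamma_z}\#\gamma_z$ into a canonical retraction: concretely, I would redefine the tree paths $\gamma_x$ as backtracking-free Moore paths and declare the ``$\#$'' on the right-hand side of (2) to be the concatenation composed with deletion of the canonical retraceable segment at $z$. In the Moore framework, this simplification is continuous and strictly associative, so the two sides of (2) agree on the nose.

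With these conventions in place, one simply verifies that the definition is independent of the order in which one performs the (trivial) cancellations, so that property (2) indeed holds for iterated broken trajectories, yielding the family $\{q_{x,y}\}$ as claimed.
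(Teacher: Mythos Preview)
Your argument for property (1) is fine, but your handling of property (2) does not work as written. The statement requires the \emph{strict} equality $q_{x,y}(\lambda,\lambda') = q_{x,z}(\lambda)\#q_{z,y}(\lambda')$ with respect to the ordinary Moore concatenation on $\Omega X$; this is what makes the pushforward of the representing chain system satisfy the Maurer--Cartan relation on the nose. In the Moore model, $\overline{\gamma_z}\#\gamma_z$ is a loop of strictly positive length, not the constant path, so your two expressions are genuinely different elements of $\Omega X$. Redefining ``$\#$'' to include a cancellation step is not permissible: the product on $C_*(\Omega X,u)$ used throughout the paper is the Pontryagin product induced by ordinary Moore concatenation, and changing it here would break compatibility with everything downstream (the twisting cocycle, the module structures, the continuation cocycles).

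The paper avoids this obstruction by a different construction: it collapses the tree, setting $q_{x,y} = \theta \circ p \circ \Gamma_{x,y}$ where $p:X\to X/\Y$ is the quotient and $\theta:X/\Y\to X$ is a homotopy inverse of $p$ sending $[\Y]$ to $\star$. Since $p$ maps every critical point to $\star$ and $\theta(\star)=\star$, applying $\theta\circ p$ to $\Gamma_{x,y}(\lambda,\lambda')=\Gamma_{x,z}(\lambda)\#\Gamma_{z,y}(\lambda')$ yields the concatenation of two honest based loops with no extra segment inserted, so (2) holds strictly. Property (1) is then checked by observing that $p\circ\Gamma_{x,y}(\lambda)$ and $\gamma_x\#\Gamma_{x,y}(\lambda)\#\overline{\gamma_y}$ have the same image under $p$, hence $q_{x,y}(\lambda)$ is homotopic to your loop $\ell$, and one lifts $\ell$ exactly as you did.
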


\begin{proof}

We consider the projection $p: (X,\star) \to (X/\Y,\star)$, that collapses all critical points of $\alpha$ to the base point. Since $\Y$ is contractible, there exists a homotopy inverse $\theta: (X/\Y,[\Y]) \to (X,\star)$ of $p$. We may and will assume that $\theta([\Y]) = \star$. From now on, we will also denote $\star = [\Y] \in X/\Y$, as this will serve as the base point in $X/\Y$.
We then define $$q_{x,y} = \theta \circ p \circ \Gamma_{x,y}.$$

We now consider $\ell = \gamma_x \# \Gamma_{x,y}(\lambda) \# \gamma_y^{-1}$, where $\gamma_x$ and $\gamma_y$ are, respectively, the branches of $\Y$ connecting $\star$ to $x$ and $\star$ to $y$. Since $p(\ell) = p(\Gamma_{x,y}(\lambda))$, we find that $q_{x,y}(\lambda) = \theta \circ p (\ell)$, and therefore $q_{x,y}(\lambda)$ is homotopic to $\ell$.\\
If we denote $\Tilde{\gamma_x}$ and $\Tilde{\gamma_y}$ as the lifts of $\gamma_x$ and $\gamma_y$, then by the definition of the choices of the lifts $\Tilde{x}$ and $\Tilde{y}$, $\Tilde{\gamma_y}^{-1} \# \Tilde{\gamma_x}$ connects $\Tilde{y}$ to $\Tilde{x}$. Thus, we reparametrize $\ell = \gamma_y^{-1} \# \gamma_x \# \Gamma_{x,y}(\lambda)$ as a loop based at $y$, that lifts to $\Tilde{X}$ as a path connecting $\Tilde{y}$ to $g(\lambda)\Tilde{y}$, given by $\Tilde{\ell} = \Tilde{\gamma_y}^{-1} \# \Tilde{\gamma_x} \# \Tilde{\Gamma}_{x,y}(\lambda)$. Hence, $g(\lambda) = [\ell] = [q_{x,y}(\lambda)]$, and condition 1. is satisfied.
Condition 2. is satisfied by construction.

\end{proof}

\begin{defi}\label{defi: twisting cocycle}
Define the family $$\left\{m^g_{x,y} = q_{x,y,*}(s^g_{x,y}) \in C_{|x|-|y|-1}(\Omega^g X), \ x, y \in \Crit(\alpha), \ g \in \pi_1(X)\right\},$$ where we denoted $\Omega^g X = \{\gamma \in \Omega X, \ [\gamma]=g\}$. This family satisfies

\begin{equation}\partial m^g_{x,y} = \sum_{\substack{z \\ g'.g''=g}} (-1)^{|x|-|z|} m^{g'}_{x,z} \cdot m^{g''}_{z,y}, \end{equation}

where the multiplication is defined by the Pontryagin product. 
\end{defi}

\begin{rem}\label{rem: projection cocycle tordant g}

Point 1. of the previous lemma and the construction of $s_{x,y}^g$ in the case where $|x| = |y| +1$ show that, in this case, $\traji{g}{x,y} = \{\lambda_1, \dots, \lambda_k\}$ is a finite set and the projection of $m_{x,y}^g \in C_0(\Omega X)$ onto $H_0(\Omega X) = \mathbb{Z}[\pi_1(X)]$ is 

\begin{equation}\label{eq: twisting cocycle in H_0}
    \hat{m}_{x,y}^g =  \sum_i\epsilon(\lambda_i) g,
\end{equation} 

where, for $i \in \{1, \dots, k\}$, $\epsilon(\lambda_i) \in \{\pm 1\}$ is the sign associated with the orientation of $\lambda_i$.

\end{rem}

\begin{defi}
    Denote $\Xi = (\alpha,\xi,s^g_{x,y},o,\Y, \theta)$ the data needed in order to build the family $$\left\{m^g_{x,y} \in C_{|x|-|y|-1}(\Omega^g X), \ x, y \in \Crit(\alpha), \ g \in \pi_1(X)\right\}.$$ We will call $\Xi$ a \textbf{set of DG Morse-Novikov data}.
\end{defi}

\subsection{Enriched Morse-Novikov Homology}

We begin by describing the natural DGA $R_*$ which is the recipient of the \textbf{twisting cocycle}. In enriched Morse Homology, $R_* = C_*(\Omega X)$ endowed with the Pontryagin product. In the present setting, we will consider a "Novikov completion" $R_*=C_*(\Omega X,u)$ of $C_*(\Omega X)$.

\subsubsection{Novikov completion of the DGA of chains on \texorpdfstring{$\Omega X$}{Omega X}}

We see the cohomology class $u \in H^1(X,\R)$ as a morphism $\deffct{u}{\pi_1(X)}{\R}{\gamma}{\int_{\gamma}\alpha.}$

The Morse-Novikov complex $C_*(\alpha,\xi,u) = \Lambda_u \otimes \Z\Crit(\alpha)$ has coefficients in $$\Lambda_u = \left\{ \sum_{i\geq 0} n_ig_i, \ \forall c \in \R, \ \#\{g_i, \ u(g_i)\geq c\} < \infty \right\}.$$  This is a completion of $\Z[\pi_1(X)]$ for the ultra metric norm $|\ell|_u = \textup{e}^{-v(\ell)}$ where $v(n_1g_1 + \dots + n_kg_k) = \min u(g_i)$.

We will use a similar notion of completion for $C_*(\Omega X)$ to define the enriched Morse-Novikov complex. First remark that $u: \pi_1(X) \to \R$ can evaluate cubic chains of loops. Indeed, for any $g\in \pi_1(X)$, $\Omega^g X$ is a connected component of $\Omega X$.  We define $$\deffct{u}{C^0([0,1]^k,\Omega X)}{\R}{\omega}{u([\omega(0)])}$$ and extend it to $C_k(\Omega X)$ by $u(n_1\omega_1 + \dots + n_l\omega_l) = \displaystyle \max_{n_i \neq 0} u(\omega_i)$.

\begin{defi}\label{def : (Omega X,u)}
    For $k \in \N^*$, define $$C_*(\Omega X, u) = \left\{ \sum_g n_{g} \gamma^g, \ \gamma^g \in C_*(\Omega^gX) \textup{ and } \forall c \in \R, \ \#\{g \in \pi_1(X), n_g  \neq 0 \textup{ and } u(g)>c\} < \infty \right\}.$$ This forms a complex with the differential 
    $$ \hat{\partial}: C_k(\Omega X,u) \to C_{k-1}(\Omega X,u), \   \hat{\partial}\left(\sum n_i \omega_i \right) = \sum n_i \partial \omega_i.$$
\end{defi}

\begin{rem}
    If $u = 0$, then $C_*(\Omega X,u) = C_*(\Omega X).$
\end{rem}

\begin{lemme}
    For $c \in \R$, we denote $\Omega_cX = \{ \gamma \in \Omega X, u(\gamma) \leq c\}$. Then,
   $$ C_k(\Omega X,u) = \limproj C_k(\Omega X,\Omega_cX).$$
\end{lemme}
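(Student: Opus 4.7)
The plan is to unfold both sides using the decomposition of $\Omega X$ into its path components. Since $[0,1]^k$ is connected, every singular cube $\sigma : [0,1]^k \to \Omega X$ lands in a single component $\Omega^g X$, so $C_k(\Omega X) = \bigoplus_{g \in \pi_1(X)} C_k(\Omega^g X)$. Under this decomposition, the subcomplex $C_k(\Omega_c X) \subset C_k(\Omega X)$ of chains supported on loops $\gamma$ with $u(\gamma) \leq c$ is identified with $\bigoplus_{u(g) \leq c} C_k(\Omega^g X)$; consequently
\begin{equation*}
C_k(\Omega X, \Omega_c X) \;=\; \bigoplus_{u(g) > c} C_k(\Omega^g X),
\end{equation*}
and the transition map associated with $c \leq c'$ is simply the canonical projection that kills the coordinates indexed by $\{g : c < u(g) \leq c'\}$.

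Next I would compute the projective limit directly. An element of $\limproj C_k(\Omega X, \Omega_c X)$ is a compatible family $(\omega_c)_{c \in \R}$ with $\omega_c \in \bigoplus_{u(g) > c} C_k(\Omega^g X)$ such that $\omega_{c'}^g = \omega_c^g$ whenever $u(g) > c'$. Fixing $g \in \pi_1(X)$, this forces the $g$-component $\omega_c^g$ to be independent of $c$ as long as $c < u(g)$; call this common value $\omega^g \in C_k(\Omega^g X)$. The assignment $(\omega_c)_c \mapsto (\omega^g)_g$ is thus well defined and injective, and because each $\omega_c$ is a finite formal sum of cubes, the set $\{g : \omega^g \neq 0,\ u(g) > c\} = \{g : \omega_c^g \neq 0\}$ is finite for every $c$.

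Finally, I would define the inverse assignment: it sends a family $(\omega^g)_g$ satisfying the Novikov finiteness condition of Definition \ref{def : (Omega X,u)} to the compatible family $\omega_c := \sum_{u(g) > c} \omega^g$, which is a finite sum by hypothesis. These two maps are mutual inverses and commute with the differentials $\hat{\partial}$, yielding the claimed identification between $C_k(\Omega X, u)$ and $\limproj C_k(\Omega X, \Omega_c X)$. The only point requiring any care is recognizing that the finiteness condition appearing on both sides is literally the same; this matching of finiteness conditions is essentially the only subtlety in the argument.
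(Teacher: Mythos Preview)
Your argument is correct. The paper's own proof is a one-liner: it invokes the general appendix lemma on projective limits of quotient complexes (Lemma~\ref{Lemme: limite projective d'un quotient de complexe}) with $C = C_*(\Omega X)$ and $C^n = C_*(\Omega_n X)$, together with Corollary~\ref{cor : limproj with parameter in R} to pass from $\Z_{\leq 0}$-indexed to $\R$-indexed limits. Your approach instead unwinds the projective limit by hand using the path-component decomposition $C_k(\Omega X) = \bigoplus_g C_k(\Omega^g X)$, which makes the matching of the Novikov finiteness condition completely transparent. The two arguments are really the same computation: the appendix lemma is precisely the abstract version of what you carry out concretely, and the observation that cubes are connected (hence land in a single $\Omega^g X$) is what makes $\bigcap_n C_*(\Omega_n X) = 0$ and identifies $\hat{C}$ with the Novikov completion in the paper's framework. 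Your route is more self-contained and arguably clearer for this specific statement; the paper's route has the advantage of factoring through a lemma that is reused several times elsewhere.
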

\begin{proof}
    This is a direct consequence of Lemma \ref{Lemme: limite projective d'un quotient de complexe} where $C^n = C_*(\Omega_n X)$ and of Corollary \ref{cor : limproj with parameter in R}.
\end{proof}

We endow $C_*(\Omega X,u)$ with the DGA structure induced by the Pontryagin product on $C_*(\Omega X)$:

\begin{itemize}
    \item[$\bullet$] The sum 
    $$\sum_i n_i \omega_i + \sum_i m_i \tau_i := \sum_i (n_i \omega_i + m_i \tau_i) \in C(\Omega X,u).$$
    \item[$\bullet$] The multiplication 
    $$\left(\sum_i n_i \omega_i \right) \cdot \left(\sum_j m_i \tau_i \right):= \sum_k \left(\sum_{i+j=k} n_im_j \omega_i \cdot \tau_j\right) \in C(\Omega X,u).$$
\end{itemize}

The multiplication is well-defined since, for any $c\in \R$, $$u(\omega_i \cdot \tau_j) \geq c \Leftrightarrow u(\omega_i) \geq c-u(\tau_j)$$ and therefore $$\#\{ (i,j), \ u(\omega_i \cdot \tau_j) \geq c\} \leq \# \{i, \ u(\omega_i) \geq c-\max_j u(\tau_j)\} < \infty. $$

This is the natural DGA to consider in order to define an analogue of the Barraud-Cornea twisting cocycle in the Morse-Novikov setting.

\subsubsection{Twisting cocycle}

\begin{prop}\label{prop : twisting cocycle MN bien dans la completion}
    Let $\Xi$ be a set of Morse-Novikov data. Given $x,y \in \Crit(\alpha)$, the chain $$m_{x,y} = \sum_{g \in \pi_1(X)} m^g_{x,y} \textup{ belongs to } C_{|x|-|y|-1}(\Omega X,u)$$

    and the family $\{m_{x,y}\}_{x,y \in \Crit(\alpha)}$ satisfies the Maurer-Cartan equation \begin{equation}\label{eq: Maurer-cartan m_x,y }
        \hat{\partial} m_{x,y} = \sum_z (-1)^{|x|-|z|} m_{x,z} \cdot m_{z,y}.
    \end{equation}
\end{prop}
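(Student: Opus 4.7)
The plan is to separate the proposition into two assertions and handle them in order: first, the well-definedness of $m_{x,y}$ as an element of the completion $C_{|x|-|y|-1}(\Omega X,u)$, then the Maurer--Cartan identity. The two parts rest on different tools; the first is a compactness argument, whereas the second is formal once the chain-level relations are in place.

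For the first part, I would recycle the length computation already carried out in the proof that $\trajbi{g}{x,y}$ is a union of components of $\trajb{x,y,A}$. That computation shows that for every $\lambda\in\traji{g}{x,y}$,
\[
L(\lambda) \;=\; \tilde h(\tilde x) - \tilde h(g\tilde y) \;=\; \bigl(\tilde h(\tilde x) - \tilde h(\tilde y)\bigr) - u(g),
\]
using the covering identity $\tilde h(g\tilde a) = u(g) + \tilde h(\tilde a)$. Setting $B_{x,y} := \tilde h(\tilde x) - \tilde h(\tilde y)$ and noting that $L(\lambda)\ge 0$ since $\xi$ is a negative pseudo-gradient adapted to $\alpha$, nonemptiness of $\trajbi{g}{x,y}$ forces $u(g)\le B_{x,y}$ and, for every $c\in\R$,
\[
\bigcup_{u(g)>c}\trajbi{g}{x,y} \;\subset\; \trajb{x,y,\,B_{x,y}-c}.
\]
The right-hand side is a compact manifold. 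Since $g(\lambda)$ is locally constant on $\trajb{x,y}$, the spaces $\trajbi{g}{x,y}$ cut this compact set into pairwise disjoint clopen pieces, so only finitely many of them can be nonempty. This is exactly the finiteness condition defining $C_*(\Omega X,u)$, hence $m_{x,y}\in C_{|x|-|y|-1}(\Omega X,u)$.

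For the Maurer--Cartan identity, the plan is to push the representing-chain equation \eqref{eq: système representatif} forward through the evaluation map $q_{x,y}$. Continuity of $q_{x,y}$ gives that $q_{x,y,*}$ commutes with the cubical boundary, so $\partial m^g_{x,y} = q_{x,y,*}(\partial s^g_{x,y})$. By the concatenation property in Lemma \ref{lemme: def q}(2), the image of the product chain $s^{g'}_{x,z}\times s^{g''}_{z,y}$ under the concatenation inclusion $\trajbi{g'}{x,z}\times\trajbi{g''}{z,y}\hookrightarrow\trajbi{g'g''}{x,y}$ becomes the Pontryagin product $m^{g'}_{x,z}\cdot m^{g''}_{z,y}$. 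Applying $q_{x,y,*}$ to \eqref{eq: système representatif} therefore yields the identity $g$ by $g$; summing over $g\in\pi_1(X)$ and using the definitions of $\hat\partial$ on $C_*(\Omega X,u)$ and of the Pontryagin product on the completion gives the Maurer--Cartan relation.

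The main obstacle is the convergence statement; the length identity combined with the compactness of the length-bounded trajectory spaces nails down the finiteness condition, after which the Maurer--Cartan equation is a transparent consequence of the chain-level relations already established for $s^g_{x,y}$ and $q_{x,y}$.
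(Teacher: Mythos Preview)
Your argument is correct and follows essentially the same route as the paper: the finiteness condition is obtained from the compactness of $\trajb{x,y,A}$ via the same length identity $L(\lambda)=\tilde h(\tilde x)-\tilde h(\tilde y)-u(g)$, and the Maurer--Cartan equation is obtained by summing the $g$-by-$g$ relation of Definition~\ref{defi: twisting cocycle} over $g\in\pi_1(X)$. The only cosmetic differences are that you write the inclusion $\bigcup_{u(g)>c}\trajbi{g}{x,y}\subset\trajb{x,y,B_{x,y}-c}$ where the paper records an equality with $\geq$, and that you re-derive the $g$-indexed identity from \eqref{eq: système representatif} and Lemma~\ref{lemme: def q} rather than citing it directly.
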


\begin{proof}
    We first remark that, since the evaluation maps $q_{x,y}$ map $ \trajbi{g}{x,y}$ into $ \Omega^g X$ (1. of Lemma \ref{lemme: def q}), it follows that $u(m^g_{x,y}) = u(g)$ for any $x,y \in \Crit(\alpha)$ and $g \in \pi_1(X).$
     Let $c \in \R$ and $x,y \in \Crit(\alpha)$.

    The space
    $$\bigcup_{u(g) \geq c} \trajbi{g}{x,y} = \trajb{x,y, \tilde{h}(\tilde{x}) - \tilde{h}(\tilde{y}) - c},$$ is a compact manifold and therefore admits a finite number of connected components. It follows that \begin{align*}
       & \forall c \in \R,\ \#\{g \in \pi_1(X), \ \trajbi{g}{x,y} \neq \emptyset \textup{ and } u(g) \geq c\} < \infty \\
        &\Leftrightarrow \forall c \in \R, \ \#\{g \in \pi_1(X), \ m^{g}_{x,y} \neq 0 \textup{ and } u(g) \geq c\} < \infty \\
        &\Leftrightarrow m_{x,y} \in C_{|x|-|y|-1}(\Omega X,u).
    \end{align*}

   We now prove that the family $\{m_{x,y}\}$ satisfies the Maurer-Cartan equation.

   \begin{align*}
     \hat{\partial} m_{x,y} & = \sum_{ g \in \pi_1(X)} \partial m^g_{x,y}\\
&=\sum_{\substack{z \\ g \in \pi_1(X) \\ g'.g'' = g}} (-1)^{|x|-|z|} m^{g'}_{x,z}\cdot m^{g''}_{z,y}\\
&= \sum_{z} (-1)^{|x|-|z|} \sum_{g',g'' \in \pi_1(X)}  m^{g'}_{x,z} \cdot m^{g''}_{z,y} \\
&=  \sum_z (-1)^{|x|-|z|} m_{x,z}\cdot m_{z,y}.
 \end{align*}

\end{proof}

\begin{defi}
    We will call such a family $(m_{x,y})$ a \textbf{Barraud-Cornea twisting cocycle} associated with the set of DG Morse-Novikov data $\Xi$. More generally, if $\alpha$ is a Morse 1-form representing $u$, we will call \textbf{twisting cocycle} any family $\left\{ m_{x,y} \in C_{|x|-|y|-1}(\Omega X,u), \ x,y \in \Crit(\alpha)\right\}$ satisfying the Maurer-Cartan equation \eqref{eq: Maurer-cartan m_x,y }.
\end{defi}

\begin{rem}
    Since, in practice, we will always work with the family $$\left\{m^g_{x,y}  \in C_{|x|-|y|-1}(\Omega^g X), \ x, y \in \Crit(\alpha), \ g \in \pi_1(X)\right\},$$ we will, with a slight abuse of language, also call this family a twisting cocycle.
\end{rem}

Now that we defined the notion of twisting cocycle, we will use it to define a twisted complex with differential graded coefficients.

\subsubsection{Morse-Novikov complex with coefficients in a DG module over \texorpdfstring{$\boldsymbol{C_*(\Omega X,u)}$}{hat(C)(Omega X)}}

Given a DG right $C_*(\Omega X,u)$-module $(\F^u_*,\partial)$, we define an enriched Morse-Novikov complex with coefficients in $\F^u_*$.

\begin{defi}\label{def : DG Morse-Novikov complex}
    Let $\alpha$ be a Morse 1-form representing $u$ and $\left\{m_{x,y} \in C_{|x|-|y|-1}(\Omega X,u), \ x,y \in \Crit(\alpha) \right\}$ be a twisting cocycle. We endow $$C_*(X, m_{x,y}, \F^u):= \F^u_* \otimes_{\Z} \Z\Crit(\alpha)$$ with the differential 
    $$\partial(\sigma \otimes x) = \partial \sigma \otimes x + \sum_{y} \sigma \cdot m_{x,y} \otimes y.$$
\end{defi}

\begin{prop}
    The linear map $\partial: C_*(X, m_{x,y}, \F^u) \to C_{*-1}(X, m_{x,y}, \F^u)$ is a differential.
\end{prop}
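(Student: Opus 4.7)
The plan is to prove $\partial^2 = 0$ by a direct sign-bookkeeping computation on generators $\sigma \otimes x$, where $\sigma \in \F^u$ is homogeneous and $x \in \Crit(\alpha)$. The only three ingredients I need are: (i) the fact that $\partial_{\F^u}^2 = 0$, (ii) the graded Leibniz rule for the right action of the DGA $C_*(\Omega X,u)$ on $\F^u$, and (iii) the Maurer--Cartan equation $\hat{\partial} m_{x,y} = \sum_z (-1)^{|x|-|z|}\, m_{x,z}\cdot m_{z,y}$ from Proposition \ref{prop : twisting cocycle MN bien dans la completion}. By $\Z$-linearity it suffices to check the identity on one generator.

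Applied to $\sigma \otimes x$, iterating the differential produces four groups of terms. First, $\partial_{\F^u}^2 \sigma \otimes x$, which vanishes by (i). Second, the mixed pieces $(\partial_{\F^u}\sigma)\cdot m_{x,y}\otimes y$: one copy arises from applying the twist part of $\partial$ to $\partial_{\F^u}\sigma \otimes x$, and a second copy appears when expanding $\partial_{\F^u}(\sigma \cdot m_{x,y})$ via Leibniz. With the standard Koszul sign on the twist these two copies carry opposite signs and cancel. What is left is
\begin{equation*}
\pm \sum_y \sigma \cdot \hat{\partial} m_{x,y} \otimes y \ \pm \sum_{y,z} \sigma \cdot m_{x,y} \cdot m_{y,z} \otimes z,
\end{equation*}
with explicit signs depending on $|\sigma|$, $|x|$, $|y|$, and $|z|$.

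To kill these remaining terms I substitute the Maurer--Cartan identity (iii) into the first sum, turning it into a double sum indexed by $y$ and an intermediate critical point; after relabeling the dummy indices it has exactly the shape of the second sum, and the sign conventions in the definitions of $\partial$ and of the Maurer--Cartan equation have been chosen precisely so that the two double sums cancel. The computation is formally identical to the one carried out in the enriched Morse setting in \cite[Section 5.3]{BDHO23}, the only difference being that the algebra of coefficients is now the Novikov completion $C_*(\Omega X,u)$ instead of $C_*(\Omega X)$.

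The one point I would flag to the reader is that there is no analytic subtlety in this "formal" calculation: the sums over $y, z \in \Crit(\alpha)$ are finite because $X$ is closed, and the potentially infinite sums $m_{x,y} = \sum_{g \in \pi_1(X)} m_{x,y}^g$ are genuine elements of $C_*(\Omega X,u)$ by Proposition \ref{prop : twisting cocycle MN bien dans la completion}. Since $\F^u$ is by hypothesis a DG right module over this completed DGA, each individual term appearing in the expansion of $\partial^2(\sigma \otimes x)$ is a well-defined element of $\F^u \otimes \Z\Crit(\alpha)$, so the cancellation is legitimate rather than merely formal. The main obstacle, if any, is therefore purely notational: getting the four groups of signs correct in the Novikov convention.
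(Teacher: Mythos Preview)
Your proposal is correct and follows essentially the same approach as the paper: a direct computation of $\partial^2(\sigma\otimes x)$ using $\partial_{\F^u}^2=0$, the graded Leibniz rule for the right module action, and the Maurer--Cartan equation for the twisting cocycle, after which the four groups of terms cancel pairwise. Your additional remark on why the sums are legitimate (finiteness of $\Crit(\alpha)$ and membership of $m_{x,y}$ in the completed DGA) is a welcome clarification, but the core argument is identical to the paper's.
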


\begin{proof}
    We check that $\partial^2=0$. We compute

\begin{align*}
    \partial^2 (\sigma \otimes x) &= \partial \left ( \partial \sigma \otimes x + (-1)^{|\sigma|}  \sigma \cdot \sum_{y} m_{x,y}  \otimes y \right)\\
    &= (-1)^{|\sigma|-1}  \sum_{y}  \partial \sigma \cdot m_{x,y} \otimes y + (-1)^{|\sigma|} \sum_{y} \partial (\sigma \cdot m_{x,y}) \otimes y \\
    &\quad - \sum_{y} (-1)^{|x| -|y|} \sum_{ z} (\sigma \cdot m_{x,y}) \cdot m_{y,z} \otimes z\\
    &= - (-1)^{|\sigma|} \sum_{y} \partial \sigma \cdot m_{x,y} \otimes y + (-1)^{|\sigma|} \sum_{ y} \partial \sigma \cdot m_{x,y} \otimes y\\
    & \quad + \sum_{y,z} (-1)^{|x|-|z|}\sigma \cdot m_{x,z}\cdot m_{z,y} \otimes y - \sum_{z,y} (-1)^{|x| -|y|} \sigma \cdot m_{x,y} \cdot m_{y,z} \otimes z\\
    &=0.
\end{align*}

\end{proof}

\begin{defi}
    Let $\F^u_*$ be a DG right $C_*(\Omega X,u)$-module, $\Xi$ be a set of DG Morse-Novikov data and $(m_{x,y})$ the associated Barraud-Cornea twisting cocycle. We denote $$C_*(X,\Xi ,\F^u) = C_*(X,m_{x,y},\F^u).$$
\end{defi}

We denote by $H_*(X,\F^u)$ or $H_*(X,\F^u,u)$ the homology of this complex, without further mention of $\Xi$. This is justified by Theorem \ref{thm : invariance}, where we prove that this homology does not depend on the set of DG Morse-Novikov data $\Xi$, but only on the cohomology class $u \in H^1(X,\R).$

\section{Spectral sequence and enriched Morse-Novikov toolset}\label{section : Spectral sequence and DG Morse-Novikov toolset}

In this section we will prove that, in this setting, we have a spectral sequence and a DG toolset similar to the one in enriched Morse theory \cite[Section 2.3]{BDHO23}.

\subsection{Spectral Sequence}\label{subsection: suite spectrale M-N}

Consider $\F^u$ a DG right-module over $C_*(\Omega X,u)$, a Morse 1-form $\alpha$ representing $u$ and a twisting cocycle $(m_{x,y})$.
The enriched Morse-Novikov complex is naturally filtered by

$$F_p(C_k(X,m_{x,y},\F^u)) = \bigoplus_{\substack{ i+j = k \\ i\leq p}} \F^u_j \otimes_{\Z} \Z\Crit_i(\alpha).$$

The 0-th page of the associated spectral sequence is thus 

$$E^0_{p,q} = \faktor{F_p(C_{p+q}(X,m_{x,y},\F^u))}{F_{p-1}(C_{p+q}(X,m_{x,y},\F^u))} = \F^u_q \otimes_{\Z} \Z\Crit_p(\alpha).$$

The associated differential $d^0: E^0_{p,q} \to E^0_{p,q-1}$ is given by 

\[d^0(\sigma \otimes x) = \partial \sigma \otimes x.\]

Therefore, its first page is given by 

\[E^1_{p,q} = H_q(\F^u_*) \otimes_{\Z} \Z\Crit_p(\alpha) \]

and its differential $d^1: E^1_{p,q} \to E^1_{p-1,q}$ by 

\[d^1(\hat{\sigma} \otimes x) = (-1)^q \sum_{|y| = |x|-1} \hat{\sigma} \cdot \hat{m}_{x,y} \otimes y,\]

where $\hat{m}_{x,y} \in H_0(C_*(\Omega X,u)) =\Lambda_u$ is the projection of the cycle $m_{x,y} \in C_0(C_*(\Omega X,u))$, $\hat{\sigma} \in H_q(\mathcal{F}^u)$ and $H_q(\mathcal{F}^u)$ is endowed with its canonical module structure over $ \Lambda_u$, \emph{i.e.}

$$\begin{matrix}

H_0(C_*(\Omega X,u)) & \to & \End(H_q(\F^u_*))\\
\hat{\gamma} & \mapsto & ( \hat{\sigma} \mapsto \widehat{\sigma. \gamma} ).

\end{matrix}$$

\begin{lemme}
    If $(m_{x,y})$ is a Barraud-Cornea twisting cocycle, then for each $x,y \in \Crit(\alpha)$ with $|x|-|y|=1$, $$\hat{m}_{x,y} = [x,y] \in \Lambda_u,$$ where we denoted $[x,y] = \displaystyle \sum_{\lambda \in \trajb{x,y}} \epsilon(\lambda) g(\lambda)$ the coefficient $\partial_{x,y}$ of the Morse-Novikov complex associated with the Morse-Smale pair $(\alpha,\xi).$ Here $\epsilon(\lambda)$ is the sign given by the orientation of $\trajbi{g(\lambda)}{x,y}$.
\end{lemme}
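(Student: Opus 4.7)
The statement is essentially a collection of facts already assembled earlier in the section, so my plan is to reduce it directly to Remark \ref{rem: projection cocycle tordant g} by summing over $g \in \pi_1(X)$ and then verifying that the sum is a well-defined element of $\Lambda_u$.

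First I would unpack the definitions. Since $|x|-|y|-1 = 0$, the Barraud-Cornea cocycle components $m^g_{x,y} = q_{x,y,*}(s^g_{x,y})$ lie in $C_0(\Omega^g X)$, and by definition of the Novikov completion the sum $m_{x,y} = \sum_g m^g_{x,y}$ lies in $C_0(\Omega X,u)$. The projection to homology $\widehat{\cdot} : C_0(\Omega X,u) \to H_0(\Omega X,u) = \Lambda_u$ is $\Z$-linear and splits along the connected components $\Omega^g X$ of $\Omega X$, so
\[
\hat{m}_{x,y} \;=\; \sum_{g \in \pi_1(X)} \hat{m}^g_{x,y}.
\]

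Next I would apply Remark \ref{rem: projection cocycle tordant g} component-wise. For each fixed $g$, the space $\trajbi{g}{x,y}$ is a compact oriented $0$-dimensional manifold, hence a finite signed set $\{\pm c_1, \dots, \pm c_k\}$; by construction of the representing chain system $s^g_{x,y}$ in this base case of the induction of Proposition \ref{prop: representing chain system}, and by point 1. of Lemma \ref{lemme: def q} which guarantees that $q_{x,y}$ lands in $\Omega^g X$, the projection gives
\[
\hat{m}^g_{x,y} \;=\; \sum_{\lambda \in \traji{g}{x,y}} \epsilon(\lambda)\, g \;\in\; \Lambda_u.
\]
Summing over $g$ and regrouping trajectories by their class $g(\lambda)$ (cf.\ Definition \ref{defi: Lg(x,y)}) yields
\[
\hat{m}_{x,y} \;=\; \sum_{g \in \pi_1(X)} \sum_{\lambda \in \traji{g}{x,y}} \epsilon(\lambda)\, g \;=\; \sum_{\lambda \in \traj{x,y}} \epsilon(\lambda)\, g(\lambda) \;=\; [x,y].
\]

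The only delicate point is ensuring that this infinite sum genuinely defines an element of $\Lambda_u$, i.e.\ that for every $c \in \R$ only finitely many $g$ with $u(g) > c$ contribute. But this was already established in Proposition \ref{prop : twisting cocycle MN bien dans la completion}, where the compactness of $\trajb{x,y,\tilde{h}(\tilde{x})-\tilde{h}(\tilde{y})-c}$ was used precisely to ensure the finiteness condition for $m_{x,y} \in C_{|x|-|y|-1}(\Omega X,u)$; the same condition transfers to $\hat{m}_{x,y}$ under the $\Z$-linear projection. I do not expect any genuine obstacle here — the lemma is really a dictionary statement, confirming that the first page differential $d^1$ of the spectral sequence recovers the classical Morse-Novikov differential with coefficients in $\Lambda_u$.
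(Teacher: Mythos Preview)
Your proof is correct and follows essentially the same route as the paper: both use Remark \ref{rem: projection cocycle tordant g} (i.e.\ point 1.\ of Lemma \ref{lemme: def q} together with the base case of the representing chain system) to identify each $\hat m^g_{x,y}$, and then sum over $g\in\pi_1(X)$ to obtain $[x,y]$. Your added remark that the resulting sum lies in $\Lambda_u$ by Proposition \ref{prop : twisting cocycle MN bien dans la completion} is a welcome precision that the paper leaves implicit.
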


\begin{proof}
The first point of Lemma \ref{lemme: def q} shows that, if $x,y \in \Crit(\alpha)$ and $|x|-|y|=1$, then
$$\hat{m}_{x,y}^g =  \sum_{\lambda \in \trajbi{g}{x,y}}\epsilon(\lambda) g.$$ 
Therefore, 

$$\hat{m}_{x,y} = \sum_g \hat{m}_{x,y}^g = \sum_{\substack{g, \\ \lambda \in \trajbi{g}{x,y}} } \epsilon(\lambda) g = \sum_{\lambda \in \trajb{x,y}} \epsilon(\lambda) g(\lambda).$$
\end{proof}
 
It follows that, in this case, the differential $d^1$ corresponds, up to a sign, to the differential of the complex $H_q(\mathcal{F}^u_*) \otimes_{\Lambda_u} C(\alpha,\xi)$, and consequently

$$E^2_{p,q} \simeq H_{p}(H_q(\mathcal{F}^u_*) \otimes_{\Lambda_u} C(\alpha,\xi)).$$

\subsection{Enriched Morse-Novikov toolset}

In this section, we adapt the DG Morse toolset \cite[Section 2.3]{BDHO23}. 
The following proposition is a reformulation \cite[Proposition 2.3.3]{BDHO23} where the DGA $R_* = C_*(\Omega X,u).$

\begin{prop}\label{prop: continuation map}

 Let $\alpha_0$ and $\alpha_1$ be Morse 1-forms representing $u$ and $\xi_0$, $\xi_1$ be adapted pseudo-gradients.  Let $\F_*^u$ be a DG right $C_{*}(\Omega X,u)$-module.
Let $\{m^0_{x,z} \in C_{|x|-|z|-1}(\Omega X,u), \ x,z \in \Crit(\alpha_0)\}$ and $\{m^1_{y,w} \in C_{|y|-|w|-1}(\Omega X,u), \ y,w \in \Crit(\alpha_1)\}$ be twisting cocycles.
Assume that, for all $x \in \Crit(\alpha_0)$ and $y \in \Crit(\alpha_1)$ such that $|x| \geq |y|$, there exists $\nu_{x,y} \in C_{|x|-|y|}(\Omega X,u)$ satisfying

\begin{equation}\label{eq: Continuation map}
    \hat{\partial} \nu_{x,y} = \sum_{z \in \Crit(\alpha_0)} m_{x,z}^{0} \cdot \nu_{z,y} - \sum_{w \in \Crit(\alpha_1)} (-1)^{|x| - |w|} \nu_{x,w} \cdot m^{1}_{w,y}.
\end{equation}

Then, the map $$\deffct{\Psi}{C_*(X,m^0_{x,y},\F^u)}{C_*(X,m^1_{x,y},\F^u)}{\sigma \otimes x}{\displaystyle \sum_{y \in \Crit(\alpha_1)} \sigma \cdot \nu_{x,y} \otimes y,}$$

is a morphism of complexes.
\end{prop}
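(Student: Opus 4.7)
The plan is to prove directly that $\Psi$ commutes with the twisted differentials, i.e.\ $\partial^1 \circ \Psi = \Psi \circ \partial^0$, mirroring the classical continuation-map argument. Well-definedness of $\Psi$ is not an issue: $\Crit(\alpha_1)$ is finite since $X$ is closed, and each term $\sigma \cdot \nu_{x,y}$ lies in $\F^u$ because $\nu_{x,y} \in C_{|x|-|y|}(\Omega X, u)$ by hypothesis and $\F^u$ is a right DG module over $C_*(\Omega X, u)$.

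First I would expand $\Psi \circ \partial^0(\sigma \otimes x)$ using Definition \ref{def : DG Morse-Novikov complex}, producing a first contribution $\sum_y \partial \sigma \cdot \nu_{x,y} \otimes y$ and a second one of the form $(-1)^{|\sigma|} \sum_{z,y} \sigma \cdot m^0_{x,z} \cdot \nu_{z,y} \otimes y$. Next I would expand $\partial^1 \circ \Psi(\sigma \otimes x)$. Applying the twisted differential of $C_*(X, m^1_{x,y}, \F^u)$ to each summand $\sigma \cdot \nu_{x,y} \otimes y$ and using Leibniz on the DG module $\F^u$, I obtain three families of terms: $\partial \sigma \cdot \nu_{x,y} \otimes y$, then $(-1)^{|\sigma|} \sigma \cdot \hat\partial \nu_{x,y} \otimes y$, and finally a twist contribution $(-1)^{|\sigma| + |x| - |y|} \sigma \cdot \nu_{x,y} \cdot m^1_{y,w} \otimes w$ coming from the outer $m^1$ factor.

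The key step is to substitute equation \eqref{eq: Continuation map} into the middle term. This converts $(-1)^{|\sigma|} \sigma \cdot \hat\partial \nu_{x,y}$ into the sum $(-1)^{|\sigma|} \sum_z \sigma \cdot m^0_{x,z} \cdot \nu_{z,y}$, which matches the remaining term on the $\Psi \circ \partial^0$ side, and a sum of the form $-(-1)^{|\sigma|}(-1)^{|x|-|w|} \sigma \cdot \nu_{x,w} \cdot m^1_{w,y}$. After swapping the dummy indices $w \leftrightarrow y$ in this latter sum, its sign $-(-1)^{|\sigma|+|x|-|y|}$ is exactly opposite to the sign $(-1)^{|\sigma|+|x|-|y|}$ of the twist contribution coming from $m^1$, so these two sums cancel. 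The identity $\partial^1 \Psi = \Psi \partial^0$ then follows immediately.

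The main obstacle is strictly the bookkeeping of Koszul signs: one must reconcile the $(-1)^{|\sigma|}$ coming from the twisted differential of $C_*(X, m, \F^u)$, the $(-1)^{|\sigma|}$ appearing in Leibniz on $\F^u$, the shifted degree $|\sigma \cdot \nu_{x,y}| = |\sigma| + |x| - |y|$ used in the outer twist, and the sign $(-1)^{|x|-|w|}$ built into \eqref{eq: Continuation map}. No convergence issue arises along the way: every intermediate expression lives in a fixed homological degree of the DGA $C_*(\Omega X, u)$, whose defining Novikov finiteness condition is preserved by the Pontryagin product and hence by the right module action on $\F^u$.
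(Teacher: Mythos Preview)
Your proposal is correct and is exactly the standard direct verification. The paper itself does not spell out a proof of this proposition: it simply declares it to be a reformulation of \cite[Proposition 2.3.3]{BDHO23} with the DGA $R_* = C_*(\Omega X,u)$ and places a $\blacksquare$, so your explicit sign check is precisely what that reference amounts to.
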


\begin{flushright}
    $\blacksquare$
\end{flushright}

\begin{rem}\label{rem: continuation map g}
    Let $x,z \in \Crit(\alpha_0)$ and $y,w \in \Crit(\alpha_1)$. Write $$m^0_{x,z} = \sum_{g \in \pi_1(X)} m^{0,g}_{x,z},$$ where $m^{0,g}_{x,z} \in C_{|x|-|z|-1}(\Omega^g X)$ and $$m^1_{y,w} = \sum_{g\in \pi_1(X)} m^{1,g}_{y,w},$$ where $m^{1,g}_{y,w} \in C_{|y|-|w|-1}(\Omega^g X)$.
    If $\left\{ \nu^g_{x,y} \in C_{|x|-|y|}(\Omega^gX)\ x \in \Crit(\alpha_0), \ y \in \Crit(\alpha_1) \textup{ and }  g \in \pi_1(X)\right\}$ is a family of chains such that
    
    \begin{equation}\label{eq: continuation map g}
   \forall x \in \Crit(\alpha_0), \forall y \in \Crit(\alpha_1), \forall g \in \pi_1(X), \  \partial \nu^{g}_{x,y} = \sum_{\substack{z \in \Crit(\alpha_0)\\ g'\cdot g''=g}} m_{x,z}^{0,g'} \cdot \nu^{g''}_{z,y} - \sum_{\substack{w \in \Crit(\alpha_1)\\g' \cdot g''=g}} (-1)^{|x| - |w|} \nu^{g'}_{x,w} \cdot m^{1,g''}_{w,y}
\end{equation}

and $\nu_{x,y} := \sum_g \nu^g_{x,y} \in C_*(\Omega X,u)$ for all $x \in \Crit(\alpha_0), \ y \in \Crit(\alpha_1)$. Then, $(\nu_{x,y})_{x \in \Crit(\alpha_0), \ y \in \Crit(\alpha_1)}$ satisfies \eqref{eq: Continuation map} and the map $$\deffct{\psi}{C_*(X,m^0_{x,y},\F^u)}{C_*(X,m^1_{x,y},\F^u)}{\sigma \otimes x}{\displaystyle \sum_{y \in \Crit(\alpha_1)} \sigma \cdot \nu_{x,y} \otimes y,}$$

is a morphism of complexes.

\end{rem}

We now state \cite[Proposition 2.3.4]{BDHO23} for $R_* = C_*(\Omega X,u)$ that is used to build homotopies between morphism of complexes constructed as above.

\begin{prop} \label{Prop: homotopy Criterion Ai}
    Let $\alpha_0$ and $\alpha_1$ be Morse 1-forms and $\F^u$ be a DG right $C_*(\Omega X,u)$-module.
    Let $\{m^0_{x,z} \in C_{|x|-|z|-1}(\Omega X,u), \ x,z \in \Crit(\alpha_0)\}$ and $\{m^1_{y,w} \in C_{|y|-|w|-1}(\Omega X,u), \ y,w \in \Crit(f_1)\}$ be twisting cocycles.
    Let $$\{\nu_{x,y}\in C_{|x|-|y|}(\Omega X,u), \ x \in \Crit(\alpha_0), \ y \in \Crit(\alpha_1)\}$$ and $$\{\nu'_{x,y}\in C_{|x|-|y|}(\Omega X,u), \ x \in \Crit(\alpha_0), \ y \in \Crit(\alpha_1)\}$$ be cocycles that satisfy \eqref{eq: Continuation map}.
    Let $\Psi$ and $\Psi'$ be the morphisms associated with $\{\nu_{x,y}\}$ and $\{\nu'_{x,y}\}$ respectively (see Proposition \ref{prop: continuation map}).

     Suppose that there exists a cocycle $\{h_{x,y} \in C_{|x|-|y|+1}(\Omega X,u)\}$ such that 

\begin{equation}\label{eq: homotopy map}
    \hat{\partial} h_{x,y} = \nu_{x,y} - \nu'_{x,y} + \sum_{z \in \Crit(\alpha_0)} (-1)^{|x|-|z|} m^{0}_{x,z} \cdot h_{z,y} + \sum_{w \in \Crit(\alpha_1)} (-1)^{|x|-|w|} h_{x,w} \cdot m^{1}_{w,y}.
\end{equation}

Then the map $$\deffct{H}{C_*(X, m^0_{x,y}, \F^u)}{C_{*+1}(X, m^1_{x,y}, \F^u)}{\sigma \otimes x}{\displaystyle \sum_{y'} \sigma \cdot h_{x,y'} \otimes y'}$$ 
is a chain homotopy between $\Psi$ and $\Psi'$.
\end{prop}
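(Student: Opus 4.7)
The plan is to verify directly the chain-homotopy identity $\partial^{1} \circ H + H \circ \partial^{0} = \Psi - \Psi'$, where $\partial^{0}$ and $\partial^{1}$ denote the twisted differentials of $C_*(X, m^0_{x,y}, \F^u)$ and $C_*(X, m^1_{x,y}, \F^u)$ respectively (with signs as in the analogous computation for Proposition \ref{prop: continuation map}). The strategy is entirely algebraic: expand both sides on a generator $\sigma \otimes x$, apply the Leibniz rule for the DG right $C_*(\Omega X, u)$-module structure on $\F^u$, substitute equation \eqref{eq: homotopy map} into the term $\hat{\partial} h_{x,y'}$ produced by $\partial^{1} H$, and collect.

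More concretely: on the one hand,
\[
H \circ \partial^{0}(\sigma \otimes x) = \sum_{y'} \partial\sigma \cdot h_{x,y'} \otimes y' + (-1)^{|\sigma|} \sum_{z, y'} \sigma \cdot m^{0}_{x,z} \cdot h_{z,y'} \otimes y'.
\]
On the other hand, applying $\partial^{1}$ to $H(\sigma \otimes x) = \sum_{y'} \sigma \cdot h_{x,y'} \otimes y'$ and using the Leibniz rule together with $|h_{x,y'}| = |x|-|y'|+1$, one obtains three groups of terms: a $\partial\sigma$-term that immediately cancels its counterpart above, a term of the form $(-1)^{|\sigma|} \sum_{y'} \sigma \cdot \hat{\partial} h_{x,y'} \otimes y'$ coming from $\F^u$, and a twisting term $\pm \sum_{y', w} \sigma \cdot h_{x,y'} \cdot m^{1}_{y',w} \otimes w$ coming from $\partial^{1}$. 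Substituting \eqref{eq: homotopy map} into the middle group, the contributions $\nu_{x,y'} - \nu'_{x,y'}$ reassemble, up to the appropriate sign, into $\Psi(\sigma \otimes x) - \Psi'(\sigma \otimes x)$.

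It then remains to check that the two residual families of terms cancel: the $m^{0} \cdot h$ contribution from \eqref{eq: homotopy map} combines with the twisting part of $H \circ \partial^{0}$, and the $h \cdot m^{1}$ contribution from \eqref{eq: homotopy map} combines with the twisting part of $\partial^{1} \circ H$. These cancellations are essentially the same bookkeeping as in the proof of \cite[Proposition 2.3.4]{BDHO23} and rely only on the associativity of the $C_*(\Omega X, u)$-action on $\F^u$.

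The main (and only) obstacle is sign discipline: the Koszul signs involving $|\sigma|$, the degree shift $|x| - |y'| + 1$ of $h_{x,y'}$, the $(-1)^{|\sigma|}$ in the twisted differential, and the signs $(-1)^{|x|-|z|}$, $(-1)^{|x|-|w|}$ in \eqref{eq: homotopy map} must all line up. Note that the fact that we are working over the completion $C_*(\Omega X, u)$ rather than over $C_*(\Omega X)$ introduces no new difficulty: the relation \eqref{eq: homotopy map} splits, as in Remark \ref{rem: continuation map g}, into a family of component-wise identities indexed by $g \in \pi_1(X)$, and the Pontryagin-type product on the completion is defined exactly so that the Leibniz rule and associativity used above hold termwise. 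Consequently the same computation that establishes the homotopy in the Morse case carries over verbatim to the Morse–Novikov setting.
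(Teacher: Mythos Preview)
Your proposal is correct and matches the paper's approach: the paper does not give an independent proof but simply states that this is \cite[Proposition 2.3.4]{BDHO23} specialized to the DGA $R_* = C_*(\Omega X,u)$, and your direct algebraic verification of the chain-homotopy identity is precisely that argument. Your observation that the completion introduces no new difficulty (the identity splits over $g\in\pi_1(X)$ and the module axioms hold termwise) is exactly the point the paper makes in the subsequent remark.
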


\begin{rem}
    Let $x,z \in \Crit(\alpha_0)$ and $y,w \in \Crit(\alpha_1)$. Write $$m^0_{x,z} = \sum_{g \in \pi_1(X)} m^{0,g}_{x,z}, \quad m^1_{y,w} = \sum_{g\in \pi_1(X)} m^{1,g}_{y,w}$$ and
    $$\nu_{x,y} = \sum_{g \in \pi_1(X)} \nu^g_{x,y}, \quad \nu'_{x,y} = \sum_{g \in \pi_1(X)} \nu'^g_{x,y}.$$ 
    If $\left\{ h^g_{x,y} \in C_{|x|-|y|+1}(\Omega^gX)\ x \in \Crit(\alpha_0), \ y \in \Crit(\alpha_1) \textup{ and }  g \in \pi_1(X)\right\}$ is a family of chains such that 
    
    \begin{equation}\label{eq: homotopy map g}
    \partial h_{x,y}^g = \nu_{x,y}^g - \nu'^{g}_{x,y} + \sum_{\substack{g'\cdot g''=g \\z \in \Crit(\alpha_0)}} (-1)^{|x|-|z|} m^{0,g'}_{x,z} \cdot h_{z,y}^{g''} + \sum_{\substack{g' \cdot g''=g \\ w \in \Crit(\alpha_1)}} (-1)^{|x|-|w|} h_{x,w}^{g'} \cdot m^{1,g''}_{w,y}
\end{equation}

and $$h_{x,y} = \displaystyle \sum_{g \in \pi_1(X)} h_{x,y}^{g} \textup{ belongs to } C_{|x|-|y|+1}(\Omega X,u).$$

Then, the family $(h_{x,y})$ satisfies \eqref{eq: homotopy map} and $$\deffct{H}{C_*(X,m^0_{x,y},\F^u)}{C_{*+1}(X,m^1_{x,y},\F^u)}{\sigma \otimes x}{\displaystyle \sum_{y \in \Crit(\alpha_1)} \sigma \cdot h_{x,y} \otimes y}$$

is a chain homotopy between $\Psi$ and $\Psi'$.
\end{rem}

\section{Invariance of enriched Morse-Novikov Homology}\label{section : Invariance of DG Morse-Novikov homology}

We will prove in this section that, for any right $C_*(\Omega X,u)$-module $\F^u$, the enriched Morse-Novikov complex $C_*(X,\Xi,\F^u)$ depends, up to chain homotopy equivalence, only on the fixed De Rham cohomology class $u \in H^1(X,\R)$. We will then prove that this complex computes the same homology as the enriched Morse complex $C_*(X,\Xi, \F^u)$ where $\F^u$ is here considered as a right $C_*(\Omega X)$-module.

\subsection{Continuation map}

Given two sets of DG Morse-Novikov data $\Xi_0 = (\alpha,\xi_0,s^{0,g}_{x,y},o_0,\Y_0, \theta_0)$ and $\Xi_1 = (\alpha + dh,\xi_1,s^{1,g}_{x,y},o_1,\Y_1, \theta_1)$ as well as a DG right $C_*(\Omega X,u)$-module $\F^u_*$, we will build a homotopy equivalence called \textbf{continuation map} $$\Psi_{01}: C_*(X,\Xi_0,\F^u) \to C_*(X,\Xi_1,\F^u)$$ by following \cite[Theorem 6.3.1]{BDHO23}.

 \begin{thm}\label{thm : invariance}
    1) Given two sets $\Xi_0$ and $\Xi_1$ of DG Morse-Novikov data on $X$, there exists a \textbf{continuation map} $\Psi_{01}: C_*(X,\Xi_0,\F^u) \to C_*(X,\Xi_1,\F^u)$ that is a homotopy equivalence and its chain homotopy type does only depend on $\Xi_0$ and $\Xi_1$. The map $\Psi_{01}$ is in particular a quasi-isomorphism. 
    
    2) Given another set of data $\Xi_2$ on  $X$ and denoting $\Psi_{ij}$ the continuation map between the data $\Xi_i$ and $\Xi_j$, then $\Psi_{00}$ is homotopic to the identity and $\Psi_{02}$ is homotopic to $\Psi_{12}\circ \Psi_{01}$. In particular, in homology

    $$\Psi_{00} = \textup{Id} \ \textup{and} \ \Psi_{12}\circ \Psi_{01} = \Psi_{02}.$$
\end{thm}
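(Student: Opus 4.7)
The strategy mirrors the Morse proof of \cite[Theorem 6.3.1]{BDHO23}, assembled from the toolset of Propositions \ref{prop: continuation map} and \ref{Prop: homotopy Criterion Ai}. The only substantively new point is that the Novikov-type finiteness must be preserved at each stage. Since $[\alpha_0]=[\alpha_1]=u$, write $\alpha_1 - \alpha_0 = dh$ and interpolate by the family $\alpha_s = \alpha_0 + s\,dh$ on $X$, together with a generic homotopy $\xi_s$ of pseudo-gradients joining $\xi_0$ to $\xi_1$. For $(x,y) \in \Crit(\alpha_0) \times \Crit(\alpha_1)$ and $g \in \pi_1(X)$, introduce the moduli space $\mathcal{M}^{01}_g(x,y)$ of trajectories of this time-dependent flow whose lift to $\tilde{X}$ connects $\tilde{x}$ to $g\tilde{y}$. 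The same lifting argument used in Definition \ref{defi: Lg(x,y)} forces the $\alpha_s$-length of such a trajectory to depend only on $x,y$ and $u(g)$, so $\mathcal{M}^{01}_g(x,y)$ admits a corner compactification of dimension $|x|-|y|$ whose codimension-one boundary decomposes as $\trajbi{g_1}{x,z} \times \mathcal{M}^{01}_{g_2}(z,y)$ (breakings at the source, $g_1 g_2 = g$) together with $\mathcal{M}^{01}_{g_1}(x,w) \times \trajbi{g_2}{w,y}$ (breakings at the target). The inductive recipe of Proposition \ref{prop: representing chain system} produces a representing chain system on these moduli spaces, and evaluating through a parametrization map modeled on Lemma \ref{lemme: parametrization map} yields chains $\nu_{x,y}^g \in C_{|x|-|y|}(\Omega^g X)$ satisfying \eqref{eq: continuation map g}. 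The finiteness argument of Proposition \ref{prop : twisting cocycle MN bien dans la completion} shows that $\nu_{x,y} = \sum_g \nu_{x,y}^g$ lies in $C_{|x|-|y|}(\Omega X, u)$, so Proposition \ref{prop: continuation map} turns this data into the chain map $\Psi_{01}$.

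To show that the chain-homotopy class of $\Psi_{01}$ depends only on $\Xi_0$ and $\Xi_1$, I would take two such interpolations and connect them by a $[0,1]^2$-family. The associated moduli spaces over the square produce, by the same recipe, chains $h_{x,y}^g \in C_{|x|-|y|+1}(\Omega^g X)$ whose sum lies in $C_*(\Omega X, u)$ and satisfies \eqref{eq: homotopy map g}, so Proposition \ref{Prop: homotopy Criterion Ai} furnishes a chain homotopy between the two candidate continuation maps. Specializing to the constant interpolation from $\Xi_0$ to itself, one may choose $\nu_{x,x}^e$ to be a representing chain of the constant loop at $x$ and $\nu_{x,y}^g = 0$ otherwise, whence the associated map is $\Id$; by the uniqueness just proved, any other realization of $\Psi_{00}$ is chain homotopic to it.

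For the composition identity in (2), concatenate an interpolation from $\Xi_0$ to $\Xi_1$ with one from $\Xi_1$ to $\Xi_2$. A neck-stretching argument exhibits the limiting boundary of $\mathcal{M}^{02}_g(x,z)$ as fiber products $\mathcal{M}^{01}_{g_1}(x,y) \times \mathcal{M}^{12}_{g_2}(y,z)$ over $y \in \Crit(\alpha_1)$ and $g_1 g_2 = g$, so the representing chain system on the concatenation can be chosen so that $\nu^{02,g}_{x,z} = \sum_{y,\ g_1 g_2 = g} \nu^{01,g_1}_{x,y} \cdot \nu^{12,g_2}_{y,z}$. The continuation map associated with this choice is literally $\Psi_{12} \circ \Psi_{01}$, and by the uniqueness of the previous paragraph $\Psi_{02} \sim \Psi_{12} \circ \Psi_{01}$. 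Taking $\Xi_2 = \Xi_0$ and combining with $\Psi_{00} \sim \Id$ produces a two-sided chain-homotopy inverse of $\Psi_{01}$, so $\Psi_{01}$ is a chain homotopy equivalence and in particular a quasi-isomorphism.

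The main obstacle, beyond the classical Morse case, is verifying at every step that the assembled sums over $\pi_1(X)$ land in the completed DGA $C_*(\Omega X, u)$. As in the proof of Proposition \ref{prop : twisting cocycle MN bien dans la completion}, this reduces to the fact that, once $x,y$ and $g$ are fixed, the $\alpha_s$-length of every trajectory in the relevant moduli space is an affine function of $u(g)$; consequently only finitely many classes $g \in \pi_1(X)$ can contribute above any threshold $c \in \R$, ensuring that the families $\nu_{x,y}$ and the homotopy chains $h_{x,y}$ each belong to the Novikov completion.
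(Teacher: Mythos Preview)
Your plan matches the paper's strategy: both implement the standard continuation construction and verify Novikov-finiteness at each stage. The presentation differs only in that the paper packages the interpolation as a grafted Morse $1$-form $\beta=\alpha+d_{(s,x)}H+g'(s)\,ds$ on $[-\epsilon,1+\epsilon]\times X$ (so that continuation trajectories become ordinary Morse--Novikov trajectories of $(\beta,\xi)$ from $\{0\}\times\Crit(\alpha)$ to $\{1\}\times\Crit(\alpha+dh)$, cf.\ Lemma~\ref{sys chaîne induit}), whereas you describe time-dependent trajectories on $X$ directly; the two pictures are equivalent. The paper in fact only spells out Step~1 and defers to \cite[Theorem~6.3.1]{BDHO23} for Steps~2 and~3.

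One claim in your sketch of Step~2 is incorrect as stated. You assert that for the constant interpolation one may take $\nu_{x,x}^e$ to be the constant loop and $\nu_{x,y}^g=0$ otherwise. The constant interpolation is not regular (the autonomous equation carries an extra $\R$-translation), and after restoring transversality the moduli spaces $\overline{\mathcal M}^{00}_g(x,y)$ with $|x|>|y|$ are non-empty with non-trivial fundamental class, so their representing chains cannot be chosen to vanish. The \emph{algebraic} cocycle $\nu=\delta$ does solve \eqref{eq: continuation map g} (since $m^0=m^1$) and yields $\Id$, but your uniqueness argument compares two \emph{geometric} interpolations via a $[0,1]^2$-family and does not bridge the gap to this algebraic solution. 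The fix, as in \cite{BDHO23}, is to work in the grafted picture with the product pseudo-gradient $(\xi_0,-\mathrm{grad}\,g)$: the resulting moduli spaces $\trajbi{\beta,g}{x_0,y_1}$ admit an explicit description from which one extracts a homotopy cocycle $(h^g_{x,y})$ satisfying \eqref{eq: homotopy map g} between the geometric $\nu$ and $\delta$. A parallel caveat applies to your composition sketch: the concatenated interpolation does not produce $\nu^{02}=\nu^{01}\cdot\nu^{12}$ on the nose; the fiber products $\overline{\mathcal M}^{01}_{g_1}(x,y)\times\overline{\mathcal M}^{12}_{g_2}(y,z)$ appear as one boundary stratum of the relevant moduli space, and it is its representing chain system that furnishes (via Proposition~\ref{Prop: homotopy Criterion Ai}) the homotopy between a geometric $\nu^{02}$ and the product cocycle. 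With these corrections the argument goes through.
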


\begin{proof}

This proof consists of three steps:
\begin{enumerate}
    \item Construction of $\Psi_{01} : C_*(X,\Xi_0,\F^u) \to C_*(X,\Xi_1,\F^u)$
    \item If $\Xi_0 = \Xi_1$, then $\Psi_{01}$ is homotopic to the identity.
    \item If $\Xi_2$ is another set of data, then $\Psi_{12} \circ \Psi_{01} : C_*(X,\Xi_0,\F^u) \to C_*(X,\Xi_2,\F^u)$ is homotopic to $\Psi_{02} : C_*(X,\Xi_0,\F^u) \to C_*(X,\Xi_2,\F^u)$.
\end{enumerate}

We will only prove step 1 and refer to the proof of \cite[Theorem 6.3.1]{BDHO23} for the steps 2 and 3. The same arguments apply using the spaces of broken trajectories with fixed endpoints in the universal cover.\\

\underline{\textbf{Step 1:} Construction of $\Psi_{01}$.}

Let $\Xi_0 = (\alpha, \xi_0, s^{0,g}_{x,y}, o_0, \Y_0, \theta_0)$ and $\Xi_1 = (\alpha + dh, \xi_1, s^{1,g}_{x,y}, o_1, \Y_1, \theta_1)$ be two sets of data. Let $\F$ be a DG right $C_*(\Omega X,u)$-module. We will define a chain homotopy equivalence

\[\Psi_{01}: C_*(X,\Xi_0,\F) \to C_*(X,\Xi_1,\F)\]

using Proposition \ref{prop: continuation map}.
For that, we define a \textbf{continuation set of DG Morse-Novikov data} $\Xi$ on $[0,1] \times X$. We will refer to $\Psi_{01}$ as a \textbf{continuation map}. We proceed as in \cite[Section 6.2]{BDHO23}. \\

$\bullet$ Let $\epsilon>0$ and $h : X \to \R$ be a Morse function. Let $H : [-\epsilon,1+ \epsilon] \times X \to \R $ be a smooth homotopy such that $$\left\{ \begin{array}{ll}
    H(t,\cdot) = 0 & \textup{if } t \in [-\epsilon, \epsilon]  \\
     H(1, \cdot) = h & \textup{if } t \in [1-\epsilon, 1+\epsilon] . 
\end{array} \right.$$

Consider $g: [-\epsilon, 1+ \epsilon] \to \R$ a Morse function that has a maximum at $0$, a minimum at $1$  and no other critical points. Assume that $g$ is decreasing enough in $(0,1)$ for the following inequality to be true  $$ \forall x \in X, \ \forall s \in (0, 1), \quad \frac{\partial H(x,s)}{\partial s} + g'(s) < 0.$$

Let $\beta \in \Omega^1([-\epsilon, 1 + \epsilon] \times X)$ be the Morse 1-form defined by $$\forall x \in X, \ \forall s \in (0, 1), \quad \beta_{s,x} = \alpha_x + d_{s,x} H + g'(s)ds.$$

Hence, $$\Crit(\beta) = \{0\} \times \Crit(\alpha) \cup \{1\} \times \Crit(\alpha + dh),$$

and if locally $\alpha = df$ in $U_x$ around each critical point $x \in \Crit(\alpha)$, then
$$\beta_{s,z} = d_{s,z}(f + H + g)$$
in $[-\epsilon,1 + \epsilon] \times U_x$. Since $d^2_{0,x}(f + H + g) = d^2_x f + g''(0)$ and $g''(0)<0$, if $x \in \Crit_k(\alpha)$, then $(0,x) \in \Crit_{k+1}(\beta)$. Similarly, if $y \in \Crit_k(\alpha + dh)$, then $(1,y) \in \Crit_k(\beta)$ because $g''(1)>0$.\\

This reasoning shows, in particular, that $\beta$ is Morse, and we choose a pseudo-gradient $\xi$ for $\beta$ that coincides with $\xi_0 - \textup{grad} \ g$ on $[-\epsilon, \epsilon] \times X$ and with $\xi_1 - \textup{grad} \ g$ on $[1-\epsilon, 1+\epsilon] \times X$.\\
In the following, we denote by $x_0$ the critical point $(0,x)$ of $\beta$ if $x \in \Crit(\alpha)$, and by $y_1$ the critical point $(1,y)$ of $F$ if $y \in \Crit(\alpha + dh)$.\\

$\bullet$ Let $(\Y)_{t \in [0,1]}$ be a homotopy with fixed base point between the trees $\Y_0$ and $\Y_1$ and define 

\[\Y = \bigcup_{t \in [0,1]} ( \{t\} \times \Y_t) \subset [0,1] \times X.\]

$\bullet$ Let $p: [0,1] \times X \to ([0,1] \times X )/\Y$ such that for all $t\in [0,1]$, the restriction $$ p \lvert_{\{t\} \times X}: \left(\{t\} \times X,\{t\} \times \star \right ) \to \left(\faktor{\{t\} \times X}{\Y_t}, \{t\} \times \star \right)$$ is the canonical projection (we recall the notation $[\Y] = \star \in X/\Y$).

Choose $$\Theta: \faktor{([0,1] \times X )}{\Y} \to [0,1] \times X,$$ to be a homotopy inverse $p$ such that for $t\in [0,1]$, it maps $\left(\faktor{\{t\} \times X}{\Y_t}, \{t\} \times \star \right)$ to $\left(\{t\} \times X,\{t\} \times \star\right )$.\\

$\bullet$ For $x_0 \in \Crit(\alpha)$, we choose $$o(x_0):= \Or \ \wb{u}{x_0} = \left( \frac{\partial}{\partial t}, \Or \ \wb{u}{x}\right) = \left( \frac{\partial}{\partial t}, o_0(x)\right).$$

For $y_1 \in \Crit(\alpha + dh)$, we choose $$o(y_0):= \Or \ \wb{u}{y_1} = \Or \ \wb{u}{y} = o_1(y).$$

It remains to build a representing chain system for the moduli spaces of trajectories $\trajbi{\beta}{x_i,y_j}$.

\begin{lemme}\label{sys chaîne induit}

There exists a representing chain system $\left\{s^{\beta,g}_{w,z} \in C_{|w|-|z|-1}\left(\trajbi{\beta,g}{w,z}\right), \ w,z \in \Crit(\beta), g \in \pi_1(X)\right\}$ such that

\begin{enumerate}
    \item If $w=x_0$ and $z=y_0$, $$s_{x_0,y_0}^{\beta,g} = (-1)^{|x| - |y|}s_{x,y}^{0,g}.$$
    \item If $w=x_1$ and $z=y_1$, $$s_{x_1,y_1}^{\beta,g} = s_{x,y}^{1,g}.$$
\end{enumerate}

\end{lemme}

\begin{proof}

Let $x,y \in \Crit(\alpha)$ and $g \in \pi_1(X)$.
We compare the orientations of the two spaces $\trajbi{\beta,g}{x_0,y_0}$ and $\trajbi{\alpha,g}{x,y}$. The orientation rules give:

$$\begin{array}{llcl}
 & \left( \Or \ \trajbi{\beta,g}{x_0,y_0}, - \xi, \Or \ \wbi{u}{\beta}{y_0}\right) & = & \Or \ \wbi{u}{\beta}{x_0}\\
\Longleftrightarrow & \left( \Or \ \trajbi{\beta,g}{x_0,y_0}, - \xi, \frac{\partial}{\partial t}, \Or \ \wbi{u}{\alpha}{y}\right) & = & \left(\frac{\partial}{\partial t}, \Or \ \wbi{u}{\alpha}{x} \right)\\
\Longleftrightarrow & (-1)^{|x| - |y|}\left( \frac{\partial}{\partial t},  \Or \ \trajbi{\beta,g}{x_0,y_0}, - \xi, \Or \ \wbi{u}{\alpha}{y}\right) & = &  \left(\frac{\partial}{\partial t}, \Or \ \wbi{u}{\alpha}{x}\right)\\
\Longleftrightarrow & (-1)^{|x| - |y|}\left( \Or \ \trajbi{\beta,g}{x_0,y_0}, - \xi, \Or \ \wbi{u}{\alpha}{y}\right) & = &  \Or \ \wbi{u}{\alpha}{x}.\\
\Longleftrightarrow & \Or \ \trajbi{\alpha,g}{x,y} & = & (-1)^{|x| - |y|} \Or \ \trajbi{\beta,g}{x_0,y_0}.
\end{array}$$

Therefore, $s^{\beta,g}_{x_0,y_0}:= (-1)^{|x|-|y|} s^{0,g}_{x,y}$ is a cycle that represents the fundamental class of $\trajbi{\beta,g}{x_0,y_0}$ and it satisfies \eqref{eq: système representatif}.

The proof that $s^{\beta,g}_{x_1,y_1}:= s^{1,g}_{x,y}$ is a cycle that represents the fundamental class of $\trajbi{\beta,g}{x_1,y_1}$ and satisfies \eqref{eq: système representatif} is similar.

 From there, we apply the procedure described in Proposition \ref{prop: representing chain system} to inductively build $$s^{\beta,g}_{x_0,y_1} \in C_{|x|-|y|}(\trajbi{\beta,g}{x_0,y_1})$$ for $x \in \Crit(\alpha)$ and $y \in \Crit(\alpha +dh)$ such that $$\left\{s^{\beta,g}_{w,z} \in C_{|w|-|z|-1}\left(\trajbi{\beta,g}{w,z}\right), \ w,z \in \Crit(\beta)\right\}$$ is a representing chain system for the moduli spaces of trajectories of $(\beta,\xi)$.
\end{proof}

Let $\left\{s^{\beta,g}_{w,z} \in C_{|w|-|z|-1}\left(\trajbi{\beta,g}{w,z}\right), \ w,z \in \Crit(\beta)\right\}$ be such a representing chain system.

Let $x \in \Crit(\alpha)$, $y\in \Crit(\alpha +dh)$, denote $\sigma^g_{x,y}:= s^{\beta,g}_{x_0,y_1}  \in C_{|x|-|y|}(\trajbi{\beta,g}{x_0,y_1})$. Equation \eqref{eq: système representatif} for $\sigma_{x,y}^g$ becomes 

\begin{equation}\label{sigma 2}
    \partial \sigma_{x,y}^g = \sum_{\substack{z \in Crit(\alpha)\\ g'\cdot g''=g}} s_{x,z}^{0,g'} \times \sigma_{z,y}^{g''}
- \sum_{\substack{w \in Crit(\alpha + dh)\\ g' \cdot g''=g}} (-1)^{|x| -|w|} \sigma_{x,w}^{g'} \times s_{w,y}^{1 , g''}.
\end{equation}

For each $w,z \in \Crit(\beta)$, define an evaluation map $q^{\beta}_{w,z}: \trajbi{\beta}{w,z} \to \Omega X$ by $$q^{\beta}_{w,z} = \pi_X \circ \Theta \circ p \circ \Gamma^{\beta}_{w,z},$$ where $\pi_X: [0,1] \times X \to X$ is the canonical projection and $\Gamma^{\beta}_{x_0,y_1}: \trajbi{\beta}{w,z} \to \mathcal{P}_{w,z}([0,1] \times X)$ is the parametrization map defined in Lemma \ref{lemme: parametrization map} using $\beta$. Since $$q^{\beta}_{w,z} = \pi_X \circ q_{w,z},$$ where $q_{w,z}: \trajbi{\beta}{w,z} \to \Omega([0,1] \times X)$ is the evaluation map defined in Lemma \ref{lemme: def q} for the pair $(\beta,\xi)$, we have:

\begin{lemme}\label{lemme : q beta}

The maps $q_{w,z}^{\beta}: \trajbi{\beta}{w,z} \to \Omega X$ satisfy the following properties:\\

\begin{enumerate}
    \item If $x_0 \in \Crit_{k+1}(\beta)$ and $y_1 \in \Crit_k(\beta)$, then for all $\lambda \in \mathcal{L}_{\beta} (x,y)$, $g(\lambda) = [q^{\beta}_{x_0,y_1}(\lambda)]$.\\
    \item If $(\lambda, \lambda') \in \trajbi{\beta}{w,d} \times \trajbi{\beta}{d,z}$, then $q_{w,z}^{\beta}(\lambda, \lambda') = q_{w,d}^{\beta}(\lambda) \# q_{d,z}^{\beta}(\lambda')$.\\
    \item If $x_i,y_i \in Crit(\beta)$, then $q_{x_i,y_i}^{\beta} = q_{x,y}$ is the function defined in Lemma \ref{lemme: def q} for the pair $(\alpha + idh, \xi_i)$ for $i \in \{0,1\}$.
\end{enumerate}

\end{lemme}

We then define, for all $g \in \pi_1(X)$:
\begin{itemize}
\item $\nu_{x,y}^g = - q_{x,y,*}^{\beta} (\sigma_{x,y}^g)$
\item $m_{x,y}^{0,g} = q_{x,y,*}^{\beta} ( s_{x,y}^{0,g})$
\item $m_{x,y}^{1,g} = q_{x,y,*}^{\beta} ( s_{x,y}^{1,g})$.
\end{itemize}

From \eqref{sigma 2}, we deduce 

\[\partial \nu_{x,y}^g = \sum_{\substack{z \in \Crit(\alpha)\\ g'.g''=g}} m_{x,z}^{0,g'} \cdot  \nu_{z,y}^{g''} - \sum_{\substack{w \in \Crit(\alpha + dh) \\ g'.g''=g}} (-1)^{|x| - |w|} \nu_{x,w}^{g'} \cdot m^{1, g''}_{w,y}.\]

We use the same arguments as in Proposition \ref{prop : twisting cocycle MN bien dans la completion} to prove that $\nu_{x,y} = \displaystyle \sum_{g \in \pi_1(X)} \nu^g_{x,y} \in C_{|x|-|y|}(\Omega X,u)$. Proposition \ref{prop: continuation map} shows that 

$$\begin{array}{cccc}
    \Psi_{01}:  & C_*(X,\Xi_0,\F) &\to& C_*(X,\Xi_1,\F)  \\
      & (\sigma \otimes x) & \mapsto &  \displaystyle \sum_{y \in \Crit(\alpha + dh)}\sigma \cdot \nu_{x,y} \otimes y.
\end{array}$$ 

is a morphism of complexes.

\end{proof}

\subsection{Latour Trick}\label{subsection : Latour Trick}

Let $\F^u_*$ be a right $C_*(\Omega X,u)$-module and $r >0$.
Since, up to homotopy equivalence, the complex $C_*(X,\Xi, \F^u_*)$ does not depend on $r$ nor on the representative $\alpha \in r u$, we can choose a particular representative $\alpha \in r u$ and an adapted pseudo-gradient $\xi$ such that  $C_*(X,\Xi, \F^u_*)$ is chain homotopy equivalent to the enriched Morse complex with coefficients in $\F^u_*$ seen as a right $C_*(\Omega X)$-module. 

\begin{prop}[Latour Trick] \label{prop : Latour Trick}
    Given a Morse function $h: X \to \R$, there exists a set of DG Morse-Novikov data $\Xi^L=(\alpha,\xi,s^g_{x,y},o,\mathcal{Y},\theta)$ and set of DG Morse data $\Xi^M=(h,\xi,\sum_g s^g_{x,y},\mathcal{Y}, \theta)$  such that  $C_*(X,\Xi^L,\F^u_*)$ is a DG Morse complex and there exists a chain homotopy equivalence between the DG Morse complexes $$C_*(X,\Xi^L,\F^u_*) \cong C_*(X,\Xi^M,\F^u_*),$$ where $\F^u_*$ is seen as a $C_*(\Omega X)$-module.
\end{prop}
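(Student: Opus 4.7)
The plan is to adapt Latour's perturbative argument \cite[Théorème 2.18]{Lat94} to the DG setting. Fix a closed 1-form $\omega\in u$ and set $\alpha := dh + r\omega$ with $r>0$ sufficiently small. For such $r$, $\alpha$ is a Morse 1-form with $\Crit(\alpha)=\Crit(h)$ (same indices), and any pseudo-gradient $\xi$ adapted to $h$ remains adapted to $\alpha$: the sign of $\alpha_x(\xi_x)=dh_x(\xi_x)+r\omega_x(\xi_x)$ is governed by $dh_x(\xi_x)<0$ outside a small neighborhood of the critical points, where $\xi$ coincides with $-\mathrm{grad}\,h$ in a Morse chart. Consequently the flows, and therefore the broken trajectory spaces, agree for $(\alpha,\xi)$ and $(h,\xi)$. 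In particular these spaces are compact, so the decomposition $\trajb{x,y}=\bigsqcup_{g\in\pi_1(X)}\trajbi{g}{x,y}$ has only finitely many nonempty pieces.

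Next I would fix a tree $\Y$, a homotopy inverse $\theta$ to the quotient $p$, apply Proposition~\ref{prop: representing chain system} to $(\alpha,\xi)$ to produce a representing chain system $\{s^g_{x,y}\}$, and set $\Xi^L=(\alpha,\xi,s^g_{x,y},o,\Y,\theta)$. By the finite decomposition, $s_{x,y}:=\sum_g s^g_{x,y}$ is a finite chain in $C_{|x|-|y|-1}(\trajb{x,y})$; summing~\eqref{eq: système representatif} over splittings $g'\cdot g''=g$ shows that it satisfies the Morse version of the representing-chain-system identity, so $\Xi^M=(h,\xi,\sum_g s^g_{x,y},\Y,\theta)$ is a bona fide set of DG Morse data. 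Then the Morse-Novikov twisting cocycle $m^L_{x,y}=\sum_g q_{x,y,*}(s^g_{x,y})$ is itself a finite sum, hence lies in the uncompleted subalgebra $C_*(\Omega X)\subset C_*(\Omega X,u)$. Since the $C_*(\Omega X,u)$-module structure on $\F^u_*$ restricts to its $C_*(\Omega X)$-module structure via the inclusion $C_*(\Omega X)\hookrightarrow C_*(\Omega X,u)$, the twisted differential on $C_*(X,\Xi^L,\F^u_*)$ coincides with that of the DG Morse complex built from the cocycle $m^L$ and the $C_*(\Omega X)$-module $\F^u_*$; this proves that $C_*(X,\Xi^L,\F^u_*)$ \emph{is} a DG Morse complex. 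The chain homotopy equivalence with $C_*(X,\Xi^M,\F^u_*)$ then follows from the invariance of DG Morse complexes \cite[Theorem 6.3.1]{BDHO23} applied to $\Xi^L$ and $\Xi^M$.

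The main technical point I expect to require care is in the first step: making precise how small $r$ must be (a uniform $C^0$-bound on $\omega$ together with a lower bound for $-dh(\xi)$ away from $\Crit(h)$ should suffice) in order to preserve the Morse--Smale structure, and verifying that the parametrization map $\Gamma^{\alpha}_{x,y}$, defined through a primitive of $\tilde\pi^*\alpha$ on $\tilde X$, is compatible (as a map into $\Omega X$ via $\theta\circ p$) with the Morse parametrization by values of $h$. Only once these compatibilities are checked is it clear that the constructions on both sides produce identifiable chains in $C_*(\Omega X)$, after which the identification of $C_*(X,\Xi^L,\F^u_*)$ with a DG Morse complex, and hence the chain homotopy equivalence with $C_*(X,\Xi^M,\F^u_*)$, becomes automatic.
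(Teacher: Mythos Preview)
Your overall strategy is the paper's strategy, but two points in your outline would not go through as written.

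\textbf{The critical set.} You assert that for $\alpha = dh + r\omega$ with $r$ small one has $\Crit(\alpha)=\Crit(h)$. This is false in general: at $x\in\Crit(h)$ one has $\alpha_x = r\omega_x$, which vanishes only if $\omega_x=0$. A $C^0$-bound on $\omega$ and a lower bound for $-dh(\xi)$ away from $\Crit(h)$ do not help near the critical points. The paper's fix is to first replace the representative of $u$: write $\omega = dg$ on a small neighborhood $U$ of $\Crit(h)$ and pass to $\alpha_1 := \omega - dg \in u$, which vanishes on $U$. Then $\alpha := dh + \epsilon\alpha_1$ equals $dh$ on $U$, so $\Crit(\alpha)=\Crit(h)$ and the Morse-chart condition for $\xi$ at these critical points is inherited verbatim from $h$. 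Without this modification your pseudo-gradient $\xi$ is not adapted to $\alpha$ (it vanishes at the wrong points), and the identification of trajectory spaces collapses.

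\textbf{The parametrization step.} Once the trajectory spaces agree, the two twisting cocycles still differ: $m^L_{x,y}=\sum_g q^{\alpha}_{x,y,*}(s^g_{x,y})$ is built from $\Gamma^{\alpha}$ (parametrization by values of a primitive of $\tilde\pi^*\alpha = d(h\circ\tilde\pi)+\epsilon\,\tilde\pi^*\alpha_1$), whereas $m^M_{x,y}$ is built from $\Gamma^{h}$ (parametrization by values of $h$). These are \emph{different} chains in $C_*(\Omega X)$, not ``identifiable''; the maps $q^{\alpha}_{x,y}$ and $q^{h}_{x,y}$ are only homotopic, through a reparametrization. Consequently you cannot invoke \cite[Theorem~6.3.1]{BDHO23} ``applied to $\Xi^L$ and $\Xi^M$'': $\Xi^L$ is not a set of DG Morse data (it carries a $1$-form, not a function), and even reinterpreting it with the function $h$ leaves you with a nonstandard parametrization that is not part of the data in \cite{BDHO23}. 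The paper closes this gap by appealing to \cite[Proposition~6.6]{Rie24}, which proves precisely that the DG Morse complex is independent, up to chain homotopy equivalence, of the function used to parametrize trajectories. Your last paragraph anticipates a compatibility check here, but the correct conclusion is not that the chains coincide --- it is that a separate reparametrization-invariance result supplies the needed chain homotopy equivalence $C_*(X,m^L,\F^u_*)\simeq C_*(X,\Xi^M,\F^u_*)$.
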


This proposition is called \textbf{Latour Trick} since the argument used in the proof of this proposition is similar to the argument Latour used to prove in \cite{Lat94} that the Morse-Novikov homology is the Morse homology with local coefficients in $\Lambda_u$.

Using the property of invariance with respect to the set of DG Morse-Novikov data and to the set of DG Morse data, we obtain the following corollary.

\begin{cor}\label{cor : DG Morse Novikov homotopy equivalent to DG Morse}
    For all set of DG Morse-Novikov data $\Xi$ and all set of DG Morse data $\Xi_M$, 

    $$C_*(X,\Xi,\F^u_*) \cong C_*(X,\Xi_M,\F^u_*)$$ and in particular $H_*(X,\F^u_*,u) \cong H_*(X,\F^u_*).$
\end{cor}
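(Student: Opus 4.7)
The plan is to combine the Latour Trick (Proposition \ref{prop : Latour Trick}) with two invariance results already in hand: Theorem \ref{thm : invariance} on the Morse-Novikov side, and the analogous invariance of the enriched Morse complex with respect to its defining data, established in \cite[Section 6]{BDHO23}. The corollary should then follow by concatenating three chain homotopy equivalences.

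First, I would pick any auxiliary Morse function $h : X \to \R$ and feed it into Proposition \ref{prop : Latour Trick}. This produces a pair $(\Xi^L, \Xi^{M,0})$, where $\Xi^L$ is a specific set of DG Morse-Novikov data for $u$ and $\Xi^{M,0}$ is a specific set of DG Morse data on $X$, together with a chain homotopy equivalence
$$C_*(X,\Xi^L,\F^u_*) \;\simeq\; C_*(X,\Xi^{M,0},\F^u_*),$$
where on the right-hand side $\F^u_*$ is viewed as a $C_*(\Omega X)$-module by restriction of scalars along the canonical inclusion $C_*(\Omega X) \hookrightarrow C_*(\Omega X,u)$.

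Next, I would invoke Theorem \ref{thm : invariance} to obtain $C_*(X,\Xi,\F^u_*) \simeq C_*(X,\Xi^L,\F^u_*)$, which applies since $\Xi$ and $\Xi^L$ are both sets of DG Morse-Novikov data for the same class $u \in H^1(X,\R)$. Symmetrically, the enriched Morse invariance from \cite[Section 6]{BDHO23} supplies $C_*(X,\Xi^{M,0},\F^u_*) \simeq C_*(X,\Xi_M,\F^u_*)$. Concatenating the three equivalences yields
$$C_*(X,\Xi,\F^u_*) \;\simeq\; C_*(X,\Xi^L,\F^u_*) \;\simeq\; C_*(X,\Xi^{M,0},\F^u_*) \;\simeq\; C_*(X,\Xi_M,\F^u_*),$$
which is the first claim. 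Passing to $H_*$ and using that chain homotopy equivalences induce isomorphisms on homology gives the second claim $H_*(X,\F^u_*,u) \cong H_*(X,\F^u_*)$.

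The corollary is therefore formal given the ingredients; the genuine work lies entirely in Proposition \ref{prop : Latour Trick}, whose proof must build compatible twisting cocycles on the two sides of $C_*(\Omega X) \hookrightarrow C_*(\Omega X,u)$ for a well-chosen representative $\alpha$ and adapted pseudo-gradient $\xi$ (in the spirit of Latour's original trick in \cite{Lat94}). Once that bridge is in place, the chain of invariances above is the only remaining step, so I do not expect any obstacle at the level of the corollary itself.
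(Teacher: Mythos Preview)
Your proposal is correct and matches the paper's approach exactly: the corollary is treated as an immediate consequence of Proposition~\ref{prop : Latour Trick} together with the two invariance statements (Theorem~\ref{thm : invariance} on the Morse-Novikov side and \cite[Section~6]{BDHO23} on the Morse side), concatenated precisely as you describe. The paper does not even open a proof environment for this corollary---it just records the one-line justification before stating it---so your write-up is, if anything, more detailed than the original.
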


\begin{proof}[Proof of Proposition \ref{prop : Latour Trick}]
    
Let $\alpha_0 \in u$ be a Morse 1-form on $X$, let $h: X \to \R$ be a Morse function and $U$ an open neighborhood of $\Crit(h)$ small enough such that $\alpha_0\lvert_U = dg\lvert_U$, where $g: X \to \R$ is a smooth function. Consider $\alpha_1 = \alpha_0-dg \in u$ that is zero on $U.$ For $\epsilon >0$, consider $$\alpha_{\epsilon} = \epsilon \alpha_1 +  dh \in \epsilon u$$ and $\xi$ a pseudo-gradient adapted to the Morse function $h$. We infer that, if $\epsilon$ is small enough, then $\xi$ is a pseudo-gradient adapted to
$$\alpha:= \alpha_{\epsilon}$$
and $\Crit(\alpha) = \Crit(h).$ For this particular pair $(\alpha,\xi)$, the orbits are those of the Morse-Smale pair $(h,\xi)$ and therefore $$\bigcup_{g \in \pi_1(X)} \trajbi{\alpha,g}{x,y} = \trajbi{\alpha}{x,y} = \trajbi{h}{x,y}.$$ In particular, if $\left\{s^g_{x,y}, \ x,y \in \Crit(\alpha), g \in \pi_1(X)\right\}$ is a representing chain system for the moduli spaces of trajectories $\trajbi{\alpha,g}{x,y}$, then $$s_{x,y} = \sum_{g \in \pi_1(X)} s^g_{x,y} \in C_{|x|-|y|-1}\left(\trajbi{h}{x,y}\right)$$ is representing chain system for the moduli spaces of trajectories $\trajbi{h}{x,y}$ and
$$m_{x,y} = \sum_g m^g_{x,y} \in C_{|x|-|y|-1}(\Omega X)$$
is the Barraud-Cornea twisting cocycle associated with $\Xi^L_M = (h,\xi, s_{x,y},o,\mathcal{Y}, \theta)$ as defined in \cite{BDHO23} up to a correction of parametrization. Indeed, the Barraud-Cornea twisting cocycle is obtained by parametrizing the trajectories by the values of $h \circ \tilde{\pi}$ in the universal cover (see \cite[Proposition 6.6]{Rie24} and Remark \ref{rem : param by an exact 1-form}), while the family $\{m^g_{x,y}\}$ is obtained by parametrizing the trajectories by $h \circ \tilde{\pi} + \epsilon \tilde{h}$ where $d\tilde{h} = \pi^*\alpha_1$. We know from \cite[Proposition 6.6]{Rie24} that DG Morse complexes do not depend, up to chain homotopy equivalence, on the function we use to parametrize the trajectories. It follows that the complexes $C_*(X,\Xi,\F^u_*)$ and $ C_*(X,\Xi_M,\F^u_*)$ are homotopy equivalent.
\end{proof}

\subsection{Computation of \texorpdfstring{$H_0$}{H0} and \texorpdfstring{$H_1$}{H1}}\label{subsection : Computation of H0 and H1}

Let $u \in H^1(X,\R)$ and let $\F^u$ be a right $C_*(\Omega X,u)$-module.
Assume that $u \neq 0$. Latour has shown \cite[Lemme 4.1]{Lat94} that there exists a Morse 1-form $\alpha \in u$ such that $\Crit_0(\alpha) = \emptyset.$ Let $\Xi = (\alpha, \xi, \dots)$ be a set of DG Morse-Novikov data.

\paragraph{Computation of $H_0$.}

With this particular set of data,
$$C_0(X,\Xi,\F^u) = \F^u_0 \otimes \Z\Crit_0(\alpha) = 0.$$

Since $H_0(X,\F)$ does not depend on the chosen set of data, it follows that $H_0(X,\F^u) = 0.$

\paragraph{Computation of $H_1$.}

We prove that

$$\begin{array}{lll}
    H_1(X,\F^u) & \cong H_1(H_0(\F^u_*)  \otimes_{\Lambda_u} C_*(\alpha,\xi)) & \cong H_0(\F^u_*) \otimes_{\Lambda_u} \Tor^{\Z[\pi_1(X)]}_1(\Z, \Lambda_u)  \\
     & \cong H_1(\tilde{C}_*(h,\xi) ; H_0(\F^u_*)) & \cong \Tor^{\Z[\pi_1(X)]}_1(\Z , H_0(\F^u)), 
\end{array}$$

where $\tilde{C}_*(h,\xi)$ is the lifted Morse complex associated with a Morse-Smale pair $(h,\xi)$ that computes the homology of the universal cover $\tilde{X}.$
 In particular, $H_1(X,\F^u)$ depends only on $\pi_1(X)$ and on $H_0(\F^u_*).$ We start with the first line using a Morse 1-form $\alpha$ such that $\Crit_0(\alpha) = \emptyset$

Let $(E^r_{p,q})$ be the spectral sequence associated with the filtration $$F_p(C_k(X,\Xi,\F^u)) = \bigoplus_{\substack{i+j=k \\ i \leq p}} \F_j^u \otimes \Z \Crit_p(\alpha).$$

We proved in Section \ref{subsection: suite spectrale M-N} that $E^1_{p,q} = H_q(\F^u_*) \otimes \Z \Crit_p(\alpha)$ and $E^2_{p,q} = H_p(H_q(\mathcal{F}^u_*) \otimes_{\Lambda_u} C(\alpha,\xi)).$
For degree reasons, $$E^{\infty}_{1,0} = E^{2}_{1,0} = H_1(H_0(\mathcal{F}^u_*) \otimes_{\Lambda_u} C(\alpha,\xi)).$$ Since $\Crit_0(\alpha) = \emptyset$, $$E^{\infty}_{0,1} = E^0_{0,1} = \F^u_1 \otimes \Z \Crit_0(\alpha) = 0.$$

McCleary proved in \cite[Theorem 2.6]{McC01}, that if we denote $$F_pH_k(X,\F^u) = \textup{Im}\left(H_k(F_p(C_*(X,\Xi,\F^u))) \overset{\textup{inclusion}_*}{\to} H_k(X,\F^u) \right),$$ then
$$0= E^{\infty}_{0,1} = F_0 H_1(X,\F^u)$$ and
$$E^{\infty}_{1,0} = \faktor{F_1H_1(X,\F^u)}{F_0H_1(X,\F^u)} = F_1H_1(X,\F^u).$$

Since $F_2(C_2(X,\Xi,\F^u)) = C_2(X,\Xi,\F^u)$ and $F_2(C_1(X,\Xi,\F^u)) = C_1(X,\Xi,\F^u)$, it follows that $H_1(F_2(C_*(X,\Xi,\F^u))) = H_1(X,\F^u)$ and 
$$0 = E^{\infty}_{2,-1} = \faktor{H_1(X,\F^u)}{E^{\infty}_{1,0}}.$$

We conclude that \begin{equation}
    H_1(X,\F^u) = E^{\infty}_{1,0} = E^2_{1,0} = H_1(H_0(\mathcal{F}^u_*) \otimes_{\Lambda_u} C(\alpha,\xi)).
\end{equation} 

We proceed in the same manner using the spectral sequence of change of coefficients $(F^r_{p,q})$ (see \cite[Théorème 5.5.1]{God73}) that converges towards  $H_*(C_*(\alpha,\xi), H_0(\F^u_*))$. Its second page is given by $$\Tor_p^{\Lambda_u}(H_q(X; u), H_0(\F^u_*)).$$

Since $$F^{\infty}_{1,0} = F^2_{1,0} = \Tor^{\Lambda_u}_1(H_0(\F^u_*), H_0(X; u)) = 0$$ and $$F^{\infty}_{0,1} = F^2_{0,1} = H_0(\F^u_*) \otimes_{\Lambda_u} H_1(X;u) \cong H_0(\F^u_*) \otimes_{\Lambda_u} \Tor^{\Z[\pi_1(X)]}_1(\Z, \Lambda_u),$$ it follows that

\begin{equation}
    H_1(X, \F^u_*) \cong H_0(\F^u_*) \otimes_{\Lambda_u} \Tor^{\Z[\pi_1(X)]}_1(\Z, \Lambda_u).
\end{equation}

Indeed, using the Universal Coefficient theorem for principal domains, we have a short exact sequence $$0 \to \underbrace{H_1(\tilde{X})}_{=0} \otimes_{\Z[\pi_1(X)]} \Lambda_u \to  H_1(X;u) \to \Tor^{\Z[\pi_1(X)]}_1(H_0(\tilde{X}), \Lambda_u)\to 0$$ that gives an isomorphism  $H_1(X;u) \cong  \Tor^{\Z[\pi_1(X)]}_1(\Z, \Lambda_u)$.

We now use the Latour Trick to prove that $$H_1(X,\F^u) \simeq H_1(\tilde{C}_*(h,\xi) ; H_0(\F^u)) \simeq \Tor^{\Z[\pi_1(X)]}_1(\Z, H_0(\F^u_*)).$$

Let $h : X \to \R$ be a Morse function and let $\xi$ be a pseudo-gradient adapted to $h$. We proved in Proposition \ref{prop : Latour Trick} that the enriched Morse-Novikov homology $H_*(X,\F^u_*)$ coincides with the enriched Morse homology still denoted $H_*(X,\F^u_*)$ where $\F^u_*$ is here considered as a right $C_*(\Omega X)$-module. There is a spectral sequence $(E^r_{p,q})$ that converges towards $H_*(X,\F^u_*)$ and its second page is 

$$E^2_{p,q} = H_{p}( \tilde{C}_*(h,\xi) ; H_q(\F^u_*)).$$

In particular, $E^2_{1,0} = H_1(\tilde{C}_*(h,\xi) ; H_0(\F^u_*))$ and $$E^2_{0,1} = H_0(\tilde{C}_*(h,\xi) ; H_1(\F^u_*)) \cong H_1(\F^u_*) \otimes_{\Z[\pi_1(X)]} H_0(\tilde{X}) \cong H_1(\F^u_*) \otimes_{\Z[\pi_1(X)]}  \Z  = 0.$$

Indeed, let $\sigma \otimes 1 \in H_1(\F^u_*) \otimes_{\Z[\pi_1(X)]}  \Z$ and $g  \in \pi_1(X)$ such that $u(g)<0.$  Since $H_1(\F^u_*)$ is a $\Lambda_u$-module and $1-g$ is invertible in $\Lambda_u$ of inverse $(1-g)^{-1} = \sum_{i\geq 0} g^i,$ it follows that $$\sigma \otimes 1  =  (1-g)(1-g)^{-1} \cdot \sigma \otimes 1 =  (1-g)^{-1} \cdot \sigma \otimes (1-g)\cdot 1 =  (1-g)^{-1} \cdot \sigma \otimes (1-1) = 0.$$

Using the Universal Coefficient theorem, it follows that \begin{equation}
    H_1(X,\F^u) \cong H_1(\tilde{C}(h,\xi); H_0(\F^u)) \cong \Tor^{\Z[\pi_1(X)]}_1(\Z, H_0(\F^u)).
\end{equation}

\section{A Fibration Theorem for enriched Morse-Novikov theory. Proof of Theorem \ref{theorem C}}\label{section : A Fibration Theorem for DG Morse-Novikov theory}

In this section, we present a Morse-Novikov generalization of the Fibration Theorem

\begin{thm} \cite[Theorem 7.2.1]{BDHO23}
    Let $\Xi$ be a set of DG Morse-Novikov data on $X$ and let $E \to X$ be a Hurewicz fibration over $X$ of fiber $F=\pi^{-1}(\star)$. Let $\Phi: E \ftimes{\pi}{\ev_0} \mathcal{P}X \to E$ be a transitive lifting function for this fibration and endow $C_*(F)$ with the associated $C_*(\Omega X)$-module structure. Then, there exists a quasi-isomorphism

    $$\Psi_E: C_*(X,\Xi,C_*(F)) \to C_*(E).$$
\end{thm}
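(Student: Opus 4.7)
The plan is to reduce this statement to the classical enriched Morse Fibration Theorem via the Latour Trick (Proposition \ref{prop : Latour Trick}), exploiting the fact that an enriched Morse-Novikov complex is independent, up to chain homotopy equivalence, of the chosen Morse-Novikov data, and that for a suitable choice of data it literally coincides with an honest enriched Morse complex, to which the BDHO23 theorem then applies directly.

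Concretely, I would first apply Theorem \ref{thm : invariance} to replace the arbitrary Morse-Novikov data $\Xi$ by a Latour Trick set $\Xi^L = (\alpha, \xi, s^g_{x,y}, o, \mathcal{Y}, \theta)$, where $\xi$ is adapted to some Morse function $h : X \to \R$ so that every orbit of $\xi$ is already a Morse trajectory of $(h, \xi)$, and the spaces $\bigsqcup_g \trajbi{\alpha,g}{x,y}$ coincide with the finite-dimensional, finite-component Morse moduli $\trajb{x,y}$ of $(h,\xi)$. Then Corollary \ref{cor : DG Morse Novikov homotopy equivalent to DG Morse} yields a chain homotopy equivalence $C_*(X, \Xi^L, C_*(F)) \simeq C_*(X, \Xi^M, C_*(F))$, where $\Xi^M$ is the set of DG Morse data $(h, \xi, \sum_g s^g_{x,y}, \mathcal{Y}, \theta)$ and $C_*(F)$ carries its classical $C_*(\Omega X)$-module structure. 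Finally, the BDHO23 Fibration Theorem supplies a quasi-isomorphism $\Psi^M_E : C_*(X, \Xi^M, C_*(F)) \to C_*(E)$, and the composition
$$\Psi_E : C_*(X, \Xi, C_*(F)) \overset{\sim}{\to} C_*(X, \Xi^L, C_*(F)) \overset{\sim}{\to} C_*(X, \Xi^M, C_*(F)) \overset{\Psi^M_E}{\to} C_*(E)$$
is the desired quasi-isomorphism.

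The main subtlety is making sense of $C_*(X, \Xi, C_*(F))$ for a general $\Xi$: the twisting cocycle $m_{x,y} = \sum_g m^g_{x,y}$ a priori lives in the Novikov completion $C_*(\Omega X, u)$, whereas $C_*(F)$ is only a $C_*(\Omega X)$-module, so the expression $\sigma \cdot m_{x,y}$ occurring in the twisted differential must be carefully interpreted. The Latour Trick approach bypasses this issue for the data $\Xi^L$, since there the cocycle is finitely supported in each bidegree $(x,y)$ and the classical $C_*(\Omega X)$-action already makes sense of every term of the differential; the invariance argument then transports the quasi-isomorphism to any other Morse-Novikov data, provided one has fixed a convention extending the $C_*(\Omega X)$-structure on $C_*(F)$ to host the formal Novikov sums in the general case. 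I expect this bookkeeping check to be the only delicate part of the argument, while the topological content reduces entirely to the Morse-theoretic fibration statement already established in \cite{BDHO23}.
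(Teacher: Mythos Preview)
This theorem is not proved in the paper: it is the classical Fibration Theorem of \cite{BDHO23}, quoted as background at the start of Section~\ref{section : A Fibration Theorem for DG Morse-Novikov theory} before the paper states and proves its Morse--Novikov generalization (Theorem~\ref{thm: Fibration Theorem Morse Novikov}). The phrase ``DG Morse--Novikov data'' in the quoted statement is almost certainly a slip for ``DG Morse data'': the coefficients $C_*(F)$ and the target $C_*(E)$ are the uncompleted objects, and BDHO23 works entirely over $C_*(\Omega X)$. So there is no ``paper's own proof'' of this statement to compare against.

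Taking the statement literally, the gap you identify in your last paragraph is not mere bookkeeping but a genuine obstruction. For a Morse--Novikov datum $\Xi$ with $u\neq 0$, the twisting cocycle $m_{x,y}=\sum_g m^g_{x,y}$ is only an element of $C_*(\Omega X,u)$, and $C_*(F)$ carries no $C_*(\Omega X,u)$-module structure in general (this is exactly why the paper imposes $\pi^*u=0$ and passes to the completion $C_*(F,u)$ in its own theorem). In particular, the very complex $C_*(X,\Xi,C_*(F))$ you begin with is undefined, and the invariance Theorem~\ref{thm : invariance} and the Latour Trick Proposition~\ref{prop : Latour Trick} you invoke are both stated only for $C_*(\Omega X,u)$-modules, so you cannot apply them to $C_*(F)$ either. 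There is no ``convention'' that fixes this without an additional hypothesis on the fibration.

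If instead one reads the statement as intended (Morse data), then no reduction is needed at all: your $\Xi^M$ is the starting datum and the result is literally \cite[Theorem 7.2.1]{BDHO23}. For what it is worth, the paper's proof of the actual Morse--Novikov generalization does use the Latour Trick to pick special data $\Xi^L$, but then constructs $\Psi_E$ directly via representing chain systems for the Latour cells and a spectral-sequence comparison, rather than invoking BDHO23 as a black box; this direct construction is needed because the target there is the completed complex $C_*(E,u)$, which BDHO23 does not produce.
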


The holonomy of the fiber induced by the transitive lifting function $\Phi : F \times \Omega X \to F$ makes $C_*(F)$ the natural DG right $C_*(\Omega X)$-module to consider in the context of a fibration. We will define a Novikov completion $C_*(F,u)$ of $C_*(F)$ that carries a $C_*(\Omega X,u)$-module structure induced by $\Phi : F \times \Omega X \to F$ and will prove that the complex $C_*(X,C_*(F,u))$ is quasi-isomorphic to a Novikov completion $C_*(E,u)$ of $C_*(E)$.

\subsection{Primitive and Novikov completion for the cubical complex of the total space of a fibration}

Let $F \hookrightarrow \fibration$ be a (Hurewicz) fibration and fix $\star_E \in F$. We can and will assume that $E$ is connected. 
Let $u \in H^1(X)$ be a cohomology class such that the morphism $\pi^*u := u\circ \pi : \pi_1(E) \to \R$ is zero. 

Let $$\hat{X}_u := \faktor{\mathcal{P}X}{\sim_u} \overset{\hat{\pi}}{\to} X$$ be the integration cover of $u \in H^1(X,\R)$, where $[\gamma] \sim_u [\tau]$ if $\ev_0(\gamma) = \ev_0(\tau)$, $\ev_1(\gamma) = \ev_1(\tau)$ and $[\gamma\#\tau^{-1}] \in \Ker(u)$. The covering $\hat{\pi} : \hat{X}_u \to X$ is given by $\hat{\pi}([\gamma]) = \ev_1(\gamma).$

\begin{lemme}\label{lemme : pi*u=0 iff factors through integration cover}

Assume $\pi^*u=0$. The map $$\deffct{\varphi_u}{E}{\hat{X}_u}{e}{[\pi \circ \gamma_e]},$$ where $\gamma_e \in \mathcal{P}_{\star_E \to e}$ is any path joining $\star_E$ to $e$ is well-defined and the fibration $\fibration$ factors through $$\xymatrix{
E \ar[dr]_{\pi} \ar[r]^{\varphi_u}& \hat{X}_u \ar[d]^{\hat{\pi}} \\
& X.
}$$ 
If we also assume that $F$ is path-connected, then $\varphi_u$ is continuous and is a fibration.
\end{lemme}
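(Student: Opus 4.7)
The plan is to verify four statements in sequence: well-definedness of $\varphi_u$, the factoring identity $\hat{\pi}\circ\varphi_u = \pi$, continuity, and the Hurewicz fibration property of $\varphi_u$.

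For well-definedness, I would argue as follows. Given two paths $\gamma_1,\gamma_2 \in \mathcal{P}_{\star_E\to e}E$, the loop $\gamma_1\#\gamma_2^{-1}$ represents some class $g\in\pi_1(E,\star_E)$. The hypothesis $\pi^*u=0$ says precisely that $u(\pi_*g)=0$, i.e. $\int_{\pi\circ\gamma_1}\alpha - \int_{\pi\circ\gamma_2}\alpha = 0$ for any $\alpha\in u$. This is exactly the condition for $[\pi\circ\gamma_1] \sim_u [\pi\circ\gamma_2]$ in $\hat{X}_u$, so $\varphi_u(e)$ does not depend on the choice of $\gamma_e$. The factoring relation $\hat{\pi}\circ\varphi_u=\pi$ then follows directly from the definitions: $\hat{\pi}([\pi\circ\gamma_e]) = \ev_1(\pi\circ\gamma_e) = \pi(e)$.

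For continuity, the idea is to recognize $\varphi_u$ as the unique continuous lift of $\pi$ through the covering map $\hat{\pi}$. The integration cover is characterized by $\hat{\pi}_*\pi_1(\hat{X}_u) = \ker u \subset \pi_1(X,\star)$, and the hypothesis $\pi^*u=0$ means exactly that $\pi_*\pi_1(E,\star_E) \subset \ker u$. Invoking the classical lifting criterion for coverings (which requires $E$ connected and locally path-connected, deducible from the fibration structure together with path-connectedness of $F$ and local path-connectedness of the manifold $X$), there exists a unique continuous lift $\tilde{\pi}\colon E\to\hat{X}_u$ of $\pi$ sending $\star_E$ to the class $[c_\star]$ of the constant path. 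By construction, $\tilde{\pi}(e)$ is obtained by lifting any path from $\star$ to $\pi(e)$ starting at $[c_\star]$, which gives exactly $[\pi\circ\gamma_e] = \varphi_u(e)$.

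For the fibration property, I would use the uniqueness of lifts through the covering $\hat{\pi}$ together with the fibration property of $\pi$. Given a homotopy lifting problem $h\colon Z\to E$ and $H\colon Z\times I \to \hat{X}_u$ with $\varphi_u\circ h = H(\cdot,0)$, composition with $\hat{\pi}$ produces a homotopy lifting problem for $\pi$ with initial datum $h$ and homotopy $\hat{\pi}\circ H$. Since $\pi$ is a Hurewicz fibration, this yields $\tilde{H}\colon Z\times I\to E$ with $\pi\circ\tilde{H} = \hat{\pi}\circ H$ and $\tilde{H}(\cdot,0) = h$. Both $\varphi_u\circ\tilde{H}$ and $H$ then lift the same map $\hat{\pi}\circ H\colon Z\times I \to X$ through $\hat{\pi}$, and they agree at time $0$; by uniqueness of path-lifting for coverings (applied fiberwise over $Z$), they agree everywhere, so $\tilde{H}$ solves the original lifting problem.

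The main subtlety I expect is in the continuity step, namely verifying that the lifting criterion for covering spaces genuinely applies; specifically, checking that the combination of the fibration $\pi$ with path-connected fiber $F$ over the manifold $X$ produces a total space $E$ which is connected and locally path-connected. Once this mild point-set input is in place, the rest of the argument is a standard combination of the covering-space lifting lemma and uniqueness of lifts.
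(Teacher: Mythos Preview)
Your argument for well-definedness, the factoring identity, and the fibration property matches the paper's proof essentially line by line (the paper also reduces the fibration property to uniqueness of sections of the pulled-back cover $g^*\hat{X}_u \to W\times[0,1]$).

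The only genuine difference is in the continuity step. You invoke the covering-space lifting criterion, which requires $E$ to be locally path-connected; the paper instead gives a direct sequential argument: given $e_n\to e$, it manufactures paths $\delta_n\in\mathcal{P}_{e_n\to e}E$ by first using the transitive lifting function $\Phi$ to transport $e_n$ into the fibre $F$ along short paths in $X$, then using local path-connectedness of $F$ to connect the resulting points inside $F$, and finally checking that $\pi\circ\delta_n$ converges to a loop in $\ker u$. Your route is cleaner and more conceptual, but note that the point-set input you flag (that $E$ is locally path-connected) genuinely needs \emph{local} path-connectedness of $F$, not merely path-connectedness as in the lemma statement (indeed, for $X=\mathrm{pt}$ one has $E=F$). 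The paper's own proof also uses local path-connectedness of $F$ despite the statement, so this is the intended hypothesis; once you assume it, the construction the paper carries out is exactly what one would do to verify your claim that $E$ is locally path-connected. In that sense the two continuity arguments are equivalent in content, with yours packaging the work into a citation of the lifting lemma and the paper's unpacking it explicitly.
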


\begin{proof}
    Let $e \in E$  and $\gamma_e, \tau_e \in \mathcal{P}_{\star_E \to e} E$. Then $\pi^*u(\gamma_e \# \tau_e^{-1}) = 0$ and therefore $[\pi \circ \gamma_e] = [\pi\circ\tau_e] \in \hat{X}_u.$ It follows that the map $$\deffct{\varphi_u}{E}{\hat{X}_u}{e}{[\pi \circ \gamma_e]}$$ is well-defined since $[\pi \circ \gamma_e] \in \hat{X}_u$ is independent on the choice of $\gamma_e \in \mathcal{P}_{\star_E \to e}E.$ Moreover it is clear from the definition that $\hat{\pi} \circ \varphi_u(e) = \ev_1(\pi \circ \gamma_e) = \pi(e)$ for any $e \in E$. 

    We now prove that $\varphi_u$ is continuous. Let $e \in E$ and $(e_n) \in E^{\N}$ be a sequence in $E$ that converges towards $e$. The goal is to define a sequence of paths $(\delta_n)$ such that for every $n \geq 0$, $\delta_n \in \mathcal{P}_{e_n \to e} E$ and 
    $\pi\circ\delta_n$ converges to a loop $\ell$ in $\Ker(u).$
    Indeed, if we find such a sequence of paths, then for any path $\gamma_e \in \mathcal{P}_{\star_E \to e} E$, we can compute
     $$
        \varphi_u(e_n) = [\pi \circ (\gamma_e \# \delta_n^{-1})] \to [(\pi \circ \gamma_e) \# \ell^{-1}] = [\pi \circ \gamma_e] = \varphi_u(e).$$

    Let $(b_n) = (\pi(e_n))$ a sequence in $X$ that converges towards $b = \pi(e).$ Since $X$ is locally path-connected, there exists a family of paths $\gamma_n \in \mathcal{P}_{b_n \to b} X$, for $n$ sufficiently large, that converges towards the constant path at $b$.  Let $\gamma \in \mathcal{P}_{\star \to b}$ and $a\geq 0$ such that $\gamma:[0,a] \to X$. For $n \in \N$, denote
    $$g_n =  \Phi(e_n,\gamma_n\# \gamma^{-1}) \in \pi^{-1}(\star)$$
    and
    $$\eta_n = s \mapsto \Phi(e_n,(\gamma_n \# \gamma^{-1})_{\lvert_{[0,s]}}) \in \mathcal{P}_{e_n \to g_n} E.$$
    Since $\Phi$ is continuous, the sequence $(g_n)$ converges towards $ v=\Phi(e,\gamma^{-1}) \in F$. Since $F$ is supposed locally path-connected, there exists a family of paths $$\tau_n \in \mathcal{P}_{g_n \to v} F$$ for $n$ large enough that converges to the constant path at $v$. Consider the sequence of paths defined by
    
    $$\nu_n = \eta_n \# \tau_n \# (s \mapsto \Phi(v,\gamma_{\lvert_{[0,s]}})) \in \mathcal{P}_{e_n \to e'} E,$$
    
    where $e'= \Phi(v,\gamma) = \Phi(e,\gamma^{-1} \gamma)$. Let 
    $$\kappa :t \mapsto \Phi(e, \gamma^{-1}_{\lvert_{[0,t]}} \# \gamma_{\lvert_{[a-t,a]}}) \in \mathcal{P}_{e' \to e} \pi^{-1}(b).$$

    \begin{figure}[h!]
        \centering
        \includegraphics[width=0.5\linewidth, height = 9cm]{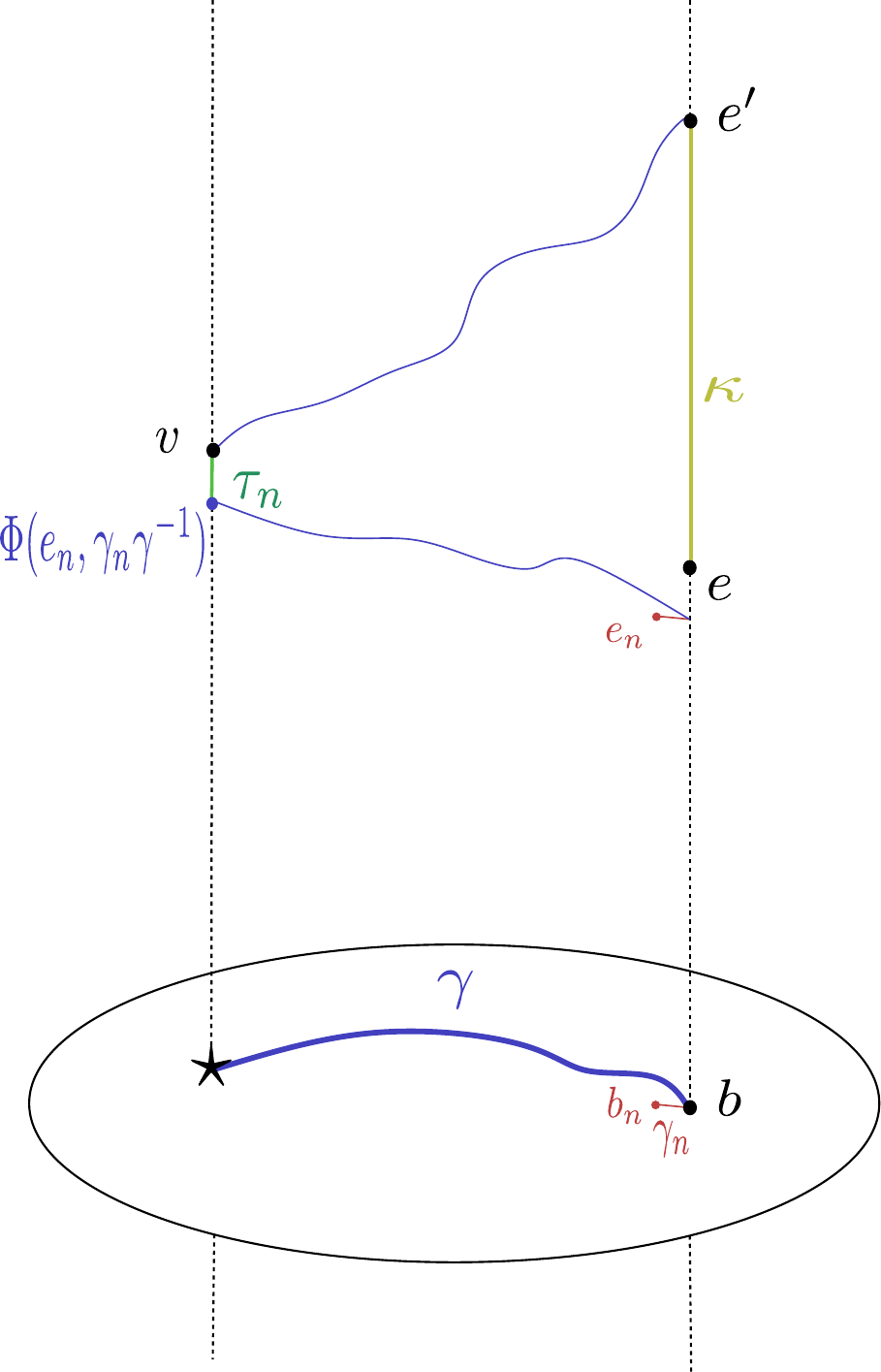}
        \caption{The path $\delta_n$.}
        \label{fig : delta n}
    \end{figure}

    Define the family of paths $\delta_n = \kappa \# \nu_n \in \mathcal{P}_{e_n \to e} E$ (see Figure \ref{fig : delta n}). Since $\pi \circ \delta_n = \gamma_n \# \gamma^{-1} \#  \gamma$, it follows that $\pi \circ \delta_n$ converges towards $\ell = \gamma^{-1}\# \gamma \in \Ker(u)$.

    It remains to prove that $\varphi_u$ is a fibration. Let $W$ be a topological space, $a: W \to E$ be a continuous map and let $b : W \times [0,1] \to \hat{X}_u$ be a continuous map such that $b(\cdot,0) = \varphi_u \circ a.$

$$\xymatrix{
W \times \{0\} \ar@{^{(}->}[d] \ar[r]^{a} & E \ar[d]^{\varphi_u} \\
W \times [0,1] \ar@{-->}[ur]^r \ar[dr]^{\hat{\pi} \circ b} \ar[r]^b & \hat{X}_u \ar[d]^{\hat{\pi}} \\
& X
}$$
    
Since $\pi = \hat{\pi} \circ \varphi$ is a fibration, there exists $r : W \times [0,1] \to E$ such that $\hat{\pi} \circ \varphi_u \circ r = \hat{\pi} \circ b =: g.$ It suffices to prove that $\varphi_u \circ r = b$ to conclude that $\varphi_u$ is a fibration.
The maps $\varphi_u \circ r: W \times [0,1] \to \hat{X}_u$ and $b : W \times [0,1] \to \hat{X}_u$ induce two sections of the covering $g^*\hat{X}_u \to  W \times [0,1]$ that agree on $W \times \{0\}.$ There are therefore equal.
\end{proof}

\begin{rem}
    Conversely, if there exists a map $\varphi_u : E \to \hat{X}_u$ such that $\pi = \hat{\pi} \circ \varphi_u$, then $$\pi^*u = \varphi_u^*\hat{\pi}^*u = \varphi_u^*0 = 0$$ since $\pi_1(\hat{X}_u) = \ker(u).$
\end{rem}

Let $\alpha \in u$.
Since $\hat{\pi}^*u = 0 \in H^1(\hat{X}_u,\R)$, the map $\hat{f} : \hat{X}_u \to \R$, $\hat{f}([\gamma]) = \int_{\gamma} \alpha$ is a primitive of $\hat{\pi}^*\alpha \in \hat{\pi}^*u.$ 

\begin{defi}\label{def : primitive of pi alpha}
   We will refer to the map $f = \hat{f} \circ \varphi_u : E \to \R$, $f(e) = \int_{\varphi_u(e)} \alpha$ as a \textbf{primitive of} $\boldsymbol{\pi^*\alpha}.$  
\end{defi}

Remark that the definition of $\varphi_u : E \to \hat{X}_u$ depends on a choice of $\star_E \in F$.
We now prove that a primitive of $\pi^*\alpha$ depends only up to a constant on this choice of $\star_E$, is compatible with the transitive lifting function and is continuous if the fiber of the fibration is locally path-connected.

\begin{prop}\label{prop : existence of primitive}
    Assume that $F$ is locally path-connected. Let $\star_E \in \pi^{-1}(\star)$ and $\alpha \in u$. If $\pi^*u = 0$, the primitive of $\pi^*\alpha$ $$\deffct{f= \hat{f}\circ \varphi_u}{E}{\R}{e}{\int_{\pi\circ \gamma_e} \alpha,}$$ where $\gamma_e \in \mathcal{P}_{\star_E \to e} E$ is any path joining $\star_E$ to $e$, is continuous and depends only up to a constant on the chosen lift $\star_E$ of $\star.$ Moreover, it satisfies 
    \begin{equation}\label{eq : primitive f}
        \forall (e,\gamma) \in E \ftimes{\pi}{ev_0} \mathcal{P} X, \ f(\Phi(e,\gamma)) - f(e) = \int_{\gamma} \alpha.
    \end{equation}
\end{prop}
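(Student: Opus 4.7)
The plan is to treat each of the four assertions separately, leveraging Lemma \ref{lemme : pi*u=0 iff factors through integration cover} which already supplies us with the continuous fibration $\varphi_u : E \to \hat{X}_u$ factoring $\pi$.

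First I would settle well-definedness and continuity by writing $f = \hat{f} \circ \varphi_u$ with $\hat{f}([\gamma]) := \int_{\gamma} \alpha$. Integration of $\alpha$ along paths is a continuous function $\mathcal{P}X \to \R$; moreover, if $[\gamma]\sim_u[\tau]$ then $\int_{\gamma}\alpha - \int_{\tau}\alpha = \int_{\gamma\#\tau^{-1}}\alpha = u([\gamma\#\tau^{-1}]) = 0$ by definition of the integration cover, so $\hat{f}$ descends continuously to $\hat{X}_u$. Since $\varphi_u$ is continuous under the assumption that $F$ is locally path-connected (Lemma \ref{lemme : pi*u=0 iff factors through integration cover}), $f$ is continuous. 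Well-definedness of $f$ (independence of the choice of $\gamma_e$) is already contained in the previous lemma.

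For the dependence on the basepoint, let $\star_E'\in F$ be another choice and let $\eta \in \mathcal{P}_{\star_E \to \star_E'} E$ (which exists since we assumed $E$ connected). For any $e \in E$ and any $\gamma_e' \in \mathcal{P}_{\star_E' \to e} E$ one has $\eta \# \gamma_e' \in \mathcal{P}_{\star_E \to e} E$, whence
\begin{equation*}
f_{\star_E}(e) - f_{\star_E'}(e) = \int_{\pi \circ (\eta \# \gamma_e')} \alpha - \int_{\pi \circ \gamma_e'} \alpha = \int_{\pi \circ \eta} \alpha,
\end{equation*}
which is a constant independent of $e$.

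Finally, the compatibility with $\Phi$ is almost a tautology. Given $(e,\gamma) \in E \ftimes{\pi}{\ev_0}\mathcal{P}X$ and $\gamma_e \in \mathcal{P}_{\star_E \to e} E$, the path $\tilde{\gamma}: s \mapsto \Phi(e,\gamma_{\lvert_{[0,s]}})$ is a path in $E$ from $e$ to $\Phi(e,\gamma)$ lifting $\gamma$, so $\gamma_e \# \tilde{\gamma} \in \mathcal{P}_{\star_E \to \Phi(e,\gamma)} E$ and $\pi \circ (\gamma_e \# \tilde{\gamma}) = (\pi \circ \gamma_e) \# \gamma$. Therefore
\begin{equation*}
f(\Phi(e,\gamma)) = \int_{(\pi \circ \gamma_e)\#\gamma} \alpha = \int_{\pi \circ \gamma_e} \alpha + \int_{\gamma} \alpha = f(e) + \int_{\gamma} \alpha.
\end{equation*}
I expect no serious obstacle here: the only nontrivial input is the continuity of $\varphi_u$, which is precisely where the local path-connectedness of $F$ is used and which has already been established in the preceding lemma; everything else reduces to juggling concatenations of paths and the vanishing of $u$ on the equivalence relation $\sim_u$.
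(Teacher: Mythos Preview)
Your proposal is correct and follows essentially the same approach as the paper: both argue continuity via the factorization $f=\hat{f}\circ\varphi_u$ through the preceding lemma, both prove basepoint-independence by concatenating with a fixed path between the two basepoints, and both verify \eqref{eq : primitive f} by lifting $\gamma$ via $s\mapsto\Phi(e,\gamma_{\lvert_{[0,s]}})$ and concatenating with $\gamma_e$. The only cosmetic difference is that you spell out why $\hat{f}$ descends continuously to $\hat{X}_u$, whereas the paper simply invokes that $\hat{f}$ is a primitive of the exact form $\hat{\pi}^*\alpha$ on the manifold $\hat{X}_u$.
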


\begin{proof}
    If $\star'_E \in \pi^{-1}(\star)$, denote $f' : E \to \R$ the map defined using $\star'_E$ instead of $\star_E$. We will prove that $f'-f : E \to \R$ is constant. Choose a path $\gamma_{\star} \in \mathcal{P}_{\star'_E \to \star_E} E$ (since $E$ is assumed to be path-connected) and let $e \in E$. If $\gamma_e \in \mathcal{P}_{\star_E \to e} E$, then $\gamma_* \# \gamma_e \in \mathcal{P}_{\star'_E \to e} E$.
    Then
    \begin{align*}
        f'(e) - f(e) &= \int_{\pi \circ( \gamma_*\# \gamma_e)} \alpha - \int_{\pi\circ\gamma_e} \alpha \\
        &= \int_{\pi\circ \gamma_*} \alpha.
    \end{align*}

    Let $(e,\gamma) \in E \ftimes{\pi}{\ev_0} \mathcal{P}X$ and denote $\eta \in \mathcal{P}_{e \to \Phi(e,\gamma)} E$ defined by $$\eta(s) = \Phi(e,\gamma_{\lvert_{[0,s]}}).$$
    If $\gamma_e \in \mathcal{P}_{\star_E \to e} E$, then $\gamma_e\# \eta \in \mathcal{P}_{\star_E \to \Phi(e,\gamma)}. $ It follows that 

    \begin{align*}
        f(\Phi(e,\gamma)) - f(e) = \int_{\pi\circ \eta} \alpha = \int_{\gamma} \alpha.
    \end{align*}

    The map $f=\hat{f}\circ \varphi_u$ is a composition of continuous maps and is therefore continuous.
\end{proof}

\begin{defi}
    Let $k \in \N$. Define the complexes
    
    $$C_k(E,f):= \left\{ \sum_{i \geq 0} n_i \sigma_i , \ n_i \in \Z, \ \sigma_i \in C^0([0,1]^k,E), \ \forall c\in \R,  \#\left\{i, \ n_i \neq 0 \textup{ and } \max_{t \in [0,1]^k} f \circ \sigma_i(t) > c\right\} < +\infty\right\},$$ 

    endowed with the differential induced by the differential on $C_*(E)$    and

    $$C_k(F,f):= \left\{ \sum_{i \geq 0} n_i \sigma_i , \ n_i \in \Z, \ \sigma_i \in C^0([0,1]^k,F), \ \forall c\in \R,  \#\left\{i, \ n_i \neq 0 \textup{ and } \max_{t \in [0,1]^k} f \circ \sigma_i(t) > c\right\} < +\infty\right\},$$

    endowed with the differential induced by the differential on $C_*(F)$. 
We will denote the associated homology groups $H_*(E,u)$ and $H_*(F,u)$.
\end{defi}

\begin{lemme}
    For each $k \in \N$ and $c \in \R$, let

    $$E_{c,f} = f^{-1}((-\infty,c]) \textup{ and } F_{c,f} = E_{c,f} \cap F.$$

    We have the following isomorphisms: $$C_k(E,f) \simeq \limproj C_k(E,E_{c,f}),$$ and $$C_k(F,f) \simeq \limproj C_k(F,F_{c,f}).$$
\end{lemme}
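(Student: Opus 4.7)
The plan is to mirror the argument used earlier in the paper for the identity $C_k(\Omega X,u) = \limproj C_k(\Omega X, \Omega_c X)$, which was itself an application of the appendix Lemma \ref{Lemme: limite projective d'un quotient de complexe} on projective limits of quotient complexes together with Corollary \ref{cor : limproj with parameter in R} on $\R$-indexing. Since the two isomorphisms in the statement are proved identically (simply replace $E$ by $F$ and $E_{c,f}$ by $F_{c,f} = F \cap E_{c,f}$), I would concentrate on the case of $E$ and state that the case of $F$ is verbatim the same.

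First I would set up the projective system. Because $f$ is real-valued, $\{E_{c,f}\}_{c \in \R}$ is an increasing family of subspaces of $E$, so $\{C_*(E_{c,f})\}_{c \in \R}$ is an increasing family of subcomplexes of $C_*(E)$. For $c \leq c'$ the inclusion $C_*(E_{c,f}) \subset C_*(E_{c',f})$ induces, by the universal property of quotients, a canonical surjective chain map $C_k(E, E_{c,f}) \twoheadrightarrow C_k(E, E_{c',f})$, producing the projective system whose limit figures in the statement.

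Next I would write down the forward map $\Psi : C_k(E,f) \to \limproj_c C_k(E, E_{c,f})$ by sending $\sum_i n_i \sigma_i$, for each $c \in \R$, to the class in $C_k(E, E_{c,f})$ of the subsum $\sum_{\max f\circ \sigma_i > c} n_i \sigma_i$. This subsum is finite, which is precisely the defining axiom of $C_k(E,f)$. Compatibility for $c \leq c'$ is automatic, since the additional cubes with $c < \max f\circ \sigma_i \leq c'$ are supported in $E_{c',f}$ and therefore vanish in the finer quotient. That $\Psi$ is a chain map follows from the fact that the cubical differential preserves the sublevel filtration, because the faces of a cube $\sigma$ satisfy $\max f\circ \partial_j^{\epsilon}\sigma \leq \max f\circ \sigma$.

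The main, and essentially only, technical point is the construction of the inverse. The key observation is that $C_k(E)/C_k(E_{c,f})$ is canonically free on the set of cubes $\sigma : [0,1]^k \to E$ with $\max f\circ \sigma > c$, because $C_k(E_{c,f})$ is the direct summand of $C_k(E)$ generated by those cubes entirely supported in $E_{c,f}$. Therefore every compatible family $(\xi_c)$ admits \emph{canonical} representatives $\xi_c = \sum_i n_i^{(c)} \sigma_i^{(c)}$ with $\max f\circ \sigma_i^{(c)} > c$, and the compatibility relation forces the list of cubes and coefficients defining $\xi_{c'}$ to be obtained from that of $\xi_c$ by discarding the cubes whose maximum has fallen to $\leq c'$. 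Assembling these canonical representatives across all $c$ produces a single formal sum $\sum_i n_i \sigma_i$ satisfying the finiteness axiom of $C_k(E,f)$ whose image under $\Psi$ recovers $(\xi_c)$. The appendix lemma packages exactly this coherent-choice argument and, once invoked, concludes the proof.
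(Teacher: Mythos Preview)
Your proposal is correct and follows essentially the same approach as the paper: the paper's proof is the single line ``This is a direct consequence of Lemma \ref{Lemme: limite projective d'un quotient de complexe}'', and you have simply unpacked that lemma in the specific case $C = C_k(E)$, $C^c = C_k(E_{c,f})$, together with Corollary \ref{cor : limproj with parameter in R} to pass from the $\R$-indexed to the $\Z_{\leq 0}$-indexed situation. The only small point you leave implicit is that $\bigcap_c C_k(E_{c,f}) = 0$ (because $f$ is real-valued and cubes are compactly supported, so no cube can lie in every sublevel set), which is why the quotient $\hat{C}/\bigcap_c C^c$ in the appendix lemma reduces exactly to $C_k(E,f)$.
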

\begin{proof}
    This is a direct consequence of Lemma \ref{Lemme: limite projective d'un quotient de complexe}.
\end{proof}

\begin{notation}
    When $f$ is implicit, we will denote $E_c = E_{c,f}$, $F_c = F_{c,f}$ and $C_*(E,u) = C_*(E,f)$, $C_*(F,u) = C_*(F,f)$. 
\end{notation}

The notation reflects the fact that the complexes $C_*(E,u)$ and $C_*(F,u)$ are independent, up to chain homotopy equivalence, of the choice $\alpha \in u$ and of the primitive $f: E \to \R$ of $\pi^*\alpha.$

\begin{lemme}
    The complexes $C_*(E,u)$ and $C_*(F,u)$ depend only on $u \in H^1(X)$ up to chain homotopy equivalence.
\end{lemme}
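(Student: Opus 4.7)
The plan is to show directly, at the level of the Novikov-completion description given in the preceding definition, that $C_*(E,f)$ and $C_*(F,f)$ do not change at all when one varies $\alpha \in u$ and the primitive $f$ of $\pi^*\alpha$; in particular they will be (trivially) chain homotopy equivalent. The whole argument rests on the observation that any two such primitives differ by a bounded function on $E$.

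The first step will be a bounded-difference estimate for primitives. Given $\alpha_0, \alpha_1 \in u$, the difference $\alpha_1-\alpha_0$ is exact, so I can write $\alpha_1-\alpha_0 = dg$ for some smooth $g : X \to \R$, and pick primitives $f_0, f_1$ of $\pi^*\alpha_0, \pi^*\alpha_1$. A direct check using \eqref{eq : primitive f} shows that $f_0 + g\circ\pi$ is again a primitive of $\pi^*\alpha_1$: indeed, $\int_\gamma \alpha_1 = \int_\gamma \alpha_0 + g(\mathrm{ev}_1\gamma) - g(\mathrm{ev}_0\gamma)$. By Proposition \ref{prop : existence of primitive}, the function $f_1 - (f_0 + g\circ\pi)$ is therefore a constant $C$. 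Since $g$ is continuous on the compact manifold $X$ it is bounded, so $M := \sup_E |f_1 - f_0| \leq \sup_X |g| + |C| < \infty$.

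Next I would translate this bound into an equality of Novikov complexes. For a formal sum $\sigma = \sum_i n_i \sigma_i$ of singular cubes in $E$, the admissibility condition for $C_k(E, f_1)$ is that for every $c \in \R$ only finitely many $i$ satisfy $\max f_1 \circ \sigma_i > c$. The inequality $|f_1 - f_0| \leq M$ gives $\max f_0 \circ \sigma_i - M \leq \max f_1 \circ \sigma_i \leq \max f_0 \circ \sigma_i + M$, so as $c$ ranges over $\R$ the admissibility condition for $f_1$ is equivalent to the one for $f_0$. Hence $C_k(E, f_0) = C_k(E, f_1)$ as subsets of the ambient module of formal $\Z$-sums of singular cubes in $E$, with the same differential (inherited from $C_*(E)$ without reference to $f$). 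The identical argument, applied to cubes in $F$ with $f_i|_F$, gives $C_*(F, f_0) = C_*(F, f_1)$.

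There is no serious obstacle here: the argument is essentially tautological once Proposition \ref{prop : existence of primitive} and the compactness of $X$ are combined. The only mild subtlety worth flagging is that the lemma is phrased only up to chain homotopy equivalence, whereas the argument above actually yields strict equality of subcomplexes of a common ambient module; this weaker formulation is presumably kept for uniformity with analogous invariance statements later in the paper that are naturally phrased at the homotopy level.
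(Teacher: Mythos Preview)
Your proof is correct, and the central observation---that any two primitives differ by a bounded function, via Proposition \ref{prop : existence of primitive} and the compactness of $X$---is exactly the paper's observation as well. The difference is in how this bound is cashed out. You work directly with the explicit formal-sum description of $C_*(E,f)$ and show that a uniform bound $|f_1-f_0|\le M$ makes the two admissibility conditions literally equivalent, giving $C_*(E,f_0)=C_*(E,f_1)$ on the nose. The paper instead uses the projective-limit description $C_*(E,f)\cong\limproj C_*(E,E_{c,f})$, observes that $\Id_E$ sends $E_{c,f}$ into $E_{c+K,g}$ and vice versa, and then invokes the general machinery of Lemma \ref{lemme: limite proj homotopy equivalence} (with $\phi=\psi=\Id$) to conclude a chain homotopy equivalence. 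Your route is more elementary and yields a sharper conclusion; the paper's route has the advantage of exercising the projective-limit toolkit that is reused later (e.g.\ in the proof that $C_*(E,u)\simeq C_*(E',u')$ under pullback by $\theta$, where one genuinely needs a homotopy equivalence rather than equality).
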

\begin{proof}
    First, fix a Morse 1-form $\alpha$ representing $u$. Since any primitive of $\pi^*\alpha$ differ by a constant that depend on the choice of the lift $\star_E$, the complex $C_*(E,u)$ does not depend on the chosen primitive $f: E \to \R.$

    We now prove that this complex does not depend on the 1-form $\alpha$ representing $u$. Let $\alpha +dh$ be another representative of $u$ and $f: E \to \R$ be a primitive of $\pi^* \alpha$. Then, $g = f + h \circ \pi - h(\star): E \to \R$ is a primitive of $\pi^*(\alpha + dh).$

    The map $h \circ \pi - h(\star): E \to \R$ factors through $h - h(\star): X \to \R$ which is bounded since $X$ is compact. Let $K \geq 0$ such that $|h|\leq K.$
     Therefore $\Id: E \to E$ induces maps $E_{c,f} \to E_{c+K,g}$ and $ E_{c,g} \to E_{c+K,f}$ for every $c \in \R$. Lemma \ref{lemme: limite proj homotopy equivalence} implies that $C_*(E,f) \cong C_*(E,g)$ and this homotopy equivalence induces $C_*(F,f) \cong C_*(F,g).$
\end{proof}

Since $f : E \to \R$ is compatible with the action of the transitive lifting function \eqref{eq : primitive f}, we prove that the $C_*(\Omega X)$-module structure over $C_*(F)$ extends to a $C_*(\Omega X,u)$-module structure over $C_*(F,u).$
\begin{lemme}
    A transitive lifting function $\Phi: E \ftimes{\pi}{\ev_0} \mathcal{P} X \to E$ induces a $C_*(\Omega X,u)$-module structure on $C_*(F,u)$.
\end{lemme}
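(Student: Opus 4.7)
The plan is to extend the existing $C_*(\Omega X)$-module action on $C_*(F)$, given by pushing forward along the holonomy map $\Phi : F \times \Omega X \to F$, to the Novikov completions. The crucial input is the compatibility provided by Proposition \ref{prop : existence of primitive}: for $e \in F$ and $\gamma \in \Omega^g X$ one has $\pi(\Phi(e,\gamma)) = \star$ and $f(\Phi(e,\gamma)) = f(e) + u(g)$. Consequently, for any cube $\sigma : [0,1]^k \to F$ and any cube $\omega : [0,1]^l \to \Omega^g X$, the product cube $\sigma \cdot \omega : (t,s) \mapsto \Phi(\sigma(t),\omega(s))$ takes values in $F$ and satisfies
\begin{equation*}
    \max_{(t,s)} f\circ(\sigma \cdot \omega)(t,s) \; = \; \max_{t} f\circ\sigma(t) \; + \; u(g).
\end{equation*}

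The first step is to define the pairing on the completions. Given $\sigma = \sum_i n_i \sigma_i \in C_*(F,u)$ and $\omega = \sum_g \sum_j m_{g,j}\, \omega^g_j \in C_*(\Omega X,u)$ with $\omega^g_j : [0,1]^l \to \Omega^g X$, I would set
\begin{equation*}
    \sigma \cdot \omega \; := \; \sum_{i,g,j} n_i m_{g,j}\, (\sigma_i \cdot \omega^g_j).
\end{equation*}
The second step is to verify this formal sum lies in $C_*(F,u)$. Fix $c \in \R$. The finiteness conditions defining $C_*(F,u)$ and $C_*(\Omega X,u)$ imply that the sets $\{\max f\circ\sigma_i : n_i \neq 0\}$ and $\{u(g) : \omega^g \neq 0\}$ are bounded above, by constants $M$ and $N$ respectively. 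By the displayed compatibility, $\max f\circ(\sigma_i \cdot \omega^g_j) > c$ forces both $\max f\circ\sigma_i > c-N$ and $u(g) > c-M$; each of these inequalities is satisfied by only finitely many indices $i$ (respectively $g$), so only finitely many summands contribute above level $c$, and the sum defines an element of $C_*(F,u)$.

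The third step is to check that the DG right-module axioms transfer from $C_*(F) \otimes C_*(\Omega X) \to C_*(F)$ to the completed pairing. The Leibniz rule, associativity with the Pontryagin product, and the unit identity are all finitary identities at the level of individual cubes, and they were already established for the uncompleted module structure. Passing to the inverse limit description $C_*(F,u) = \limproj C_*(F, F_c)$ and $C_*(\Omega X,u) = \limproj C_*(\Omega X, \Omega_c X)$, and using that for each $c$ the product descends to a bilinear map on the truncations, these identities hold level-wise and hence in the limit.

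The main obstacle is the well-definedness in the second step: without the relation $f(\Phi(e,\gamma)) = f(e) + u([\gamma])$ from Proposition \ref{prop : existence of primitive}, the multiplication would not respect the Novikov filtrations, because an unbounded $(i,g)$-family could accumulate at any level $c$. Thus the whole argument hinges on the existence of a primitive of $\pi^*\alpha$ compatible with $\Phi$, which is exactly what was arranged in the previous subsection.
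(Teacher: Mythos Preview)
Your proof is correct and follows essentially the same approach as the paper: both use the identity $f(\Phi(e,\gamma)) = f(e) + u([\gamma])$ from Proposition \ref{prop : existence of primitive} to show that the bilinearly extended product lands in $C_*(F,u)$, via the same finiteness estimate. Your write-up is slightly more detailed (you bound both factors and also spell out the DG module axioms), whereas the paper only checks well-definedness using the bound on $\max_j u(\omega_j)$ and leaves the algebra implicit.
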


\begin{proof}
    Recall that, if $\sigma \in C_i(F)$ and $\omega \in C_j(\Omega X)$, $\sigma \cdot \omega:=\Phi_*(\sigma, \omega) \in C_{i+j}(F)$.
    From \eqref{eq : primitive f}, it follows that, $$\forall \sigma \in C_i(F),\omega \in C_j(\Omega X) \ f\circ (\sigma \cdot \omega) = f\circ \sigma + u(\omega).$$ 

    Define the $C_*(\Omega X,u)$-right module structure over $C_*(F,f)$ by 

    $$\sum_i n_i \sigma_i \cdot \sum_j m_j \omega_j = \sum_{i,j} n_im_j \sigma_i \cdot \omega_j,$$
    where $\sum n_i \sigma_i \in C_*(F,f)$ and $\sum m_j \omega_j \in C_*(\Omega X,u)$. For any $c \in \R$, 
    $$\#\{(i,j), \ f\circ(\sigma_i \cdot \omega_j) \geq c\} \leq \# \left\{i,\ f\circ \sigma_i \geq c - \max_j u(\omega_j)\right\} < \infty.$$ 

    Indeed, since for every $c \in \R$, $\#\{j, \ u(\omega_j) \geq c\} < + \infty$, then $\displaystyle \max_j u(\omega_j) $ is well-defined.
    Therefore $\sum_i n_i \sigma_i \cdot \sum_j m_j \omega_j \in C_*(F,f).$
\end{proof}

We can now state the main result of this section:

\begin{thm}[Morse-Novikov Fibration Theorem]\label{thm: Fibration Theorem Morse Novikov}
    Let $\Xi$ be a set of DG Morse-Novikov data on $X$. Let $F \hookrightarrow E \to X$ be a (Hurewicz) fibration over $X$ such that $\pi^*u = 0$ or equivalently, let $F \hookrightarrow E \to \hat{X}_u$ be a fibration. Let $\Phi: E \ftimes{\pi}{\ev_0} \mathcal{P}X \to E$ be a transitive lifting function for this fibration and endow $C_*(F,u)$ with the associated $C_*(\Omega X,u)$-module structure. Then, there exists a quasi-isomorphism

    $$\Psi_E: C_*(X,\Xi,C_*(F,u)) \to C_*(E,u).$$
\end{thm}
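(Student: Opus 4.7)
The plan is to adapt the construction of $\Psi_E$ from the Morse Fibration Theorem \cite[Theorem 7.2.1]{BDHO23}, refining it by a $\pi_1(X)$-grading, and then to establish the quasi-isomorphism by passing to the projective limit along sublevel filtrations, using the algebraic toolkit of Appendix \ref{Appendix : Homology and projective limit, algebraic properties}.

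First, I would extend the representing chain system of Proposition \ref{prop: representing chain system} by producing a family $\{w^g_x \in C_{|x|}(\wb{u}{x})\}_{x \in \Crit(\alpha),\, g \in \pi_1(X)}$, where $w^g_x$ is supported on the compact sub-$|x|$-manifold of $\wb{u}{x}$ consisting of broken orbits whose trajectory part has total winding $g$ in the universal cover, subject to the boundary identity
$$\partial w^g_x = \sum_{\substack{y \in \Crit(\alpha)\\ g'\cdot g''=g}} (-1)^{|x|-|y|}\, s^{g'}_{x, y} \times w^{g''}_y.$$
Parametrizing these via the analog of Lemma \ref{lemme: parametrization map} yields chains $\Gamma_{x,*}(w^g_x) \in C_{|x|}(\mathcal{P}_{\star \to X} X)$ of paths starting at $\star$. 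Given $\sigma \in C_j(F, u)$ and $x \in \Crit(\alpha)$, I would set
$$\Psi_E(\sigma \otimes x) = \sum_{g \in \pi_1(X)} (-1)^{|x|\,|\sigma|}\, \Phi_*\bigl(\sigma \times \Gamma_{x,*}(w^g_x)\bigr) \in C_{|x|+j}(E).$$
The compatibility $f(\Phi(e, \gamma)) - f(e) = \int_\gamma \alpha$ from Proposition \ref{prop : existence of primitive} shows that the $g$-summand has $f$-level shifted by $-u(g)$ up to an $x$-dependent constant, so the Novikov finiteness of $\sigma \in C_*(F, u)$ transfers to $\Psi_E(\sigma \otimes x)$, placing the sum in $C_*(E, u)$. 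The chain-map identity then follows from the boundary relation above together with the Maurer–Cartan equation \eqref{eq: Maurer-cartan m_x,y } for the twisting cocycle $(m^g_{x,y})$ and the module relations for $\Phi$, by exactly the computation of \cite[Theorem 7.2.1]{BDHO23}.

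Second, to prove that $\Psi_E$ is a quasi-isomorphism I would argue via the sublevel filtrations. On the target $C_*(E, u) = \limproj C_*(E, E_c)$. On the source, filtering by primitive value and invoking the Latour trick of Proposition \ref{prop : Latour Trick} (which replaces $\alpha$ by a $1$-form dominated by a genuine Morse function) presents $C_*(X, \Xi, C_*(F, u))$ as an inverse limit of Morse complexes with $C_*(\Omega X)$-module coefficients $C_*(F, F_c)$. On each level $c$, $\Psi_E$ descends to a chain map matching the BDHO23 Morse Fibration Theorem applied to $F \hookrightarrow E \to X$ with coefficient module $C_*(F, F_c)$, hence a quasi-isomorphism. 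The key remaining step is to promote this level-wise quasi-isomorphism to a quasi-isomorphism on the inverse limit via the $\limun$-vanishing criteria of Appendix \ref{Appendix : Homology and projective limit, algebraic properties}; verifying the Mittag–Leffler condition on each homological degree is the principal technical obstacle, and is precisely what motivates the algebraic appendix.

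Third, the \emph{moreover} clause --- independence (up to chain homotopy) of both complexes from the choices $(\alpha, f, \Phi)$ --- is obtained by a continuation argument in the spirit of Theorem \ref{thm : invariance}. Given two choices $(\alpha_i, f_i, \Phi_i)$ for $i=0,1$, I would construct a continuation datum on $[0, 1] \times X$ together with a $[0,1]$-family of transitive lifting functions and an interpolating primitive, and extract from the associated moduli spaces a chain homotopy between $\Psi_E^{(0)}$ and $\Psi_E^{(1)}$ factoring through the continuation model. The resulting diagram of continuation maps and fibration maps then shows that both sides are intrinsically determined, up to chain homotopy equivalence, by the cohomology class $u$ alone.
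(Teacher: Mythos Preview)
Your first step contains a genuine error: the $\pi_1(X)$-grading you propose on the Latour cells $\wb{u}{x}$ cannot exist. A point $a \in W^u(x)$ is the endpoint of a half-infinite flow line from $x$; after collapsing $\Y$ this becomes a \emph{path} from $\star$ to $p(a)$ in $X/\Y$, not a loop, so there is no natural element of $\pi_1(X)$ attached to it. Concretely, take $|x|=1$ in the Latour-trick data: $\wb{u}{x}\cong [0,1]$ has boundary $\{y,y'\}$, and the two trajectories may have distinct windings $g\neq g'$. Your boundary relation would then force $\partial w^g_x = \pm\{y\}$ and $\partial w^{g'}_x = \pm\{y'\}$, but no $1$-chain on an interval has boundary a single point (the augmentation of a boundary is zero). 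So the family $\{w^g_x\}$ satisfying your relation does not exist in general.

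The paper circumvents this entirely. It first invokes the Latour trick (Proposition \ref{prop : Latour Trick}) at the stage of \emph{constructing} $\Psi_E$, not just for the quasi-isomorphism proof: with this data the cells $\wb{u}{x}$ are compact Morse cells and form a CW decomposition of $X$. One then builds a single chain $m_x\in C_{|x|}(\mathcal{P}_{\star\to X/\Y}\,X/\Y)$ for each $x$ (no $g$-grading) satisfying $\partial m_x = \sum_y m'_{x,y}\cdot m_y$, and sets $\Psi(\sigma\otimes x)=\Phi'_*(\sigma\otimes m_x)$. The Novikov finiteness of $\Psi(\sigma\otimes x)$ comes not from a $g$-decomposition of $m_x$ but from the compactness of $q_x(\wb{u}{x})$ together with \eqref{eq : primitive f}.

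For the quasi-isomorphism, the paper does \emph{not} argue via the inverse limit over sublevels. It filters both sides by the skeletal/index filtration and shows $\Psi$ induces an isomorphism on first pages (Proposition \ref{Prop: qiso cellules into Sk Novikov}); the projective-limit machinery of the Appendix is used only to identify $\mathcal{E}^1_{p,q}$ as $H_{p+q}$ of a quotient of inverse limits (via Lemma \ref{lemme: limproj d'un quotient de sys proj}), not to pass a level-wise quasi-isomorphism to the limit. Your proposed reduction has an additional gap: the Fibration Theorem of \cite{BDHO23} is stated for $\F=C_*(F)$, not for $C_*(F,F_c)$, and $E_c\to X$ is not itself a fibration with fibre $F_c$, so ``BDHO23 at each level $c$'' is not available as a black box. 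Even granting level-wise quasi-isomorphisms, Mittag--Leffler on the chain complexes does not by itself give $\limun$ vanishing on the \emph{homology} towers, which is what you would need.
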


\subsection{Proof of the theorem}

Fix $\Xi = (\alpha,\xi,s^g_{x,y},o,\Y, \theta)$ a set of DG Morse-Novikov data. Since, up to chain homotopy equivalence, the complex does not depend on $\Xi$, it suffices to prove the theorem for a well-chosen $\Xi$.

We will use a similar approach as \cite[Theorem 7.2.1]{BDHO23}.
For a well-chosen Morse 1-form $\alpha$ (see Section \ref{subsection : Latour Trick}), we will define a morphism $$\Psi_E: C_*(X,\Xi,C_*(F,u)) \to C_*(E,u)$$ by lifting the Latour cells associated with $\alpha$, and we will prove that it induces a morphism of spectral sequence and an isomorphism between the second page $E^2_{p,q}$ of the spectral sequence defined in Section \ref{subsection: suite spectrale M-N} that converges towards $H_*(X,C_*(F,u))$ and the second page of a Leray-Serre-like spectral sequence $\mathcal{E}^2_{p,q}$ that converges towards $H_*(E,u)$.

\subsubsection{Definition of \texorpdfstring{$\Psi_E$}{psiE}}

\paragraph{Pulling back by $\theta$.}

Consider $F' \hookrightarrow E' \overset{\pi}{\to} \Xq$ the pull-back fibration of $F \hookrightarrow \fibration$ by a homotopy inverse $\theta: \Xq \to X$ of the canonical projection $p : X \to \Xq$. By definition, $$E' = \Xq \ftimes{\theta}{\pi} E.$$ This fibration is naturally endowed with the transitive lifting function $$\deffct{\Phi'}{E' \ftimes{\pi'}{\ev_0} \mathcal{P}(\Xq)}{E'}{((y,e),\gamma)}{(\ev_1(\gamma), \Phi(e, \theta \circ \gamma))}.$$

Recall that we denoted $\Gamma_{x,y}: \trajb{x,y} \to \mathcal{P}_{x \to y} X$ the parametrization map defined in Lemma \ref{lemme: parametrization map}. 
For $x,y \in \Crit(\alpha)$ and $g \in \pi_1(X)$, define 

$$q'_{x,y}= p \circ \Gamma_{x,y}:\trajb{x,y} \to \Omega(\Xq)$$
and 
$$m'^g_{x,y} = q'_{x,y,*}(s^g_{x,y}) \in C_{|x|-|y|-1}(\Omega (\Xq))$$ such that the Barraud-Cornea twisting cocycle associated with $\Xi$ satisfies $m^g_{x,y} = \theta_*(m'^g_{x,y}) $. Let $$m'_{x,y} = \sum_{g \in \pi_1(X)} m'^g_{x,y} \in C_*(\Omega (\Xq), u')$$ where we denoted $u' = \theta^*u \in H^1(\Xq)$. Let $f' : E' \to \R$, $f'(y,e) = f(e)$ and define
\begin{align*}
    C_k(E',u') & := \left\{ \sum_{i \geq 0} n_i \sigma_i , \ n_i \in \Z, \ \sigma_i \in C^0([0,1]^k,E'), \ \forall c\in \R,  \#\left\{i, \ \max_{t \in [0,1]^k} f' \circ \sigma_i(t) > c\right\} < +\infty\right\} \\
    & \cong \limproj C_k(E',E'_{c})
\end{align*}

and

\begin{align*}
    C_k(F',u') &:= \left\{ \sum_{i \geq 0} n_i \sigma_i , \ n_i \in \Z, \ \sigma_i \in C^0([0,1]^k,F'), \ \forall c\in \R,  \#\left\{i, \ \max_{t \in [0,1]^k} f' \circ \sigma_i(t) > c\right\} < +\infty\right\} \\
    &\cong \limproj C_k(F',F'_{c}),
\end{align*}
    
where, for $c\in \R$, we omitted the mention of the function $f : E \to \R$ and we denoted $E'_c = \theta^{-1}(E_c)$ and $F'_c = \theta^{-1}(F_c)$.

 \begin{defi}
   Define the complex $$C_*(X,\Xi,C_*(F',u')) = C_*(F',u') \otimes_{\Z} \Z\Crit(\alpha)$$ with the differential 
$$\partial(\sigma \otimes x) = \partial \sigma \otimes x + (-1)^{|\sigma|} \sum_{y} \sigma \cdot m'_{x,y} \otimes y.$$  
 \end{defi}

\begin{lemme}
\begin{enumerate}
    \item The map $\theta: \Xq \to X$ induces a  chain homotopy equivalence $C_*(E,u) \to C_*(E',u').$
    \item The complexes $C_*(X,\Xi,C_*(F,u))$ and $C_*(X,\Xi, C_*(F',u'))$ are chain homotopy equivalent.
\end{enumerate}
\end{lemme}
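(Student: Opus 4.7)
My plan is to handle part (2) by a direct identification of twisted chain complexes, and part (1) by combining the fact that pullback by a homotopy equivalence is a homotopy equivalence with a bounded-shift estimate on the Novikov filtration provided by Proposition \ref{prop : existence of primitive}.

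For part (2), I would first observe that $F' = \pi'^{-1}(\star) = \{\star\} \times F$ is canonically homeomorphic to $F$ via the second projection, and that under this identification $f'$ restricts to $f$, so $C_*(F',u') \cong C_*(F,u)$ as chain complexes. Next I would check that the action of a loop $\gamma \in \Omega(\Xq)$ on $F'$ via $\Phi'$, namely $\Phi'((\star,e),\gamma) = (\star, \Phi(e, \theta \circ \gamma))$, is nothing but the action of $\theta \circ \gamma \in \Omega X$ on $F$ via $\Phi$. Combined with $\theta_*(m'^g_{x,y}) = m^g_{x,y}$, this yields an isomorphism of twisted complexes $C_*(X,\Xi,C_*(F',u')) \cong C_*(X,\Xi,C_*(F,u))$, which is in particular a chain homotopy equivalence.

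For part (1), let $\tilde\theta : E' \to E$ denote the second projection $(y,e) \mapsto e$. Since $f'(y,e) = f(e)$, one has $\tilde\theta^{-1}(E_c) = E'_c$, so $\tilde\theta$ preserves the Novikov filtration strictly and induces a chain map $\tilde\theta_* : C_*(E',u') \to C_*(E,u)$. To build a homotopy inverse, I would fix a smooth homotopy $H : X \times [0,1] \to X$ with $H_0 = \Id_X$ and $H_1 = \theta \circ p$, lift it through the fibration $\pi$ starting at $\Id_E$ to obtain $\tilde H : E \times [0,1] \to E$ with $\pi \circ \tilde H_s = H_s \circ \pi$, and define $\sigma : E \to E'$ by $\sigma(e) = (p(\pi(e)), \tilde H_1(e))$. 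Then $\tilde\theta \circ \sigma = \tilde H_1$ is homotopic to $\Id_E$ through $\tilde H$; a symmetric lifting using a homotopy $p \circ \theta \simeq \Id_{\Xq}$ yields $\sigma \circ \tilde\theta \simeq \Id_{E'}$ inside $E'$.

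The step I expect to be the main obstacle is verifying that $\sigma$ and the two homotopies shift the Novikov filtration only by a bounded amount, so that one can actually pass to projective limits. Applying Proposition \ref{prop : existence of primitive} along the path $s \mapsto \tilde H_s(e)$, I would compute
\[
f(\tilde H_s(e)) - f(e) = \int_0^s \alpha\bigl(\tfrac{d}{d\tau} H_\tau(\pi(e))\bigr)\, d\tau,
\]
which is uniformly bounded by some constant $K \geq 0$ independent of $e$ and $s$, thanks to compactness of $X \times [0,1]$ and smoothness of $H$. Consequently $\sigma$ induces $C_*(E,E_c) \to C_*(E',E'_{c+K})$, and the chain homotopies coming from $\tilde H$ raise filtration levels by at most $K$. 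An application of Lemma \ref{lemme: limite proj homotopy equivalence} on the resulting cofinal subsystems then promotes $\tilde\theta_*$ and $\sigma_*$ to mutually inverse chain homotopy equivalences between $C_*(E',u')$ and $C_*(E,u)$.
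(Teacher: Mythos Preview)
Your approach matches the paper's strategy closely, with one genuine gap and a couple of minor differences worth noting.

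For part (2) your argument is correct and in fact slightly cleaner than the paper's: the restriction of $\tilde\theta$ to $F' = \{\star\}\times F$ is a homeomorphism onto $F$, so the twisted complexes are literally isomorphic, not merely chain homotopy equivalent. The paper reaches the same conclusion by restricting the $E$-level equivalence and checking $\tilde\theta(\sigma\cdot\omega)=\tilde\theta(\sigma)\cdot\theta_*(\omega)$, which is exactly your module-compatibility check.

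For part (1) the construction of $\sigma$, the homotopy $\tilde H$ between $\tilde\theta\circ\sigma$ and $\Id_E$, and the bounded-shift estimate are all correct and parallel the paper (which uses the transitive lifting function $\Phi$ rather than a generic lift, but this is immaterial). The gap is in the sentence ``a symmetric lifting using a homotopy $p\circ\theta\simeq\Id_{\Xq}$ yields $\sigma\circ\tilde\theta\simeq\Id_{E'}$''. Lifting $H':\Id_{\Xq}\simeq p\circ\theta$ through $\pi'$ starting at $\Id_{E'}$ produces a homotopy to \emph{some} fibrewise self-map of $E'$, not to $\sigma\circ\tilde\theta$; lifting from the other end has the same problem. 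The difficulty is that $\sigma\circ\tilde\theta(y,e)=(p\theta(y),\tilde H_1(e))$ involves $\tilde H_1$, which covers $H_1(\theta(y),\cdot)$ on $X$, whereas the first coordinate moves along $H'(y,\cdot)$ on $\Xq$; these project to two a priori different paths $H(\theta(y),\cdot)$ and $\theta(H'(y,\cdot))$ in $X$. The paper addresses precisely this point: it chooses a homotopy $Q_y$ between these two paths, extracts from it a family of connecting paths $\tau_y(t)$, and defines the homotopy $W'$ explicitly as
\[
W'((y,e),t)=\bigl(H'(y,t),\ \Phi\bigl(e,\ H(\theta(y),\cdot)|_{[0,t]}\#\tau_y(t)\bigr)\bigr).
\]
Once such a $W'$ is in hand, its $f'$-shift is controlled by an integral over a path in $X$ depending only on compact data, exactly as in your argument for $\tilde H$. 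So your outline is right, but the ``symmetric'' step needs this extra ingredient rather than a bare homotopy lift.
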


\begin{proof}
  \textbf{1.}  We will use Lemma \ref{lemme: limite proj homotopy equivalence}, that we state here, to prove the first point. 

  \begin{lemme}[Lemma \ref{lemme: limite proj homotopy equivalence}]
    Let $Y,Z$ be topological spaces each filtered respectively by $(Y_c)_{c\in \R}$ and $(Z_c)_{c\in \R}$ i.e. if $c<c'$, then $Y_c \subset Y_{c'}$ and $Z_c \subset Z_{c'}$. Consider the projective systems $$\left(C_*(Y, Y_{c}), \pi^Y_{c,c'}\right) \textup{ and }\left(C_*(Z, Z_{c}), \pi^Z_{c,c'}\right),$$ where $\pi^Y_{c,c'}: C_*(Y,Y_c) \to C_*(Y,Y_{c'})$ and  $\pi^Z_{c,c'}: C_*(Z,Z_c) \to C_*(Z,Z_{c'})$   are defined by the canonical projections for $c<c'$.
Let $\phi: Y \to Z$ and $\psi: Z \to Y$ be homotopy equivalences which are inverses of each other, and denote $$F_Z : [0,1] \times Z, \ F_Z(0,\cdot) = \phi \circ \psi, \ F_Z(1, \cdot) = \Id_Z \textup{ and } F_Y : [0,1] \times Y, \ F_Y(0,\cdot) = \psi \circ \phi, \ F_Y(1, \cdot) = \Id_Y$$  associated homotopies. Assume that there exists a constant $K>0$ such that

$$\forall c \in \R, \ \phi(Y_c) \subset Z_{c+K},  \psi(Z_c) \subset Y_{c+K}, \ F_Y( \cdot ,Y_c) \subset Y_{c+2K} \textup{ and } F_Z(\cdot,Z_c) \subset Z_{c+2K}.$$

    Then $\phi_*$ and $\psi_*$ induce chain homotopy equivalences $$\xymatrix{
    \limproj C_*(Y,Y_c) \ar@<2pt>[r]^{\phi_*} &  \limproj C_*(Z,Z_c) \ar@<2pt>[l]^{\psi_*}}.$$ 

\end{lemme}

  Denote $H: X \times [0,1] \to X$ a homotopy between $\Id = H(\cdot,0)$ and $\theta \circ p = H(\cdot,1)$ and denote $H': \Xq \times [0,1] \to \Xq$ a homotopy between $\Id = H'(\cdot, 0)$ and $p\circ \theta = H'(\cdot,1)$. The map $\theta: \Xq \to X$ induces an homotopy equivalence $$\deffct{\tilde{\theta}}{E'}{E}{(y,e)}{(\theta(y),e)}.$$
    
     The map $$\deffct{W}{E \times [0,1]}{E}{(e,t)}{\Phi(e, H(\pi(e),\cdot) \lvert_{[0,t]})}$$ that lifts $ H(\pi,\cdot): E \times [0,1] \to X$ is a  homotopy between $\Id_E = W(\cdot,0)$ and $\tilde{\theta} \circ \tilde{p} = W(\cdot,1),$ where $$\deffct{\tilde{p}}{E}{E'}{e}{\left(p(\pi(e)), \Phi\left(e,H(\pi(e),\cdot)\right)\right)}$$ is a right homotopy inverse of $\tilde{\theta}$. It is actually known that $\tilde{p}$ is a homotopy inverse of $\tilde{\theta}$ (see \cite[Proposition 6.59]{Ja84}). However, the homotopy $W' : E' \times [0,1] \to E'$  between $\Id_{E'} = W'(\cdot,0)$ and $\tilde{p} \circ \tilde{\theta} = W'(\cdot,1)$ is not constructed explicitly. In our case, we can give an explicit definition of $W'.$\\
    
    Let $U \subset X$ be a contractible neighborhood of the tree $\mathcal{Y}$ and denote $U' = p(U) \subset \Xq$. We can and will assume that $\theta(U') \subset U$, that the homotopy $H : X \times [0,1] \to X$ is constant equal to the identity outside of $U$ and the homotopy $H' : \Xq \times [0,1] \to \Xq$ is constant equal to the identity outside of $U'$. In other words, $$\forall x \notin U, \forall t \in [0,1] \ H(x,t) = x$$ and $$\forall y \notin U', \forall t \in [0,1] \ H'(y,t) = y.$$

    Let $y \in \Xq$. There exists a homotopy with fixed endpoints $Q_y : [0,1] \to \mathcal{P}_{\theta(y) \to \theta \circ p \circ \theta(y)}$ between the paths $$\theta(H'(y,\cdot)) \in \mathcal{P}_{\theta(y) \to \theta \circ p \circ \theta(y)} X$$ and $$H(\theta(y),\cdot)  \in \mathcal{P}_{\theta(y) \to \theta \circ p \circ \theta(y)} X.$$ 

    Let $\tau_y : [0,1] \to \mathcal{P}X$ defined by $$\tau_y(t) = Q_y(\cdot)(t).$$ Remark that for any $t \in [0,1]$, $\tau_y(t) \in \mathcal{P}_{H(\theta(t),s) \to \theta(H'(y,t))} X$. Moreover, $\tau_y(0)$ is the constant path at $\theta(y)$ and $\tau_y(1)$ is the constant path at $\theta \circ p \circ \theta(y)$.

    \begin{figure}[h!]
        \centering
        \includegraphics[width=0.7\linewidth, scale = 1.2]{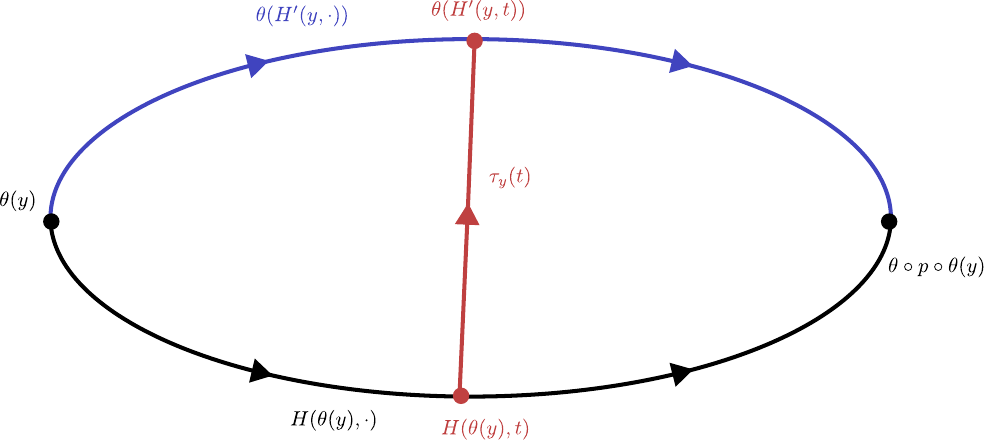}
        \caption{The path $\tau_y(t)$}
        \label{fig:enter-label}
    \end{figure}

    Hence
    $$\deffct{W'}{E' \times [0,1]}{E'}{((y,e),t)}{\left(H'(y,t), \Phi(e, H(\theta(y), \cdot)\lvert_{[0,t]} \# \tau_y(t))\right)}$$ is a homotopy between $\Id_{E'} = W'(\cdot,0)$ and $\tilde{p} \circ \tilde{\theta} = W'(\cdot,1).$

    Let $c \in \R$. By definition $\tilde{\theta}(E'_c) \subset E_c$.
    We now prove that there exists $K>0$ such that for all $t\in [0,1]$, $\tilde{F}(E_c,t) \subset E_{c+K}$.
   
    This is equivalent to finding a bound for the map $$h := f\circ pr_1 - f\circ W: E \times [0,1] \to \R.$$ We will prove that $h(e,t)$ only depends on $(\pi(e),t) \in X \times [0,1]$.  The compactness of $X \times [0,1]$ will allow to conclude.
    
    Let $(e,e') \in E \ftimes{\pi}{\pi} E$, $x= \pi(e)$ and $t \in [0,1]$. Let $\gamma \in \mathcal{P}_{e \to e'}E$ and denote $g =[\pi\circ \gamma] \in \pi_1(X).$ Since $W(\gamma,t)$ is a path between $W(e,t)$ and $W(e',t)$ that belong to the same fiber $\pi^{-1}(H(x,t))$,  \begin{align*}
        h(e,t) - h(e',t) &= f(e) - f(e') - (f\circ W(e,t) - f\circ W(e',t))\\
        & = u(g) - u([\pi\circ W( \gamma,t)])\\
        &= u(g) - u([H(\pi \circ \gamma,t)]) \\
        &=  u(g) - u([H(\pi \circ \gamma,0)]) \\
        &= 0.
    \end{align*}
    
For $t=1$, we obtain $\tilde{\theta} \circ \tilde{p}(E_c) \subset E_{c+K} \Leftrightarrow \tilde{p}(E_c) \subset E'_{c+K}$. We prove in the same manner that there exists $K'>0$ such that $W'(E'_c, \cdot) \subset E'_{c+K'}$.\\

\textbf{2.}  The homotopy equivalence $C_*(E,u) \simeq C_*(E',u')$ restricts to a homotopy equivalence $C_*(F,u) \simeq C_*(F',u')$.
Moreover, it preserves the module structures. Indeed, for all $\sigma \in C_*(F')$ and $\omega \in C_*(\Omega(\Xq))$,
\begin{align*}
    \tilde{\theta}(\sigma \cdot \omega) & = \tilde{\theta}_*(\Phi'_*(\sigma \otimes \omega)) \\
    & = \Phi_*(\tilde{\theta}(\sigma) \otimes \theta_* ( \omega)) \\
    &= \tilde{\theta}(\sigma) \cdot \theta_*(\omega)
\end{align*}

since $\Phi(\tilde{\theta},\theta) = \tilde{\theta} \circ \Phi': E' \ftimes{\pi'}{\ev_0} \mathcal{P}(\Xq) \to E.$
This equality extends by linearity to $\sigma \in C_*(F',u')$ and $\omega \in C_*(\Omega(\Xq),u').$

The map $\deffct{\Theta}{C_*(X,\Xi,C_*(F',u'))}{C_*(X,\Xi,C_*(F,u))}{\sigma \otimes x}{\tilde{\theta}(\sigma) \otimes x}$ is therefore a chain homotopy equivalence.

\end{proof}

We will define $\Psi_E: C_*(X,\Xi,C_*(F,u)) \to C_*(E,u)$ as a composition $$C_*(X,\Xi,C_*(F,u)) \simeq C_*(X,\Xi,C_*(F',u')) \overset{\Psi}{\to} C_*(E',u') \simeq C_*(E,u),$$

where the first and the last maps are given by the previous lemma.

\paragraph{Latour cells.}\label{para : Latour cells}

Let $h : X \to \R$ be a Morse function. Since for a generic 1-form $\alpha$, the Latour cells $\left(\wb{u}{x}\right)_{x \in \Crit(\alpha)}$ are not compact and do not cover $X$, we will take $\Xi$ a set of DG Morse-Novikov data given by the Latour Trick \ref{prop : Latour Trick}.
Since the orbits of the pair $(\alpha,\xi) \in \Xi$ are those of the Morse function $h$, $\left(\wb{u}{x}\right)_{x \in \Crit(\alpha)}$ form a cellular decomposition of $X$ (see \cite[Section 4.9]{AD14}. We then proceed as in \cite[Section 7.3]{BDHO23} to define $$m_x \in C_{|x|}(\mathcal{P}_{\star \to \Xq} \Xq)$$ for each $x \in \Crit(h) = \Crit(\alpha)$ such that $$\partial m_x = \sum_{y} m'_{x,y} \cdot m_y$$ and subsequently, the morphism $\Psi : C_*(X,\Xi, C_*(F',u')) \to C_*(E',u').$

\begin{lemme}\cite[Lemma 7.3.2]{BDHO23}
   There exists a family  $\left\{ s_x \in C_{|x|}(\wb{u}{x}), \ x \in \Crit(\alpha) \right\}$ such that:

\begin{enumerate}
     \item each $s_x$ is a cycle relative to the boundary and represents the fundamental class $[\wb{u}{x}]$.
     \item each $s_x$ satisfies $$\partial s_x = \sum_y s_{x,y} \cdot s_y.$$
\end{enumerate}
\end{lemme}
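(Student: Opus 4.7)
The plan is to argue by induction on the Morse index $|x|$. For $|x|=0$, the cell $\wb{u}{x}$ reduces to the critical point $x$, there are no $y$ with $|y|<|x|$, and we take $s_x=[x]$ with the sign prescribed by $o(x)$; both conditions are then trivial.

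For the inductive step, assume $s_y$ has been constructed for every $y$ with $|y|<|x|$, and consider the candidate boundary chain
$$c_x := \sum_y s_{x,y}\cdot s_y \in C_{|x|-1}(\partial \wb{u}{x}).$$
First I would verify that $c_x$ is a cycle. This is a direct Leibniz computation: expanding $\partial(s_{x,y}\cdot s_y)$ using the relation \eqref{eq: système representatif} satisfied by $s_{x,y}$ together with the inductive relation $\partial s_y=\sum_z s_{y,z}\cdot s_z$ produces two triple sums indexed by intermediate critical points, carrying signs $(-1)^{|x|-|z|}$ and $(-1)^{|x|-|y|-1}$ respectively; after swapping the summation indices $y\leftrightarrow z$ in the second sum, the two terms cancel exactly. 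Next, I would argue that $c_x$ represents the fundamental class $[\partial \wb{u}{x}]$: as a compact manifold with corners, $\partial \wb{u}{x}$ decomposes into the strata $\trajb{x,y}\times \wb{u}{y}$; by induction $s_y$ represents $[\wb{u}{y}]$ and by construction $s_{x,y}$ represents $[\trajb{x,y}]$, so under the concatenation map each product $s_{x,y}\cdot s_y$ represents the fundamental class of the corresponding stratum, once the orientations are matched to the exterior-normal convention on $\wb{u}{x}$.

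To close the induction, I would pick any relative cycle $s'_x\in C_{|x|}(\wb{u}{x})$ representing $[\wb{u}{x}]$. Both $\partial s'_x$ and $c_x$ then lie in $C_*(\partial \wb{u}{x})$ and represent the same class in $H_{|x|-1}(\partial \wb{u}{x})$, so by exactness there exists $p_x\in C_{|x|}(\partial \wb{u}{x})$ with $\partial p_x = \partial s'_x - c_x$; viewing $p_x$ as a chain in $\wb{u}{x}$ and setting $s_x := s'_x - p_x$ yields $\partial s_x = c_x$, while $s_x$ still represents $[\wb{u}{x}]$ in $H_{|x|}(\wb{u}{x},\partial \wb{u}{x})$ because $p_x$ is supported entirely on the boundary. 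The main technical obstacle I anticipate is the orientation bookkeeping in the middle step, namely verifying that the signs imposed by the exterior-normal orientations on the corners $\trajb{x,y}\times \wb{u}{y}$ of $\wb{u}{x}$ match precisely the signs $(-1)^{|x|-|z|}$ of \eqref{eq: système representatif}; this is the same type of orientation computation already carried out in the proof of Proposition \ref{prop: representing chain system}, adapted to the Latour cells.
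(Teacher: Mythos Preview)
Your proposal is correct and follows exactly the template the paper uses for the analogous statement on moduli spaces of trajectories (Proposition \ref{prop: representing chain system}); the paper itself does not reprove this lemma but simply cites \cite[Lemma 7.3.2]{BDHO23}, whose proof is precisely the inductive argument you outline. Your sign computation for $\partial c_x=0$ is accurate, and the correction step $s_x=s'_x-p_x$ is the standard one; the orientation bookkeeping you flag is indeed the only point requiring care and is handled in \cite{BDHO23} by the same exterior-normal comparison you invoke.
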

\begin{flushright}
    $\blacksquare$
\end{flushright}

\begin{defi}
    Such a family is called a \textbf{representing chain system for the Latour cells} adapted to the representing chain system $\left\{s_{x,y} \in C_{|x|-|y|-1}(\trajb{x,y}), \ x,y \in \Crit(\alpha) \right\}$.
\end{defi}

For convenience, we can and will assume that $\Crit_0(\alpha)$ is the singleton $\{\star\}$ and choose $\Y$ to be a tree rooted in $\star$ whose branches are gradient lines. For $x \in \Crit(\alpha)$, define $l_x \in \mathcal{P}_{x \to \star} X$ to be the branch of $\Y$ linking $x$ to $\star.$ 

The attachment maps $i_x: \wb{u}{x} \to X$ induce a cellular decomposition $$j_x: \wb{u}{x}/l_x \to \Xq$$ for which the canonical projection $p: X \to \Xq$ is cellular, \emph{i.e.} $p\circ i_x = j_x \circ p_x,$ where $p_x : \wb{u}{x} \to \wb{u}{x}/l_x$.

Recall that $i_x(a) = a$ if $a \in W^u(x)$ and, if $a=(\lambda,b) \in \partial \wb{u}{x}$, then $i_x(a) = b.$

\begin{lemme}\label{lemme : defi of parametrization maps for Latour cells}
    There exists a family of \textbf{parametrization maps} $$\left\{\Gamma_x: \wb{u}{x} \to \mathcal{P}_{x \to X} X, \ x \in \Crit(\alpha) \right\}$$ such that, for each $x \in \Crit(\alpha)$,  $$\ev_1\circ \Gamma_x = i_x \textup{ and } \Gamma_x(\lambda,a) = \Gamma_{x,y}(\lambda) \# \Gamma_y(a)$$ if $(\lambda,a) \in \trajb{x,y} \times W^u(y) \subset \partial \wb{u}{x}.$
\end{lemme}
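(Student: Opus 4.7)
The plan is to mimic the construction of the parametrization maps $\Gamma_{x,y}$ from Lemma \ref{lemme: parametrization map} and extend it by induction on the index $|x|$. The essential tool is again the primitive $\tilde{h} : \tilde{X} \to \R$ of $\tilde{\pi}^* \alpha$, used here to assign a canonical Moore parametrization to the downward orbits starting at a critical point.

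The base case is $|x| = 0$, where $\Crit_0(\alpha) = \{\star\}$, so $\wb{u}{x} = \{x\}$ and we set $\Gamma_x$ to be the constant Moore path at $x$ of length $0$. For the inductive step, assume $\Gamma_y$ has been constructed for all $y \in \Crit(\alpha)$ with $|y| < |x|$ and satisfies the desired relations. We then define $\Gamma_x$ separately on the interior stratum $W^u(x)$ and on the boundary strata $\trajb{x,y} \times W^u(y)$. For $a \in W^u(x)$, lift the orbit through $a$ to a path starting at $\tilde{x}$ in $\tilde{X}$ and parametrize it by the values of $\tilde{h}$: the resulting path has length $\tilde{h}(\tilde{x}) - \tilde{h}(\tilde{a})$ and projects to a Moore path from $x$ to $a$ in $X$, which we set as $\Gamma_x(a)$. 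On each boundary stratum $\trajb{x,y} \times W^u(y) \subset \partial \wb{u}{x}$, set
$$\Gamma_x(\lambda, b) := \Gamma_{x,y}(\lambda) \# \Gamma_y(b),$$
using the concatenation of Moore paths. Both definitions clearly satisfy $\ev_1 \circ \Gamma_x = i_x$, and the recursive identity for the boundary is built into the definition.

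The main issue is proving that these two pieces agree continuously across the boundary, i.e.\ that $\Gamma_x$ is continuous on the whole of $\wb{u}{x}$ endowed with the topology recalled in \cite[2.5]{Lat94}. To see this, consider a sequence $a_n \in W^u(x)$ converging to a broken configuration $(\lambda, b) \in \trajb{x,y} \times W^u(y)$. By the very definition of the topology on $\wb{u}{x}$, the orbit from $x$ to $a_n$ (when parametrized by $\tilde{h}$) breaks in the Hausdorff sense onto the broken trajectory $\lambda$ followed by the orbit in $W^u(y)$ ending at $b$. Because $\tilde{h}$ is a diffeomorphism along each orbit segment and satisfies the cocycle relation $\tilde{h}(g\cdot) = u(g) + \tilde{h}(\cdot)$, the reparametrized Moore paths converge, in the compact-open topology on $\mathcal{P}X$, to $\Gamma_{x,y}(\lambda) \# \Gamma_y(b)$, which is exactly $\Gamma_x(\lambda,b)$ by the inductive definition. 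This is the step that carries the most technical weight; it is the exact analogue, in the Latour-cell setting, of the continuity argument for $\Gamma_{x,y}$ and is handled by the same compactness-of-bounded-orbits/primitive-parametrization mechanism already used earlier in the paper.

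Finally, the compatibility relations on double boundary strata $\trajb{x,y_1} \times \trajb{y_1, y_2} \times W^u(y_2)$ follow automatically from the associativity of Moore concatenation together with the analogous relation for $\Gamma_{x,y}$ established in Lemma \ref{lemme: parametrization map} and the inductive hypothesis on $\Gamma_{y_1}$. Thus the family $\{\Gamma_x\}_{x \in \Crit(\alpha)}$ has all the required properties.
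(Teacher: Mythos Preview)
Your proposal is correct and follows essentially the same approach as the paper: define $\Gamma_x$ on the interior $W^u(x)$ by lifting the half-orbit to $\tilde{X}$ and parametrizing by the values of the primitive $\tilde{h}$, then extend to the boundary strata by the concatenation formula $\Gamma_{x,y}(\lambda)\#\Gamma_y(b)$. The paper's proof is terser and does not spell out the induction or the continuity argument, relying instead on the observation that since both $\Gamma_{x,y}$ and $\Gamma_y$ are already parametrized by the same function $\tilde{h}$, the boundary formula is automatically the continuous extension of the interior definition; your more explicit treatment of these points is a welcome elaboration rather than a different route.
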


\begin{proof}
    Recall the notation $\tilde{h}: \tilde{X} \to \R$ for a primitive of $\tilde{\pi}^*\alpha \in \tilde{\pi}^*u \in H^1(\tilde{X},\R) = \{0\},$ where $\tilde{\pi}: \tilde{X} \to X$ is the universal cover of $X$.    
    Let $a \in \wb{u}{x}$. If $a \in W^u(x)$, denote $\gamma: (-\infty,0] \to X$ the half-infinite flow line going from $x$ to $a$. In this case, denote  $$\Gamma_x(a):= \tilde{\pi} \circ \tilde{\Gamma}_x(a) \in \mathcal{P}_{x \to a} W^u(x),$$
    where $\tilde{\Gamma}_x(a)$ is the parametrization of $\tilde{\gamma} \in \mathcal{P}_{\tilde{x} \to \tilde{X}} \tilde{X}$, the lift of $\gamma$ in $\tilde{X}$ by the values of $\tilde{h}: \tilde{X} \to \R$.
    
    Since the parametrization map $\Gamma_{x,y} : \trajb{x,y} \to \mathcal{P}_{x \to y} X$ defined in Lemma \ref{lemme: parametrization map} is already defined using the values of $\tilde{h}$, we can define  $$\Gamma_x(a) = \Gamma_{x,y}(\lambda) \# \Gamma_y(b) \in \mathcal{P}_{x \to b} \wb{u}{x}$$ if $a= (\lambda,b) \in \trajb{x,y} \times W^u(y)$.

\end{proof}

\begin{rem}
    If $a \in \wb{s}{x}$, we define $\Gamma_x(a) \in \mathcal{P}_{a \to x}X$ in the same way considering the half-infinite flow line going from $a$ to $x$. We will also denote by $\Gamma_x: \wb{s}{x} \to \mathcal{P}_{X \to x} X$ this parametrization map.
\end{rem}

\begin{lemme}\label{lemme: def q Latour cell}\cite[Lemma 7.3.3]{BDHO23}
     The family of continuous maps $$q_x = p \circ \Gamma_x: \wb{u}{x} \to \mathcal{P}_{\star \to \Xq} \Xq$$ satisfies:
    \begin{enumerate}
        \item For any $(\lambda,a) \in \trajb{x,y} \times \wb{u}{y} \subset \partial \wb{u}{x}$, we have $$q_x(\lambda,a) = q'_{x,y}(\lambda) \# q_y(a).$$
        \item  $\ev_1 \circ q_x = j_x \circ p_x = p \circ i_x$.
    \end{enumerate}
\end{lemme}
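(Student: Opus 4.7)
The proof is a direct unraveling of the definitions; no new geometric input is required beyond Lemma \ref{lemme : defi of parametrization maps for Latour cells} and the cellular compatibility $p \circ i_x = j_x \circ p_x$ observed just before it.

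For property 1, the plan is to combine three ingredients: (i) the concatenation formula $\Gamma_x(\lambda, a) = \Gamma_{x,y}(\lambda) \# \Gamma_y(a)$ established in Lemma \ref{lemme : defi of parametrization maps for Latour cells}, valid on the open stratum $\trajb{x,y} \times W^u(y)$ and extended by iteration to all of $\partial \wb{u}{x}$ via the recursive structure of broken trajectories; (ii) the fact that post-composition by the continuous map $p : X \to \Xq$ commutes with path concatenation, i.e.\ $p \circ (\gamma \# \delta) = (p \circ \gamma) \# (p \circ \delta)$; (iii) the definitions $q'_{x,y} = p \circ \Gamma_{x,y}$ (from the pullback construction above) and $q_y = p \circ \Gamma_y$. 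Applying $p$ to the concatenation formula then yields
\[
q_x(\lambda, a) = p \circ \Gamma_x(\lambda, a) = p(\Gamma_{x,y}(\lambda)) \# p(\Gamma_y(a)) = q'_{x,y}(\lambda) \# q_y(a),
\]
which is the desired identity.

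For property 2, the plan is to exploit the naturality of $\ev_1$ with respect to post-composition: for any continuous path $\gamma : [0,a] \to X$, $\ev_1(p \circ \gamma) = p(\ev_1(\gamma))$. Combined with the identity $\ev_1 \circ \Gamma_x = i_x$ from Lemma \ref{lemme : defi of parametrization maps for Latour cells}, this gives
\[
\ev_1 \circ q_x = \ev_1 \circ p \circ \Gamma_x = p \circ \ev_1 \circ \Gamma_x = p \circ i_x.
\]
The equality $p \circ i_x = j_x \circ p_x$ is the definition of $j_x$ as the attaching map of the induced cellular decomposition of $\Xq$, recalled in the paragraph preceding Lemma \ref{lemme : defi of parametrization maps for Latour cells}.

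The only real bookkeeping is to check that the concatenation rule holds on all boundary strata, not merely on the top one; this is immediate by induction on the depth of the stratification of $\wb{u}{x}$, using that $\Gamma_y$ itself satisfies the analogous rule on $\partial \wb{u}{y}$. Since all constituent maps are continuous and the identities to be checked are closed conditions, everything passes to the compactification with no further obstacle.
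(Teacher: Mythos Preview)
Your proof is correct and is precisely the natural direct argument: unwind the definition $q_x = p \circ \Gamma_x$, invoke the concatenation property of $\Gamma_x$ from Lemma \ref{lemme : defi of parametrization maps for Latour cells}, and use that $p$ respects concatenation and $\ev_1$. The paper itself gives no proof here at all, simply citing \cite[Lemma 7.3.3]{BDHO23} and closing with a black square, so your write-up is strictly more informative than what appears in the text.
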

\begin{flushright}
    $\blacksquare$
\end{flushright}

For each $x \in \Crit(\alpha)$, denote $$m_x = q_{x,*}(s_x) \in C_{|x|}(\mathcal{P}_{\star \to \Xq} \Xq).$$

This family satisfies \begin{equation}
    \partial m_x = \sum_{y} m'_{x,y} \cdot m_y.
\end{equation}

\begin{lemme}
    The map $$\deffct{\Psi}{C_*(X,\Xi,C_*(F',u'))}{C_*(E',u')}{\sum_i n_i\sigma_i \otimes x}{\sum_i n_i \Phi'_*(\sigma_i \otimes m_x)}$$ is well-defined and is a morphism of complexes.
\end{lemme}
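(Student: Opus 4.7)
The plan is to split the statement into two independent claims: that $\Psi$ lands in $C_*(E',u')$ (Novikov finiteness), and that $\partial \Psi = \Psi \partial$. Algebraically, the chain-map identity mirrors the one in \cite[Lemma 7.3.4]{BDHO23}; the genuinely new content is compatibility with the Novikov filtrations defined by the primitive $f'$, and this is what I will address first.

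For well-definedness, I will apply equation \eqref{eq : primitive f} to the pulled-back fibration $F' \hookrightarrow E' \to \Xq$ equipped with $\theta^*\alpha \in u'$ and its primitive $f'$: for every $(e',\gamma) \in E' \ftimes{\pi'}{\ev_0} \mathcal{P}(\Xq)$ one has $f'(\Phi'(e',\gamma)) = f'(e') + \int_\gamma \theta^*\alpha$. Since each $m_x$ is a finite singular chain, its image in $\mathcal{P}_{\star \to \Xq}\Xq$ is compact, and thus $M_x := \sup_s \int_{m_x(\cdot)(s)} \theta^*\alpha$ is finite. The inequality $\max f' \circ \Phi'_*(\sigma_i \times m_x) \leq \max f' \circ \sigma_i + M_x$ then transfers the Novikov finiteness condition from any $\sum_i n_i \sigma_i \in C_*(F',u')$ to its image under $\Psi$; summing over the finite set $\Crit(\alpha)$ shows $\Psi$ lands in $C_*(E',u')$. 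This same estimate shows that $\Psi$ carries the filtration by sublevel sets of $f'\lvert_{F'}$ into the filtration on $E'$ up to the uniform shift $\max_x M_x$, a fact that will be used to extend the chain-map property to the completion.

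For the identity $\partial \Psi = \Psi \partial$, I will verify it first on a generator $\sigma \otimes x$ using four ingredients: (i) $\Phi'_*$ commutes with $\partial$ as it is induced by a continuous map; (ii) the cubical Leibniz rule $\partial(\sigma \times m_x) = \partial\sigma \times m_x + (-1)^{|\sigma|}\sigma \times \partial m_x$; (iii) the transitivity of $\Phi'$, which yields $\Phi'_*\bigl(\sigma \times (m'_{x,y} \cdot m_y)\bigr) = \Phi'_*\bigl((\sigma \cdot m'_{x,y}) \times m_y\bigr)$; and (iv) the Maurer--Cartan-type identity $\partial m_x = \sum_y m'_{x,y} \cdot m_y$ established immediately before the statement. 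Combining these yields $\partial \Psi(\sigma \otimes x) = \Psi(\partial(\sigma \otimes x))$ on generators; passing to the projective limit over sublevel sets by the filtration compatibility obtained above, together with the machinery of Appendix \ref{Appendix : Homology and projective limit, algebraic properties}, extends the identity to arbitrary elements of $C_*(X,\Xi,C_*(F',u'))$.

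The main obstacle I foresee is not any individual computation but the simultaneous handling of the Novikov-type infinite sums embedded in $\sigma \cdot m'_{x,y}$ and $m'_{x,y} \cdot m_y$; once the uniform filtration shift of the first step is secured, however, the algebra is formally the same as in the exact (Morse) case and the verification becomes essentially mechanical.
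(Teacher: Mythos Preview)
Your proposal is correct and follows essentially the same approach as the paper's proof: both establish well-definedness by showing that $f'(\Phi'(e',\gamma)) - f'(e') = \int_\gamma \theta^*\alpha$ is bounded uniformly over the compact image of $m_x$, and both verify the chain-map identity via the transitivity of $\Phi'$ together with $\partial m_x = \sum_y m'_{x,y}\cdot m_y$. One remark: because $\Xi$ is obtained via the Latour Trick, each $m'_{x,y}$ is in fact a \emph{finite} chain (only finitely many $g$ contribute), so your closing appeal to the projective-limit machinery of the Appendix is unnecessary---the identity on generators extends by ordinary linearity, and the Novikov finiteness check from your first paragraph already guarantees convergence of the infinite sums in $\sigma$.
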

\begin{proof}
    First, we prove that $\Psi$ is well-defined. Let  $x \in \Crit(\alpha)$ and $\sigma = \sum_i n_i \sigma_i \in C_*(F',u')$, \emph{i.e.} for all $c \in \R$, $$\#\left\{ i, \ n_i \neq 0 \textup{ and } f'\circ \sigma_i > c\right\} < \infty .$$ To prove that  for all $c \in \R$, $$\#\left\{ i, \ n_i \neq 0 \textup{ and } f'\circ \Phi'_*(\sigma_i \otimes m_x) > c\right\} < \infty,$$ we prove that there exists $A>0$ such that if $a \in F_c$ and $\gamma$ is a parametrized piece of a (broken) trajectory in a cell $\wb{u}{x}/l_x$, then $\Phi'(a,\gamma) \in E_{c+A}$. This is equivalent to the existence of a bound for the map 

$$\begin{array}{ccc}
    F' \times q_x\left(\wb{u}{x}\right) &\to& \R  \\
    (v,\gamma) & \mapsto & f'(\Phi'(v,\gamma)) - f'(a) = \int_{\theta\circ\gamma}\alpha. 
\end{array}$$

Since this map only depends on $\gamma \in q_x \left(\wb{u}{x}\right)$ and $q_x \left(\wb{u}{x}\right)$ is compact, $\Psi$ is well-defined.

We now prove that $\Psi$ is a morphism of complexes.
Let $\sigma \otimes x \in C_*(X,\Xi,C_*(F',u'))$. Then:
\begin{align*}
    \partial \Psi(\sigma \otimes x) & = \Phi'_{*}(\partial(\sigma \otimes m_x))\\
    & = \Psi( \partial \sigma \otimes x) + (-1)^{|\sigma|} \Phi'_{*}\left(\sigma \otimes \sum_y m'_{x,y}\cdot m_y\right)\\
    & \overset{(1)}{=}  \Psi( \partial \sigma \otimes x) + (-1)^{|\sigma|} \sum_y \Phi'_*( \Phi'_{*}(\sigma \otimes m'_{x,y}) \otimes m_y)\\
    & \overset{(2)}{=}\Psi( \partial \sigma \otimes x) + (-1)^{|\sigma|} \sum_y \Phi'_{*}(\sigma \cdot m'_{x,y} \otimes m_y)\\
    & = \Psi\left(\partial \sigma \otimes x + (-1)^{|\sigma|}  \sum_y \sigma \cdot m'_{x,y} \otimes y \right)\\
    & = \Psi(\partial(\sigma \otimes x)).
\end{align*}

Equality (1) comes from the fact that $\Phi'$ is a transitive lifting function and Equality (2) is a consequence of the definition of the $C_*(\Omega(\Xq))$-module structure on $C_*(F',u').$

\end{proof}

The proof of Theorem \ref{thm: Fibration Theorem Morse Novikov} is now reduced to showing that $\Psi: C_*(X,\Xi,C_*(F',u')) \to C_*(E',u')$ is a quasi-isomorphism. 
To do that, we will prove that $\Psi$ induces a morphism of spectral sequences that is an isomorphism on the second pages.

\subsubsection{Spectral sequences}

For each $p \in \{0, \dots,n\}$, denote by $ Sk_p =  \displaystyle \bigcup_{|x|\leq p} j_x\left(\wb{u}{x}/l_x\right)$, the $p$-skeleton of $\Xq$.
Consider the following filtrations:

\[
F_p(C_*(X,\Xi,C_*(F',u')))= \bigoplus_{\substack{i+j =k \\ j \leq p}} C_i(F',u') \otimes \Z\Crit_j(\alpha)
\]
and
\[
F_p(C_*(E',u')) = C_*(\pi^{-1}(Sk_p),u'):= \limproj C_*(\pi^{-1}(Sk_p), \pi^{-1}(Sk_p) \cap E_c).
\]

The two spectral sequences associated with these filtrations, $(E^r_{p,q})$ and $(\mathcal{E}^r_{p,q})$, converge towards $ H_*(X,C_*(F',u')) $ and $ H_*(E',u')$ respectively and their first pages are given by

\[
E^1_{p,q} = H_q(F',u') \otimes \Z\Crit_p(\alpha) = H_{q}(\limproj C_*(F',F'_c)) \otimes \Z\Crit_p(\alpha)
\]
and
\begin{align*}
    \mathcal{E}^1_{p,q} &= H_{p+q}\left(\pi^{-1}(Sk_p), \pi^{-1}(Sk_{p-1}),u'\right) \\
    &:= H_{p+q} \left( \faktor{C_*(\pi^{-1}(Sk_p),u')}{C_*(\pi^{-1}(Sk_{p-1}),u')} \right)\\
    &= H_{p+q}\left( \frac{\limproj C_*\left(\pi^{-1}(Sk_p), \pi^{-1}(Sk_p) \cap E'_c \right)}{\limproj C_*\left( \pi^{-1}(Sk_{p-1}), \pi^{-1}(Sk_{p-1}) \cap E'_c \right)}  \right)\\
    &\overset{(*)}{=} H_{p+q}\left( \limproj \left[ \frac{ C_*\left(\pi^{-1}(Sk_p), \pi^{-1}(Sk_p) \cap E'_c \right)}{ C_*\left( \pi^{-1}(Sk_{p-1}), \pi^{-1}(Sk_{p-1}) \cap E'_c \right)} \right]  \right)\\
    &= H_{p+q}\left( \limproj \left[ \frac{C_*\left(\pi^{-1}(Sk_p), \pi^{-1}(Sk_{p-1}) \right)}{C_*\left(\pi^{-1}(Sk_p) \cap E'_c, \pi^{-1}(Sk_{p-1}) \cap E'_c \right)} \right]\right).
\end{align*}

The equality $(*)$ is a consequence of Lemma \ref{lemme: limproj d'un quotient de sys proj} that we state here:

\begin{lemme}[Lemma \ref{lemme: limproj d'un quotient de sys proj}]
    Let $((B_i)_{i \in\Z_{\leq 0}}, \pi^B_{i,j})$ be a subsystem of a projective system $((A_i)_{i \in\Z_{\leq 0}}, \pi^A_{i,j})$ i.e. $$\forall i\leq j , \ B_i \subset A_i \textup{ and } \pi^B_{i,j}(B_i) = \pi^A_{i,j}(B_i) \subset B_j.$$ Assume that $\pi^B_{i,j} = \pi^{A}_{i,j} \lvert_{B_i}$ is surjective for all $i\leq j$.  Then $\pi^A$ induces a projection on $\left(\faktor{A_i}{B_i}\right)_{i \in \Z_{\leq 0}}$ such that 
    $$\limproj \faktor{A_i}{B_i} \cong \faktor{\limproj A_i}{\limproj B_i}.$$
\end{lemme}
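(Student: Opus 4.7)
The strategy is an elementary Mittag-Leffler style induction. Since $\pi^A_{i,j}$ sends $B_i$ into $B_j$, it descends to a well-defined map $\bar{\pi}^A_{i,j} : A_i/B_i \to A_j/B_j$, so $(A_i/B_i, \bar{\pi}^A_{i,j})$ forms a projective system and there is a canonical map
$$\Phi : \faktor{\limproj A_i}{\limproj B_i} \longrightarrow \limproj \faktor{A_i}{B_i}, \qquad [(a_i)_i] \longmapsto ([a_i])_i.$$
Injectivity is tautological: if $([a_i])_i = 0$, then $a_i \in B_i$ for every $i$, and the compatibility of $(a_i)$ under $\pi^A$ then forces $(a_i)_i \in \limproj B_i$.

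The heart of the proof is surjectivity. Given a compatible sequence $([a_i])_{i \leq 0} \in \limproj A_i/B_i$, I would construct a lift $(a'_i)_{i \leq 0} \in \limproj A_i$ with $a'_i - a_i \in B_i$ for every $i$, by downward recursion starting from $a'_0 := a_0$. Assume $a'_0, \dots, a'_{-k}$ have been chosen with $\pi^A_{-\ell,-\ell+1}(a'_{-\ell}) = a'_{-\ell+1}$ and $a'_{-\ell} - a_{-\ell} \in B_{-\ell}$ for all $\ell \leq k$. Set
$$b := \pi^A_{-k-1,-k}(a_{-k-1}) - a'_{-k} \in A_{-k}.$$
Then $b \in B_{-k}$: it is the sum of $\pi^A_{-k-1,-k}(a_{-k-1}) - a_{-k}$, which lies in $B_{-k}$ by the compatibility of $([a_i])_i$ modulo $B$, and of $a_{-k} - a'_{-k}$, which lies in $B_{-k}$ by the induction hypothesis. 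Using surjectivity of $\pi^B_{-k-1,-k}$, lift $b$ to $b' \in B_{-k-1}$ and put $a'_{-k-1} := a_{-k-1} - b'$. Then $a'_{-k-1} - a_{-k-1} = -b' \in B_{-k-1}$ and $\pi^A_{-k-1,-k}(a'_{-k-1}) = \pi^A_{-k-1,-k}(a_{-k-1}) - b = a'_{-k}$, closing the induction.

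The only real obstacle in such an argument is precisely the existence of a lift $b' \in B_{-k-1}$ of $b \in B_{-k}$, which is guaranteed by the surjectivity hypothesis on $\pi^B$ (i.e.\ the Mittag-Leffler condition); without it the recursion stalls. This is the vanishing of $\limun B_i$ in disguise, and one can alternatively deduce the statement by applying the six-term $\limproj/\limun$ exact sequence to the short exact sequence of projective systems $0 \to (B_i) \to (A_i) \to (A_i/B_i) \to 0$, noting that $\limun B_i = 0$ by Mittag-Leffler.
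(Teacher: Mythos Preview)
Your proof is correct. Your main argument is an explicit elementary lifting construction, whereas the paper's proof is exactly the alternative you sketch in your last paragraph: it applies the six-term $\limproj/\limun$ exact sequence to $0 \to (B_i) \to (A_i) \to (A_i/B_i) \to 0$ and invokes the previous lemma (surjective transition maps $\Rightarrow \limun B_i = 0$) to conclude. So the paper's route is shorter because it cashes in on machinery already set up, while your main argument unwinds that machinery by hand --- the inductive lift you give is essentially the content of the proof that $d$ is surjective in Lemma~\ref{lemme : projection surjective implique limun = 0}, reworked inside $(A_i)$. Both approaches are standard; yours has the virtue of being self-contained, the paper's the virtue of brevity.
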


We now prove that $\Psi : C_*(X,\Xi,C_*(F',u')) \to C_*(E',u')$ induces a quasi-isomorphism $\Psi^{(1)} : E^1_{p,q} \to \mathcal{E}^1_{p,q}$.

\begin{prop}\label{Prop: qiso cellules into Sk Novikov}
Consider $(\gamma_{x, \hat{a}})_{\hat{a} \in \wb{u}{x}/l_x}$, a continuous family of paths in $\Xq$ connecting $\star$ to $j_x(\hat{a}) \in \Xq$.
The maps $$
\deffct{\chi_x}{(F' \times \wb{u}{x}/l_x , F'  \times \partial(\wb{u}{x}/l_x)) }{(\pi^{-1}(Sk_{|x|}), \pi^{-1}(Sk_{|x|-1}))}{(v,\hat{a})}{\Phi'(v, \gamma_{x, \hat{a}})}
$$
induce an isomorphism $ \bigoplus_{|x|=p} H_q(F',u') \otimes H_p\left( \wb{u}{x}/l_x, \partial \left ( \wb{u}{x} / l_x \right ) \right) \simeq  \mathcal{E}^{1}_{p,q}.$

\end{prop}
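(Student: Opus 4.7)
The strategy mirrors the classical identification of the $E^1$-page of a Leray--Serre spectral sequence, adapted to the Novikov-completed setting. I proceed in three steps: a CW-type additivity reduction over the cells of $\Xq$, a trivialization of the fibration over each cell via $\chi_x$, and a Künneth computation in each summand.

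First, apply excision at each finite level $c$ of the filtration of $E'$ by the $E'_c$. Since the $p$-cells $j_x(\wb{u}{x}/l_x)$ meet only along $Sk_{p-1}$, the pair $(\pi^{-1}(Sk_p), \pi^{-1}(Sk_{p-1}))$ splits as a disjoint union over $|x|=p$, giving at each level $c$ an isomorphism
$$H_{p+q}\!\left(\pi^{-1}(Sk_p),\; \pi^{-1}(Sk_{p-1}) \cup (\pi^{-1}(Sk_p) \cap E'_c)\right) \;\cong\; \bigoplus_{|x|=p} H_{p+q}\!\left(\pi^{-1}(j_x(\wb{u}{x}/l_x)),\; \pi^{-1}(j_x(\partial(\wb{u}{x}/l_x))) \cup (\cdots \cap E'_c)\right).$$
Passing to the projective limit over $c$ preserves this splitting thanks to Lemma \ref{lemme: limproj d'un quotient de sys proj}, since the decomposition at each level is compatible with the sub-projective system given by the $E'_c$.

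Second, fix $x$ with $|x|=p$. The map $\chi_x$ sends $F' \times \partial(\wb{u}{x}/l_x)$ into $\pi^{-1}(j_x(\partial(\wb{u}{x}/l_x)))$, so it is a map of pairs. The family of paths $\gamma_{x,\hat{a}}$ continuously trivializes $E'$ over the compact cell $\wb{u}{x}/l_x$, and classical arguments produce a homotopy inverse $\psi_x$ of the form $\psi_x(e) = (\Phi'(e,\, \text{reverse-}\gamma_{x,\pi'(e)}),\; \pi'(e))$. To upgrade this to a Novikov-completed chain homotopy equivalence, one uses (the $E'$-analogue of) Proposition \ref{prop : existence of primitive} to compute
$$f'(\chi_x(v,\hat{a})) - f'(v) \;=\; \int_{\theta \circ \gamma_{x,\hat{a}}} \alpha,$$
which depends only on $\hat{a}$ and is bounded by some constant $A_x>0$ by compactness of $\wb{u}{x}/l_x$. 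The same argument furnishes symmetric bounds for $\psi_x$ and for an explicit homotopy $\psi_x \circ \chi_x \simeq \mathrm{Id}$. Lemma \ref{lemme: limite proj homotopy equivalence} then promotes the classical homotopy equivalence to a chain homotopy equivalence of the projective-limit complexes.

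Third, apply the Künneth formula in each summand. Since $\wb{u}{x}/l_x$ is a $p$-cell with boundary, $H_*(\wb{u}{x}/l_x, \partial(\wb{u}{x}/l_x))$ is free of rank one, concentrated in degree $p$; the Tor term vanishes and
$$H_{p+q}\!\left(F' \times \wb{u}{x}/l_x,\; F' \times \partial(\wb{u}{x}/l_x),\; u'\right) \;\cong\; H_q(F',u') \otimes H_p(\wb{u}{x}/l_x,\, \partial(\wb{u}{x}/l_x)).$$
Tensoring with a rank-one free group commutes with the projective limit defining $H_q(F',u')$, so the completed Künneth reduces to the classical one. Combining the three steps yields the claimed isomorphism. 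The main technical obstacle is step two: ensuring that all the classical data witnessing the trivialization of $E'$ over the cell (the inverse $\psi_x$ and the homotopies) can be chosen so as to shift the $f'$-filtration by a uniformly bounded amount, which is precisely the mechanism by which contractibility of the base cell propagates through the Novikov completion.
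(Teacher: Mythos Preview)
Your overall strategy matches the paper's, but there is a genuine gap in your Step~2. The formula you propose for the homotopy inverse,
\[
\psi_x(e)\;=\;\bigl(\Phi'(e,\ \text{reverse-}\gamma_{x,\pi'(e)}),\ \pi'(e)\bigr),
\]
is not well-defined. The paths $\gamma_{x,\hat a}$ are indexed by $\hat a\in C_x:=\wb{u}{x}/l_x$, whereas $\pi'(e)$ lies in $\Xq$; to recover the correct index you would need $j_x^{-1}(\pi'(e))$. But the attaching map $j_x\colon C_x\to\Xq$ is \emph{not} injective on $\partial C_x$ (it collapses the boundary sphere into $Sk_{p-1}$), so $j_x^{-1}$ does not exist there and $\chi_x$ admits no continuous inverse on the full pair $(F'\times C_x,\,F'\times\partial C_x)$. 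The filtration bounds you describe are correct in spirit, but they cannot be applied to an undefined map.

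The paper handles exactly this point by shrinking each cell: one chooses a smaller concentric closed disk $D_x\subset C_x$ with collar $U_x=C_x\setminus\overset{\circ}{D}_x$, so that $j_x|_{D_x}$ \emph{is} a homeomorphism onto its image. On the pair $(F'_x\times D_x,\,F'_x\times\partial D_x)$ your formula for $\psi_x$ (now with $j_x^{-1}$ explicit) becomes legitimate, and your bounded-shift argument via $f'(\chi_x(v,\hat a))-f'(v)=\int_{\theta\circ\gamma_{x,\hat a}}\alpha$ and compactness goes through verbatim to feed Lemma~\ref{lemme: limite proj homotopy equivalence}. One then returns to the full cells and to the skeleton pair via excisions through $(C_x,U_x)$ and a thickened skeleton $\widetilde{Sk}_{p-1}=Sk_{p-1}\cup\bigcup_{|x|=p}j_x(U_x)$. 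This same shrinking also repairs your Step~1: the images $j_x(D_x)$ for $|x|=p$ are genuinely disjoint, so excision cleanly yields the direct-sum splitting, whereas the images $j_x(C_x)$ overlap along $Sk_{p-1}$ and your ``splits as a disjoint union'' is not literally correct. Once the $D_x$-trick is inserted, your Step~3 (K\"unneth with a rank-one free factor, compatibility with the projective limit) is fine as written.
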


\begin{proof}
We first show that these maps induce an isomorphism  $$\displaystyle \bigoplus_{|x|=p} H_{q}(F') \otimes  H_{p}\left( \wb{u}{x}/l_x, \partial \left( \wb{u}{x} / l_x \right)\right) \simeq H_{p+q} (\pi^{-1}(Sk_p), \pi^{-1}(Sk_{p-1}))$$ and prove in a second step that this isomorphism behaves well with respect to the filtrations.\\

\textbf{\underline{Step 1:}} \ Let $x \in \Crit(\alpha)$.

$\bullet$ We first consider smaller cells $D_x \subset \wb{u}{x}/l_x$ so that $j_x\lvert_{D_x}$ is a homeomorphism onto its image.

Consider $D\subset C$, two $p$-disks centered at $o$, $U = C \setminus \overset{\circ}{D}$, a neighborhood of $\partial C$ in $C$ that retracts to $\partial C$, and the family of straight lines, denoted $(\delta_a)_{a \in C}$, that connects $o$ to $a \in C$.  \\

\begin{figure}[h!]
    \centering
    \includegraphics[scale = 0.3]{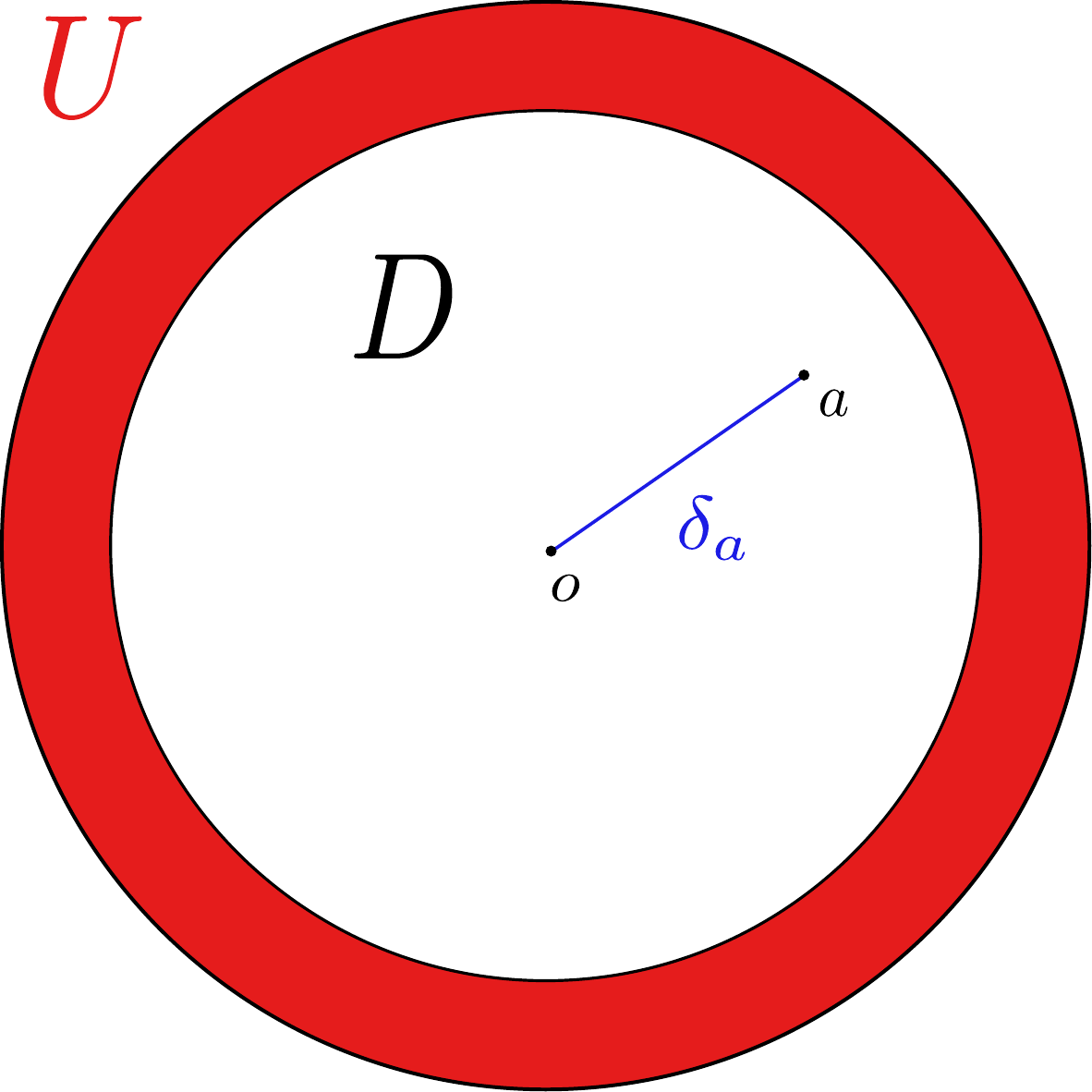}
    \caption{The cell $D$, neighborhood $U$ and the path $\delta_a$.}
    \label{fig: cell}
\end{figure}

Since $U$ can be retracted onto $\partial C$ and onto $\partial D$, $$H_*(C, \partial C) = H_*(C,U) = H_*(D, \partial D).$$

We use the following notation:
\begin{itemize}
    \item $\tau_x$ is a homeomorphism transforming $C$ into the cell $\wb{u}{x}/l_x$.
    \item $C_x = \tau_x(C) = \wb{u}{x}/l_x$.
    \item $D_x = \tau_x(D)$.
    \item $F'_x = \pi^{-1}(c_x)$ the fiber above $c_x:= j_x(\tau_x(o))$.
    \item $U_x = \tau_x(U).$
    \item $\hat{\delta}_{x,\hat{a}} = j_x \circ \tau_x \circ \delta_{\tau_x^{-1}(\hat{a})}$, a continuous family of paths connecting $c_x$ to $j_x(\hat{a}) \in j_x(C_x)$ within $j_x(C_x)$.
\end{itemize}

We consider the maps

$$\deffct{\chi_x^D}{(F'_x \times D_x, F'_x  \times \partial D_x)}{ (\pi^{-1}(j_x(D_x)), \pi^{-1}( j_x( \partial D_x))) }{(v,\hat{a})}{\Phi'(v,\hat{\delta}_{x,\hat{a}})} $$

and 

$$\deffct{\psi_x^D}{ (\pi^{-1}(j_x(D_x)), \pi^{-1}( j_x( \partial D_x))) }{(F'_x \times D_x, F'_x \times \partial D_x)}{y}{\left(\Phi'\left(y, \hat{\delta}^{-1}_{x, j_x^{-1}\circ\pi(y)}\right), j_x^{-1}(\pi(y))\right)}.$$

Since $j_x$ is a homeomorphism on $D_x$, its inverse $j_x^{-1}$ is well-defined in this case.
These two maps are homotopy inverses of each other: 
$$\chi_x^D \circ \psi_x^D: y \mapsto \Phi'\left(\Phi'\left(y, \hat{\delta}^{-1}_{x, j_x^{-1}(\pi(y))}\right), \hat{\delta}_{x, j_x^{-1}(\pi(y))}\right) = \Phi'\left(y, \hat{\delta}^{-1}_{x, j_x^{-1}(\pi(y))}\# \hat{\delta}_{x, j_x^{-1}(\pi(y))}\right)  \simeq \Id$$
and $$\psi_x^D \circ \chi_x^D: (v,\hat{a}) \mapsto \left(\Phi'\left(\Phi'\left(v , \hat{\delta}_{x,\hat{a}}\right), \hat{\delta}^{-1}_{x, \hat{a}} \right), j_x^{-1} \circ \pi'\left( \Phi'\left(v , \hat{\delta}_{x,\hat{a}}\right)\right)\right) = \left(\Phi'\left(v, \hat{\delta}_{x,\hat{a}} \#  \hat{\delta}_{x,\hat{a}}^{-1} \right), \hat{a}\right) \simeq \Id. $$

These maps therefore induce chain homotopy equivalences.\\

• We now will consider $\chi_x$ on the whole cell $C_x = \wb{u}{x}/l_x$. Define $\widetilde{Sk}_{p-1} = Sk_{p-1} \cup \displaystyle \bigcup_{|x| = p} j_x(U_x)$ such that $\displaystyle \bigcup_{|x|=p} (j_x(C_x),j_x(U_x)) = (Sk_p, \widetilde{Sk}_{p-1})$. \\

It is clear that $\widetilde{Sk}_{p-1}$ retracts to $Sk_{p-1}$, therefore $H_p(Sk_p, \widetilde{Sk}_{p-1}) = H_p(Sk_p, Sk_{p-1})$.\\

We define the map 

\[
\deffct{\chi^{C}_x}{(F'_x \times C_x, F'_x \times U_x)}{ (\pi^{-1}(Sk_p), \pi^{-1}(\widetilde{Sk}_{p-1})) }{(v,\hat{a})}{\Phi'(v, \hat{\delta}_{x,\hat{a}})}.
\]

Since $j_x: C_x \to \Xq$ is not a homeomorphism, we do not have a homotopy inverse as for $\chi^D_x$. Nonetheless, we will prove that it induces a quasi-isomorphism.

Let $\gamma \in \mathcal{P}_{\star \to c_x} \Xq$. The map

\[
\deffct{\eta_x}{F'}{F'_x}{(\star,v)}{(c_x,\Phi'(v,\gamma))}
\]

is a homeomorphism with inverse 

\[
\deffct{\eta_x^{-1}}{F'_x}{F'}{(c_x,v_x)}{\left(\star,\Phi'\left((c_x,v_x),\gamma^{-1}\right)\right)}.
\]

We consider the continuous family of paths $ (\gamma_{x,\hat{a}})_{\hat{a} \in C_x}$ connecting $\star$ to $\hat{a}$ in $C_x$, given by $\gamma_{x,\hat{a}} = \gamma \# \hat{\delta}_{x, \hat{a}}$. We define $\chi_x: F' \times C_x$ as the composition 

\[ \chi_x:
\begin{array}{ccccc}
    F' \times C_x & \overset{\eta_x}{\to} & F'_x \times C_x & \overset{\chi^C_x}{\to} &  \pi^{-1}(Sk_p) \\
    (v, \hat{a}) & \mapsto & \left(\Phi'(v,\gamma), \hat{a}\right) & \mapsto & \Phi'(\Phi'(v,\gamma), \hat{\delta}_{x, \hat{a}}) = \Phi'(v,\gamma_{x,\hat{a}}).
\end{array}
\]

We then have the following commutative diagram: 
{\footnotesize
$$
\xymatrix{ 
&
\displaystyle \bigoplus_{|x|=p} H_{p+q}(F'_x \times D_x, F'_x \times \partial D_x) \ar@<2ex>[r]_-{\sim}^-{\chi_{x,*}^{D}} \ar[d]^-{\sim}
&
\displaystyle \bigoplus_{|x|=p} H_{p+q}(\pi^{-1}(j_x(D_x)), \pi^{-1}(j_x(\partial D_x))) \ar@<2ex>[l]^-{\psi_{x,*}^{D}}_-{\sim} \ar[d]^-{\sim} \\
&
\displaystyle \bigoplus_{|x|=p} H_{p+q}(F'_x \times C_x, F'_x \times U_x) \ar[r]^-{(\chi^{C}_{x,*})_{|x|=p}} 
&
H_{p+q} (\pi^{-1} (Sk_p), \pi^{-1}(\widetilde{Sk}_{p-1}))\\
\displaystyle \bigoplus_{|x|=p} H_{q}(F') \otimes  H_{p}( C_x,\partial C_x ) \ar[r]^-{\sim} 
&
\displaystyle \bigoplus_{|x|=p} H_{p+q}(F'_x \times C_x, F'_x \times \partial C_x ) \ar[r]^-{(\chi^{C}_{x,*})_{|x|=p}} \ar[u]_-{\sim}
&
H_{p+q}(\pi^{-1}(Sk_p) , \pi^{-1}(Sk_{p-1})), \ar[u]_-{\sim}
}$$
}

where all the vertical arrows are excisions and the horizontal arrow $$ \bigoplus_{|x|=p} H_{q}(F') \otimes  H_{p}( C_x,\partial C_x ) \to  \bigoplus_{|x|=p} H_{p+q}(F'_x \times C_x, F'_x \times \partial C_x )$$ is the identification given by the maps $\eta_x$ and the Universal Coefficient Theorem, noting that, if $|x|=p$, $H_k(C_x, \partial C_x) = \mathbb{Z}$ if $k=p$, and $0$ otherwise. The two squares on the right are commutative by definition of $\chi^D_x$ and $\chi^C_x$ at the topological level.
Therefore, $$(\chi_{x,*})_{|x|=p}: \bigoplus_{|x|=p} H_q(F') \otimes H_p(C_x, \partial C_x) \to H_{p+q}(\pi^{-1}(Sk_p),\pi^{-1}(Sk_{p-1}))$$ is an isomorphism. It now suffices to remark that, since $C_x$ is contractible, any other choice for the family of paths $\gamma_{x, \hat{a}}$ would yield a chain homotopic map at the complex level and therefore the same map in homology.\\

\textbf{\underline{Step 2:}} We now check that the maps used in Step 1 respect the filtrations. Let $x \in \Crit(\alpha)$ and $(v,\hat{a}) \in F'_{x} \times C_x$. For $c \in \R$, denote $F'_{x,c} = \eta_x(F'_c).$ The real number

$$f'(\chi^C_x(v,\hat{a})) - f'(v) = \int_{\delta_{x,\hat{a}}} \theta^*\alpha$$ does not depend on $v$ but only on $\hat{a} \in C_x$. Since $C_x$ is compact, there exists $M_1 > 0$ such that for all $c\in \R$, $$\chi_x^C(F'_{x,c} \times C_x) \subset \pi^{-1}(Sk_{|x|}) \cap E'_{c+M_1}.$$

In particular, $\chi^D_x(F'_{x,c} \times D_x) \subset \pi^{-1}(j_x(D_x)) \cap E'_{c+M_1}$ and similar arguments show that there exists $M_2>0$ such that for all $c \in \R$, $$\psi^D_x\left( \pi^{-1}(j_x(D_x)) \cap E'_{c}\right) \subset F'_{x,c+M_2} \times D_x.$$ Therefore $\chi_{x,*}^D$ and $\psi^D_{x,*}$ induce homotopy equivalence inverse of each other $$H_*\left(\limproj \left[ \frac{C_*(F'_x \times D_x, F'_x \times \partial D_x)}{C_*(F'_{x,c} \times D_x, F'_{x,c} \times \partial D_x)}\right] \right) \simeq H_*\left( \limproj \left[\frac{C_*(\pi^{-1}(j_x(D_x)),\pi^{-1}(j_x(\partial D_x)))}{C_*(\pi^{-1}(j_x(D_x)) \cap E'_c,\pi^{-1}(j_x(\partial D_x)) \cap E'_c)}\right] \right).$$

It follows that,  $$(\chi_{x,*})_{|x|=p}: \bigoplus_{|x|=p} H_q(F',u') \otimes H_p(C_x, \partial C_x) \to H_{p+q}\left( \limproj \left[ \frac{C_*\left(\pi^{-1}(Sk_p), \pi^{-1}(Sk_{p-1}) \right)}{C_*\left(\pi^{-1}(Sk_p) \cap E'_c, \pi^{-1}(Sk_{p-1}) \cap E'_c \right)} \right]\right) = \mathcal{E}^1_{p,q} $$ is well-defined and the same arguments as in the first step show that it is an isomorphism.

\end{proof}

\paragraph{End of proof of Theorem \ref{thm: Fibration Theorem Morse Novikov}.}

Let $x \in \Crit(\alpha)$ and $a \in \wb{u}{x}$. Consider the path $\gamma_{x,a} = \Gamma_x(a) \in \mathcal{P}_{x \to i_x(a)} X$. For $\hat{a} = p_x(a) \in \wb{u}{x}/l_x$, we define $\gamma_{x,\hat{a}} = p(\gamma_{x,a}) = q_x(a) \in \mathcal{P}_{\star \to j_x(\hat{a})}\Xq$ (see Lemma \ref{lemme: def q Latour cell}).

At the chain level:

$$\chi_{x,*}(\sigma \otimes p_{x,*}(s_x)) = \Phi'_{*}(\sigma \otimes q_{x,*}(s_x)) = \Phi'_{*}(\sigma \otimes m_x) = \Psi(\sigma \otimes x).$$

Furthermore, since $\wb{u}{x}/l_x$ is homeomorphic to a closed disk, there is a bijection 

$$ H_p(\wb{u}{x}/l_x, \partial \wb{u}{x}/l_x) \simeq \Z\langle x\rangle $$
 
when $|x|=p$. Thus, $\Psi: C_*(X,\Xi, C_*(F',u')) \to C_*(E',u')$ induces a quasi-isomorphism on the first pages $\Psi^{(1)}: E^1_{p,q} \to \mathcal{E}^1_{p,q}$ and this concludes the proof of Theorem \ref{thm: Fibration Theorem Morse Novikov}.

\section{Functoriality in enriched Morse-Novikov theory}\label{Section : Functoriality in the DG Morse-Novikov theory}
In this section, we will use the Latour Trick to define direct and shriek maps in the DG Morse-Novikov setting, analogous to the definition in the DG Morse setting given in \cite[Section 9 and 10]{BDHO23}. We will also give a chain-level construction that is equivalent at the homology level to the definition using the Latour Trick. The functorial properties of these maps are the same as in DG Morse Homology.

\subsection{Direct and shriek maps in homology}

Using the Latour Trick (see \ref{subsection : Latour Trick}), we can generalize the definition of direct and shriek maps in DG Morse Homology to DG Morse-Novikov Homology.

Let $(Y^k,\star_Y)$ be a pointed, oriented, closed, and connected manifold of dimension $k$ and $\varphi : X \to Y$ be a continuous map. We will assume that $\varphi(\star)= \star_Y.$ Let $u_Y \in H^1(Y,\R)$, $u_X = \varphi^*u_Y \in H^1(X,\R)$ and $\G$ be a DG right $C_*(\Omega Y,u_Y)$-module. 

Let $\Xi_X$ and $\Xi_Y$ be sets of DG Morse-Novikov data adapted to $u_X$ and $u_Y.$

Let $\Xi^L_{X}$ and $\Xi^L_{Y}$ be sets of DG Morse-Novikov data adapted to $u_X$ and $u_Y$ given by the Latour Trick. We proved in Proposition \ref{prop : Latour Trick} that $$C_*(X, \Xi^L_X,\varphi^*\G) \textup{ and } C_*(Y,\Xi^L_Y, \G)$$ are DG Morse complexes.

\begin{defi}\label{defi : functoriality DG Morse-Novikov with Latour trick}
    Let $$\varphi_* : H_*(X, \varphi^*\G, u_X) \to H_*(Y,\G,u_Y) \textup{ and } \varphi_! : H_*(Y,\G,u_Y) \to H_{*+n-k}(X,\varphi^*\G, u_X)$$
    be the \textbf{direct and shriek} maps respectively induced in homology by the compositions
    $$ C_*(X, \Xi_X, \varphi^*\G) \overset{\Psi^{01}_X}{\to} C_*(X, \Xi^L_X, \varphi^*\G) \overset{\varphi_*}{\to} C_*(Y,\Xi^L_Y,\G) \overset{\Psi^{10}_Y}{\to} C_*(Y,\Xi_Y,\G)$$
and 
    $$C_*(Y,\Xi_Y,\G) \overset{\Psi^{01}_Y}{\to} C_*(Y,\Xi^L_Y,\G) \overset{\varphi_!}{\to} C_{*+n-k}(X, \Xi^L_X, \varphi^*\G) \overset{\Psi^{10}_X}{\to} C_{*+n-k}(X, \Xi_X, \varphi^*\G),$$
    
    where $\varphi_* : C_*(X, \Xi^L_X, \varphi^*\G) \to C_*(Y,\Xi^L_Y,\G)$ and $\varphi_! : C_*(Y,\Xi^L_Y,\G) \to C_{*+n-k}(X,\Xi^L_X,\varphi^*\G)$ are, respectively, the direct and shriek maps for DG Morse homology defined in \cite[Section 9 and 10]{BDHO23}. The maps $ \Psi^{01}_X : C_*(X, \Xi_X, \varphi^*\G) \to C_*(X, \Xi^L_X, \varphi^*\G)$,  $\Psi^{10}_X : C_{*}(X, \Xi^L_X, \varphi^*\G) \to C_{*}(X, \Xi_X, \varphi^*\G)$, 
    
    $\Psi_Y^{01} : C_*(Y,\Xi_Y,\G)\to C_*(Y,\Xi^L_Y,\G)$ and $\Psi_Y^{10} : C_*(Y,\Xi^L_Y,\G) \to C_*(Y,\Xi_Y,\G) $ are continuation maps (see Section \ref{section : Invariance of DG Morse-Novikov homology}).
\end{defi}

\begin{rem}
    The composition $\Psi^{10}_Y \circ \varphi_* \circ \Psi^{01}_X$ depends on the choice of  DG Morse-Novikov data $\Xi^L_X$ and $\Xi^L_Y$ up to chain homotopy. Therefore this construction does not yield a canonical map $\varphi_* : C_*(X, \Xi_X, \varphi^*\G) \to C_*(Y,\Xi_Y,\G) $ at the chain-level.
\end{rem}

The following theorem is an extension of \cite[Theorem 8.1.1]{BDHO23} for smooth maps in the DG Morse-Novikov setting.

\begin{prop}[DG Morse-Novikov Functoriality]\label{prop : Functoriality in DG Morse-Novikov}
Let $X^n,Y^k,Z^{\ell}$ be oriented, closed, and connected manifolds.
Let $u \in H^1(Y,\R)$ and $\G_*$ be a DG right $C_*(\Omega Y, u)$-module.
A continuous map $\varphi: X \to Y$ induces in homology a \textbf{direct map}
$$\varphi_*: H_*(X,\varphi^*\G, \varphi^*u) \to H_*(Y, \G,u)$$
and a \textbf{shriek map}
$$\varphi_!: H_*(Y,\G,u) \to H_{*+n-k}(X,\varphi^*\G, \varphi^*u)$$ with the following properties:

\begin{enumerate}
    \item \textup{(IDENTITY)} We have $\Id_* = \Id_!= \Id: H_*(Y,\G,u) \to H_*(Y,\G,u)$.

    \item \textup{(COMPOSITION)} Given continuous maps $X \overset{\varphi}{\to} Y \overset{\psi}{\to} Z$, $u_Z \in H^1(Z,\R)$ and $\F$ a DG right $C_*(\Omega Z, u_Z)$-module. 

    $$(\psi \circ \varphi)_* = \psi_* \circ \varphi_*: H_*(X,\psi^*\varphi^*\F, \psi^*\varphi^*u_Z) \to H_*(Z, \F, u_Z)$$ and
    $$(\psi \circ \varphi)_! = \varphi_! \circ \psi_!: H_*(Z,\F,u_Z) \to H_{*+n-\ell}(X,\psi^*\varphi^*\F, \psi^*\varphi^*u_Z).$$
    \item \textup{(HOMOTOPY)} Two homotopic maps induce the same direct and shriek maps.

    \item \textup{(SPECTRAL SEQUENCE)} The direct and shriek maps are limits of morphisms between the spectral sequences associated with the corresponding enriched complexes, given at the second pages by 
    $$\varphi_{p,q,*} : H_p(X,\varphi^*H_q(\G)) \to H_p(Y,H_q(\G))$$ 
    and 
    $$\varphi_{p,q,!} : H_p(Y,H_q(\G)) \to H_{p+n-k}(X,\varphi^*H_q(\G)),$$ the usual direct and shriek maps in homology with local coefficients.
\end{enumerate}
\end{prop}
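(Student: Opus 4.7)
The plan is to reduce each of the four properties to the corresponding property of the direct and shriek maps in DG Morse homology established in \cite[Theorem 8.1.1]{BDHO23}, by systematically exploiting the Latour Trick (Proposition \ref{prop : Latour Trick}) and the invariance of the DG Morse-Novikov complex under continuation (Theorem \ref{thm : invariance}). For every manifold $M \in \{X,Y,Z\}$ involved, fix once and for all a set of Latour DG Morse-Novikov data $\Xi^L_M$, so that $C_*(M,\Xi^L_M,\cdot)$ becomes a bona fide DG Morse complex, together with fixed continuation maps $\Psi^{01}_M$ and $\Psi^{10}_M$ that are chain homotopy inverses of one another by Theorem \ref{thm : invariance}. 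By Definition \ref{defi : functoriality DG Morse-Novikov with Latour trick}, I then have
$$\varphi_* = \Psi^{10}_Y \circ \varphi^M_* \circ \Psi^{01}_X, \qquad \varphi_! = \Psi^{10}_X \circ \varphi^M_! \circ \Psi^{01}_Y,$$
where $\varphi^M_*$ and $\varphi^M_!$ are the DG Morse direct and shriek maps of \cite[Sections 9--10]{BDHO23}.

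For IDENTITY, in the Latour complex on $Y$, $\Id^M_* = \Id^M_! = \Id$ by \cite[Theorem 8.1.1]{BDHO23}, so $\Id_* = \Psi^{10}_Y \circ \Psi^{01}_Y$, which is chain homotopic to the identity by part 2 of Theorem \ref{thm : invariance}; the same computation handles $\Id_!$. For COMPOSITION, given $X \overset{\varphi}{\to} Y \overset{\psi}{\to} Z$, I compute
$$\psi_* \circ \varphi_* = \Psi^{10}_Z \circ \psi^M_* \circ (\Psi^{01}_Y \circ \Psi^{10}_Y) \circ \varphi^M_* \circ \Psi^{01}_X.$$
The parenthesised composition is chain homotopic to $\Id$ by Theorem \ref{thm : invariance}, so the whole expression is chain homotopic to $\Psi^{10}_Z \circ (\psi^M_* \circ \varphi^M_*) \circ \Psi^{01}_X$, which by the DG Morse composition property equals $\Psi^{10}_Z \circ (\psi \circ \varphi)^M_* \circ \Psi^{01}_X = (\psi \circ \varphi)_*$. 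The analogous argument, with the order of continuation maps reversed, gives $(\psi \circ \varphi)_! = \varphi_! \circ \psi_!$.

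For HOMOTOPY, if $\varphi \simeq \psi$, then the DG Morse homotopy property of \cite[Theorem 8.1.1]{BDHO23} gives $\varphi^M_* = \psi^M_*$ and $\varphi^M_! = \psi^M_!$ in homology, and these equalities are preserved under pre- and post-composition with the continuation maps. For SPECTRAL SEQUENCE, observe that under the Latour Trick the filtration of $C_*(M,\Xi^L_M,\cdot)$ by critical points of the chosen Morse $1$-form coincides with the filtration of the underlying DG Morse complex by critical points of the associated Morse function; moreover continuation maps respect these filtrations and induce isomorphisms on the second pages (identifying coefficient systems via $H_0(C_*(\Omega Y,u)) = \Lambda_u$ acting on $H_q(\G)$). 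The spectral-sequence statement in \cite[Theorem 8.1.1]{BDHO23} then transports, via these identifications, to the desired description of $\varphi_{p,q,*}$ and $\varphi_{p,q,!}$ as the classical direct and shriek maps in singular homology with local coefficients.

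The main point requiring care will be the compatibility of module structures: $\G$ is a $C_*(\Omega Y,u)$-module, and under the Latour Trick we view it as a $C_*(\Omega Y)$-module through the canonical inclusion $C_*(\Omega Y) \hookrightarrow C_*(\Omega Y,u)$. One must check that the DG Morse direct and shriek maps of \cite[Sections 9--10]{BDHO23} apply in this situation and that the continuation maps $\Psi^{01}$, $\Psi^{10}$ (which are genuinely Morse-Novikov in nature on one side and Morse on the other) intertwine both module structures; this ultimately reduces to the fact that the $\Lambda_u$-action on $H_0(\G)$ is the one induced by the $C_*(\Omega Y)$-action, which is already implicit in Corollary \ref{cor : DG Morse Novikov homotopy equivalent to DG Morse}.
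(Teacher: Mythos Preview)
Your proposal is correct and matches the paper's approach: the paper states this proposition as an extension of \cite[Theorem 8.1.1]{BDHO23} without an explicit proof, relying on Definition \ref{defi : functoriality DG Morse-Novikov with Latour trick} (direct and shriek maps as conjugates of the DG Morse maps by continuation isomorphisms) together with Theorem \ref{thm : invariance}, which is exactly the reduction you carry out. Your write-up simply makes explicit the verifications the paper leaves to the reader.
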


\subsection{Sketch of a construction at the chain level}\label{subsection : Sketch of a construction at the chain level}

Let $(X^n,\star)$ and $(Y^k,\star_Y)$ be pointed, oriented, closed, and connected manifolds and $\varphi: X \to Y$ be a continuous map. Let $u \in H^1(Y,\R)$ and $u_X = \varphi^*u \in H^1(X,\R)$.
We adapt the second definition of \cite[Section 10]{BDHO23}. Let $\Xi_X$ be a set of DG Morse-Novikov data on $X$ and $\Xi_Y$ be a set of DG Morse-Novikov data on $Y$. Choose $\alpha \in \Xi_X$ and $\beta \in \Xi_Y$ Morse 1-forms such that $\alpha \in u_X$ and $\beta \in u$. Let $\G$ be a right DG $C_*(\Omega Y,u)$-module. 
We will always work under the assumption that $ \varphi(\star) = \star_Y$.

We will assume that $\varphi : X \to Y$ is smooth and refer to \cite[Section 9.7]{BDHO23} to extend the definition for smooth maps to continuous maps since the construction of the \textbf{identification morphism} is the same in the DG Morse-Novikov context.

\subsubsection{Direct maps}

We will assume the generic transversality condition $$\varphi\lvert_{W^u_{\alpha}(x)} \pitchfork W^s_{\beta}(y)$$ for each $x \in \Crit(\alpha)$ and $y \in \Crit(\beta).$

\begin{defi}
    For each $x \in \Crit(\alpha)$ and $y \in \Crit(\beta)$, define the $|x|-|y|$-dimensional manifolds $$\Mtraj^{\varphi}(x,y):= W^u_{\alpha}(x) \cap \varphi^{-1}(W^s_{\beta}(y)) \cong W^u_{\alpha}(x) \ftimes{\varphi}{} W^s_{\beta}(y) = \left\{ (a,\varphi(a)) \in W^u_{\alpha}(x) \times W^s_{\beta}(y)\right\}$$
    and $$\overline{\mathcal{M}}^{\varphi}(x,y):= \wbi{u}{\alpha}{x} \pitchfork \varphi^{-1} \left( \wbi{s}{\beta}{y} \right) \cong  \wb{u}{x} \ftimes{\varphi}{} \wb{s}{y},$$
    
    that we orient by $$\left( \Or \ \overline{\mathcal{M}}^{\varphi}(x,y), \Or \ \wbi{u}{\beta}{y}\right) = \Or \ \wbi{s}{\alpha}{x}.$$

\end{defi}

In this section, we will denote $ \functrajb{}{x,y} = \overline{\mathcal{M}}^{\varphi}(x,y)$ when no confusion can arise.
The manifold $\functrajb{}{x,y}$ is a manifold with boundary and corners 

$$\partial \functrajb{}{x,y} = \bigcup_{z \in \Crit(\alpha)} \trajbi{\alpha}{x,z} \times \functrajb{}{z,y} \cup \bigcup_{ w \in \Crit(\beta)} \functrajb{}{x,w} \times \trajbi{\beta}{w,y}.$$

It has been proven in \cite[Proposition 10.1.1]{BDHO23} that the orientation difference between the product orientation and the boundary orientation is:

    $$\Or \ \partial \functrajb{}{x,y} = (-1)^{|x|-|z|} \left( \Or \ \trajbi{\alpha}{x,z}, \Or \ \functrajb{}{z,y} \right)$$ and $$\Or \ \partial \functrajb{}{x,y} = (-1)^{|x|-|y|-1} \left( \Or \ \functrajb{}{x,w}, \Or \ \trajbi{\beta}{w,y}\right).$$

We now define \textbf{evaluation maps} for these spaces of trajectory as in \cite[Section 10.1]{BDHO23}.

\begin{lemme}
There exists a family of evaluation maps $$q^{\varphi}_{x,y}: \functrajb{}{x,y} \to \Omega Y$$ for any two critical points $x \in \Crit(\alpha)$, $y \in \Crit(\beta)$, such that on the boundary
$$q^{\varphi}_{x,y}(\lambda,a,\varphi(a)) = \varphi(q^X_{x,z}(\lambda)) \# q^{\varphi}_{z,y}(a,\varphi(a))$$ if $(\lambda,a,\varphi(a)) \in \trajb{x,z} \times \functrajb{}{z,y},$ and $$q^{\varphi}_{x,y}(a,\varphi(a),\lambda') = q^{\varphi}_{x,w}(a,\varphi(a)) \# q^Y_{w,y}(\lambda')$$ if $(a,\varphi(a), \lambda') \in \functrajb{}{x,w} \times \trajb{w,y}.$

\end{lemme}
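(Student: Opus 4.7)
The construction mirrors that of Lemma~\ref{lemme: def q}, replacing the single trajectory parametrization $\Gamma_{x,y}$ by the pair of parametrization maps on Latour cells furnished by Lemma~\ref{lemme : defi of parametrization maps for Latour cells}: the unstable map $\Gamma^X_x\colon \wb{u}{x}\to \mathcal{P}_{x\to X}X$ in $X$ and its stable analogue $\Gamma^Y_y\colon \wb{s}{y}\to \mathcal{P}_{Y\to y}Y$ in $Y$, provided by the remark following that lemma. Write $l^X_x\subset \Y_X$ for the branch from $\star$ to $x$ and $l^Y_y\subset \Y_Y$ for the branch from $\star_Y$ to $y$; the hypothesis $\varphi(\star)=\star_Y$ turns $\varphi(l^X_x)$ into a path in $Y$ from $\star_Y$ to $\varphi(x)$. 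For $(a,\varphi(a))\in \functrajb{}{x,y}$, I would set
\begin{equation*}
q^{\varphi}_{x,y}(a,\varphi(a))\ :=\ \theta_Y\circ p_Y\circ \bigl(\varphi(l^X_x)\,\#\,\varphi(\Gamma^X_x(a))\,\#\,\Gamma^Y_y(\varphi(a))\,\#\,(l^Y_y)^{-1}\bigr),
\end{equation*}
where $p_Y\colon Y\to Y/\Y_Y$ and $\theta_Y\colon Y/\Y_Y\to Y$ are the projection of $\Xi_Y$ and its chosen homotopy inverse. The bracketed path is a loop in $Y$ based at $\star_Y$, so $q^{\varphi}_{x,y}$ takes values in $\Omega Y$ and is continuous in $(a,\varphi(a))$.

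To verify the compatibility on the stratum $\trajb{x,z}\times \functrajb{}{z,y}\subset \partial \functrajb{}{x,y}$, I would substitute $\Gamma^X_x(\lambda,a)=\Gamma_{x,z}(\lambda)\#\Gamma^X_z(a)$ from Lemma~\ref{lemme : defi of parametrization maps for Latour cells} into the defining formula and insert the trivial Moore-path cancellation $\varphi(l^X_z)^{-1}\#\varphi(l^X_z)$ between the two $\varphi$-images. The resulting bracketed loop then splits, at its base point $\star_Y$, as the concatenation of $\varphi\bigl(l^X_x\#\Gamma_{x,z}(\lambda)\#(l^X_z)^{-1}\bigr)$ with the defining loop for $q^{\varphi}_{z,y}(a,\varphi(a))$. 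Since $p_Y$ sends each of these two factors to a loop based at $\star_{Y/\Y_Y}$ and $\theta_Y$ acts pointwise, $\theta_Y\circ p_Y$ commutes with the concatenation of such loops, exactly as in the proof of Lemma~\ref{lemme: def q}; identifying $\theta_Y\circ p_Y\circ \varphi\bigl(l^X_x\#\Gamma_{x,z}(\lambda)\#(l^X_z)^{-1}\bigr)$ with $\varphi(q^X_{x,z}(\lambda))$ then gives the required equality. The completely analogous argument, using the decomposition of $\Gamma^Y_y$ on $\wb{s}{w}\times \trajb{w,y}\subset \partial \wb{s}{y}$ and inserting the cancellation $l^Y_w\#(l^Y_w)^{-1}$, handles the second stratum.

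The main obstacle is bookkeeping around the trivial Moore-path factor $\varphi(l^X_z)^{-1}\#\varphi(l^X_z)$ and reconciling the two natural descriptions of $q^X_{x,z}(\lambda)$, namely $\theta_X\circ p_X\circ \Gamma_{x,z}(\lambda)$ and the closed-up path $l^X_x\#\Gamma_{x,z}(\lambda)\#(l^X_z)^{-1}$. These describe the same free homotopy class in $\pi_1(X,\star)$, and they can be arranged to coincide on the nose by a compatible choice of the homotopy inverses $\theta_X$ and $\theta_Y$ relative to the tree branches; once this alignment is set up, the splitting above produces strict equality of loops and not merely equality up to homotopy, and the proof concludes.
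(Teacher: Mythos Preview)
Your construction has a genuine gap at exactly the point you flag as ``the main obstacle'', and the suggested fix does not work.

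First, in the Moore path setting used throughout the paper, the concatenation $\gamma^{-1}\#\gamma$ is \emph{not} the constant path: it is a path of positive length that goes back and forth. Hence ``inserting the trivial Moore-path cancellation $\varphi(l^X_z)^{-1}\#\varphi(l^X_z)$'' changes the loop on the nose, and you cannot split your bracketed loop into two pieces without altering it. Second, even granting the split, the first factor you obtain is
\[
\theta_Y\circ p_Y\circ \varphi\bigl(l^X_x\#\Gamma_{x,z}(\lambda)\#(l^X_z)^{-1}\bigr),
\]
whereas the boundary condition requires $\varphi(q^X_{x,z}(\lambda))=\varphi\bigl(\theta_X\circ p_X\circ \Gamma_{x,z}(\lambda)\bigr)$. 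Equality of these two loops would force $\theta_Y\circ p_Y\circ\varphi = \varphi\circ\theta_X\circ p_X$ on the relevant paths, which there is no reason to hold: $p_Y$ collapses $\Y_Y$, not $\varphi(\Y_X)$, and no ``compatible choice'' of $\theta_X,\theta_Y$ forces $\varphi$ to intertwine these collapse/reinflate operations. At best you get homotopic loops, not equal ones, and the lemma asks for strict equality on the boundary strata.

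The paper's construction sidesteps both issues by not using the tree branches as connectors at all. Instead it fixes once and for all homotopies $H^X$ between $\Id_X$ and $\theta_X\circ p_X$, and $H^Y$ between $\Id_Y$ and $\theta_Y\circ p_Y$, and defines
\[
q^{\varphi}_{x,y}(a,\varphi(a))=\varphi(q^X_x(a))\ \#\ \varphi(H^X(1-\cdot,a))\ \#\ H^Y(\cdot,\varphi(a))\ \#\ q^Y_y(\varphi(a)),
\]
where $q^X_x=\theta_X\circ p_X\circ\Gamma_x$ and $q^Y_y=\theta_Y\circ p_Y\circ\Gamma_y$. The point is that the first factor already \emph{is} $\varphi$ applied to the evaluation on the $X$ side, so on the stratum $\trajb{x,z}\times\functrajb{}{z,y}$ the decomposition $q^X_x(\lambda,a)=q^X_{x,z}(\lambda)\#q^X_z(a)$ (which holds strictly because $\theta_X\circ p_X$ is applied pointwise and the endpoints $x,z$ map to $\star$) immediately gives the required factorisation $\varphi(q^X_{x,z}(\lambda))\#q^{\varphi}_{z,y}(a,\varphi(a))$; the two middle connector pieces depend only on the point $a$ (resp.\ $\varphi(a)$), not on $x$ or $y$, so they are unchanged when passing to the boundary. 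No cancellation insertions and no compatibility between $\theta_X,\theta_Y$ and $\varphi$ are needed.
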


\begin{proof}

    Let $x \in \Crit(\alpha)$, $y \in \Crit(\beta)$ and $(a,\varphi(a)) \in \functrajb{}{x,y}.$ Let $H^X: [0,1] \times X \to X$ be a homotopy between $H^X(0,\cdot) = \Id$ and $H^X(1,\cdot) = \theta_X \circ p_X,$ and $H^Y: [0,1] \times Y \to Y$ be a homotopy between $H^Y(0,\cdot) = \Id$ and $H^Y(1,\cdot) = \theta_Y \circ p_Y.$ 

    Let $\Gamma_x: \wb{u}{x} \to \mathcal{P}_{x \to X} X$ and $\Gamma_y: \wb{s}{y} \to \mathcal{P}_{Y \to y} Y$ be the parametrization maps defined in Lemma \ref{lemme : defi of parametrization maps for Latour cells}. Let $$q^X_x = \theta_X \circ p_x \circ \Gamma_x: \wb{u}{x} \to \mathcal{P}_{\star \to X} X$$ and $$q^Y_y = \theta_Y \circ p_Y \circ \Gamma_y: \wb{s}{x} \to \mathcal{P}_{Y\to \star_Y} Y.$$ 

    Define $q^{\varphi}_{x,y}: \functrajb{}{x,y} \to \Omega Y$ by $$q^{\varphi}_{x,y}(a, \varphi(a)) = \varphi(q^X_x(a)) \# \varphi(H^X(1-\cdot,a)) \# H^Y(\cdot,\varphi(a)) \#q^Y_y(\varphi(a)).$$

 Since the evaluation maps $q^X$ and $q^Y$ satisfy such boundary conditions, so does $q^{\varphi}.$
\end{proof}

As for the moduli spaces of trajectories, to achieve compactness we need to either restrict the length of such trajectories or prescribe the homotopy class of the evaluation.

\begin{defi}
     Let $g \in \pi_1(Y)$, $x \in \Crit(\alpha)$ and $y \in \Crit(\beta)$. Define $$\functrajb{g}{x,y}= \left\{ (a, \varphi(a)) \in \functrajb{}{x,y}, \ \left[q^{\varphi}_{x,y}(a,\varphi(a))\right] = g\right\}.$$
    
\end{defi}

\begin{prop}\label{prop : compactness of moduli spaces for direct maps}
    For any $x \in \Crit(\alpha)$, 
    $y \in \Crit(\beta)$ and $g \in \pi_1(Y),$  $\functrajb{g}{x,y}$ is a compact manifold with boundary and corners of dimension $|x|-|y|$ such that $$\partial \functrajb{g}{x,y} = \bigcup_{\substack{h' \in \pi_1(X), g'' \in \pi_1(Y), \\ \varphi^*(h') \cdot g'' = g,\\z \in \Crit(\alpha)}} \trajbi{h'}{x,z} \times \functrajb{g''}{z,y} \cup \bigcup_{\substack{g',g'' \in \pi_1(Y), \\ g' \cdot g'' = g, \\ w \in \Crit(\beta)}} \functrajb{g'}{x,w} \times \trajbi{g''}{w,y}$$.
\end{prop}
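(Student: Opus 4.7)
The plan is to follow the same strategy as in the proof of compactness of $\trajbi{g}{x,y}$ given earlier in the paper: exhibit $\functrajb{g}{x,y}$ as a union of connected components of a compact manifold with boundary and corners obtained by simultaneously bounding the $\alpha$-length of the $\wb{u}{x}$-factor and the $\beta$-length of the $\wb{s}{y}$-factor.

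The first step is a length identity. Since $q^{\varphi}_{x,y}(a,\varphi(a))$ is a loop at $\star_Y$ representing $g$ and $\beta$ is closed with $[\beta]=u$, one has $\int_{q^{\varphi}_{x,y}(a,\varphi(a))} \beta = u(g)$. Expanding the concatenation
\[
q^{\varphi}_{x,y}(a,\varphi(a)) = \varphi(q^X_x(a)) \# \varphi(H^X(1-\cdot,a)) \# H^Y(\cdot,\varphi(a)) \# q^Y_y(\varphi(a)),
\]
and using that $\varphi^*\beta = \alpha + dh_X$ for some smooth $h_X : X \to \R$ (both forms representing $\varphi^*u$), together with the fact that the contributions coming from $H^X$, $H^Y$, $\theta_X$, $\theta_Y$ are uniformly bounded on the compact $X$ and $Y$, one obtains an equality of the form
\[
u(g) = L_\alpha(\Gamma_x(a)) + L_\beta(\Gamma_y(\varphi(a))) + C,
\]
with $|C| \leq K$ for a constant $K$ depending only on $x$, $y$ and the auxiliary data. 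Since both $L_\alpha(\Gamma_x(a))$ and $L_\beta(\Gamma_y(\varphi(a)))$ are nonpositive by the pseudo-gradient property, each of them is pinched into a bounded interval depending only on $u(g)$ and $K$. Hence there exists $A(g) > 0$ such that
\[
\functrajb{g}{x,y} \subset \wb{u}{x, A(g)} \ftimes{\varphi}{} \wb{s}{y, A(g)},
\]
and the right-hand side is a compact manifold with boundary and corners of dimension $|x|-|y|$ by compactness of each factor and the transversality assumption $\varphi\lvert_{W^u_\alpha(x)} \pitchfork W^s_\beta(y)$.

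Next I would show that $\functrajb{g}{x,y}$ is both open and closed in this compact ambient space: closedness follows from continuity of $q^{\varphi}_{x,y}$ together with discreteness of $\pi_1(Y)$, while openness follows from the homotopy lifting property of the universal cover $\tilde{Y} \to Y$ applied to a continuous family of loops inside a path-component, exactly as in the analogous argument already carried out for $\trajbi{g}{x,y}$. Consequently $\functrajb{g}{x,y}$ is a compact manifold with boundary and corners of dimension $|x|-|y|$.

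Finally, the boundary decomposition will be obtained by restricting the known boundary of $\functrajb{}{x,y}$ to the homotopy class $g$ using the concatenation formulas of the previous lemma: for $(\lambda,a,\varphi(a)) \in \trajbi{h'}{x,z} \times \functrajb{g''}{z,y}$, the loop $\varphi(q^X_{x,z}(\lambda)) \# q^{\varphi}_{z,y}(a,\varphi(a))$ has class $\varphi_*(h') \cdot g''$, while for $(a,\varphi(a),\lambda') \in \functrajb{g'}{x,w} \times \trajbi{g''}{w,y}$ the loop $q^{\varphi}_{x,w}(a,\varphi(a)) \# q^Y_{w,y}(\lambda')$ has class $g' \cdot g''$; imposing that each of these equals $g$ yields the stated decomposition. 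The main technical obstacle is the length identity in the second paragraph; the compactness, connected-component, and boundary descriptions then reduce to arguments already developed for Morse-Novikov trajectories.
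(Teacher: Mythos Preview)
Your proposal is correct and follows essentially the same route as the paper: use the decomposition of $q^{\varphi}_{x,y}$ to bound the $\alpha$- and $\beta$-lengths of the two factors by a constant depending only on $g$, conclude that $\functrajb{g}{x,y}$ sits inside the compact $\wb{u}{x,A(g)} \ftimes{\varphi}{} \wb{s}{y,A(g)}$, and then observe it is a union of connected components there. The paper's connected-component step is slightly simpler than yours---just local constancy of the homotopy class along the continuous map $q^{\varphi}_{x,y}$, no lifting needed---and the boundary decomposition is not spelled out in the paper but follows exactly as you indicate from the concatenation formulas of the preceding lemma.
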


\begin{proof}
    We will prove that $\functrajb{g}{x,y}$ is a union of connected components of the compact manifold $$\functrajb{}{x,y,A}:= \wb{u}{x,A} \pitchfork \varphi^{-1}\left(\wb{s}{y,A}\right),$$ for $A>0$ large enough.

    We first prove that there exists $A>0$ such that $$\functrajb{g}{x,y} \subset \functrajb{}{x,y,A}.$$ Let $(a,\varphi(a)) \in \functrajb{g}{x,y}$. By definition, \begin{align*}
        u(g) &= L(q^{\varphi}_{x,y}(a,\varphi(a)))\\
        &= L(\varphi(q_x^X(a))) + L(\varphi(H^X(1-\cdot,a))) + L(H^Y(\cdot, \varphi(a))) + L(q^Y_y(\varphi(a))).
    \end{align*}

Since the function $$\begin{array}{ccc}
   X  & \to & \R  \\
    a & \mapsto & \int_{H^X(1-\cdot,a)} \varphi^*\beta + \int_{H^Y(\cdot, \varphi(a))} \beta
\end{array}$$

is continuous on the compact space $X$, there exists $K_0 \in \R$ such that $$\int_{q_x^X(a)} \varphi^*\beta + \int_{q_y^Y(\varphi(a))} \beta \leq u(g) - K_0.$$

Stokes' Theorem implies that $$\int_{q^X_x(a)}\varphi^*\beta - \int_{\Gamma_x(a)}\varphi^*\beta = \int_{H^X(\cdot,a)}\varphi^*\beta - \int_{H^X(\cdot,x)} \varphi^*\beta.$$  Since $$\begin{array}{ccc}
    X & \to & \R  \\
    a & \mapsto & \int_{H^X(\cdot,a)}\varphi^*\beta - \int_{H^X(\cdot,x)} \varphi^*\beta 
\end{array}$$ is bounded, so is $$\begin{array}{ccc}
     \wb{u}{x} & \to & \R  \\
     a & \mapsto & \int_{q^X_x(a)}\varphi^*\beta - \int_{\Gamma_x(a)}\varphi^*\beta. 
\end{array}$$ For the same reasons,  $$\begin{array}{ccc}
    \wb{s}{y} & \to & \R  \\
    b & \mapsto & \int_{q_y^Y(b)}\beta - \int_{\Gamma_y(b)}\beta
\end{array}$$ is bounded.
It follows that there exists $A>0$ such that $$L(\varphi(a))=\int_{\Gamma_y(\varphi(a))}\beta <A \textup{ and } L(a) = \int_{\Gamma_x(a)}\varphi^*\beta<A.$$

To conclude the proof it suffices to remark that, if $\gamma : [0,1] \to \functrajb{}{x,y}$ is a continuous path starting at $\gamma(0) \in \functrajb{g}{x,y}$, then $\gamma(1) \in \functrajb{g}{x,y}$. Indeed, since $q_{x,y}$ is continuous, $[q_{x,y}(\gamma(1))] = [q_{x,y}(\gamma(0))] = g$.
\end{proof}

As in Proposition \ref{prop: representing chain system}, we can build a representing chain system $$\left\{ \sigma_{x,y}^g \in C_{|x|-|y|}(\functrajb{g}{x,y}), \ x \in \Crit(\alpha), y \in \Crit(\beta), g \in \pi_1(Y)\right\}$$ such that, for all $x \in \Crit(\alpha), y \in \Crit(\beta)$ and $ g \in \pi_1(Y)$,

$$\partial \sigma_{x,y}^g = \sum_{\substack{h' \in \pi_1(X), g'' \in \pi_1(Y), \\ \varphi^*(h') \cdot g'' = g,\\z \in \Crit(\alpha)}}  s^{h'}_{x,z} \times \sigma^{g''}_{z,y} - \sum_{\substack{g', g'' \in \pi_1(Y), \\ g' \cdot g'' = g,\\w \in \Crit(\beta)}} (-1)^{|x|-|w|} \sigma_{x,w}^{g'} \times s^{g''}_{w,y}.$$

It remains now to evaluate this family using $\{q^{\varphi}_{x,y}\}$ to obtain a family of chains $$\left\{\nu^g_{x,y} \in C_{|x|-|y|}(\Omega^g Y), \ x \in \Crit(\alpha), y \in \Crit(\beta), g \in \pi_1(Y)\right\}$$ that satisfies  \eqref{eq: continuation map g}: $$\partial\nu^g_{x,y} = \sum_{\substack{h' \in \pi_1(X), g'' \in \pi_1(Y), \\ \varphi^*(h') \cdot g'' = g,\\z \in \Crit(\alpha)}} (-1)^{|x|-|z|} \varphi_*(m^{h'}_{x,z}) \cdot \nu^{g''}_{z,y} - \sum_{\substack{g', g'' \in \pi_1(Y), \\ g' \cdot g'' = g,\\w \in \Crit(\beta)}} (-1)^{|x|-|w|} \nu_{x,w}^{g'} \cdot m^{g''}_{w,y}.$$

The direct map associated with $\varphi: X \to Y$ is defined by 

$$\deffct{\varphi_*}{C_*(X,\Xi_X,\varphi^*\G)}{C_*(Y,\Xi_Y,\G)}{\sigma \otimes x}{\displaystyle \sum_y \sigma \cdot \nu_{x,y} \otimes y,}$$ where $$\nu_{x,y} = \sum_{g \in \pi_1(X)} \nu^g_{x,y} \in C_{|x|-|y|}(\Omega Y,u).$$

\subsubsection{Shriek maps}

Let $\varphi : X^n \to Y^k$ be a smooth map.
We assume the generic transversality condition:

$$\varphi \lvert_{W^s_{\alpha}(x)} \pitchfork W^s_{\beta}(y),$$

for each $x \in \Crit(\alpha)$ and $y \in \Crit(\beta).$

\begin{defi}
    For each $x \in \Crit(\alpha)$ and $y \in \Crit(\beta)$, define the $|y|-|x|+n-k$-dimensional manifolds 

    $$\Mtraj^{\varphi_!}(y,x) := W^s(x) \cap \varphi^{-1}\left(W^u(y)\right) \cong  W^s(x) \ftimes{\varphi}{}W^u(y) $$ and $$\overline{\Mtraj}^{\varphi_!}(y,x) = \wb{s}{x} \cap \varphi^{-1}\left(\wb{u}{y}\right) \cong \wb{s}{x} \ftimes{\varphi}{} \wb{u}{y},$$

    that we orient by $$\left( \Or \ \wb{s}{y}, \Or \ \overline{\Mtraj}^{\varphi_!}(y,x) \right) = \Or \ \wb{s}{x}.$$
\end{defi}

When the context is clear, we will use the notation $\functrajb{}{y,x} = \overline{\Mtraj}^{\varphi_!}(y,x)$.

This manifold has boundary and corners such that $$\partial \functrajb{}{y,x} = \bigcup_{w \in \Crit(\beta)} \trajbi{\beta}{y,w} \times \functrajb{}{w,x} \cup \bigcup_{ z \in \Crit(\alpha)} \functrajb{}{y,z} \times \trajbi{\alpha}{z,x}.$$

For convenience we will denote $[x] = |x|+ k$ and $[y] = |y| +n$, so that $$\dim\left( \functrajb{}{y,x} \right) =|y|-|x|+n-k = [y] - [x].$$

It has been proven in \cite[Lemma 10.4.2]{BDHO23} that the orientation difference between the product orientation and the boundary orientation is

    $$\Or \ \partial \functrajb{}{y,x} = (-1)^{[y]-[w]} \left( \Or \ \trajbi{\beta}{y,w}, \Or \ \functrajb{}{w,x} \right)$$ and $$\Or \ \partial \functrajb{}{y,x} = (-1)^{[y]-[x]-1} \left( \Or \ \functrajb{}{y,z}, \Or \ \trajbi{\beta}{z,x}\right).$$

Evaluation maps are defined using an idea similar to that used for direct maps (see \cite[Section 10.4]{BDHO23}).

\begin{lemme}
There exists a family of evaluation maps $$q^{\varphi_!}_{y,x}: \overline{\Mtraj}^{\varphi_!}(y,x) \to \Omega Y,$$ one for any two critical points $x \in \Crit(\alpha)$, $y \in \Crit(\beta)$, such that on the boundary
$$q^{\varphi_!}_{y,x}(\lambda,a,\varphi(a)) = q^Y_{y,w}(\lambda) \# q^{\varphi_!}_{w,x}(a,\varphi(a))$$ if $(\lambda,a,\varphi(a)) \in \trajb{y,w} \times \functrajb{}{w,x},$ and $$q^{\varphi_!}_{y,x}(a,\varphi(a),\lambda') = q^{\varphi_!}_{y,z}(a,\varphi(a)) \# \varphi(q^X_{z,x}(\lambda'))$$ if $(a,\varphi(a), \lambda') \in \functrajb{}{y,z} \times \trajb{z,x}.$

\end{lemme}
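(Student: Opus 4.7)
The plan is to mirror the construction just given for the direct evaluation maps $q^\varphi_{x,y}$, swapping the roles of stable and unstable Latour cells. I would start by recalling the parametrization map $\Gamma_y:\wb{u}{y}\to\mathcal{P}_{y\to Y}Y$ from Lemma \ref{lemme : defi of parametrization maps for Latour cells} together with its stable variant $\Gamma_x:\wb{s}{x}\to\mathcal{P}_{X\to x}X$ (see the remark following that lemma), and set $q^Y_y:=\theta_Y\circ p_Y\circ\Gamma_y$ and $q^X_x:=\theta_X\circ p_X\circ\Gamma_x$. I would then reuse the homotopies $H^X,H^Y$ between the identity and $\theta_X\circ p_X$, resp.\ $\theta_Y\circ p_Y$, already used in the direct-map construction.

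For $(a,\varphi(a))\in\overline{\Mtraj}^{\varphi_!}(y,x)$, I would define
$$
q^{\varphi_!}_{y,x}(a,\varphi(a))\;:=\;q^Y_y(\varphi(a))\,\#\,H^Y(1-\cdot,\varphi(a))\,\#\,\varphi(H^X(\cdot,a))\,\#\,\varphi(q^X_x(a)).
$$
Tracking endpoints shows that the first segment runs from $\star_Y$ to $\theta_Yp_Y(\varphi(a))$, the second to $\varphi(a)$, the third to $\varphi(\theta_Xp_X(a))$, and the last to $\varphi(\star)=\star_Y$, so the concatenation is a loop in $Y$ based at $\star_Y$. Continuity in $(a,\varphi(a))$ is automatic from continuity of each ingredient.

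To verify the two concatenation identities, I would reduce them to the corresponding boundary formulas for $\Gamma_y$ and $\Gamma_x$ on the Latour cells. On the face $\trajb{y,w}\times\functrajb{}{w,x}$, the pair $(\lambda,\varphi(a))$ lies in the stratum $\trajb{y,w}\times W^u(w)\subset\partial\wb{u}{y}$, so Lemma \ref{lemme : defi of parametrization maps for Latour cells} yields $\Gamma_y(\lambda,\varphi(a))=\Gamma_{y,w}(\lambda)\#\Gamma_w(\varphi(a))$, hence $q^Y_y(\lambda,\varphi(a))=q^Y_{y,w}(\lambda)\#q^Y_w(\varphi(a))$. Substituting into the definition, the remaining three factors depend only on $a$ and stay untouched, which gives exactly $q^Y_{y,w}(\lambda)\#q^{\varphi_!}_{w,x}(a,\varphi(a))$. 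The symmetric identity on the face $\functrajb{}{y,z}\times\trajb{z,x}$ is proved in the same way, splitting $q^X_x(a,\lambda')=q^X_z(a)\#q^X_{z,x}(\lambda')$ via the boundary decomposition of $\Gamma_x$ on $\wb{s}{x}$.

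I do not anticipate any genuine obstacle at this stage: the statement is a purely formal companion to the direct-map construction and involves neither compactness nor transversality. The substantive work lies downstream, when the subspaces $\functrajb{g}{y,x}\subset\overline{\Mtraj}^{\varphi_!}(y,x)$ must be shown compact and coherently oriented so that a representing chain system produces evaluation chains $\nu^g_{y,x}$ satisfying the shriek analogue of \eqref{eq: continuation map g}; that bookkeeping is independent of the existence claim here.
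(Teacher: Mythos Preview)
Your proposal is correct and is precisely the construction the paper has in mind: the paper does not spell out a proof for this lemma but simply remarks that ``evaluation maps are defined using an idea similar to that used for direct maps,'' and your formula is exactly the shriek-side mirror of the direct-map formula $q^{\varphi}_{x,y}(a,\varphi(a))=\varphi(q^X_x(a))\#\varphi(H^X(1-\cdot,a))\#H^Y(\cdot,\varphi(a))\#q^Y_y(\varphi(a))$, with the roles of unstable and stable Latour cells swapped. The verification of the boundary identities via the concatenation properties of $\Gamma_y$ on $\wb{u}{y}$ and of $\Gamma_x$ on $\wb{s}{x}$ is exactly right.
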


\begin{defi}
    Let $g \in \pi_1(Y)$, $x \in \Crit(\alpha)$ and $y \in \Crit(\beta).$ Define $$\functrajb{g}{y,x} = \left\{ (a,\varphi(a)) \in \functrajb{}{y,x}, \left[q^{\varphi_!}_{y,x}(a,\varphi(a)) \right] = g \right\}.$$
\end{defi}

Analogous arguments as for the proof of Proposition \ref{prop : compactness of moduli spaces for direct maps} show the following result.

\begin{prop}\label{prop : compactness of moduli spaces for shriek maps}
    For each $x \in \Crit(\alpha)$, 
    $y \in \Crit(\beta)$ and $g \in \pi_1(Y),$  $\functrajb{g}{y,x}$ is a compact manifold with boundary and corners of dimension $[x]-[y]$ such that $$\partial \functrajb{g}{y,x} = \bigcup_{\substack{g',g'' \in \pi_1(Y), \\ g' \cdot g'' = g, \\ w \in \Crit(\beta)}} \trajbi{g'}{y,w} \times \functrajb{g''}{w,x} \cup \bigcup_{\substack{h'' \in \pi_1(X), g'' \in \pi_1(Y), \\ g' \cdot \varphi^*(h'') = g,\\z \in \Crit(\alpha)}} \functrajb{g'}{y,z} \times \trajbi{h''}{z,x}.$$
\end{prop}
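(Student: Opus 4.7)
The plan is to mirror the proof of Proposition \ref{prop : compactness of moduli spaces for direct maps} in the shriek setting, replacing the moduli spaces $\functrajb{}{x,y}$ by $\functrajb{}{y,x}$ and switching the roles of stable and unstable manifolds. First, I would exhibit a compact ambient space. Given $A > 0$, define
$$\functrajb{}{y,x,A} := \wbi{s}{\alpha}{x,A} \pitchfork \varphi^{-1}\left(\wbi{u}{\beta}{y,A}\right),$$
which is compact by Latour's compactness result \cite[Proposition 2.8]{Lat94} applied to $\wbi{s}{\alpha}{x,A}$ and $\wbi{u}{\beta}{y,A}$, together with the transversality assumption $\varphi\lvert_{W^s_{\alpha}(x)} \pitchfork W^s_{\beta}(y)$ (which in turn gives transversality of $\varphi\lvert_{\wb{s}{x}}$ with $\wb{u}{y}$ by a dimension count). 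I would then verify that $\functrajb{g}{y,x}$ is a union of connected components of $\functrajb{}{y,x,A}$ for $A$ chosen large enough depending on $g$.

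The core estimate is the analogue of the length bound in Proposition \ref{prop : compactness of moduli spaces for direct maps}. If $(a,\varphi(a)) \in \functrajb{g}{y,x}$, then by definition $u(g) = L(q^{\varphi_!}_{y,x}(a,\varphi(a)))$, and decomposing $q^{\varphi_!}_{y,x}$ along its defining formula yields
$$u(g) = L\bigl(q^Y_y(\varphi(a))\bigr) + L\bigl(\text{correction from } H^Y\bigr) + L\bigl(\text{correction from } H^X\bigr) + L\bigl(\varphi(q^X_x(a))\bigr).$$
The two correction terms are continuous functions on the compact manifold $X$, hence bounded; combining this with Stokes' theorem (as in the direct map case) to compare $L$ on $q^X_x(a)$ versus $\Gamma_x(a)$ and on $q^Y_y(\varphi(a))$ versus $\Gamma_y(\varphi(a))$, one obtains a constant $A > 0$ such that $L(\Gamma_x(a)) \leq A$ with respect to $\varphi^*\beta$ and $L(\Gamma_y(\varphi(a))) \leq A$ with respect to $\beta$. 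Hence $\functrajb{g}{y,x} \subset \functrajb{}{y,x,A}$.

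Next, I would show that $\functrajb{g}{y,x}$ is both open and closed in $\functrajb{}{y,x,A}$: if $\gamma : [0,1] \to \functrajb{}{y,x,A}$ is a continuous path with $\gamma(0) \in \functrajb{g}{y,x}$, then $t \mapsto \left[q^{\varphi_!}_{y,x}(\gamma(t))\right] \in \pi_1(Y)$ is continuous hence locally constant, so $\gamma(1) \in \functrajb{g}{y,x}$. This identifies $\functrajb{g}{y,x}$ as a union of connected components of a compact manifold with boundary and corners of dimension $[y]-[x]$ (matching the dimension computed in the preceding paragraph; the statement's $[x]-[y]$ appears to be a sign typo).

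Finally, the boundary structure follows from the already-established decomposition
$$\partial \functrajb{}{y,x} = \bigcup_{w \in \Crit(\beta)} \trajbi{\beta}{y,w} \times \functrajb{}{w,x} \cup \bigcup_{z \in \Crit(\alpha)} \functrajb{}{y,z} \times \trajbi{\alpha}{z,x}$$
by restricting the homotopy class constraint: on the first piece the concatenation formula for $q^{\varphi_!}$ forces $g = g' \cdot g''$ with $g' \in \pi_1(Y)$ the class on $\trajbi{g'}{y,w}$ and $g'' \in \pi_1(Y)$ the class on $\functrajb{g''}{w,x}$; on the second piece the formula forces $g = g' \cdot \varphi^*(h'')$ with $g' \in \pi_1(Y)$ on $\functrajb{g'}{y,z}$ and $h'' \in \pi_1(X)$ on $\trajbi{h''}{z,x}$, exploiting that $\varphi$ sends an $X$-loop of class $h''$ to a $Y$-loop of class $\varphi_*(h'')$. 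The main obstacle is the length estimate, which demands careful bookkeeping of the corrections introduced by $H^X$ and $H^Y$ as well as by the reparametrizations $q$ versus $\Gamma$; once this is in place, the rest is formal.
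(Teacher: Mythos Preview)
Your proposal is correct and follows exactly the approach the paper intends: the paper does not give a detailed proof here but simply writes ``Analogous arguments as for the proof of Proposition \ref{prop : compactness of moduli spaces for direct maps} show the following result,'' and your write-up is precisely that analogous argument. Your observation that the stated dimension $[x]-[y]$ should read $[y]-[x]$ is also accurate, matching the dimension formula $\dim \functrajb{}{y,x} = [y]-[x]$ given just above the proposition.
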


\begin{flushright}
    $\blacksquare$
\end{flushright}

We can build a representing chain system $$\left\{ \sigma_{y,x}^g \in C_{[y]-[x]}(\functrajb{g}{y,x}), \ x \in \Crit(\alpha), y \in \Crit(\beta), g \in \pi_1(Y)\right\},$$ such that for all $x \in \Crit(\alpha), y \in \Crit(\beta)$ and $ g \in \pi_1(Y)$,

$$\partial \sigma_{y,x}^g = \sum_{\substack{g', g'' \in \pi_1(Y), \\ g' \cdot g'' = g,\\w \in \Crit(\beta)}}  s^{g'}_{y,w} \times \sigma^{g''}_{w,x} - \sum_{\substack{h' \in \pi_1(X), g'' \in \pi_1(Y), \\ g' \cdot \varphi^*(h'') = g,\\z \in \Crit(\alpha)}} (-1)^{[y]-[z]} \sigma_{y,z}^{g'} \times s^{h''}_{z,x}.$$

It remains now to evaluate this family using $\{q^{\varphi_!}_{y,x}\}$ to obtain a family of chains $$\left\{\nu^{\varphi_!,g}_{y,x} \in C_{[x]-[y]}(\Omega^g Y), \ x \in \Crit(\alpha), y \in \Crit(\beta), g \in \pi_1(Y)\right\}$$ that satisfies  \eqref{eq: continuation map g}:

$$\partial \nu_{y,x}^{\varphi_!,g} = \sum_{\substack{g', g'' \in \pi_1(Y), \\ g' \cdot g'' = g,\\w \in \Crit(\beta)}}  m^{g'}_{y,w} \times \nu^{\varphi_!,g''}_{w,x} - \sum_{\substack{h' \in \pi_1(X), g'' \in \pi_1(Y), \\ g' \cdot \varphi^*(h'') = g,\\z \in \Crit(\alpha)}} (-1)^{[y]-[z]} \nu_{y,z}^{\varphi_!,g'} \times \varphi_*\left(m^{h''}_{z,x}\right).$$

The shriek map associated with $\varphi: X \to Y$ is defined by 

$$\deffct{\varphi_!}{C_*(Y,\Xi_Y,\G)}{C_{*+n-k}(X,\Xi_X,\varphi^*\G)}{\sigma \otimes y}{\displaystyle \sum_x \sigma \cdot \nu^{\varphi_!}_{y,x} \otimes x,}$$ where $$\nu_{y,x}^{\varphi_!} = \sum_{g \in \pi_1(X)} \nu^{\varphi_!,g}_{y,x} \in C_{[y]-[x]}(\Omega Y,u).$$

\subsubsection{Equivalence with Definition \ref{defi : functoriality DG Morse-Novikov with Latour trick}}

To prove that these definitions are equivalent to the ones given in Definition \ref{defi : functoriality DG Morse-Novikov with Latour trick}, we need to prove that the direct and shriek maps commute with the continuation maps up to chain homotopy:

$$\xymatrix{
C_*(X,\Xi^X_0, \varphi^*\G) \ar[r]^{\varphi_*} \ar[d]_{\Psi_{01}^X} & C_*(Y, \Xi^Y_0, \G) \ar[d]^{\Psi_{01}^Y}\\
C_*(X,\Xi^X_1, \varphi^*\G) \ar[r]_{\varphi_*} & C_*(Y, \Xi^Y_1, \G)
}$$

and 

$$\xymatrix{
C_{*+n-k}(X,\Xi^X_0, \varphi^*\G)  \ar[d]_{\Psi_{01}^X} & C_*(Y, \Xi^Y_0, \G) \ar[l]_-{\varphi_!} \ar[d]^{\Psi_{01}^Y}\\
C_{*+n-k}(X,\Xi^X_1, \varphi^*\G)  & C_*(Y, \Xi^Y_1, \G) \ar[l]^-{\varphi_!}.
}$$

To do that, we refer to \cite[Proposition 10.2.1]{BDHO23} and \cite[Proposition 10.5.1]{BDHO23} to prove that $\Psi_{01} : C_*(X,\Xi_{0},\G) \to C_*(X,\Xi_1,\G)$ is chain homotopic to $\Id_* : C_*(X,\Xi_{0},\G) \to C_*(X,\Xi_1,\G)$ and that $\Psi_{10} : C_*(X,\Xi_1,\G) \to C_*(X,\Xi_0,\G)$ is chain homotopic to $\Id_! : C_*(X,\Xi_1,\G) \to C_*(X,\Xi_0,\G)$. The proof of these propositions can be adapted in our context using the spaces $\trajbi{g}{x,y}, \overline{\mathcal{M}}_{g}^{\Id}(x,y)$  and $\overline{\mathcal{M}}_{g}^{\Id_!}(x,y)$ and the DG Morse-Novikov toolset.
We also refer to \cite[Section 10.3]{BDHO23} to prove that, up to chain homotopy, $\varphi_*\psi_* = (\varphi \circ \psi)_*$ and $\psi_! \circ \varphi_! = (\varphi \circ \psi)_!$ for all smooth functions $\varphi : X \to Y$ and $\psi : Z \to X$.

Since any map commutes with the identity, which is chain homotopic to the continuation map, the commutativity of the two diagrams above up to chain homotopy follows.\\

To conclude, it suffices to remark that, for a set of DG Morse-Novikov data obtained by the Latour Trick, the definitions of the direct and shriek maps coincide, up to chain homotopy equivalence with the definitions of \cite{BDHO23}. This is true since, in this case, the spaces $\overline{\mathcal{M}}^{\varphi}_g(x,y)$ for $g \in \pi_1(X)$ are either empty or unions of connected components of $\overline{\mathcal{M}}^{\varphi}(x,y)$. Therefore $$\nu_{x,y} = \sum_g \nu^g_{x,y} \in C_{|x|-|y|}(\Omega Y)$$ is the cocycle defined in \cite{BDHO23}.

\section{Chas-Sullivan type product in enriched Morse-Novikov homology. Proof of Theorem \ref{theorem G}}\label{section : Chas-Sullivan type product in enriched Morse-Novikov homology}

The goal of this section is to prove the following theorem:

\begin{thm}[Theorem \ref{theorem G}]\label{thm : Chas-Sullivan product in Morse-Novikov}

Let $u \in H^1(X,\R)$ and a fibration $F \hookrightarrow \fibration$  endowed with a morphism of fibrations $m : E \ftimes{\pi}{\pi} E \to E$ such that $\pi^*u = 0 \in H^1(E,\R)$ and $F$ is locally path-connected. Endow $\F^u = C_*(F,u)$ with a DG right $C_*(\Omega X,u)$-module structure induced by a transitive lifting function  $\Phi : E \ftimes{\pi}{\ev_0} \mathcal{P}X \to E$.

The morphism of fibrations $m : E \ftimes{\pi}{\pi} E \to E$ induces a degree $-n$ product

$$\CS^u : H_*(X, \F^u)^{\otimes 2} \to H_*(X, \F^u).$$

such that the following properties hold:
    \begin{itemize}
    \item \textbf{\textup{Associativity :}}  If $m_* : H_*(E \ftimes{\pi}{\pi}E,u) \to H_*(E,u)$ is associative, then so is $\CS^u$.
    \item \textbf{\textup{Commutativity :}}  If $m_* : H_*(E \ftimes{\pi}{\pi}E,u) \to H_*(E,u)$ is commutative, then $\CS^u$ is commutative up to sign $$\CS^u(\gamma \otimes \delta) = (-1)^{(n-|\gamma|)(n-|\delta|)} \CS^u(\delta \otimes \gamma).$$
    \item \textbf{\textup{Functoriality :}}\\ $\bullet$ For any pointed, oriented, closed, and connected manifold $Y^k$, any continuous map $g : Y \to X$ satisfies that $\pi_Y^*g^*u = \pi^* u = 0$ and induces a degree $-k$ product for the fibration $F \hookrightarrow g^*E \overset{\pi_Y}{\to} Y$

    $$\CS^{Y,g^*u} : H_i(Y,g^*\F^u)\otimes H_j(Y, g^*\F^u) \to H_{i+j-k}(Y,g^*\F^u),$$ such that $g_! : H_*(X,\F^u) \to H_{*+n-k}(Y, g^*\F^u)$ is a morphism of rings up to sign.
    
     $\bullet$ If $g$ is an orientation-preserving homotopy equivalence then, $g_!$ and
     $g_* : H_*(Y,g^*\F^u) \to H_{*}(X, \F^u)$ are  isomorphisms of rings inverse of each other.
     
     \item \textbf{\textup{Spectral sequence :}} Let $\Xi$ be a set of DG Morse data on $X$. The canonical filtration
    $$F_p(C_*(X,\Xi, \F^u)) = \bigoplus_{\substack{i +j = k\\ i \leq p}} \F^u_j \otimes \Z\Crit_i(f)$$
    induces a spectral sequence $E^r_{p,q}$ that is endowed with an algebra structure
    $$E^r_{p,q} \otimes E^r_{\ell,m} \to E^r_{p+\ell-n, q+m}$$ induces by a chain-level model $\CS^u : C_*(X,\Xi,\F^u)^{\otimes 2} \to C_*(X,\Xi,\F^u)$ and converges towards $H_*(X, \F^u)$ as algebras.  For $s,t \geq 0$ $E^2_{s,t} = H_s(X,H_t(\F^u))$ and the algebra structure is given, up to sign, by the intersection product on $X$ with coefficients in $H_t(\F^u)$.
    \end{itemize}

Moreover, if $u=0$, then $\CS^u = \CS $ is the Chas-Sullivan product in DG Morse Homology defined in \cite[Theorem 7.1]{Rie24}.

\end{thm}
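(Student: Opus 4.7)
The plan is to mirror the construction of the Chas-Sullivan product in enriched Morse homology of \cite[Theorem 7.1]{Rie24}, now using the Novikov extensions of the cross product, shriek map, and morphism of fibrations that have been built in the preceding sections.

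First, I would define the product at the chain level as
\[
\CS^u = (-1)^{n(n-j)} \tilde{m} \circ \Delta_! \circ K : C_i(X,\Xi,C_*(F,u)) \otimes C_j(X,\Xi,C_*(F,u)) \to C_{i+j-n}(X,\Xi,C_*(F,u)),
\]
where $K$ is the DG Morse-Novikov cross product of Theorem \ref{theorem D}, $\Delta_!$ is the shriek map for the diagonal $\Delta: X \to X^2$ supplied by Proposition \ref{Proposition E}, and $\tilde{m}$ is the morphism induced by the morphism of fibrations $m$ via Theorem \ref{theorem F}. Since each factor is a chain map of the appropriate degree, $\CS^u$ is a chain map of degree $-n$ and induces the desired product in homology.

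Next, I would show that, via the Morse-Novikov Fibration Theorem \ref{theorem C}, the product $\CS^u$ corresponds under $\Psi_E$ to the composition $m_* \circ \Delta_! \circ \times$ on $H_*(E,u)^{\otimes 2}$. This follows by stacking three diagrams that commute up to chain homotopy: the Künneth square from Theorem \ref{theorem D}, the functoriality square for $\Delta_!$ from Proposition \ref{Proposition E}, and the morphism-of-fibrations square from Theorem \ref{theorem F}. With this identification in hand, each of the four listed properties is deduced along the lines of \cite[Theorem 7.1]{Rie24}. Associativity follows from the associativity of $m_*$ combined with the identity $\Delta_! \circ K \simeq K \circ (\Delta_! \otimes \Id)$ for the triple diagonal on $X^3$; graded commutativity follows from the commutativity of $m_*$, the symmetry $\tau \circ \Delta = \Delta$, and the Künneth swap sign, producing the factor $(-1)^{(n-|\gamma|)(n-|\delta|)}$. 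Functoriality with respect to a continuous $g : Y \to X$ uses the composition formulas for $g_*$ and $g_!$ of Proposition \ref{Proposition E} together with the compatibility of $K$ and $\tilde{m}$ with the pullback fibration $g^*E \to Y$, which satisfies $\pi_Y^* g^* u = \tilde{g}^* \pi^* u = 0$ so that our construction applies. The spectral sequence statement holds because $K$, $\Delta_!$ and $\tilde{m}$ each respect the canonical filtration, and on the $E^2$-page they reduce, respectively, to the classical cross product, the shriek in homology with local coefficients (Proposition \ref{Proposition E}.4), and the map on fiber homology induced by $m$, whose composition is the intersection product with local coefficients in $H_*(\F^u)$. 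The case $u=0$ reproduces \cite[Theorem 7.1]{Rie24} since each of Theorems \ref{theorem D}, \ref{theorem F} and Proposition \ref{Proposition E} specializes to its DG Morse counterpart in that case.

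The main obstacle is the sign bookkeeping in the proof of graded commutativity, since the Künneth sign, the transposition sign $(-1)^{n^2}$ on $H_*(X^2)$, and the symmetry of $\Delta_!$ must assemble to $(-1)^{(n-|\gamma|)(n-|\delta|)}$. Fortunately this computation is combinatorially identical to the Morse case in \cite[Theorem 7.1]{Rie24}, as the Novikov completion only replaces the coefficient DGA $C_*(\Omega X)$ by $C_*(\Omega X,u)$ without altering the homological-algebraic scaffolding; the only new verification is that $K$ lands in the correct Novikov completion on $X^2$, which is guaranteed by Theorem \ref{theorem D}.
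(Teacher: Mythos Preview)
Your proposal is correct and follows essentially the same approach as the paper. The paper defines $\CS^u$ as the composition $(-1)^{n(n-j)}\tilde m\circ\Delta_!\circ K$ and then states that the proofs of all four properties are the same as in \cite[Section 7]{Rie24}, with the compatibility lemmas between $K$, $\Delta_!$, and $\tilde m$ (Lemma \ref{prop : morphisme Ai commute avec direct et shriek MNDG}, Lemma \ref{lemme : commutativité K avec direct et shriek M-N}, Lemma \ref{lemme : induced maps on a Cartesian product commutes with K MNDG}) as the key inputs; your outline is a slightly more explicit unpacking of exactly this reduction.
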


The product $\CS$ was defined in \cite[Section 7]{Rie24} as follows:

$$\underbrace{H_*(X,C_*(F))^{\otimes 2}}_{\simeq H_*(E)^{\otimes 2}} \overset{K}{\to} \underbrace{H_*(X^2,C_*(F^2))}_{\simeq H_*(E^2)} \overset{\Delta_!}{\to} \underbrace{H_{*-n}(X,\Delta^*C_*(F^2))}_{\simeq H_{*-n}(E \ftimes{\pi}{\pi} E)} \overset{\tilde{m}}{\to} H_{*-n}(X,C_*(F)),$$

where: \begin{itemize}
    \item[$\bullet$] $K : C_*(X,\Xi,C_*(F))^{\otimes 2} \to C_*(X^2,\Xi_{X^2},C_*(F^2))$ is the DG cross-product defined in \cite[Section 6]{Rie24},
    \item[$\bullet$] $\Delta_! : C_*(X^2,\Xi_{X^2},C_*(F^2)) \to C_{*-n}(X,\Xi,\Delta^*C_*(F^2))$ is the shriek map of the diagonal $\Delta : X \to X^2$ defined in \cite[Section 9 and 10]{BDHO23}.
    \item[$\bullet$] $\tilde{m} : C_{*-n}(X,\Xi,\Delta^*C_*(F^2)) \to C_{*-n}(X,\Xi,C_*(F))$ is the map induced by the morphism of fibrations $m : E \ftimes{\pi}{\pi} E \to E$ defined in \cite[Section 5]{Rie24}.
\end{itemize} 

We have already seen in Section \ref{Section : Functoriality in the DG Morse-Novikov theory} that the shriek maps can be extended to the DG Morse-Novikov setting. It remains to prove such results for morphisms of fibrations and for the cross-product $K.$

\subsection{Morphism of fibrations. Proof of Theorem \ref{theorem D}}\label{subsection : Morphism of fibrations}

Let $u \in H^1(X,\R)$ and $F_0 \hookrightarrow E_0 \overset{\pi_0}{\to} X$, $F_1 \hookrightarrow E_1 \overset{\pi_1}{\to} X$ be (Hurewicz) fibrations such that $\pi_0^*u = \pi_1^*u=0$ and $F_0,F_1$ are locally path-connected. Let $\alpha \in \Omega^1(X)$ a representative of $u$. A morphism of fibrations is a continuous map $\varphi:E_0 \to E_1$ such that $\pi_0 = \pi_1 \circ \varphi$. 

Let $\varphi : E_0 \to E_1$ be a morphism of fibrations. If $\alpha \in u$ and $f_1 : E_1 \to \R$ is a primitive of $\pi^*_1\alpha$ (in the sense of Definition \ref{def : primitive of pi alpha}), then $f_0 = f_1 \circ \varphi$ is a primitive of $\pi^*_0\alpha$. Since we have proved that $C_*(E_0,f_0)$ does not depend, up to chain homotopy equivalence, on the choice of primitive $f_0$, we will always assume the relation $f_0 = f_1 \circ \varphi$.

It is clear that $\varphi$ induces a chain map $$\varphi_* : C_*(E_0, f_0) \to C_*(E_1,f_1).$$

We will now prove that the arguments used in \cite[Theorem 5.8]{Rie24} carry along in this context to prove the following analogous theorem:

\begin{thm}\label{thm : morphisme induit commute avec iso Morse-Novikov}
    
    Let $\Xi$ be a set of DG Morse-Novikov data on $X$.

    \begin{itemize}
        \item[i)] There exists a sequence of maps $\left\{\varphi_{n+1} : [0,1]^n \times F_0 \times \Omega X^{n-1} \times \mathcal{P}_{\star \to X} X \to E_1\right\}$, called a \textbf{coherent homotopy} for $\varphi$ that induces a morphism of complexes $$\tilde{\varphi} : C_*(X,\Xi, C_*(F_0,f_0)) \to C_*(X,\Xi, C_*(F_1,f_1)).$$

        \item[ii)] A coherent homotopy also induces a chain homotopy $v : C_*(X,\Xi,C_*(F_0,f_0)) \to C_{*+1}(E_1,f_1)$ between $\Psi_1 \circ \tilde{\varphi}$ and $\varphi_* \circ \Psi_0$ for any set of DG Morse data $\Xi$ on $X$,  where the morphisms $\Psi_0$ and $\Psi_1$ are the quasi-isomorphisms given by the Fibration Theorem. In other words, the following diagram commutes up to chain homotopy
    \end{itemize}
    \[
    \xymatrix{
    C_*(X,\Xi, C_*(F_0,f_0)) \ar[r]^-{\Tilde{\varphi}} \ar[d]^{\Psi_0} & C_*(X,\Xi, C_*(F_1,f_1)) \ar[d]_{\Psi_1} \\
    C_*(E_0,f_0) \ar[r]_{\varphi_*} & C_*(E_1,f_1).
    }
    \]
\end{thm}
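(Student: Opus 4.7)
The plan is to follow the strategy of \cite[Theorem 5.8]{Rie24} step by step, while carefully tracking the Novikov growth condition with respect to the primitive $f_1$ at each stage.

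First, I would construct the coherent homotopy $\{\varphi_{n+1}\}$ by induction on $n$, using the homotopy extension property of the fibration $\pi_1 : E_1 \to X$. The base case $\varphi_1$ is just a lift of $\pi_1 \circ \varphi \circ \text{(evaluation)}$, and having built $\varphi_1,\dots,\varphi_n$ satisfying the appropriate boundary concatenation relations, one defines $\varphi_{n+1}$ on the boundary $\partial [0,1]^n \times F_0 \times \Omega X^{n-1} \times \mathcal{P}_{\star\to X}X$ by the prescribed concatenation formula and extends to the interior using the fibration property. Since $\pi_1 \circ \varphi_{n+1}$ is forced by the concatenation formula and factors through $X$, and because each input variable except $\mathcal{P}_{\star\to X}X$ lives in a compact space, Proposition \ref{prop : existence of primitive} gives the key estimate
\begin{equation*}
f_1 \circ \varphi_{n+1}(t,v,\omega_1,\dots,\omega_{n-1},\gamma) \leq f_0(v) + u([\omega_1]) + \cdots + u([\omega_{n-1}]) + u([\gamma]) + K_n,
\end{equation*}
where $K_n$ depends only on the compact parameter space $[0,1]^n$ and the data.

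Next, I would define $\tilde\varphi$ by the standard higher-evaluation formula
\begin{equation*}
\tilde\varphi(\sigma \otimes x) = \sum_{k \geq 0} \sum_{x=x_0,\dots,x_k} \pm\, \varphi_{k+1,*}\bigl(\text{id}_{[0,1]^k} \times \sigma \times m_{x_0,x_1} \times \cdots \times m_{x_{k-1},x_k}\bigr) \otimes x_k.
\end{equation*}
That $\tilde\varphi$ lands in $C_*(X,\Xi,C_*(F_1,f_1))$ uses the previous estimate together with the defining property of $C_*(\Omega X,u)$: for each $c$, only finitely many tuples $(g_1,\dots,g_k)$ with $m_{x_{i-1},x_i}^{g_i}\neq 0$ satisfy $u(g_1)+\cdots+u(g_k) > c - \max f_0\circ\sigma - K_k$ (bounding $k$ uses the fact that the critical set is finite and the length of trajectories grows with $k$). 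The chain-map property $\partial \tilde\varphi = \tilde\varphi \partial$ is then a purely formal identity between (a) the coherence relations for $\{\varphi_{n+1}\}$ and (b) the Maurer-Cartan equation \eqref{eq: Maurer-cartan m_x,y }, identical to the computation in \cite[Proposition 5.7]{Rie24}.

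For part ii), I would fix a representing chain system $\{s_x \in C_{|x|}(\wb{u}{x})\}$ for the Latour cells associated with $\Xi$ (as in Section \ref{section : A Fibration Theorem for DG Morse-Novikov theory}), so that $\Psi_0$ and $\Psi_1$ are defined by the corresponding cycles $m_x^0, m_x^1$ obtained by evaluation with the two transitive lifting functions $\Phi_0,\Phi_1$. Define
\begin{equation*}
v(\sigma \otimes x) = \sum_{k \geq 0} \sum_{x=x_0,\dots,x_k} \pm\, \varphi_{k+2,*}\bigl(\text{id}_{[0,1]^{k+1}} \times \sigma \times m_{x_0,x_1} \times \cdots \times m_{x_{k-1},x_k} \times s_{x_k}\bigr).
\end{equation*}
The identity $\partial v + v\partial = \Psi_1 \circ \tilde\varphi - \varphi_* \circ \Psi_0$ follows by expanding both sides using $\partial s_x = \sum_y s_{x,y}\cdot s_y$ together with the coherence relations for $\{\varphi_{n+1}\}$ and the Maurer-Cartan relations for $(m_{x,y})$; the boundary terms where an $s_y$-factor collapses onto the end produce $\Psi_1 \circ \tilde\varphi$, while the term where the first parameter $t=0$ specializes $\varphi_{k+2}$ to the composition with $\varphi$ yields $\varphi_* \circ \Psi_0$.

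The main obstacle, and the only genuinely new ingredient beyond \cite[Theorem 5.8]{Rie24}, is the Novikov convergence of the sums defining $\tilde\varphi$ and $v$ in the $f_1$-completed complexes $C_*(F_1,f_1)$ and $C_*(E_1,f_1)$. This reduces to the additivity of $f_1$ along the holonomy from Proposition \ref{prop : existence of primitive}, combined with the compactness of $[0,1]^n$ and the Novikov finiteness property of each $m_{x_{i-1},x_i} \in C_*(\Omega X,u)$. Once this bookkeeping is done for each $\varphi_{n+1,*}$ term and for the telescoping sum over $k$, the algebraic identities transfer verbatim from the Morse case.
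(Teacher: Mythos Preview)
Your overall strategy matches the paper's, but there are two genuine gaps.

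\textbf{Part i): the Novikov estimate.} Your inequality $f_1\circ\varphi_{n+1}\le f_0(v)+\sum u([\omega_i])+\int_\gamma\alpha+K_n$ cannot be justified by compactness: neither $F_0$ nor $\Omega X$ is compact in general, so ``each input variable except $\mathcal{P}_{\star\to X}X$ lives in a compact space'' is false. The paper instead proves an \emph{exact} equality (Lemma~\ref{lemme : coherent homotopy preserves length}):
\[
f_1\bigl(\varphi_{n+1}(t_1,\dots,t_n,v,\gamma_1,\dots,\gamma_n)\bigr)=f_0(v)+\sum_{i=1}^{n-1}u([\gamma_i])+\int_{\gamma_n}\alpha,
\]
with no error term $K_n$ at all. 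This works because the coherent homotopy of \cite[Lemma 5.9]{Rie24} is not built by an abstract HEP extension but is given by the explicit formula $\varphi_{n+1}(t,v,\gamma_1,\dots,\gamma_n)=\Phi_1\bigl(\varphi(\Phi_0(v,\tau|_{I_n^0})),\tau|_{I_n^1}\bigr)$ with $\tau=\gamma_1\#\cdots\#\gamma_n$ and $I_n^0\cup I_n^1$ a partition of the domain of $\tau$. Since $f_0=f_1\circ\varphi$ and both $\Phi_0,\Phi_1$ satisfy~\eqref{eq : primitive f}, the two contributions add up to $\int_\tau\alpha$ exactly. Your inductive HEP construction does not obviously give this; if you insist on an abstract construction you would need the separate observation that $f_1$ is constant along paths in a fiber of $\pi_1$, which you do not mention.

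\textbf{Part ii): the Latour cells.} You write ``fix a representing chain system $\{s_x\in C_{|x|}(\wb{u}{x})\}$ for the Latour cells associated with $\Xi$''. For a general set of DG Morse--Novikov data $\Xi$ this is not available: the Latour cells $\wb{u}{x}$ of a Morse $1$-form are typically non-compact and do not form a CW decomposition of $X$, so there is no $s_x$ and no $m_x$, and indeed $\Psi_0,\Psi_1$ are not defined directly for such $\Xi$. The paper handles this in two steps: first prove ii) for a set of data $\Xi_1$ obtained by the Latour Trick (Proposition~\ref{prop : Latour Trick}), where the trajectories are those of a genuine Morse function and the $m_x$'s exist; then pass to an arbitrary $\Xi_0$ using that $\tilde\varphi$ commutes with continuation maps up to homotopy (\cite[Proposition~5.19]{Rie24}) and that $\Psi_{E_i}$ for $\Xi_0$ is defined as $\Psi_{E_i}\circ\Psi_{01}$. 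Your proposal is missing this reduction.

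Once these two points are fixed, the algebraic verification you sketch is exactly the paper's.
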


We restate here a topological result about fibrations showing that a morphism of fibrations $\varphi : E_0 \to E_1$ commutes with any transitive lifting functions $\Phi_0 : E_0 \ftimes{\pi}{\ev_0} \mathcal{P}X \to E_0$ and $\Phi_1 : E_1 \ftimes{\pi}{\ev_0} \mathcal{P}X \to E_1$ associated respectively to $E_0$ and $E_1$ up to \textbf{coherent homotopy}, \emph{i.e.} successive homotopies compensating for the fact that, in general, $\varphi \circ \Phi_0 \neq \Phi_1(\varphi, \cdot).$

We here restate \cite[Lemma 5.9]{Rie24}.
\begin{lemme}\label{lemme : morphism of fibrations induces coherent homotopy Morse-Novikov}
    Let $F_0 \hookrightarrow E_0 \overset{\pi_0}{\to} X $ and $F_1 \hookrightarrow E_1 \overset{\pi_1}{\to} X$ be two fibrations over $X$ endowed with transitive lifting functions $\Phi_0 : E_0 \times \mathcal{P}_{\star \to X} X \to E_0$ and $\Phi_1 : E_1 \times \mathcal{P}_{\star \to X} X \to E_1$. Let $I = [0,1].$
    
    For any morphism of fibrations $\varphi : E_0 \to E_1$ there exists a sequence of maps $$\varphi_{n+1} : I^n \times F_0 \times \Omega X^{n-1} \times \mathcal{P}_{\star \to X} X \to E_1 \textup{ for } n\geq 1$$ called \textbf{coherent homotopy} such that  $\pi_1 \circ \varphi_{n+1}(t_1,\dots, t_n, \alpha, \gamma_1, \dots, \gamma_{n}) = \ev_1(\gamma_n)$ and
    \begin{equation}\label{eq : Ai morphism of fibration}
    \begin{split}
    & \varphi_{n+1}(t_1,\dots, t_n, \alpha, \gamma_1, \dots, \gamma_n) = \\
    & \left\{\begin{array}{rl}
      \varphi_{n}(t_2,\dots, t_n, \Phi_1(\alpha,\gamma_1),\gamma_2, \dots, \gamma_{n})   &  \textup{if} \ t_1=1, \\
       \varphi_{n}(\hat{t}_j , \alpha, \gamma_1, \dots,\gamma_{j-1} \cdot \gamma_j,\dots, \gamma_{n})  & \textup{if} \ t_j=1, \ j \geq 2,\\
       \Phi_2\left(\varphi_{j}(t_1,\dots, t_{j-1}, \alpha, \gamma_1, \dots, \gamma_{j-1}),\gamma_{j} \cdot \ \dots \ \cdot \gamma_{n})\right) & \textup{if} \ t_j=0,
    \end{array}\right.
   \end{split}
    \end{equation} 
 where we denoted $\varphi_1 = \varphi$.
\end{lemme}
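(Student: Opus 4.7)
The plan is to construct the family $(\varphi_{n+1})_{n \geq 1}$ by induction on $n$, using at each stage the homotopy lifting property (CHP) of the Hurewicz fibration $\pi_1 : E_1 \to X$. The construction is purely topological: it is the standard procedure for producing a homotopy-coherent morphism between the two holonomy structures induced on $E_1$ by $\Phi_1$ and by $\varphi \circ \Phi_0$. In particular, it coincides with the argument of \cite[Lemma 5.9]{Rie24}, whose proof carries over verbatim to this Novikov context since no Morse-theoretic input is used and the statement is stated only at the level of underlying topological fibrations.

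For the base case $n = 1$, we construct $\varphi_2 : I \times F_0 \times \mathcal{P}_{\star \to X} X \to E_1$. Both prescribed endpoints at $t \in \{0, 1\}$ are lifts through $\pi_1$ of the same map $(\alpha, \gamma) \mapsto \ev_1(\gamma)$, and they agree on $F_0 \times \{c_\star\}$ (constant path) by the unit axiom of a transitive lifting function. A fiberwise interpolation can be written down explicitly by splitting the Moore path as $\gamma = \gamma' \# \gamma''$ at parameter $t$ and setting
\[
\varphi_2(t, \alpha, \gamma) \;=\; \Phi_1\!\left(\varphi(\Phi_0(\alpha, \gamma')),\, \gamma''\right).
\]
The transitivity of $\Phi_1$, the identity $\pi_0 = \pi_1 \circ \varphi$, and the unit axiom applied at $t = 0, 1$ then verify the required properties.

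For the inductive step, assume $\varphi_1, \ldots, \varphi_n$ have been constructed. The relations of \eqref{eq : Ai morphism of fibration} prescribe $\varphi_{n+1}$ on each face of the cube $I^n$, hence on the whole boundary $\partial I^n$. After checking compatibility on every codimension-two corner $\{t_i = \varepsilon_i\} \cap \{t_j = \varepsilon_j\}$, we obtain a continuous partial lift of the base map $(t, \alpha, \gamma_1, \ldots, \gamma_n) \mapsto \ev_1(\gamma_n)$. The pair $(I^n, \partial I^n)$ is homeomorphic to $(D^n, S^{n-1})$, and $\partial I^n$ is a retract of $I^n$ minus any interior point; the CHP applied to $\pi_1$ then extends this partial lift to all of $I^n$, producing the required $\varphi_{n+1}$.

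The main obstacle lies in the corner compatibility check. For each pair of face conditions $(i, \varepsilon_i), (j, \varepsilon_j)$ with $i < j$ and $\varepsilon_i, \varepsilon_j \in \{0, 1\}$, one has to verify that the two formulas given by \eqref{eq : Ai morphism of fibration} produce the same value on $\{t_i = \varepsilon_i, t_j = \varepsilon_j\}$. The computation relies on the transitivity axiom $\Phi_1(\Phi_1(\cdot, \delta), \delta') = \Phi_1(\cdot, \delta \# \delta')$, the associativity of Moore-path concatenation, the identity $\pi_1 \circ \varphi = \pi_0$, and the inductive hypothesis on $\varphi_1, \ldots, \varphi_n$. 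Each individual case is routine, but the bookkeeping is substantial; it encodes the associahedral combinatorics underlying the faces of $I^n$.
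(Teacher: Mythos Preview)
Your argument is correct for the lemma as stated, and you rightly observe that the result is a restatement of \cite[Lemma 5.9]{Rie24}; indeed the paper gives no proof here and simply cites that reference. There is, however, a genuine difference in construction worth flagging. For the inductive step you invoke the covering homotopy property of $\pi_1$ to extend from $\partial I^n$ to $I^n$, producing \emph{some} coherent homotopy. The construction actually used in \cite{Rie24}, and tacitly relied on later in this paper, is instead a closed formula: one splits the concatenated path $\gamma_1\#\cdots\#\gamma_n$ into two pieces along intervals $I_n^0, I_n^1$ determined by $(t_1,\dots,t_n)$ and sets
\[
\varphi_{n+1}(t_1,\dots,t_n,v,\gamma_1,\dots,\gamma_n)=\Phi_1\!\left(\varphi\!\left(\Phi_0\bigl(v,(\gamma_1\#\cdots\#\gamma_n)\lvert_{I_n^0}\bigr)\right),\,(\gamma_1\#\cdots\#\gamma_n)\lvert_{I_n^1}\right),
\]
which your base case $n=1$ already reproduces. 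The payoff of the explicit formula is that it makes the length computation in the subsequent Lemma~\ref{lemme : coherent homotopy preserves length} immediate: the value $f_1\circ\varphi_{n+1}$ is visibly $f_0(v)+\sum u([\gamma_i])+\int_{\gamma_n}\alpha$. A CHP-produced extension need not satisfy this exact identity, so while your approach proves the present lemma, it would not feed directly into the Novikov-specific argument that follows.
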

\begin{flushright}
    $\blacksquare$
\end{flushright}

This lemma is useful in DG Morse theory to prove that a morphism of fibrations induces a morphism $\left\{\phi_{n+1} : C_*(F_0) \otimes C_*(\Omega X)^{\otimes n} \to C_*(F_1) \right\}$ of $\Ai$-modules over $C_*(\Omega X)$ such that $\phi_1 = \varphi_*$, where the $C_*(\Omega X)$-module structures of $C_*(F_0)$ and $C_*(F_1)$ are induced by $\Phi_0$ and $\Phi_1$. This morphism of $\Ai$-modules induces a chain map 
$$\tilde{\varphi} : C_*(X,\Xi,C_*(F_0)) \to C_*(X,\Xi, C_*(F_1)), \ \tilde{\varphi}(\sigma \otimes x) = \varphi_*(\sigma) \otimes x + \sum_{\substack{n \geq 1\\ z_1, \dots, z_n}} \pm \phi_{n+1}(\sigma \otimes m_{x,z_1} \otimes \dots \otimes m_{z_{n-1},z_n}) \otimes z_n.$$ 
See \cite[Section 5]{Rie24} for more details.\\

\begin{proof}[Proof of Theorem \ref{thm : morphisme induit commute avec iso Morse-Novikov}.i)]

We will now prove that this coherent homotopy preserves length. 

\begin{lemme}\label{lemme : coherent homotopy preserves length}
    Let $\{\varphi_{n+1} : I^n \times F_0 \times \Omega X^{n-1} \times \mathcal{P}_{\star \to X} X \to E_1, \ n \geq 1\}$ be the coherent homotopy constructed for the morphism of fibrations $\varphi$. Then, for all $v \in F_0$, $\gamma_1, \dots, \gamma_{n-1} \in \Omega X$, $\gamma_n \in \mathcal{P}_{\star \to X}X$, $$f_1 \left( \varphi_{n+1}(t_1, \dots, t_n, v, \gamma_1, \dots, \gamma_n) \right) = f_0(v) + \sum_{i=1}^{n-1} u([\gamma_i]) + \int_{\gamma_n} \alpha.$$
\end{lemme}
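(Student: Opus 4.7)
The plan is to show that $f_1 \circ \varphi_{n+1}(\cdot, v, \gamma_1, \dots, \gamma_n)$ is constant on $I^n$, and then evaluate this constant at a convenient corner.

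First, by Lemma \ref{lemme : morphism of fibrations induces coherent homotopy Morse-Novikov}, we have $\pi_1 \circ \varphi_{n+1}(t_1, \dots, t_n, v, \gamma_1, \dots, \gamma_n) = \ev_1(\gamma_n)$ independently of $t_1, \dots, t_n$. Thus the continuous map $(t_1, \dots, t_n) \mapsto \varphi_{n+1}(t_1, \dots, t_n, v, \gamma_1, \dots, \gamma_n)$ sends the connected cube $I^n$ into the single fiber $\pi_1^{-1}(\ev_1(\gamma_n))$. By Lemma \ref{lemme : pi*u=0 iff factors through integration cover}, the hypotheses $\pi_1^*u = 0$ and local path-connectedness of $F_1$ yield a factorization $f_1 = \hat{f}_1 \circ \varphi_u^1$ with $\varphi_u^1 : E_1 \to \hat{X}_u$ continuous. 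Since $\hat{\pi} \circ \varphi_u^1 = \pi_1$, on our fiber the map $\varphi_u^1$ takes values in the discrete set $\hat{\pi}^{-1}(\ev_1(\gamma_n))$. The continuous image of the connected cube $I^n$ under $\varphi_u^1 \circ \varphi_{n+1}(\cdot, v, \gamma_1, \dots, \gamma_n)$ must therefore reduce to a single point, and so $f_1 \circ \varphi_{n+1}(\cdot, v, \gamma_1, \dots, \gamma_n)$ is constant on $I^n$.

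It then suffices to evaluate this constant at $t_1 = \dots = t_n = 0$. Applying the case $t_1 = 0$ of the recursion \eqref{eq : Ai morphism of fibration} gives directly
$$\varphi_{n+1}(0, \dots, 0, v, \gamma_1, \dots, \gamma_n) = \Phi_1(\varphi(v), \gamma_1 \cdot \ldots \cdot \gamma_n).$$
The compatibility relation \eqref{eq : primitive f} of $f_1$ with $\Phi_1$ from Proposition \ref{prop : existence of primitive}, combined with the standing convention $f_0 = f_1 \circ \varphi$, yields
$$f_1\bigl(\Phi_1(\varphi(v), \gamma_1 \cdots \gamma_n)\bigr) = f_0(v) + \int_{\gamma_1 \# \ldots \# \gamma_n} \alpha = f_0(v) + \sum_{i=1}^{n-1} u([\gamma_i]) + \int_{\gamma_n} \alpha,$$
where in the last step we used $\int_{\gamma_i} \alpha = u([\gamma_i])$ for the loops $\gamma_i \in \Omega X$ with $i \leq n-1$.

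The only delicate point is the constancy of $f_1 \circ \varphi_{n+1}$ on $I^n$: since fibers of $\pi_1$ are not assumed path-connected, one cannot directly argue that $f_1$ is constant on a fiber. Routing through the integration cover $\hat{X}_u$ is precisely what bypasses this difficulty, reducing the claim to the trivial fact that a connected set maps to a point in a discrete space.
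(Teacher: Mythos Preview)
Your proof is correct and takes a genuinely different route from the paper. The paper's argument invokes the explicit construction of $\varphi_{n+1}$ from \cite{Rie24}: one writes
\[
\varphi_{n+1}(t,v,\gamma_1,\ldots,\gamma_n) = \Phi_1\bigl(\varphi(\Phi_0(v,\tau|_{I_n^0})),\,\tau|_{I_n^1}\bigr)
\]
with $\tau = \gamma_1\#\cdots\#\gamma_n$ and $I_n^0 \cup I_n^1$ a $t$-dependent partition of the domain of $\tau$, and then applies the primitive relation \eqref{eq : primitive f} separately to the inner $\Phi_0$ and the outer $\Phi_1$; the two partial integrals recombine to $\int_\tau \alpha$ regardless of where the split point lies. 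Your argument, by contrast, uses only the boundary condition $t_1=0$ of the recursion \eqref{eq : Ai morphism of fibration} together with a connectedness argument through the integration cover $\hat X_u$. This is more axiomatic---it applies to any coherent homotopy satisfying \eqref{eq : Ai morphism of fibration}, not just the specific one built in \cite{Rie24}---and avoids unpacking the explicit formula; the paper's computation, on the other hand, makes the $t$-independence visible directly, without passing through the discreteness of covering fibers.
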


\begin{proof}[Proof of Lemma \ref{lemme : coherent homotopy preserves length}]
Let $n \geq 1$, $t_1, \dots, t_n \in I$, $v \in F_0$ and $\gamma_1, \dots, \gamma_n \in \Omega X$. The map $\varphi_{n+1}$ is defined as 

$$\varphi_{n+1}(t_1, \dots, t_n, v,\gamma_1, \dots, \gamma_n) = \Phi_2\left( \varphi\left( \Phi_1\left(v, (\gamma_1 \# \ \dots \ \# \gamma_n)\lvert_{I_n^0}\right) \right),  (\gamma_1 \# \ \dots \ \# \gamma_n)\lvert_{I_n^1} \right), $$

where $I_n^0$ and $I_n^1$ are some intervals that depend on $t_1, \dots, t_n$ and $a_1, \dots, a_n\in [0,+\infty)$, where $\gamma_i : [0,a_i] \to X$ for $i \in \{1, \dots, n\}$, are such that $I_n^0 \cup I_n^1 = [0, a_1 + \dots + a_n]$ and $I_n^0 \cap I_n^1$ is a single point.  Therefore, $$\int_{I_n^0} (\gamma_1 \# \ \dots \ \# \gamma_n)^* \alpha + \int_{I_n^1} (\gamma_1 \# \ \dots \ \# \gamma_n)^* \alpha = \int_{\gamma_1 \# \ \dots \ \# \gamma_n} \alpha = \sum_{i=1}^n u([\gamma_i]). $$

For readability, denote $\tau = \gamma_1 \# \ \dots \ \# \gamma_n.$
We compute

$$f_0(\Phi_0(v,\tau\lvert_{I_n^0})) - f_0(v) = \int_{I_n^0} \tau^* \alpha$$ and $$f_1\left(\Phi_1(\varphi(\Phi_0(v,\tau\lvert_{I_n^0})), \tau\lvert_{I^1_n})\right) - f_0(\Phi_0(v,\tau\lvert_{I_n^0})) = f_1\left(\Phi_1(\varphi(\Phi_0(v,\tau\lvert_{I_n^0})), \tau\lvert_{I^1_n})\right) - f_1\left(\varphi(\Phi_0(v,\tau\lvert_{I_n^0}))\right) = \int_{I_n^1} \tau^*\alpha.$$

Therefore, \begin{align*}
    &f_1 \left(\varphi_{n+1}(t_1, \dots, t_n, v, \gamma_1, \dots, \gamma_n)\right) - f_0(v) = \sum_{i=1}^n u([\gamma_i]) + \int_{\gamma_n}\alpha\\
    &\Leftrightarrow f_1 \left( \varphi_{n+1}(t_1, \dots, t_n, v, \gamma_1, \dots, \gamma_n) \right) = f_0(v) + \sum_{i=1}^n u([\gamma_i]) + \int_{\gamma_n}\alpha.
\end{align*}
\end{proof}

In particular, the coherent homotopy $(\varphi_{n+1})_{n \geq 1}$ induces a morphism $$\left\{\phi_{n+1} : C_*(F_0,f_0) \otimes C_*(\Omega X,u)^{\otimes n} \to C_*(F_1,f_1) \right\}$$ of $\Ai$-modules over $C_*(\Omega X,u)$, where 

$$\phi_{n+1}(\sigma \otimes \omega_1 \otimes \dots \otimes \omega_n) = \varphi_{n+1,*}(\Id_{I^n} \otimes \sigma \otimes \omega_1 \otimes \dots \otimes \omega_n)$$ if $n \geq 1$ and $$\phi_1 = \varphi_*.$$

\end{proof}

\begin{proof}[Proof of Theorem \ref{thm : morphisme induit commute avec iso Morse-Novikov}.ii)]

First assume that the set of DG Morse-Novikov data $\Xi_1$ is given by the Latour Trick (see Proposition \ref{prop : Latour Trick}). The proof of Theorem \ref{thm : morphisme induit commute avec iso Morse-Novikov} \emph{ii)} is the same as \cite[Theorem 5.8 ii)]{Rie24}. It relies on the fact that \eqref{eq : Ai morphism of fibration} induces an "$\Ai$- relation"

\begin{align*}
\sum_{n \geq 1} (-1)^{n+1} \partial \phi_{n+1} &=\sum_{n\geq 1} (-1)^{n+1} \phi_n \Phi_{1,*} - \sum_n  \Phi_{2,*} (\phi_n \otimes 1)\\
    & + \sum_{n\geq 1} (-1)^{n+1}(-1)^n \phi_{n+1} \partial_{\F_1} - \sum_{n \geq 1} \sum_{r\geq 1} \phi_{n+1}(1^{\otimes r} \otimes \mu_1 \otimes 1^{\otimes n-r}) \\
    & - \sum_{n\geq 2} \sum_{r \geq 1} (-1)^{r+n} \phi_n(1^{\otimes r} \otimes \mu_2 \otimes 1^{\otimes n-r-1})
\end{align*}

as a functional equality for the maps

$$\phi_{n+1} : C_*(F_0,f_0) \otimes C_*(\Omega X,u)^{\otimes n-1} \otimes C_*(\mathcal{P}_{\star \to X} X)  \to C_*(E_1,f_1)$$
defined by
$$\phi_{n+1}(\sigma \otimes \omega_1 \otimes \dots \otimes \omega_n) = \varphi_{n+1,*}(\Id_{I^n} \otimes \sigma \otimes \omega_1 \otimes \dots \otimes \omega_n).$$ 

Consider the pull-back fibrations $\theta^*E_0$ and $\theta^* E_1$ endowed with the associated transitive lifting functions $\Phi'_0$ and $\Phi'_1$, and let $f'_i : \theta^*E_i \to \R, \ f'_i(y,e) = f'(e)$ for $i \in \{0,1\}.$ The morphism of fibrations $\varphi : E_0 \to E_1$ induces a morphism of fibrations $\varphi : \theta^* E_0 \to \theta^*E_1$ and we denote $$\theta^* \tilde{\varphi} : C_*(X,\Xi_1, C_*(\theta^*F_0,f_0)) \to C_*(X,\Xi_1, C_*(\theta^*F_1,f_1))$$ the associated chain map.

With such a set of DG Morse-Novikov data, we constructed in Section \ref{section : A Fibration Theorem for DG Morse-Novikov theory} a family of chains $$\left\{ m_x \in C_{|x|}(\mathcal{P}_{\star \to X} X), \ x \in \Crit(\alpha)\right\}$$ 
and we defined $\Psi_i : C_*(X,\Xi, C_*(F_i,f_i)) \to C_*(X,\Xi,C_*(\theta^*F_i, f'_i)) \overset{\psi_i}{\to} C_*(\theta^*E_i,f'_i) \to C_*(E_i, f_i),$ where $$\psi_i(\sigma \otimes x) = \Phi'_i(\sigma \otimes m_x).$$

The map $$v : C_*(X,\Xi_1,C_*(\theta^*F_0,f'_0)) \to C_{*+1}(\theta^*E_1,f'_1)$$ defined by $$v(\sigma \otimes x) = \sum_{\substack{n \\ z_1, \dots, z_{n-1}}} \pm \phi_{n+1}(\sigma \otimes m_{x,z_1} \otimes \dots \otimes m_{z_{n-2},z_{n-1}} \otimes m_{z_{n-1}})$$ is a chain homotopy between $\psi_1 \circ \theta^*\tilde{\varphi} $ and $\varphi_* \circ \psi_0$. See the proof of \cite[Theorem 5.8 ii)]{Rie24} for more details on the signs and for the computations.

Now, let $\Xi_0$ be any set of DG Morse-Novikov data. It is a direct consequence of \cite[Proposition 5.19]{Rie24} that $\tilde{\varphi}$ commutes with the continuation maps up to chain homotopy. It follows that the diagram

$$\xymatrix{
C_*(X,\Xi_0, C_*(F_0, f_0)) \ar[r]^{\tilde{\varphi}} \ar@/_4.5pc/[dd]_{\Psi_{E_0}} \ar[d]^{\Psi_{01}} & C_*(X, \Xi_0, C_*(F_1, f_1)) \ar[d]^{\Psi_{01}} \ar@/^4.5pc/[dd]^{\Psi_{E_1}} \\
C_*(X,\Xi_1, C_*(F_0, f_0)) \ar[r]^{\tilde{\varphi}} \ar[d]^{\Psi_{E_0}} & C_*(X, \Xi_1, C_*(F_1, f_1)) \ar[d]^{\Psi_{E_1}}\\
C_*(E_0,f_0) \ar[r]^{\varphi_*} & C_*(E_1,f_1)
}$$ commutes, and this concludes the proof.
\end{proof}

In particular, if $E_0=E_1$ but are endowed with different lifting functions, the identity $E_0 \to E_1$ induces a chain homotopy equivalence $\tilde{\Id} : C_*(X,\Xi_0,C_*(F_0,f_0)) \to C_*(X,\Xi_0,C_*(F_1,f_1))$. 

\begin{cor}\label{cor : independance of the transitive lifting function}
    If $\fibration$ is a fibration such that $\pi^*u = 0$ and $\Xi$ is a set of DG Morse-Novikov data, the complex $C_*(X,\Xi, C_*(F,u))$ does not depend, up to chain homotopy equivalence, on the choice of transitive lifting function $\Phi : E \ftimes{\pi}{\ev_0} \mathcal{P}X \to X$.
\end{cor}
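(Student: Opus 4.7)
The plan is to apply Theorem \ref{thm : morphisme induit commute avec iso Morse-Novikov} to the identity $\Id_E : E \to E$, viewed as a morphism of fibrations from $(E, \pi, \Phi_0)$ to $(E, \pi, \Phi_1)$, where $\Phi_0$ and $\Phi_1$ are any two transitive lifting functions for $\fibration$. This is indeed a morphism of fibrations since $\pi \circ \Id_E = \pi$, and the hypotheses of the theorem are satisfied: $\pi^*u = 0$ by assumption, and $F$ is locally path-connected (this is implicit in the statement since the Fibration Theorem is being invoked). Write $C_*(X, \Xi, C_*(F, u))_i$ for the enriched complex built with the $C_*(\Omega X,u)$-module structure on $C_*(F,u)$ induced by $\Phi_i$. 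Part (i) of the theorem then produces, via a coherent homotopy, a chain map
\[ \widetilde{\Id}_{01} : C_*(X, \Xi, C_*(F, u))_0 \to C_*(X, \Xi, C_*(F, u))_1, \]
and, by exchanging the roles of $\Phi_0, \Phi_1$, a chain map $\widetilde{\Id}_{10}$ in the opposite direction.

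Next I would invoke part (ii) to obtain the commutative (up to chain homotopy) square
\[
\xymatrix{
C_*(X,\Xi, C_*(F,u))_0 \ar[r]^-{\widetilde{\Id}_{01}} \ar[d]_-{\Psi_0} & C_*(X,\Xi, C_*(F,u))_1 \ar[d]^-{\Psi_1} \\
C_*(E,u) \ar[r]_-{\Id} & C_*(E,u).
}
\]
By the Morse-Novikov Fibration Theorem \ref{thm: Fibration Theorem Morse Novikov}, both $\Psi_0$ and $\Psi_1$ are quasi-isomorphisms, so two-out-of-three immediately gives that $\widetilde{\Id}_{01}$ is a quasi-isomorphism, and symmetrically for $\widetilde{\Id}_{10}$.

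To upgrade quasi-isomorphism to genuine chain homotopy equivalence, I would chain the square of part (ii) twice. Applying the theorem to the composition of the identity with itself, or equivalently composing the two homotopy-commutative squares, yields $\Psi_0 \circ \widetilde{\Id}_{10} \circ \widetilde{\Id}_{01} \simeq \Psi_0$ and $\Psi_1 \circ \widetilde{\Id}_{01} \circ \widetilde{\Id}_{10} \simeq \Psi_1$. Since all the complexes at play consist of free (or at least torsion-free projective-limit) $\Z$-modules and are bounded below in total degree, as $|x| \in \{0, \dots, n\}$ for critical points and $C_j(F,u) = 0$ for $j < 0$, one can invoke a Whitehead-type argument for cofibrant chain complexes to promote these composites to be chain homotopic to the respective identities, so that $\widetilde{\Id}_{01}$ and $\widetilde{\Id}_{10}$ are mutually inverse chain homotopy equivalences.

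The main obstacle is the last step: part (ii) only provides a homotopy $\Psi_1 \circ \widetilde{\Id}_{01} \simeq \Psi_0$, and one has to descend this through $\Psi_0$ to conclude an intrinsic homotopy $\widetilde{\Id}_{10} \circ \widetilde{\Id}_{01} \simeq \Id$. An alternative, perhaps cleaner, route would be to exhibit the two twisting structures as $A_\infty$-quasi-isomorphic $C_*(\Omega X, u)$-modules, using the coherent homotopies of Lemma \ref{lemme : morphism of fibrations induces coherent homotopy Morse-Novikov}, and then apply the invariance machinery of Section \ref{section : Invariance of DG Morse-Novikov homology} via Propositions \ref{prop: continuation map} and \ref{Prop: homotopy Criterion Ai} directly at the level of enriched complexes, bypassing the need to invert $\Psi_i$.
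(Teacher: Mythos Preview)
Your approach is exactly the one the paper takes: apply Theorem~\ref{thm : morphisme induit commute avec iso Morse-Novikov} to $\Id_E$ viewed as a morphism of fibrations between $(E,\Phi_0)$ and $(E,\Phi_1)$, and the paper's entire argument is the one-line remark preceding the corollary. The paper does not spell out the upgrade from quasi-isomorphism to chain homotopy equivalence either; it implicitly relies on the $A_\infty$-machinery from \cite{Rie24} (your alternative route), where the composition of coherent homotopies for $\Id_E$ yields an $A_\infty$-morphism homotopic to the identity, so your concern is legitimate but is handled by the reference rather than by a Whitehead argument.
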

\begin{flushright}
    $\blacksquare$
\end{flushright}

The following lemma states the compatibility between morphisms of fibrations and direct and shriek maps. It is a extension of \cite[Proposition 5.23]{Rie24}.

\begin{lemme}\label{prop : morphisme Ai commute avec direct et shriek MNDG} 
    Let $h: X^n \to Y^m$ be a continuous map. Let $v \in H^1(Y,\R)$ and let $G_0 \hookrightarrow E_0 \overset{\pi_0}{\to} Y$, $G_1 \hookrightarrow E_1 \overset{\pi_1}{\to} Y$ be two fibrations over $Y$ such that $\pi_0^* v= 0$, $\pi_1^*v = 0$ and $G_0,G_1$ are locally path-connected. Let $\varphi : E_0 \to E_1$ be a morphism of fibrations.

    Then the following diagrams commute

{
\begin{minipage}{7cm}
    $$
    \xymatrix{
    H_*(X, h^*C_*(G_0,v)) \ar[r]^-{h^*\Tilde{\varphi}} \ar[d]_{h_*} & H_*(X,h^*C_*(G_1,v)) \ar[d]^{h_*}\\
    H_*(Y,C_*(G_0,v)) \ar[r]_{\Tilde{\varphi}} & H_*(Y, C_*(G_1,v))
    }
    $$
\end{minipage}
\hspace{1em}
\begin{minipage}{7cm}
   $$
    \xymatrix{
    H_{*-m+n}(X, h^*C_*(G_0,v)) \ar[r]^-{h^*\Tilde{\varphi}}  & H_{*-m+n}(X,h^*C_*(G_1,v)) \\
    H_*(Y,C_*(G_0,v)) \ar[u]^{h_!} \ar[r]_{\Tilde{\varphi}} & H_*(Y, C_*(G_1,v)) \ar[u]_{h_!}.
    }
    $$
\end{minipage}
}

\end{lemme}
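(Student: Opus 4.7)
The strategy is to reduce the Morse-Novikov statement to its Morse analogue \cite[Proposition 5.23]{Rie24} via the Latour Trick. I treat only the direct map; the shriek case is identical.

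First, I would invoke the Latour Trick (Proposition \ref{prop : Latour Trick}) to fix sets of DG Morse-Novikov data $\Xi^L_X$ and $\Xi^L_Y$ whose associated enriched Morse-Novikov complexes coincide with enriched Morse complexes for well-chosen Morse functions $h_X : X \to \R$ and $h_Y : Y \to \R$, with the coefficient $C_*(\Omega Y, v)$-modules $C_*(G_i,v)$ re-interpreted as $C_*(\Omega Y)$-modules. By Definition \ref{defi : functoriality DG Morse-Novikov with Latour trick}, the direct map $h_*$ in DG Morse-Novikov homology factors through the Latour-Trick complexes as $h_* = \Psi^{10}_Y \circ h_*^L \circ \Psi^{01}_X$, where $h_*^L$ is the direct map between the Latour-Trick complexes (which, under the identifications of Proposition \ref{prop : Latour Trick}, agrees with the DG Morse direct map of \cite{BDHO23}).

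I would then decompose the square we want to prove commutes into a stack of three squares: the top square compares $h^*\tilde{\varphi}$ across the continuation map $\Psi^{01}_X$, the middle square compares $h^*\tilde\varphi$ and $\tilde\varphi$ across the DG Morse direct map $h_*^L$, and the bottom square compares $\tilde\varphi$ across $\Psi^{10}_Y$. The top and bottom squares commute up to chain homotopy because $\tilde\varphi$ is natural with respect to continuation maps — this is exactly the statement used inside the proof of Theorem \ref{thm : morphisme induit commute avec iso Morse-Novikov} ii) (the auxiliary diagram there), which itself relies on the observation that a coherent homotopy for $\varphi$ gives rise to a coherent homotopy for the continuation data, pulling back an $\mathcal A_\infty$-morphism argument from \cite[Proposition 5.19]{Rie24}. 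The middle square commutes in homology by direct application of \cite[Proposition 5.23]{Rie24}, since once we are in the Latour-Trick setting, all the complexes involved are DG Morse complexes, $\tilde\varphi$ reduces to the DG Morse $\mathcal A_\infty$-module morphism built from the same coherent homotopy, and $h_*^L$ is the DG Morse direct map. Vertically pasting the three commutative squares yields the commutativity of the outer square, which is the claim.

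The shriek case follows the same script, replacing $h_*^L$ by $h_!^L$ and using the shriek half of \cite[Proposition 5.23]{Rie24}; the vertical continuation-map squares are unchanged since they do not see the direction of the functoriality map. The main technical point to verify is the identification, in the Latour-Trick setting, of the Morse-Novikov coherent homotopy data for $\varphi$ with its Morse counterpart: this is where Lemma \ref{lemme : coherent homotopy preserves length} (length preservation along coherent homotopies) and the fact that in Proposition \ref{prop : Latour Trick} the orbits of $(\alpha,\xi)$ are exactly those of $(h_X,\xi)$ combine to guarantee that the two constructions yield the same $\mathcal A_\infty$-morphism up to parametrization; this is the one place the reduction could plausibly fail, and it is the step I expect to be the main obstacle.
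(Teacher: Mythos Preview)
The paper does not give an explicit proof of this lemma; it is stated immediately after Corollary \ref{cor : independance of the transitive lifting function} with only the remark that it ``is an extension of \cite[Proposition 5.23]{Rie24}''. Your Latour-Trick reduction is correct and is precisely the natural route given that the direct and shriek maps in this paper are \emph{defined} via the Latour Trick (Definition \ref{defi : functoriality DG Morse-Novikov with Latour trick}): the three-square decomposition you describe is forced by that definition, the outer squares commute by the compatibility of $\tilde\varphi$ with continuation maps already invoked in the proof of Theorem \ref{thm : morphisme induit commute avec iso Morse-Novikov} ii) (via \cite[Proposition 5.19]{Rie24}), and the middle square is literally \cite[Proposition 5.23]{Rie24}.

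Your flagged ``main obstacle'' --- identifying the Morse-Novikov coherent homotopy with its Morse counterpart in the Latour-Trick setting --- is not a genuine obstruction. The coherent homotopy $(\varphi_{n+1})$ of Lemma \ref{lemme : morphism of fibrations induces coherent homotopy Morse-Novikov} is a purely topological object built from the fibrations and lifting functions; it does not see the $1$-form at all. What changes between the Morse and Morse-Novikov pictures is only the twisting cocycle $m_{x,y}$ fed into $\tilde\varphi$, and in the Latour-Trick data the Morse-Novikov cocycle $\sum_g m^g_{x,y}$ is, up to the reparametrization already handled in the proof of Proposition \ref{prop : Latour Trick} (via \cite[Proposition 6.6]{Rie24}), the Barraud--Cornea cocycle for the Morse pair. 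So the two $\tilde\varphi$'s agree up to the same chain homotopy equivalence that identifies the complexes, and no further argument is needed.
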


\subsection{The cross-product \texorpdfstring{$K$}{K}. Proof of Theorem \ref{theorem F}}\label{subsection : The cross product K}

Let $F \hookrightarrow E \overset{\pi_X}{\to} X$ be a fibration, let $u \in H^1(X,\R)$ such that $\pi_X^*u= 0 \in H^1(E,\R)$ and let $f : E \to \R$ be a primitive of $\pi_X^*\alpha$ (in the sense of Definition \ref{def : primitive of pi alpha}), where $\alpha \in u$.
Let $(Y,\star_Y)$ be another pointed, oriented, closed, and connected manifold and let $G \hookrightarrow E_Y \overset{\pi_Y}{\to} Y$ be a fibration. Let $v \in H^1(Y,\R)$ such that $\pi_Y^*v = 0 \in H^1(E_Y,\R)$ and let $g : E_Y \to \R$ be a primitive of $\pi_Y^*\beta$ where $\beta \in v$. 

The operations $$
    \begin{array}{ccccc}
        F \times G \times \Omega(X \times Y) & \to & F \times \Omega X \times G \times \Omega Y & \to & F \times G  \\
        (\alpha, \beta , \gamma) &\mapsto & (\alpha,\gamma_X, \beta, \gamma_Y) & \mapsto & (\alpha \cdot \gamma_X, \beta \cdot \gamma_Y).
    \end{array} $$ give rise to a module structure $$C_*(F \times G) \otimes C_*(\Omega(X \times Y)) \overset{\EZ}{\to} C_*(F \times G \times \Omega(X \times Y)) \to C_*(F \times \Omega X \times G \times \Omega Y) \to C_*(F \times G)$$ that can be extended by linearity to a module structure $$C_*(F \times G ,f + g) \otimes C_*(\Omega(X \times Y), u+v) \to C_*(F\times G ,f + g).$$

    For any pair of topological spaces $A$ and $B$, the \textbf{Eilenberg-Zilber} map for cubical chains  $\EZ : C_*(A) \otimes C_*(B) \to C_*(A \times B)$ is defined by  $$ \forall \alpha \in C^0([0,1]^k,A), \forall \beta \in C^0([0,1]^{\ell}, B), \  \EZ((s\mapsto \alpha(s)) \otimes (t \mapsto \beta(t))) = (s,t) \mapsto (\alpha(s), \beta(t)).$$

    Let $\Xi_X = (\alpha, \xi_X, s^g_{x,x'}, o_X, \mathcal{Y}_X,\theta_X)$ and $\Xi_Y= (\beta, \xi_Y, s^h_{y,y'}, o_Y, \mathcal{Y}_Y,\theta_Y)$ be sets of DG Morse-Novikov data for $X$ and $Y$ respectively and denote $\{m_{x,x'}^X\}$ and $\{m^Y_{y,y'}\}$ the associated twisting cocycles.

 From $\Xi_X$ and $\Xi_Y$, we define a set of DG Morse-Novikov data $\Xi_{X \times Y}$ on $X \times Y$ by:\\
 
 \begin{itemize}
     \item $\eta_{(x,y)} = \alpha_x + \beta_y$. Note that $\eta$ is a Morse 1-form on $X \times Y$ that satisfies $|(x,y)| = |x| + |y| $.\\
    \item $\xi(x,y) = (\xi_X(x), \xi_Y(y))$ is a pseudo-gradient associated with $\eta$.\\
   \item There is a canonical identification $\overline{W^u_\eta}(x,y) \simeq \overline{W^u_{\alpha}}(x) \times \overline{W^u_{\beta}}(y)$. We therefore use the orientation $\Or \ \wbi{u}{\eta}{x,y} = \left( \Or \ \wbi{u}{\alpha}{x} ,  \Or \ \wbi{u}{\beta}{y} \right) $.\\
    \item The tree $\mathcal{Y} = (\mathcal{Y}_X, \mathcal{Y}_Y)$.\\
   \item The homotopy inverse $\theta = (\theta_X, \theta_Y) : (X \times Y)/\mathcal{Y} \to X \times Y  $ of the canonical projection $p : X \times Y \to (X \times Y)/\mathcal{Y}$.
 \end{itemize}

It remains to define the representing chain system $$s^{(g,h)}_{(x,y),(x',y')} \in C_{|x|+|y|-|x'|-|y'|-1}(\trajbi{g}{x,x'} \times \trajbi{h}{y,y'}).$$

For any critical points $x,x' \in \Crit(\alpha)$, $y,y' \in \Crit(\beta)$ and $(g,h) \in \pi_1(X \times Y)$, there is an identification of the parametrized spaces of trajectories

$$\functrajb{(g,h)}{(x,y),(x',y')} = \functrajb{g}{x,x'} \times \functrajb{h}{y,y'}$$

and there exist a projection and a section 

$$
\trajbi{(g,h)}{(x,y),(x',y')} \underset{\pi}{\overset{i}{\leftrightarrows}} \trajbi{g}{x,x'} \times \trajbi{h}{y,y'}.
$$
The projection can be written $$\pi([a,b]_{X\times Y}) = (\pi_X([a,b]_{X\times Y}), \pi_Y([a,b]_{X\times Y})) = ([a]_X , [b]_Y) $$ if $(a,b) \in \functrajb{(g,h)}{(x,y),(x',y')} $ where $[\cdot]$ represents the equivalence class given by the $\R$-action.

Let $U,V$ be neighborhoods of $\Crit(\alpha)$ and $\Crit(\beta)$ respectively such that $\alpha = d\eta_X$ in $U$ and $\beta = d\eta_Y$. Choose $\epsilon>0$ small enough such that for any critical points $x \in \Crit(\alpha)$, $y \in \Crit(\beta)$, $\eta_X(x) - \epsilon \subset \eta_X(U)$ and $\eta_Y(y) - \epsilon \subset \eta_Y(V)$. 
Given $\lambda_X \in \trajbi{g}{x,x'}$ and $\lambda_Y \in \trajbi{h}{y,y'}$, denote $$i_X(\lambda_X) = \lambda_X \cap \eta_X^{-1}(\eta_X(x) - \epsilon) \in \functrajb{g}{x,x'} , \textup{ and } i_Y(\lambda_Y) = \lambda_Y \cap \eta_Y^{-1}(\eta_Y(x) - \epsilon) \in \functrajb{h}{y,y'}.$$ The section $i :  \trajbi{g}{x,x'} \times \trajbi{h}{y,y'} \to \trajbi{(g,h)}{(x,y),(x',y')}$ is then defined by $$i(\lambda_X,\lambda_X) = [i_X(\lambda_X), i_Y(\lambda_Y)].$$

\begin{lemme}
    Let $$\{s^g_{x,x'} \in C_{|x|-|x'|-1}(\trajbi{g}{x,x'}), \ x,x' \in \Crit(\alpha), \ g \in \pi_1(X)\}$$ and 
    $$\{s^h_{y,y'} \in C_{|y|-|y'|-1}(\trajbi{h}{y,y'}), \ y,y' \in \Crit(\beta), \ h \in \pi_1(Y)\}$$ be representing chain systems. There exists a representing chain system $$\{s^{(g,h)}_{(x,y),(x',y')} \in C_{|x|+|y|-|x'|-|y'|-1}(\trajbi{(g,h)}{(x,y),(x',y)})\}$$ such that: 

   $$ \left\{\begin{array}{lc}
        s^{(g,h)}_{(x,y),(x',y')} = (-1)^{|x|(|y|-|y'|)} (\{x\},s^h_{y,y'}) & \textup{if } x=x' \textup{ and } g=1,\\
        s^{(g,h)}_{(x,y),(x',y')} = (s^g_{x,x'}, \{y\}) & \textup{if } y = y' \textup{ and } h=1,\\
        \pi^*s^{(g,h)}_{(x,y),(x',y')} = 0 & \textup{otherwise}.
    \end{array} \right.$$
\end{lemme}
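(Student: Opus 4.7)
The plan is to construct $\{s^{(g,h)}_{(x,y),(x',y')}\}$ by induction on $d = |x|+|y|-|x'|-|y'|-1$, following the inductive template of Proposition \ref{prop: representing chain system}, but with the chain forced to coincide with the prescribed formula in the two ``degenerate'' regimes $(x=x',\,g=1)$ and $(y=y',\,h=1)$, and subject to the normalization $\pi^* s^{(g,h)}_{(x,y),(x',y')}=0$ in the ``mixed'' regime. These three regimes are mutually exclusive whenever $d\ge 0$: simultaneously lying in both degenerate regimes would force $d=-1$. The Morse-Smale hypothesis ensures that $\trajbi{g}{x,x'}$ is empty whenever $|x|\le|x'|$ and $(x,g)\ne(x',1)$ (in particular, there are no generic homoclinic orbits), so for the base case $d=0$ every configuration falls into Case A ($x=x',\,g=1$) or Case B ($y=y',\,h=1$), and the prescribed formula manifestly defines a $0$-chain representing the fundamental class of $\{x\}\times\trajbi{h}{y,y'}$, resp.\ $\trajbi{g}{x,x'}\times\{y\}$.

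For the inductive step $d\ge 1$ in Case A, I would set $s^{(1,h)}_{(x,y),(x,y')}:=(-1)^{|x|(|y|-|y'|)}(\{x\},s^h_{y,y'})$, which is a cycle relative to the boundary of $\trajbi{(1,h)}{(x,y),(x,y')}=\{x\}\times\trajbi{h}{y,y'}$ by the corresponding property of $s^h_{y,y'}$. To verify the global boundary relation \eqref{eq: système representatif}, note that Morse-Smale forces each intermediate critical point $(z,w)$ contributing to the sum to satisfy $z=x$, and then the absence of homoclinic orbits forces $g_1=g_2=1$. Substituting the inductive formulas into the surviving terms, the signs $(-1)^{|x|(|y|-|w|)}$ and $(-1)^{|x|(|w|-|y'|)}$ combine into the common prefactor $(-1)^{|x|(|y|-|y'|)}$, and the remaining sum reproduces $(\{x\},\partial s^h_{y,y'})$ term by term via the boundary relation for $s^h_{y,y'}$. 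Case B is handled symmetrically.

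For Case C, the construction proceeds by the standard representing-chain argument of Proposition \ref{prop: representing chain system}: one first checks that the expected boundary chain is a cycle representing the fundamental class of $\partial\trajbi{(g,h)}{(x,y),(x',y')}$, then extends it to a representative of the fundamental class of the interior, and finally uses the one-dimensional fibers of the submersion $\pi$ (available in the generic Case C) to correct the chain by a fiberwise-averaged term so that $\pi^* s^{(g,h)}_{(x,y),(x',y')}=0$ holds. This last adjustment is the new ingredient relative to the baseline construction and is possible because the one-dimensional fiber furnishes a one-parameter family of eligible representatives.

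The main obstacle I anticipate lies in Cases A and B, namely the careful verification that the prescribed chain satisfies the global boundary relation: one must check that no mixed (Case C) term survives the boundary decomposition of $s^{(1,h)}_{(x,y),(x,y')}$, which hinges on the Morse-Smale exclusion of homoclinic orbits combined with the admissible decompositions $(g_1,h_1)(g_2,h_2)=(1,h)$ of group elements. The delicate point is ensuring that these constraints collectively exclude every configuration that would spoil the telescoping of signs in the boundary sum.
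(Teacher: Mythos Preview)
Your treatment of Cases A and B matches the paper's proof closely, and your identification of the exhaustive trichotomy is correct. However, your handling of Case C has a genuine gap, and your assessment of where the difficulty lies is inverted.

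In Case C the paper does \emph{not} rely on any ``fiberwise averaging'' or on the one-dimensional fibers furnishing a one-parameter family of representatives. The actual mechanism is the following. Having built an arbitrary representative $t^{(g,h)}_{(x,y),(x',y')}$ satisfying the boundary relation, one computes $\pi_*\bigl(t^{(g,h)}_{(x,y),(x',y')}\bigr)$ and shows it is a \emph{cycle} in $\trajbi{g}{x,x'}\times\trajbi{h}{y,y'}$. This is the key step you omit: expanding $\partial\pi_*(t)$ via the boundary relation, every term indexed by $(z,w)$ with $z\ne x,x'$ and $w\ne y,y'$ vanishes by the inductive normalization, and the four surviving ``corner'' terms (those with $(z,w)$ equal to $(x,y')$ or $(x',y)$) cancel in pairs after substituting the Case~A/B formulas. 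Once $\pi_*(t)$ is known to be a cycle of degree $\ell$ in a manifold of dimension $\ell-1$, it is automatically a boundary $\partial b$; one then sets $s=t-\partial i_*(b)$ using the section $i$, so that $\pi_*(s)=\pi_*(t)-\partial(\pi_*i_*b)=\pi_*(t)-\partial b=0$. This is a section-plus-dimension argument, not a fiber-averaging one.

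Your ``main obstacle'' paragraph misplaces the difficulty: the verification in Cases A and B that no mixed terms appear is a two-line consequence of Morse--Smale (exactly as you sketch), and the paper dispatches it quickly. The substantive new content is entirely in Case~C, specifically in proving that $\pi_*(t)$ is a cycle and then exploiting the dimension drop. Your proposal does not identify either of these ingredients.
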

\begin{proof}
    $\bullet$ If $x=x'$ or $y=y'$ we can just choose $$s^{(g,h)}_{(x,y),(x,y')} = \left\{ \begin{array}{cc}
        (-1)^{|x|(|y|-|y'|)} (\{x\}, s^h_{y,y'}) & \textup{ if } g=1, \\
        0 & \textup{ otherwise,} 
    \end{array}\right. $$
    and
    $$s^{(g,h)}_{(x,y),(x',y)} = \left\{ \begin{array}{cc}
        (s^g_{x,x'}, \{y\})  & \textup{ if } h=1, \\
         0 & \textup{ otherwise,} 
    \end{array} \right.$$ and complete by induction as in Proposition \ref{prop: representing chain system} in order to obtain a family of chains $$\left\{s^{(g,h)}_{(x,y),(x',y')} \in C_*\left(\trajbi{(g,h)}{(x,y),(x',y')}\right), \ x = x' \textup{ or } y=y'\right\}$$ that satisfies the first two conditions. We just have to check that $$\partial s^{(g,h)}_{(x,y),(x,y')} = \sum_{\substack{w \in \Crit(\beta) \\ h'\cdot h'' = h}} (-1)^{|y|-|w|} s^{(g,h')}_{(x,y),(x,w)} \times s^{(g,h'')}_{(x,w), (x,y')} $$ and $$\partial s^{(g,h)}_{(x,y),(x',y)} =  \sum_{\substack{z \in \Crit(\alpha) \\ g' \cdot g''=g}} (-1)^{|x|-|z|} s^{(g',h)}_{(x,y),(z,y)} \times s^{(g'',h)}_{(z,y), (x',y)}.$$

    We only establish the first equality since the second one is analogous. If $g \neq 1$ then both sides are equal to $0$. If $g=1,$

    \begin{align*}
        \partial s^{(1,h)}_{(x,y),(x,y')} &= (-1)^{|x|(|y|-|y'|)} (\{x\}, \partial s^h_{y,y'})\\
        &= (-1)^{|x|(|y|-|y'|)} \sum_{\substack{w \\ h' \cdot h''= h}} (-1)^{|y|-|w|} (\{x\}, s^{h'}_{y,w} \times s^{h''}_{w,y'})\\
        &= \sum_{\substack{w \\ h' \cdot h''= h}} (-1)^{|y| - |w|} (-1)^{|x|(|y|-|w|)} (-1)^{|x|(|w|-|y'|)} (\{x\}, s^{h'}_{y,w}) \times (\{x\}, s^{h''}_{w,y'})\\
        &= \sum_{\substack{w \\ h' \cdot h''= h}} (-1)^{|y| - |w|} s^{h'}_{(x,y),(x,w)} \times s^{h''}_{(x,w),(x,y')}.
    \end{align*}

\noindent $\bullet$ If $x\neq x'$ and $y \neq y'$, we build $\{s^{(g,h)}_{(x,y),(x',y')}\}$ by induction on $|x|+|y|-|x'|-|y'| = \ell$. If $x \neq x'$, $y \neq y'$ and $\ell = 2$, then $|x|-|x'| = 1$ and $|y|-|y'| = 1$ or $\trajbi{(g,h)}{(x,y),(x',y')}$ is empty. Any representative $s^{(g,h)}_{(x,y),(x',y')} \in C_1(\trajbi{(g,h)}{(x,y),(x',y')})$ of the fundamental class satisfies the third condition since $\pi_{*}\left(s^{(g,h)}_{(x,y),(x',y')}\right) \in C_1(\trajbi{g}{x,x'} \times \trajbi{h}{y,y'}) $ is a $1$-chain in a $0$-dimensional space and is thus constant and degenerate.\\

Suppose that a representing chain system $s^{(g,h)}_{(a,b),(a',b')}$ respecting the three conditions has been constructed for every $|a|+|b|-|a'|-|b'| \leq \ell$ and $(g,h) \in \pi_1(X \times Y)$. Let $|x|+|y|-|x'|-|y'| = \ell +1$ such that $x \neq x'$ and $y \neq y'$, and let $(g,h) \in \pi_1(X \times Y)$. Let $\{t^{(g,h)}_{(x,y),(x',y')}\}$ be a representative of the fundamental class of $\trajbi{(g,h)}{(x,y),(x',y')}$ which satisfies \eqref{eq: système representatif}. Then $\pi_*\left(t^{(g,h)}_{(x,y),(x',y')}\right)\in C_{\ell}(\trajbi{g}{x,x'} \times \trajbi{h}{y,y'})$ is a cycle. Indeed, using the induction hypothesis

\begin{align*}
    \partial \pi_*(t^{(g,h)}_{(x,y),(x',y')}) &= \sum_{\substack{(z,w) \\ g' \cdot g''=g \\ h' \cdot h''=h}}  (-1)^{|x|+|y|-|z|-|w|} \pi_*s^{(g',h')}_{(x,y),(z,w)} \times \pi_*s^{(g'',h'')}_{(z,w),(x',y')}\\
    &= (-1)^{|y|-|y'|} s^{(1,h)}_{(x,y),(x,y')} \times s^{(g,1)}_{(x,y'),(x',y')} + (-1)^{|x|-|x'|} s^{(g,1)}_{(x,y),(x',y)} \times s^{(1,h)}_{(x',y),(x',y')}\\
    &= (-1)^{|x|+|x'|(1 + |y|-|y'|) +1} \left((s^g_{x,x'},s^h_{y,y'}) -  (s^g_{x,x'},s^h_{y,y'})\right) = 0.
\end{align*}

Since $\trajbi{g}{x,x'} \times \trajbi{h}{y,y'}$ is a manifold of dimension $\ell-1$ every $\ell$-cycle is a boundary. Hence, there exists $b \in C_{\ell+1}(\trajbi{g}{x,x'} \times \trajbi{h}{y,y'})$ such that $\partial b = \pi_*(t^{(g,h)}_{(x,y),(x',y')})$. We then define $s^{(g,h)}_{(x,y),(x',y')} = t^{(g,h)}_{(x,y),(x',y')} - \partial i_*(b) $, which satisfies the third condition. The resulting representing chain system $\{s^{(g,h)}_{(x,y),(x',y')}\}$ satisfies all the conditions and the proof is concluded.
    
\end{proof}

The family of evaluation maps $q_{(x,y), (x',y')} : \trajbi{(g,h)}{(x,y),(x',y)} \to \Omega^g X \times \Omega^h Y$ defined by $$ q_{(x,y), (x',y')}(\lambda) = \left(q_{x,x'}(\pi_X(\lambda)), q_{y,y'}(\pi_Y(\lambda) ) \right)$$ gives rise to the \textbf{Künneth twisting cocycle}

$$\left\{ m^{K,(g,h)}_{(x,y),(x',y')} \in C_{|x|+|y|-|x'|-|y'|-1}(\Omega^g X \times \Omega^h Y), \ x\in \Crit(\alpha), y \in \Crit(\beta), (g,h) \in \pi_1(X \times Y)  \right\}$$

defined by
$$m^{K,(g,h)}_{(x,y),(x',y')} = \left\{ \begin{array}{lc}
   (-1)^{|x|(|y|-|y'|)}(\star, m^h_{y,y'})  & \textup{if } x=x' \textup{ and } g=1,  \\
    (m^g_{x,x'}, \star) & \textup{if } y=y' \textup{ and } h=1,\\
    0 & \textup{otherwise.}
\end{array} \right.$$

Equivalently, we can define 

$$\left\{m^{K}_{(x,y),(x',y')} \in C_{|x|+|y|-|x'|-|y'|-1}(\Omega X \times \Omega Y, u+v), x \in \Crit(\alpha), y \in \Crit(\beta) \right\}$$

by 

$$m^{K}_{(x,y),(x',y')} = \sum m^{K,(g,h)}_{(x,y),(x',y')}  = \left\{ \begin{array}{lc}
   (-1)^{|x|(|y|-|y'|)}(\star, m_{y,y'})  & \textup{if } x=x',  \\
    (m_{x,x'}, \star) & \textup{if } y=y',\\
    0 & \textup{otherwise.}
\end{array} \right.$$

Although this twisting cocycle is not the Barraud-Cornea twisting cocycle $m^0_{(x,y),(x',y')}$ associated with the set of DG Morse-Novikov data $\Xi_{X \times Y}$, it actually computes the same homology. More precisely, we have the following result:

\begin{prop}\label{prop : dg Kunneth m^0 and m^1 quasi-iso MNDG}(see \cite[Proposition 6.5]{Rie24})
    For any right DG-module $\mathcal{H}$ over $C_*(\Omega X \times \Omega Y, u+v)$, there exists a chain homotopy equivalence $$C_*(X \times Y, m^0_{(x,y),(x',y')},\mathcal{H}) \underset{\Psi_{0K}}{\overset{\Psi_{K0}}{\leftrightarrows}} C_*(X \times Y, m^K_{(x,y),(x',y')}, \mathcal{H}).$$ 
\end{prop}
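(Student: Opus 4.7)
The plan is to mimic the strategy of \cite[Proposition 6.5]{Rie24} and upgrade it to the Novikov-completed setting, where the only new issue is to check that all the algebraic objects we construct actually belong to $C_*(\Omega X\times \Omega Y,u+v)$ and not merely to an uncompleted version. Both $m^0$ and $m^K$ are twisting cocycles (for $m^0$ this is Proposition \ref{prop : twisting cocycle MN bien dans la completion} applied to $\Xi_{X\times Y}$; for $m^K$ it follows from a direct computation using the Maurer-Cartan equations satisfied by $(m^g_{x,x'})$ and $(m^h_{y,y'})$ separately, together with the fact that the ``mixed'' components vanish). So it will suffice to produce families
\[
\{\nu^{K0,(g,h)}_{(x,y),(x',y')}\}\quad\text{and}\quad \{\nu^{0K,(g,h)}_{(x,y),(x',y')}\}
\]
of chains satisfying the continuation equation \eqref{eq: continuation map g} and then feed them into Proposition \ref{prop: continuation map}, obtaining morphisms $\Psi_{K0}$ and $\Psi_{0K}$. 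Homotopy inversion will then be obtained by constructing explicit chain homotopies via Proposition \ref{Prop: homotopy Criterion Ai}.

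The geometric input for $\nu^{K0}$ is already present in the representing chain system built above: the property $\pi_*\bigl(s^{(g,h)}_{(x,y),(x',y')}\bigr)=0$ when $x\neq x'$ and $y\neq y'$ means that $\pi_*\bigl(s^{(g,h)}_{(x,y),(x',y')}\bigr)$ is an exact chain on $\trajbi{g}{x,x'}\times\trajbi{h}{y,y'}$. First I would build inductively on $|x|+|y|-|x'|-|y'|$ chains $$\sigma^{(g,h)}_{(x,y),(x',y')}\in C_{|x|+|y|-|x'|-|y'|}\!\left(\trajbi{g}{x,x'}\times\trajbi{h}{y,y'}\right)$$ interpolating between $s^{(g,h)}$ pushed to the product and the ``boundary-only'' system $s^K$ adapted to $m^K$, using the acyclicity mentioned above to fill in the higher-dimensional cells, and satisfying the analogue of \eqref{eq: système representatif} with $s^{(g,h)}$ on one side and $s^K$ on the other. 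Evaluating via $(q_{x,x'}\times q_{y,y'})_*$ yields $\nu^{K0,(g,h)}_{(x,y),(x',y')}\in C_*(\Omega^gX\times\Omega^hY)$, which by construction satisfies the required relation between $m^0$ and $m^K$. The family $\nu^{0K}$ is obtained symmetrically.

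To get an honest element of the completion I would then sum over $(g,h)$ and verify, exactly as in the proof of Proposition \ref{prop : twisting cocycle MN bien dans la completion}, that
\[
\nu^{K0}_{(x,y),(x',y')}=\sum_{(g,h)}\nu^{K0,(g,h)}_{(x,y),(x',y')}\in C_*(\Omega X\times\Omega Y,u+v).
\]
The key finiteness input is that, for each $c\in\mathbb{R}$, there are only finitely many $(g,h)$ with $\trajbi{g}{x,x'}\times\trajbi{h}{y,y'}\neq\emptyset$ and $(u+v)(g,h)\geq c$, because these spaces are the connected components of the compact manifold
\[
\trajb{x,x',A}\times\trajb{y,y',B}
\]
for suitable $A,B$ depending on $c$. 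Plugging $\nu^{K0},\nu^{0K}$ into Proposition \ref{prop: continuation map} produces morphisms of complexes $\Psi_{K0}$ and $\Psi_{0K}$.

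To finish I would produce chains $\{h^{00}_{(x,y),(x',y')}\}$ and $\{h^{KK}_{(x,y),(x',y')}\}$ satisfying \eqref{eq: homotopy map g} relating the composites to the identity continuation cocycles. These are built by the same inductive acyclicity argument, now on the product cobordism $[0,1]\times\trajbi{g}{x,x'}\times\trajbi{h}{y,y'}$, and the same Novikov-finiteness check ensures they live in the completed chains. Proposition \ref{Prop: homotopy Criterion Ai} then gives $\Psi_{0K}\circ\Psi_{K0}\simeq\mathrm{Id}$ and $\Psi_{K0}\circ\Psi_{0K}\simeq\mathrm{Id}$.

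The main obstacle is not the algebra, which is a direct transcription of \cite[Proposition 6.5]{Rie24}, but the bookkeeping of the Novikov completion: one must carefully check that every chain constructed inductively over a pair $(x,x')$, $(y,y')$ assembles into a convergent sum over $(g,h)$. The rest is routine: the key observation that makes the induction work is precisely property (3) of the representing chain system, namely $\pi_*(s^{(g,h)}_{(x,y),(x',y')})=0$ off the diagonal, which is what lets us compare $m^0$ with its ``skeletal'' version $m^K$.
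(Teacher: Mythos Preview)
Your proposal is correct and matches the paper's intended approach: the paper gives no proof of its own, instead deferring entirely to \cite[Proposition 6.5]{Rie24} and marking the statement with a $\blacksquare$. The implicit claim is precisely what you make explicit, namely that the argument of \cite{Rie24} carries over verbatim once one checks that every cocycle and homotopy assembled over $(g,h)\in\pi_1(X)\times\pi_1(Y)$ lands in the Novikov completion $C_*(\Omega X\times\Omega Y,u+v)$; your use of the compactness of $\trajb{x,x',A}\times\trajb{y,y',B}$ to obtain this finiteness is exactly the mechanism of Proposition \ref{prop : twisting cocycle MN bien dans la completion}.
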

\begin{flushright}
    $\blacksquare$
\end{flushright}

It follows that \cite[Theorem 6.14]{Rie24} can be adapted in the DG Morse-Novikov setting and the cross products $K^{alg}$ and $K$ benefit from the same properties as those given in \cite[Section 6.3]{Rie24}.

\begin{thm}\label{thm : Morse-Novikov Künneth formula}
Let $\Xi_X$, $\Xi_Y$ be sets of DG Morse data on $X$ and $Y$ with respective Morse 1-forms $\alpha \in \Omega^1(X)$ and $\beta \in \Omega^1(Y)$. Let $u=[\alpha] \in H^1(X,\R)$ and $v = [\beta] \in H^1(Y,\R)$.

i) The Künneth twisting cocycle $\{m^{K,(g,h)}_{z,w} \in C_{|z|-|w|-1}(\Omega^g X \times \Omega^h Y), \ z,w \in \Crit(\alpha+\beta)\}$ associated with $\Xi_X$ and $\Xi_Y$ computes the same homology as the Barraud-Cornea cocycle associated with the set of DG data $\Xi_{X \times Y}$ constructed previously from $\Xi_X$ and $\Xi_Y$. \\

ii) Let $\F^u$ and $\G^v$ be DG modules over $C_*(\Omega X,u)$ and  $C_*(\Omega Y,v)$. Then, $$\deffct{K^{alg}}{C_*(X,\Xi_X, \F^u) \otimes_{\Z} C_*(Y, \Xi_Y, \G^v)}{C_*(X \times Y, m^K_{z,w}, \F^u \otimes_{\Z} \G^v)}{(\alpha \otimes x) \otimes (\beta \otimes y)}{(-1)^{|\beta||x|}(\alpha \otimes \beta) \otimes (x,y)}$$ is an isomorphism of complexes.\\

iii) Let $F \hookrightarrow E_X \overset{\pi_X}{\to} X$ and $G\hookrightarrow E_Y \overset{\pi_Y}{\to} Y$ be fibrations such that $\pi_X^*u = 0$, $\pi^*_Yv=0$ and $F,G$ are locally path-connected. Then $$\deffct{K}{C_*(X, \Xi_X, C_*(F,u)) \otimes_{\Z} C_*(Y,\Xi_Y, C_*(G,v))}{C_*(X \times Y, m^K_{z,w}, C_*(F \times G,u+v))}{(\alpha \otimes x) \otimes (\beta \otimes y)}{(-1)^{|\beta||x|}(\alpha , \beta) \otimes (x,y)}$$ is a quasi-isomorphism of complexes.
\end{thm}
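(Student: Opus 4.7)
The plan is to follow the structure of the proof of \cite[Theorem 6.14]{Rie24}, adapting each step so that the projective-limit completions $C_*(\Omega X, u)$, $C_*(F,u)$, etc. are handled via the tools of Appendix \ref{Appendix : Homology and projective limit, algebraic properties}. Part (i) is essentially immediate: Proposition \ref{prop : dg Kunneth m^0 and m^1 quasi-iso MNDG} provides a pair of chain homotopy equivalences $\Psi_{0K}, \Psi_{K0}$ between the $m^0$- and $m^K$-twisted complexes valid for any DG right $C_*(\Omega X \times \Omega Y, u+v)$-module $\mathcal{H}$; applied to $\mathcal{H} = C_*(F \times G, u+v)$ equipped with its $C_*(\Omega X, u) \otimes C_*(\Omega Y, v)$-structure pulled back via the Eilenberg--Zilber map, it yields the desired homological identification.

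For part (ii) the verification is purely algebraic. Since $\Crit(\alpha + \beta) = \Crit(\alpha) \times \Crit(\beta)$ and the tensor product is over $\Z$, the map $K^{alg}$ is a bijection on the underlying bigraded groups. Compatibility with differentials follows from the specific shape of $m^K$: the coefficient $m^K_{(x,y),(x',y')}$ is zero unless $x=x'$ or $y=y'$, in which cases it equals $(\star, m^Y_{y,y'})$ or $(m^X_{x,x'}, \star)$ up to the sign $(-1)^{|x|(|y|-|y'|)}$. Expanding $\partial((\alpha \otimes x) \otimes (\beta \otimes y))$ on the source and $\partial((-1)^{|\beta||x|}(\alpha,\beta) \otimes (x,y))$ on the target produces two families of terms (one from each factor), and a Koszul-sign bookkeeping matches them termwise, exactly as in the enriched Morse case.

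For part (iii) I would factor $K$ as
\[
C_*(X, \Xi_X, C_*(F,u)) \otimes C_*(Y, \Xi_Y, C_*(G,v)) \xrightarrow{K^{alg}} C_*(X \times Y, m^K, C_*(F,u) \otimes C_*(G,v)) \xrightarrow{\widetilde{\EZ}} C_*(X \times Y, m^K, C_*(F \times G, u+v)),
\]
where $\widetilde{\EZ}$ is induced coefficient-wise by a Novikov-completed Eilenberg--Zilber map $\EZ_u : C_*(F,u) \otimes C_*(G,v) \to C_*(F \times G, u+v)$. Part (ii) reduces the claim to showing $\widetilde{\EZ}$ is a quasi-isomorphism. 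I would introduce the canonical filtration $F_p C_*(X \times Y, m^K, \cdot) = \bigoplus_{i+j \leq p} (\cdot)_j \otimes \Z\Crit_i(\alpha+\beta)$ on both complexes; the associated spectral sequences (cf.\ Section \ref{subsection: suite spectrale M-N}) have $E^1_{p,q} = H_q(\text{coefficient module}) \otimes \Z\Crit_p(\alpha+\beta)$, and $\widetilde{\EZ}$ induces a morphism of spectral sequences which on $E^1$ is $(\EZ_u)_* \otimes \Id$. It therefore suffices to show $\EZ_u$ is a quasi-isomorphism of chain complexes.

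The main obstacle will be this last step, since tensor products do not commute with projective limits in general. The approach is to exploit the compatibility of $\EZ$ with the sublevel-set filtrations: $\EZ$ restricts to maps $C_*(F, F_c) \otimes C_*(G) + C_*(F) \otimes C_*(G, G_{c'}) \to C_*(F \times G, (F \times G)_{c+c'})$, so it induces a morphism of cofinal projective systems whose individual terms are classical Eilenberg--Zilber quasi-isomorphisms. The algebraic results of Appendix \ref{Appendix : Homology and projective limit, algebraic properties} (in particular the vanishing of $\lim^1$ under a Mittag-Leffler-type surjectivity condition on the transition maps, which holds here because the transition maps are canonical projections) then let us pass to the projective limit and conclude that $\EZ_u$ is a quasi-isomorphism. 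Combining this with $K^{alg}$ yields that $K$ is a quasi-isomorphism, completing the proof.
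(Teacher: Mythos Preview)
Parts (i) and (ii) are handled correctly and in line with the paper, which simply defers to \cite{Rie24} and Proposition~\ref{prop : dg Kunneth m^0 and m^1 quasi-iso MNDG}.

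For part (iii), your factorization $K=\widetilde{\EZ}\circ K^{alg}$ and the spectral-sequence reduction are the natural moves, but the last step contains a genuine gap. The vanishing of $\lim^1$ from Appendix~\ref{Appendix : Homology and projective limit, algebraic properties} governs the commutation of \emph{homology} with projective limits; it says nothing about commuting \emph{tensor products} with projective limits. The source $C_*(F,u)\otimes_{\Z} C_*(G,v)$ is a tensor product of two projective limits, and the canonical comparison map
\[
\Bigl(\limproj C_*(F,F_c)\Bigr)\otimes\Bigl(\limproj C_*(G,G_{c'})\Bigr)\longrightarrow \limproj\bigl[C_*(F,F_c)\otimes C_*(G,G_{c'})\bigr]
\]
is in general neither an isomorphism nor a quasi-isomorphism. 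So the fact that each finite-level Eilenberg--Zilber map is a quasi-isomorphism does not allow you to conclude the same for $\EZ_u$.

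Concretely, take $X=Y=S^1$ with $u,v$ the standard generators and $E_X=E_Y=\R$ the universal covers, so $F=G=\Z$ with primitive the inclusion $\Z\hookrightarrow\R$. Then $C_0(\Z,u)$ consists of sequences $(a_n)_{n\in\Z}$ with support bounded above, and $\EZ_u\colon C_0(\Z,u)\otimes C_0(\Z,v)\to C_0(\Z^2,u+v)$ sends $(a_n)\otimes(b_m)$ to the array $(a_nb_m)$. The element $\sum_{n\ge 0}[(n,-2n)]$ lies in $C_0(\Z^2,u+v)$, since $(u+v)(n,-2n)=-n$ satisfies the Novikov finiteness condition, but it is not a finite sum of rank-one arrays and hence is not in the image of $\EZ_u$. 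As everything sits in degree~$0$, $\EZ_u$ is not a quasi-isomorphism here, and your $E^1$-page comparison cannot be an isomorphism whenever $\Crit(\alpha+\beta)\neq\emptyset$. (In this particular example the theorem is nonetheless true---both enriched complexes are acyclic---so the defect is in the method, not the statement; but the argument you propose does not establish~(iii).)
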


\begin{flushright}
    $\blacksquare$
\end{flushright}

We here state two lemmas that are direct extension of \cite[Lemma 6.17]{Rie24} and \cite[Lemma 6.22]{Rie24} that state the compatibility of the cross products with morphisms of fibrations and direct and shriek maps.

The next lemma is stated using $K^{alg}$ but also applies for $K$ if $\F^u = C_*(F,u)$ and $\G^v = C_*(G,v)$ where $F$ and $G$ are locally path-connected fibers of a fibration $E_X \overset{\pi_X}{\to} X$ and $E_Y \overset{\pi_Y}{\to} Y$ respectively such that $\pi_X^*u = 0$ and $\pi_Y^*v= 0.$

\begin{lemme}\label{lemme : commutativité K avec direct et shriek M-N}
Let $\varphi : Y^{n_Y} \to X^{n_X}$ and $\psi: Z^{n_Z} \to W^{n_W} $ be two continuous maps. Let $u \in H^1(X,\R)$, $v \in H^1(W,\R)$ and let $\F^u$, $\G^v$ be a $C_*(\Omega X,u)$ right DG module and a $C_*(\Omega W, v)$ right DG module respectively. Let $\Xi_X, \Xi_Y, \Xi_Z, \Xi_W$ be sets of DG Morse-Novikov data for $X,Y,Z,W$ respectively. Then the following diagram for the direct maps commutes at the chain level 

\[
\xymatrix{
C_*(Y, \Xi_Y, \varphi^*\F^u) \otimes C_*(Z, \Xi_Z, \psi^*\G^v) \ar[r]^-{K} \ar[d]_{\varphi_* \otimes \psi_*} & C_*(Y \times Z, \Xi_{Y \times Z}, \varphi^*\F^u \otimes \psi^*\G^v) \ar[d]^{(\varphi \times \psi)_*} \\
C_*(X, \Xi_X, \F^u) \otimes C_*(W, \Xi_W, \G^v) \ar[r]_{K} & C_*(X \times W, \Xi_{X \times W}, \F^u \otimes \G^v)
}
\]

and the following diagram for the shriek maps commutes at the chain level up to sign :

\[
\xymatrix{
C_{i+n_Y-n_X}(Y, \Xi_Y, \varphi^*\F^u) \otimes C_{j+n_Z - n_W}(Z, \Xi_Z, \psi^*\G^v) \ar[r]^-{K}  & C_{i+j +n_Y + n_Z -n_X -n_W}(Y \times Z, \Xi_{Y \times Z}, \varphi^*\F^u \otimes \psi^*\G^v)  \\
C_i(X, \Xi_X, \F^u) \otimes C_j(W, \Xi_W, \G^v) \ar[u]^{\varphi_! \otimes \psi_!} \ar[r]_{K} & C_{i+j}(X \times W, \Xi_{X \times W}, \F^u \otimes \G^v) \ar[u]_{(\varphi\times \psi)_!}
}
\]

More precisely, $$K( \varphi_!(\alpha \otimes x) \otimes \psi_!(\beta \otimes w)) = (-1)^{(n_Y-n_X)(n_W  -|\beta|-|w|) + (n_Z-n_W)(|\alpha| + |x|)} (\varphi_! \times \psi_!)K(\alpha \otimes x \otimes \beta \otimes w).$$

\end{lemme}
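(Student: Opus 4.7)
The plan is to follow the same strategy as in \cite[Lemma 6.22]{Rie24}, working at the chain level using the construction of the direct and shriek maps from Section \ref{subsection : Sketch of a construction at the chain level}. Fix Morse 1-forms and adapted pseudo-gradients on $X$, $Y$, $Z$, $W$, and form the product data $\Xi_{Y \times Z}$ and $\Xi_{X \times W}$ as explained in Section \ref{subsection : The cross product K}. The critical points of the product Morse 1-forms are pairs $(y,z)$, with index $|y|+|z|$, and $\pi_1(Y \times Z) = \pi_1(Y) \times \pi_1(Z)$, and similarly for $X \times W$. Since, up to chain homotopy, nothing depends on these choices (Theorem \ref{thm : invariance} together with the discussion following Definition \ref{defi : functoriality DG Morse-Novikov with Latour trick}), it suffices to check both diagrams for this particular choice of data.

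The geometric core is the observation that the transversality conditions for $\varphi \times \psi$ hold as soon as they hold for both $\varphi$ and $\psi$, and that the moduli spaces defining the direct and shriek maps split as products
\begin{equation*}
\overline{\mathcal{M}}^{\varphi \times \psi}_{(g_1,g_2)}\bigl((y,z),(x,w)\bigr) \;=\; \overline{\mathcal{M}}^{\varphi}_{g_1}(y,x) \times \overline{\mathcal{M}}^{\psi}_{g_2}(z,w),
\end{equation*}
and analogously for the shriek moduli spaces. One then chooses a representing chain system on these product spaces that restricts to products of representing chain systems on the factors, exactly as was done in Section \ref{subsection : The cross product K} for the Künneth cocycle $m^K$. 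Since the evaluation maps factor as $q^{\varphi \times \psi}_{(y,z),(x,w)} = (q^{\varphi}_{y,x}, q^{\psi}_{z,w})$, the cocycles governing $(\varphi \times \psi)_*$ and $(\varphi \times \psi)_!$ are, up to sign,
\begin{equation*}
\nu^{\varphi \times \psi,(g_1,g_2)}_{(y,z),(x,w)} \;=\; \pm\,\bigl(\nu^{\varphi,g_1}_{y,x},\, \nu^{\psi,g_2}_{z,w}\bigr) \in C_*\bigl(\Omega^{g_1}X \times \Omega^{g_2}W\bigr).
\end{equation*}
The finiteness condition needed to land in the Novikov completion $C_*(\Omega X \times \Omega W, u+v)$ is automatic because it holds for each factor. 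Summing over $(g_1,g_2) \in \pi_1(X) \times \pi_1(W)$ and matching against the definitions of $K$ (respectively $K^{alg}$) then identifies the two composites in each diagram.

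The remaining task is bookkeeping of signs. For the direct diagram the Koszul sign built into $K$ already absorbs the sign coming from the identification above, and commutativity holds on the nose. For the shriek diagram, the sign $(-1)^{(n_Y - n_X)(n_W - |\beta| - |w|) + (n_Z - n_W)(|\alpha| + |x|)}$ arises from comparing the product orientation on $\overline{\mathcal{M}}^{(\varphi \times \psi)_!}_{(g_1,g_2)}$ with the convention $(\Or\,\overline{W}^s(y), \Or\,\overline{\mathcal{M}}^{\varphi_!}(y,x)) = \Or\,\overline{W}^s(x)$ used in Section \ref{subsection : Sketch of a construction at the chain level}, together with the Koszul rearrangement needed to bring $K$ to the outside. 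The computation is the same as in \cite[Lemma 6.22]{Rie24}, and inserting the labels $g_1, g_2 \in \pi_1$ affects none of the signs. The main obstacle is therefore not conceptual but purely combinatorial: ensuring that the various sign conventions from \cite{BDHO23} and \cite{Rie24} remain consistent once the completion $C_*(\Omega X, u)$ is introduced; this is taken care of by the fact that all relevant sign rules are graded only in the homological degree, which is unchanged by the passage to the Novikov setting.
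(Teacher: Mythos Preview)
Your proposal is correct and is in fact more detailed than what the paper provides: the paper does not prove this lemma at all, merely stating it as a ``direct extension of \cite[Lemma 6.22]{Rie24}'' and closing with a $\blacksquare$. Your outline---splitting the product moduli spaces, building a product representing chain system as in the Künneth cocycle construction, factoring the evaluation maps componentwise, and importing the sign computation verbatim from \cite{Rie24}---is exactly the intended argument.
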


\begin{flushright}
    $\blacksquare$
\end{flushright}

\begin{lemme}\label{lemme : induced maps on a Cartesian product commutes with K MNDG}
    Let $F_i \hookrightarrow E^X_i \overset{\pi^X_i}{\to} X$ and $G_i \hookrightarrow E^Y_i \overset{\pi^Y_i}{\to} Y$ be fibrations over two pointed, oriented, closed, and connected manifolds $(X,\star)$ and $(Y,\star_Y)$ with respective transitive lifting functions $\Phi^X_i$ and $\Phi^Y_i$ for $i \in \{0,1\}$. Let $u\in H^1(X,\R)$ and $v \in H^1(Y,\R)$. Assume that $\pi^{X,*}_i u = 0$ and $\pi^{Y,*}_i u = 0$ for $i \in \{0,1\}$ and assume that $F_0,F_1,G_0,G_1$ are locally path-connected.
    Let $\varphi : E^X_0 \to E^X_1$ and $\psi: E^Y_0 \to E^Y_1$ be morphisms of fibrations over $X$ and $Y$ respectively. 
    Endow the fibrations $E^X_i \times E^Y_i \to X \times Y$ with the transitive lifting functions $$\Phi_i^{X \times Y} = (\Phi_i^X,\Phi_i^Y) \ \textup{for } i\in \{0,1\}.$$
     Then $\varphi \times \psi : E^X_0 \times E^Y_0 \to  E^X_1 \times E^Y_1$ is a morphism of fibrations and the following diagram commutes
$$\xymatrix{
H_*(X,C_*(F_0,u)) \otimes H_*(Y, C_*(G_0,v)) \ar[d]^{\tilde{\varphi} \otimes \tilde{\psi}} \ar[r]^K & H_*(X \times Y, C_*(F_0 \times G_0, u+v)) \ar[d]^{\widetilde{\varphi \times \psi}} \\
H_*(X,C_*(F_1,u)) \otimes H_*(Y, C_*(G_1,v)) \ar[r]^{K} & H_*(X \times Y, C_*(F_1 \times G_1,u+v)).
}$$

\end{lemme}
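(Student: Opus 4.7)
The strategy is to reduce the statement, via the Morse-Novikov Fibration Theorem \ref{thm: Fibration Theorem Morse Novikov}, to the obvious topological compatibility: on total spaces of fibrations the topological cross product manifestly commutes with induced maps, since $(\varphi \times \psi)_* \circ \EZ = \EZ \circ (\varphi_* \otimes \psi_*)$ at the level of cubical chains, and this relation passes to the Novikov completions because $\varphi \times \psi$ respects the primitive $f^X + f^Y : E^X_i \times E^Y_i \to \R$. The plan is to assemble the cube whose two horizontal faces are the diagram in the statement (for the DG Morse-Novikov side) and the corresponding diagram for total spaces (with $\varphi_* \otimes \psi_*$ and $(\varphi \times \psi)_*$), linked vertically by the Fibration Theorem quasi-isomorphisms $\Psi^X_i, \Psi^Y_i, \Psi^{X \times Y}_i$ for $i \in \{0,1\}$.

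The first main step is to establish the Morse-Novikov analogue of the compatibility between the cross product $K$ and the Fibration Theorem, i.e.~to show that the diagram
\[
\xymatrix{
H_*(X,C_*(F_i,u)) \otimes H_*(Y,C_*(G_i,v)) \ar[r]^-{K} \ar[d]_{\Psi^X_i \otimes \Psi^Y_i} & H_*(X \times Y, C_*(F_i \times G_i, u+v)) \ar[d]^{\Psi^{X \times Y}_i} \\
H_*(E^X_i,u) \otimes H_*(E^Y_i,v) \ar[r]^-{\times} & H_*(E^X_i \times E^Y_i, u+v)
}
\]
commutes, where the bottom map is the topological cross product induced by $\EZ$. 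This follows the same scheme as the Morse case treated in \cite{Rie24}: picking a set of DG Morse-Novikov data on $X \times Y$ arising from the Latour Trick on each factor, the Latour cells on $X \times Y$ factor as products of Latour cells on $X$ and $Y$, and the chains $m_{(x,y)}$ used in the construction of $\Psi^{X \times Y}_i$ can be chosen as $\EZ$-images of the chains $m_x \otimes m_y$. Combined with Theorem \ref{thm : Morse-Novikov Künneth formula} identifying $K$ with the map induced by the Künneth twisting cocycle, the commutativity is then a direct computation. The one point that requires care is that everything must be compatible with the Novikov completion, and this is ensured by Lemma \ref{lemme : coherent homotopy preserves length}-type compatibility: the topological $\EZ$ satisfies $f^X \circ \pi_1 + f^Y \circ \pi_2 = (f^X + f^Y) \circ \EZ$, so it extends to the completed complexes.

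The second step applies Theorem \ref{thm : morphisme induit commute avec iso Morse-Novikov} to each of the three pairs of fibrations $(E^X_0,E^X_1)$, $(E^Y_0, E^Y_1)$, $(E^{X\times Y}_0, E^{X\times Y}_1)$, yielding chain homotopy commutative squares linking $\tilde{\varphi}, \tilde{\psi}, \widetilde{\varphi \times \psi}$ with $\varphi_*, \psi_*, (\varphi \times \psi)_*$ through the respective $\Psi_i$. Combining the cube's four side faces (two cross-product squares from step one, two morphism-of-fibrations squares from step two) with the topological bottom face
\[
\xymatrix{
H_*(E^X_0,u) \otimes H_*(E^Y_0, v) \ar[r]^-{\times} \ar[d]_{\varphi_* \otimes \psi_*} & H_*(E^X_0 \times E^Y_0, u+v) \ar[d]^{(\varphi \times \psi)_*} \\
H_*(E^X_1,u) \otimes H_*(E^Y_1, v) \ar[r]^-{\times} & H_*(E^X_1 \times E^Y_1, u+v)
}
\]
(which is commutative because $\EZ$ is natural), and using that the vertical Fibration Theorem maps $\Psi^X_i \otimes \Psi^Y_i$ and $\Psi^{X\times Y}_i$ are isomorphisms in homology, one concludes that the desired top face commutes.

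The main obstacle is to verify that the first step, namely the compatibility of $K$ with the Fibration Theorem, genuinely passes through the Novikov completion. Concretely, one must check that the chain $\EZ(m_x \otimes m_y)$ representing the Latour cell of $(x,y) \in \Crit(\eta)$ sends chains in $C_*(F \times G, u+v)$ to chains in $C_*(E^X \times E^Y, u+v)$, and that all the chain homotopies implicit in the Morse analogue of this compatibility statement have bounded length in the sense of Section \ref{section : A Fibration Theorem for DG Morse-Novikov theory}. This is handled exactly as in the proof of Theorem \ref{thm: Fibration Theorem Morse Novikov}: the relevant parametrized paths live in images of compact Latour cells (or products thereof), so the length is uniformly bounded on compact pieces, and the homotopies extend continuously to the projective limits.
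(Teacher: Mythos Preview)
The paper gives no proof of this lemma: it is announced as a ``direct extension of \cite[Lemma 6.22]{Rie24}'' and closed with a $\blacksquare$. The implicit argument is therefore purely chain-level: one repeats the computation of \cite{Rie24}, namely that with the product lifting function $\Phi_i^{X\times Y}=(\Phi_i^X,\Phi_i^Y)$ a coherent homotopy for $\varphi\times\psi$ can be taken as the product of coherent homotopies for $\varphi$ and $\psi$, so that the induced $\Ai$-morphism $\widetilde{\varphi\times\psi}$ acts diagonally with respect to the K\"unneth twisting cocycle $m^K$ and commutes with $K$ on the nose (or up to an explicit homotopy). The only new point in the Morse--Novikov setting is that these coherent homotopies preserve the Novikov filtration, which is Lemma~\ref{lemme : coherent homotopy preserves length}.

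Your route is genuinely different: instead of a direct chain-level verification you assemble a cube whose bottom face is the trivial naturality $(\varphi\times\psi)_*\circ\EZ=\EZ\circ(\varphi_*\otimes\psi_*)$ on total spaces, whose side faces are instances of Theorem~\ref{thm : morphisme induit commute avec iso Morse-Novikov}, and whose front/back faces are the compatibility of $K$ with the Fibration Theorem quasi-isomorphisms. This is correct and conceptually clean, since it reduces everything to an obvious topological identity. The trade-off is that your ``first step''---compatibility of $K$ with $\Psi^X\otimes\Psi^Y$ and $\Psi^{X\times Y}$---is not stated in this paper and is itself a lemma of comparable weight (it needs the Latour-cell factorization $m_{(x,y)}=\EZ(m_x\otimes m_y)$ and the same completion-compatibility checks). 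So your argument is valid but not shorter; the paper's deferral to \cite{Rie24} is more economical here precisely because the Morse case is taken as known input.
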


\begin{flushright}
    $\blacksquare$
\end{flushright}

\subsection{Product and properties}

We now accomplished all the preliminary work in order to state one of the main results of this article.

The degree $-n$ product $\CS^u : H_*(X,\F^u)^{\otimes 2} \to H_*(X,\F^u)$ is defined by the composition 

$$\begin{array}{ccl}
    \CS^u :   H_i(X,\F^u) \otimes H_j(X,\F^u) 
    &\overset{(-1)^{n(n-j)}K}{\longrightarrow} & H_{i+j}(X^2, C_*(F^2,u))\\
    &\overset{\Delta_!}{\longrightarrow} &  H_{i+j-n}(X, \Delta^*C_*(F^2,u))\\
    &\overset{\tilde{m}}{\longrightarrow} & H_{i+j-n}(X,\F^u).
\end{array}$$

\begin{rem}
    If $\pi: E \to X$ admits a section $s : X \to E$,  then $\pi^*u=0 \Leftrightarrow s^*\pi^*u = u = 0$. In particular, the DG Morse-Novikov extension of the Neutral Element property in \cite[Theorem 7.1]{Rie24} could only be stated when $u=0$, which corresponds to the Morse case. 
\end{rem}

The proofs of the properties in Theorem \ref{thm : Chas-Sullivan product in Morse-Novikov} are the same as in \cite[Section 7]{Rie24} using the construction of shriek and direct maps in Section \ref{Section : Functoriality in the DG Morse-Novikov theory}, the construction of the morphism induced by a morphism of fibrations in Section \ref{subsection : Morphism of fibrations} and the construction of the cross product $K$ in Section \ref{subsection : The cross product K}. We stated properties of compatibility between these three types of maps (Lemma \ref{prop : morphisme Ai commute avec direct et shriek MNDG}, Lemma \ref{lemme : commutativité K avec direct et shriek M-N} and Lemma \ref{lemme : induced maps on a Cartesian product commutes with K MNDG}) which are integral to the proofs.

\section{Appendix: Homology and projective limit, algebraic properties}\label{Appendix : Homology and projective limit, algebraic properties}

In this article, we are interested in projective limits of chain complexes and homology groups filtered by $\Z_{\leq 0}$ where the parameter converges towards $-\infty$ and will use notation conventions accordingly. We will lay out the definition and basic properties of the projective limit that we will use throughout this article such as, the Universal Property of projective limits or a vanishing condition on the derived functor $\limun$. In a more topological note, we also consider $\R$-filtered topological spaces $(Y,Y_c)$ such that $Y_c \subset Y_{c'}$ if $c\leq c'$ and the projective limit $$\limproj C_*(Y,Y_c)$$ since these complexes naturally appear in our work. We give a chain homotopy equivalence criterion for such limits.  We will use the Appendix of the book \cite{Mas78} as a reference.

\subsection{Definitions and basic properties}

\begin{defi}
   
    $\bullet$ \textbf{A projective system of chain complexes} is a family $(A_i)_{i \in \Z_{\leq 0}}$ of complexes endowed with a family of morphisms $\pi_{i,j}: A_i \to A_j$ for $i\leq j \in \Z_{\leq 0}$ such that $\pi_{i,i} = \Id$ for all $i \in \Z_{\leq 0}$ and $\pi_{j,k}\circ \pi_{i,j} = \pi_{i,k}$ for any $i\leq j \leq k.$

    $\bullet$ Let $\left((A_i)_{i \in \Z_{\leq 0}}, \pi_{i,j}\right)$ be a projective system and define $$d: \prod_{i \in \Z_{\leq 0}} A_i \to \prod_{i \in \Z_{\leq 0}} A_i,\ d((a_i)_{i \in \Z_{\leq 0}}) = (a_i- \pi_{i-1,i}(a_{i-1}))_{i \in \Z_{\leq 0}}.$$  The \textbf{projective limit} $A$ of the projective system  is the complex defined by 
    $$A:= \left\{ (\sigma_i)_{i \in \Z_{\leq 0}} \in  \prod_{i \in \Z_{\leq 0}} A_i, \ \pi_{i,j}(\sigma_i) = \sigma_j\right\} = \textup{Ker}(d)$$ endowed with the differential $$\partial (\sigma_i)_{i \in \Z_{\leq 0}} = (\partial \sigma_i)_{i \in \Z_{\leq 0}}.$$ The projective limit $A$ naturally comes equipped with the projections $\pi_j: A \to A_j$, $\pi_j((\sigma_i)_{i \in \Z_{\leq 0}}) = \sigma_j$ such that the following diagram commutes:

    $$\xymatrix{
    & A\ar[dr]^{\pi_j} \ar[dl]_{\pi_i} & \\
    A_i \ar[rr]^{\pi_{i,j}} & & A_j.
    }$$
We will often denote $\limproj A_i:= A$.    
\end{defi}

Fix for the rest of this section a projective system $(A_i)_{i \in \Z_{\leq 0}}$ and let $A = \limproj A_i$ be its projective limit.

The projective limit has the following universal property.

\begin{universalprop}\label{universal property projective limit}
    For any complex $B$ endowed with morphisms $p_i: B \to A_i$ for any $i \in \Z_{\leq 0}$ such that $\pi_{i,j} \circ p_i = p_j$, there exists a unique map $\psi: B \to A$, $\psi(b) = (p_i(b))_{i \in \Z_{\leq 0}} \in A$ such that the following diagram commutes

     $$\xymatrix{
     & B \ar@/^1.0pc/[ddr]^{p_j}  \ar@/_1.0pc/[ddl]_{p_i} \ar@{-->}[d]^{\psi} & \\
    & A\ar[dr]^{\pi_j} \ar[dl]_{\pi_i} & \\
    A_i \ar[rr]^{\pi_{i,j}} & & A_j.
    }$$
\end{universalprop}
\begin{flushright}
    $\blacksquare$
\end{flushright}

\begin{cor}\label{cor: fonctoriality limproj}
    Let $((B_i), \pi^B_{i,j})$ be a projective system with projective limit $B$. A \textbf{morphism of projective systems} $\{\psi_i : A_i \to B_i,\ i \in \Z_{\leq 0}\}$ (i.e.  $\pi^B_{i,j} \circ \psi_i = \psi_j \circ \pi^A_{i,j}$ for all $i,j \in \Z_{\leq 0}$) induces a morphism $\psi: A \to B$, $\psi((a_i)_{i \in \Z_{\leq 0}}) = (\psi_i(a_i))_{i \in \Z_{\leq 0}}$  between their projective limits. 
\end{cor}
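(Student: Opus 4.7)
The plan is to derive this corollary directly from the Universal Property of projective limits stated just above. Given the morphism of projective systems $\{\psi_i : A_i \to B_i\}$, I will construct compatible maps from $A$ into the system $(B_i)$ and then invoke the universal property of $B = \limproj B_i$ to obtain the desired $\psi : A \to B$.

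Concretely, first I would define $p_i := \psi_i \circ \pi^A_i : A \to B_i$ for every $i \in \Z_{\leq 0}$, where $\pi^A_i : A \to A_i$ is the canonical projection from the projective limit. The key verification is that this family of maps is compatible with the transition morphisms of $(B_i)$, i.e. that $\pi^B_{i,j} \circ p_i = p_j$ for all $i \leq j$. This follows from a short computation:
\begin{equation*}
\pi^B_{i,j} \circ p_i = \pi^B_{i,j} \circ \psi_i \circ \pi^A_i = \psi_j \circ \pi^A_{i,j} \circ \pi^A_i = \psi_j \circ \pi^A_j = p_j,
\end{equation*}
where the second equality uses the hypothesis that $\{\psi_i\}$ is a morphism of projective systems and the third equality uses the defining property of the projections from $A$.

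With this compatibility established, the Universal Property \ref{universal property projective limit} applied to the complex $A$ with the morphisms $p_i : A \to B_i$ yields a unique morphism $\psi : A \to B$ characterized by $\pi^B_i \circ \psi = p_i$ for every $i$. Unwinding the definition of $\psi$ provided by the universal property, we read off
\begin{equation*}
\psi\bigl((a_i)_{i \in \Z_{\leq 0}}\bigr) = \bigl(p_i((a_k)_k)\bigr)_{i} = \bigl(\psi_i(\pi^A_i((a_k)_k))\bigr)_i = (\psi_i(a_i))_{i \in \Z_{\leq 0}},
\end{equation*}
which is precisely the formula claimed in the statement. No obstacle is expected here: the proof is essentially a one-line diagram chase, and the only substantive content is the compatibility check, which is immediate from the definition of a morphism of projective systems.
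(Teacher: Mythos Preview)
Your proposal is correct and follows exactly the approach implicit in the paper, which marks this corollary with a $\blacksquare$ and no written proof, indicating it is an immediate consequence of the Universal Property~\ref{universal property projective limit}. Your explicit verification that the maps $p_i = \psi_i \circ \pi^A_i$ are compatible with the transition morphisms, followed by the invocation of the universal property, is precisely the intended argument.
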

\begin{flushright}
    $\blacksquare$
\end{flushright}

\subsection{The derived functor \texorpdfstring{$\limun$}{limun}}

Unlike for direct limits, the homology of a projective limit need not be isomorphic to the projective limit of the homology groups.

\begin{thm}\cite[Theorem A.14]{Mas78}
The failure of right-exactness of the functor $\limproj = \textup{Ker}(d)$ is measured by $\limun := \textup{Coker}(d)$. More precisely, given projective systems $(A_i)$, $(B_i)$, and $(C_i)$ such that there exists a short exact sequence
$$0 \to A_i \to B_i \to C_i \to 0,$$ 
where the arrows commute with the respective projections of these systems, then we have the exact sequence
$$0 \to \limproj A_n \to \limproj B_n \to \limproj C_n \to \limun A_n \to \limun B_n \to \limun C_n \to 0.$$
\end{thm}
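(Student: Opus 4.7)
The plan is to reduce the statement to a direct application of the snake lemma, applied to the two-term complexes that by definition compute $\limproj$ and $\limun$. For each projective system $(X_i)$ with structural maps $\pi^X_{i,j}$, we have the two-term complex
$$d^X : \prod_{i \in \Z_{\leq 0}} X_i \longrightarrow \prod_{i \in \Z_{\leq 0}} X_i, \qquad d^X((x_i)) = (x_i - \pi^X_{i-1,i}(x_{i-1})),$$
and by definition $\limproj X_i = \Ker(d^X)$ and $\limun X_i = \Coker(d^X)$.

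First, from the short exact sequences $0 \to A_i \to B_i \to C_i \to 0$ for each $i \in \Z_{\leq 0}$, I take the product to obtain
$$0 \to \prod_{i} A_i \to \prod_{i} B_i \to \prod_{i} C_i \to 0,$$
which is still short exact since direct products are exact in the category of abelian groups (or of modules over a ring). Second, I verify that the three differentials $d^A$, $d^B$, $d^C$ assemble into a commutative diagram with the product maps as horizontal arrows: this is exactly the hypothesis that the maps $A_i \to B_i$ and $B_i \to C_i$ commute with the structural projections of the three projective systems.

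This produces a commutative diagram with exact rows
$$\xymatrix{
0 \ar[r] & \prod_i A_i \ar[r] \ar[d]^{d^A} & \prod_i B_i \ar[r] \ar[d]^{d^B} & \prod_i C_i \ar[r] \ar[d]^{d^C} & 0 \\
0 \ar[r] & \prod_i A_i \ar[r] & \prod_i B_i \ar[r] & \prod_i C_i \ar[r] & 0
}$$
to which I apply the snake lemma. This yields the six-term exact sequence
$$0 \to \Ker d^A \to \Ker d^B \to \Ker d^C \to \Coker d^A \to \Coker d^B \to \Coker d^C \to 0,$$
which is exactly the claimed exact sequence once one identifies kernels with $\limproj$ and cokernels with $\limun$.

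There is no real obstacle: every step is formal. The only point deserving a line of care is the exactness of products in the first step (true in abelian categories with exact products, such as abelian groups or modules, which is the setting here), and the verification that the three vertical differentials do form a morphism of short exact sequences, which is immediate from the compatibility hypothesis with the projections.
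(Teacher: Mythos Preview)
Your proof is correct and is the standard snake-lemma argument. The paper does not give its own proof of this statement: it is simply quoted from \cite[Theorem A.14]{Mas78}, so there is nothing to compare against beyond noting that your argument is precisely the classical one.
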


However, there exists a vanishing condition for $\lim^1$, known as the \textbf{Mittag-Leffler condition}:

\begin{prop}\cite[Lemma A.17]{Mas78}
    If $((A_i)_{i \in \Z}, \pi_{i,j})$ satisfies the Mittag-Leffler condition 
    $$\forall i \in \Z, \ \exists j \leq i, \ \forall k \leq j, \ \Image(\pi_{k,i}) = \Image(\pi_{j,i}),$$
then $\limun A_i = 0.$ 
\end{prop}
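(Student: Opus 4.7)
The plan is to show the map $d : \prod_i A_i \to \prod_i A_i$, $d((a_i)) = (a_i - \pi_{i-1,i}(a_{i-1}))$, is surjective, by splitting the system into two pieces where the vanishing of $\lim^1$ is manifest. Define the \emph{stable image} $A'_i = \bigcap_{k \leq i} \Image(\pi_{k,i}) \subseteq A_i$. By Mittag-Leffler, for each $i$ there exists $j(i) \leq i$ with $A'_i = \Image(\pi_{j(i),i})$. One first checks that $(A'_i)$ is a subsystem and, crucially, that the induced transition maps $\pi_{i,i+1}|_{A'_i} : A'_i \to A'_{i+1}$ are surjective: given $z \in A'_{i+1}$, writing $z \in \Image(\pi_{j(i),i+1}) = \pi_{i,i+1}(\Image(\pi_{j(i),i})) = \pi_{i,i+1}(A'_i)$ produces the required preimage.

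Set $A''_i = A_i/A'_i$ and consider the short exact sequence $0 \to (A'_i) \to (A_i) \to (A''_i) \to 0$. The induced transition maps $\overline{\pi}_{k,i} : A''_k \to A''_i$ vanish whenever $k \leq j(i)$, because $\pi_{k,i}(A_k) = \Image(\pi_{k,i}) = A'_i$ for such $k$, and $A'_i$ is precisely the kernel of $A_i \twoheadrightarrow A''_i$. The long exact sequence from the theorem above gives
$$\lim{}^1 A'_i \to \lim{}^1 A_i \to \lim{}^1 A''_i \to 0,$$
so it suffices to show that both outer terms vanish.

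For $\lim^1 A'_i$, given $(b_i)$ with $b_i \in A'_i$, set $a_0 = 0$; define $a_i = b_i + \pi_{i-1,i}(a_{i-1})$ for $i > 0$, and for $i < 0$, lift $a_{i+1} - b_{i+1} \in A'_{i+1}$ through the surjective map $\pi_{i,i+1}|_{A'_i}$ to obtain $a_i \in A'_i$. This solves $a_i - \pi_{i-1,i}(a_{i-1}) = b_i$ for all $i$. For $\lim^1 A''_i$, given $(\overline{b}_i)$, the finite sum
$$\overline{a}_i := \overline{b}_i + \overline{\pi}_{i-1,i}(\overline{b}_{i-1}) + \cdots + \overline{\pi}_{j(i)+1,i}(\overline{b}_{j(i)+1})$$
is well-defined (only finitely many terms are nonzero by the vanishing of $\overline{\pi}_{k,i}$ for $k \leq j(i)$), and a direct telescoping check gives $\overline{a}_i - \overline{\pi}_{i-1,i}(\overline{a}_{i-1}) = \overline{b}_i$.

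The main obstacle is the surjectivity argument in the first paragraph: it is the only place where Mittag-Leffler is substantively used, and it turns the otherwise intractable problem of inverting $\pi_{i-1,i}$ into something manageable. The remainder of the proof is essentially bookkeeping, split between the surjective-transitions case (direct recursive lifting) and the eventually-zero-transitions case (finite truncation).
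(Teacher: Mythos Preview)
Your argument is correct and is essentially the classical proof (as found, e.g., in the cited reference \cite{Mas78}). Note, however, that the paper does not actually give a proof of this proposition: it merely cites \cite[Lemma A.17]{Mas78} and places a $\blacksquare$, then immediately restricts to the special case of surjective transition maps in Lemma~\ref{lemme : projection surjective implique limun = 0}, which is all that is needed in the body of the article. Your proof recovers that special case as one half of the decomposition (the $(A'_i)$ piece), and supplements it with the eventually-zero case for the quotient $(A''_i)$.

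Two very minor remarks. First, the telescoping check for $(A''_i)$ is cleanest if you write $\overline{a}_i := \sum_{k \leq i} \overline{\pi}_{k,i}(\overline{b}_k)$ as a formally infinite sum in which all but finitely many terms vanish; then $\overline{a}_i - \overline{\pi}_{i-1,i}(\overline{a}_{i-1}) = \overline{b}_i$ is immediate and you avoid having to argue that the answer is independent of the particular choice of $j(i)$. Second, the verification that $(A'_i)$ is a subsystem (i.e.\ $\pi_{i,i+1}(A'_i) \subseteq A'_{i+1}$) is straightforward from the intersection definition $A'_i = \bigcap_{k \leq i} \Image(\pi_{k,i})$ and should perhaps be stated before the surjectivity claim, since the latter presupposes it.
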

\begin{flushright}
    $\blacksquare$
\end{flushright}

All the projective systems of complexes $((A_i),\pi_{i,j})$ considered in this article satisfy the condition that $\pi_{i,j}: A_i \to A_j$ is surjective for any $i \leq j \in \Z_{\leq 0}$. In particular, they all satisfy the Mittag-Leffler condition. We will give a independent proof of the vanishing of $\limun A_i$ in this particular case.

\begin{lemme}\label{lemme : projection surjective implique limun = 0}
    Assume that $\pi_{i,j}: A_i \to A_j$ is surjective for any $i \leq j \in \Z_{\leq 0}$. Then $\limun A_i = 0.$ 
\end{lemme}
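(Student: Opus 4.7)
The plan is to show directly that the map $d : \prod_{i \in \Z_{\leq 0}} A_i \to \prod_{i \in \Z_{\leq 0}} A_i$ defined by $d((a_i)) = (a_i - \pi_{i-1,i}(a_{i-1}))$ is surjective, since $\limun A_i = \textup{Coker}(d)$ by definition. This would immediately give $\limun A_i = 0$.

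To prove surjectivity of $d$, I would fix an arbitrary element $(b_i)_{i \in \Z_{\leq 0}} \in \prod_{i \in \Z_{\leq 0}} A_i$ and construct $(a_i)_{i \in \Z_{\leq 0}}$ inductively, starting from the top index $i = 0$ and descending. Set $a_0 := 0 \in A_0$. The defining equation $a_i - \pi_{i-1,i}(a_{i-1}) = b_i$ rearranges to $\pi_{i-1,i}(a_{i-1}) = a_i - b_i$, so assuming $a_i$ has already been constructed, the surjectivity hypothesis on $\pi_{i-1,i} : A_{i-1} \to A_i$ provides some $a_{i-1} \in A_{i-1}$ satisfying this equation. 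By induction on $-i \in \Z_{\geq 0}$ the family $(a_i)_{i \in \Z_{\leq 0}}$ is defined everywhere, and by construction $d((a_i)) = (b_i)$.

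There is essentially no obstacle here: the argument only uses the surjectivity of each individual projection $\pi_{i-1,i}$ and the fact that $\Z_{\leq 0}$ has a greatest element at which to start the induction. This is why the hypothesis that all $\pi_{i,j}$ be surjective (a much stronger condition than Mittag-Leffler in general, but natural in our setting) gives the vanishing of $\limun$ without having to invoke the full Mittag-Leffler machinery.
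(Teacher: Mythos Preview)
Your proof is correct and essentially identical to the paper's: both show $d$ is surjective by constructing a preimage of an arbitrary $(b_i)$ via downward induction from $i=0$, using surjectivity of each $\pi_{i-1,i}$. The only cosmetic difference is the choice of starting value ($a_0=0$ for you, $a_0=b_0$ in the paper), which is immaterial.
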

\begin{proof}
    In this case, $$d: \prod_{i \in \Z_{\leq 0}} A_i \to \prod_{i \in \Z_{\leq 0}} A_i,\ d((a_i)_{i \in \Z_{\leq 0}}) = (a_i- \pi_{i-1,i}(a_{i-1}))_{i \in \Z_{\leq 0}}$$ is surjective and therefore $\textup{Coker}(d) = \limun A_i = 0$. Indeed, if $(b_i) \in \prod_{i \in \Z_{\leq 0}} A_i$, define inductively $$a_0 = b_0 \textup{ and } \ a_{i} \in \pi_{i,i+1}^{-1}( a_{i+1} - b_{i+1}) \textup{ for } i \leq -1. $$
\end{proof}

\begin{lemme}\label{lemme: limproj d'un quotient de sys proj}
    Let $((B_i)_{i \in\Z_{\leq 0}}, \pi^B_{i,j})$ be a subsystem of $((A_i)_{i \in\Z_{\leq 0}}, \pi^A_{i,j})$ i.e. $$\forall i\leq j , \ B_i \subset A_i \textup{ and } \pi^B_{i,j}(B_i) = \pi^A_{i,j}(B_i) \subset B_j.$$ Assume that $\pi^B_{i,j} = \pi^{A}_{i,j} \lvert_{B_i}$ is surjective for all $i\leq j$.  Then $\pi^A$ induces a projection on $\left(\faktor{A_i}{B_i}\right)_{i \in \Z_{\leq 0}}$ such that 
    $$\limproj \faktor{A_i}{B_i} \cong \faktor{\limproj A_i}{\limproj B_i}.$$

\end{lemme}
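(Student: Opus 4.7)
The plan is to apply the long exact sequence relating $\limproj$ and $\limun$ to the short exact sequence of projective systems
$$0 \to B_i \to A_i \to A_i/B_i \to 0.$$

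First I would verify that $(A_i/B_i)_{i \in \Z_{\leq 0}}$ is indeed a well-defined projective system: the hypothesis $\pi^A_{i,j}(B_i) \subset B_j$ ensures that $\pi^A_{i,j}$ descends to a map $\overline{\pi}_{i,j} : A_i/B_i \to A_j/B_j$ satisfying $\overline{\pi}_{j,k} \circ \overline{\pi}_{i,j} = \overline{\pi}_{i,k}$ and $\overline{\pi}_{i,i} = \Id$. Moreover, the inclusions $B_i \hookrightarrow A_i$ and quotient maps $A_i \twoheadrightarrow A_i/B_i$ commute with the respective projections, so we genuinely have a short exact sequence of projective systems.

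Applying the six-term exact sequence of the previous subsection yields
$$0 \to \limproj B_i \to \limproj A_i \to \limproj A_i/B_i \to \limun B_i \to \limun A_i \to \limun A_i/B_i \to 0.$$
The key step is the vanishing $\limun B_i = 0$. This follows directly from Lemma \ref{lemme : projection surjective implique limun = 0}, since we assumed that $\pi^B_{i,j} : B_i \to B_j$ is surjective for all $i \leq j$. Consequently the beginning of the long exact sequence reduces to
$$0 \to \limproj B_i \to \limproj A_i \to \limproj A_i/B_i \to 0,$$
which gives the desired isomorphism
$$\limproj \faktor{A_i}{B_i} \cong \faktor{\limproj A_i}{\limproj B_i}.$$

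There is no real obstacle here: once one has the long exact sequence and the surjectivity hypothesis for $\limun$-vanishing, the result is immediate. The only small verification to keep in mind is functoriality of the quotient construction for projective systems, which is straightforward from the compatibility $\pi^A_{i,j}(B_i) \subset B_j$.
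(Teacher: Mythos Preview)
Your proposal is correct and follows essentially the same approach as the paper: verify that the quotients form a projective system, apply the six-term exact sequence to $0 \to B_i \to A_i \to A_i/B_i \to 0$, and use the surjectivity hypothesis together with Lemma~\ref{lemme : projection surjective implique limun = 0} to obtain $\limun B_i = 0$.
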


\begin{proof}

Let $i \leq j$. Since $\pi^A_{i,j}(B_i) \subset B_{j}$, the map 
$$A_i \overset{\pi^A_{i,j}}{\to} A_{j} \to \faktor{A_{j}}{B_{j}}$$ 
induces a projection $$\pi^{A/B}_{i,j} : \faktor{A_i}{B_i} \to \faktor{A_j}{B_j}$$ such that $$\xymatrix{
A_i \ar[d] \ar[r]^{\pi^A_{i,j}} & A_j \ar[d] \\
\faktor{A_i}{B_i} \ar[r]_{\pi^{A/B}_{i,j}} & \faktor{A_j}{B_j} 
}$$ commutes. We have the exact sequence:
$$0 \to B_i \to A_i \to \faktor{A_i}{B_i}\to 0,$$
where the maps commute with the projections. Applying $\limproj$, we obtain 
$$0 \to \limproj B_i \to \limproj A_i \to \limproj \faktor{A_i}{B_i} \to \limun B_i \to \limun A_i \to \limun \faktor{A_i}{B_i} \to 0.$$

Since the projection $\pi^B$ is assumed to be surjective, Lemma \ref{lemme : projection surjective implique limun = 0} implies that $\limun B_i = 0$, thereby concluding the proof of the lemma.
\end{proof}

\subsection{\texorpdfstring{$\R$}{R}-filtered complexes}

The projective systems encountered in this article are complexes modded out by a family of subcomplexes naturally filtered by $\R$. The next lemma will not only explain why the "Novikov completions" considered are indeed projective limits but also justify that we can unequivocally consider such systems as filtered by $\Z_{\leq 0}$ and therefore obtain a well-defined notion of projective limits for such projective systems filtered by $\R$ with the properties listed above.

\begin{lemme}\label{Lemme: limite projective d'un quotient de complexe}
    Let $C$ be a chain complex and $(C^n)_{n \in \Z_{\leq 0}}$ a sequence of subcomplexes such that  $$C^{n} \subset C^{m}, \ \text{ if } n<m.$$

    Consider the projective system $\left( C/C^n, \pi_{n,m}\right)$ where $\deffct{\pi_{n,m}}{\faktor{C}{C^{n}}}{\faktor{C}{C^{m}}}{\overline{\tau}^{n}}{\overline{\tau}^{m}}$ is the canonical projection for $n\leq m$. Its projective limit is

    $$\limproj \faktor{C}{C^{n}} = \faktor{\hat{C}}{\bigcap_n C^{n}},$$

    where we denoted $$\hat{C} = \left \{ \displaystyle \sum_{i\in \Z_{\leq 0}} a_i \tau_i,  \  \ a_i \in \Z, \tau_i \in C , \text{ and }  \forall n \in \Z_{\leq 0}, \ \#\{i, \ a_i \neq 0 \textup{ and }\tau_i \notin C^{n}\} < \infty \right \},$$
    and 
    $$\bigcap_n C^n = \left\{ \sum_{i \in \Z_{\leq 0}} a_i \tau_i, \ \forall i\in \Z_{\leq 0}, \tau_i \in \bigcap_n C^n \right\}.$$
    
\end{lemme}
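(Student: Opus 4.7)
The plan is to construct an explicit map $\Phi : \hat{C} \to \limproj C/C^n$ and identify its kernel as $\bigcap_n C^n$, which then yields the desired isomorphism.

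Define $\Phi$ componentwise: for $\sigma = \sum_i a_i \tau_i \in \hat{C}$, the defining finiteness condition makes $\sum_{\tau_i \notin C^n} a_i \tau_i$ a genuine finite element of $C$, whose class $\overline{\sigma}^n \in C/C^n$ is set to be the $n$-th component of $\Phi(\sigma)$. Since $C^n \subset C^m$ for $n \leq m$, any basis term of $\sigma$ lying in $C^n$ already lies in $C^m$, so $\pi_{n,m}(\overline{\sigma}^n) = \overline{\sigma}^m$. The Universal Property \ref{universal property projective limit} then packages these components into a chain map $\Phi : \hat{C} \to \limproj C/C^n$, which manifestly vanishes on $\bigcap_n C^n$ and therefore factors through the quotient.

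For surjectivity, given a compatible family $(\overline{x}_n)$, choose any lifts $\sigma_n \in C$ of $\overline{x}_n$. The compatibility translates into $\delta_n := \sigma_n - \sigma_{n+1} \in C^{n+1}$ for all $n \leq -1$, and the telescoping
$$\hat{\sigma} := \sigma_0 + \sum_{n \leq -1} \delta_n$$
lies in $\hat{C}$: for a fixed $m$, only $\sigma_0$ and the finitely many $\delta_n$ with $n \geq m$ can contribute basis terms outside $C^m$, and each summand is itself a finite combination in $C$. Telescoping then gives $\Phi(\hat{\sigma})^m = \overline{\sigma_m}^m = \overline{x}_m$ for every $m$, proving surjectivity.

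The remaining, and main, step is to show $\ker \Phi = \bigcap_n C^n$. If $\Phi(\sigma) = 0$ then for every $n$ the finite chain $\sum_{\tau_i \notin C^n} a_i \tau_i$ lies in $C^n$. In all applications considered in this article the subcomplexes $C^n$ are generated by subfamilies of a distinguished basis of $C$ (e.g. non-degenerate cubes for $C_*(\Omega_n X) \subset C_*(\Omega X)$), so linear independence of the basis forces $\tau_i \in C^n$ whenever $a_i \neq 0$. Since this holds for every $n$, each such $\tau_i$ lies in $\bigcap_n C^n$, i.e. $\sigma$ belongs to $\bigcap_n C^n$ in the sense of the statement. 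This identification of the kernel is the only point where the argument goes beyond the purely categorical structure of the inverse system, and is the main (if mild) obstacle: one must interpret the formal sums in $\hat{C}$ through a basis so that the inclusion $\bigcap_n C^n \hookrightarrow \hat{C}$ is unambiguous and linear independence can be invoked.
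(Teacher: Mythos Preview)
Your proof is correct and follows essentially the same route as the paper: define the map to $\limproj C/C^n$ via the universal property, identify its kernel, and prove surjectivity by the same telescoping construction $\sum (\tau_k - \tau_{k+1})$. Your explicit discussion of why the kernel identification requires a basis (or direct-sum) structure on $C$ is in fact more careful than the paper, which asserts the equivalence $\overline{\tau}^n = 0 \ \forall n \Leftrightarrow \tau \in \bigcap_n C^n$ without comment; you are right that this step relies on the $\tau_i$ being linearly independent, as is the case in all the applications.
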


\begin{proof}
    
    Define for $n \in \Z_{\leq 0}$, the chain maps 

    $$\deffct{\phi^{n}}{\hat{C}}{\faktor{C}{C^{n}}}{\displaystyle \sum_{i \geq 0} a_i\tau_i}{\displaystyle \overline{  \sum_{i \geq 0} a_i \tau_i}^{n} = \overline{ \sum_{\tau_i \notin C^{n}} a_i \tau_i}^{n}.}$$

    The following diagram is commutative for all $n \in \Z_{\leq 0}$:

    \[
    \xymatrix{
\hat{C} \ar[r]^{\phi^{n}} \ar[dr]_{\phi^{m}} & \faktor{C}{C^{n}} \ar[d]^{\pi_{n,m}}\\
 & \faktor{C}{C^{m}}
}
    \]

The Universal property of projective limits \ref{universal property projective limit} gives a chain map 
$$\deffct{\psi}{\hat{C}}{\displaystyle \lim_{\longleftarrow} \faktor{C}{C^{n}}}{\tau}{\left (\phi^{n}(\tau)\right )_{n \in \Z_{\leq 0}}.}$$

Remark that $$ \tau \in \textup{Ker}(\psi) \ \Longleftrightarrow \ \forall n \in \Z, \ \overline{\tau}^{n} = 0  \ \Longleftrightarrow \ \tau \in \displaystyle \bigcap_n C^{n}.$$

Therefore $\psi$ factors through an injective morphism $\tilde{\psi}: \faktor{\hat{C}}{\bigcap C^n} \to \limproj \faktor{C}{C^n}$.\\

It remains to prove that $\psi$ is surjective.
Let $(\overline{\tau_n}^{n})_{n \in \Z_{\leq 0}} \in \limproj \faktor{C}{C^{n}} = \left \{ (\overline{\tau_n}^{n})_{n \in \Z_{\leq 0}}, \  \pi_{n, m}(\overline{\tau_n}^{n}) = \overline{\tau_{m}}^{m}  \right \} $.

Fix, for all $n \in \Z_{\leq 0}$ a representative $\tau_n \in C$ of  $\overline{\tau_n}^{n}$.  For $n \leq m$, since   $\overline{\tau_n}^{m} = \pi_{n, m}(\overline{\tau_n}^{n}) = \overline{\tau_{m}}^{m},$ then $\tau_n - \tau_{m} \in C^{m}$. 
Let $n \in \Z_{\leq 0}$,
\begin{align*}
    \phi^{n}\left( \displaystyle \sum_{k \in \Z_{\leq 0}} \tau_{k} - \tau_{k+1} \right) &= \sum_k \overline{\tau_k - \tau_{k+1}}^{n}\\
    &= \sum_{k\geq n} \overline{\tau_k - \tau_{k+1}}^{n} + \sum_{k < n} \overline{\tau_k - \tau_{k+1}}^{n}\\
    &= \overline{\tau_n}^{n} + 0
\end{align*}

The second sum is zero since, if $k<n$, then $\tau_k - \tau_{k+1} \in C^{k+1} \subset C^n$. Therefore $$\psi \left( \displaystyle \sum_{n \in \Z} \tau_{n} - \tau_{n+1} \right) = (\overline{\tau_n}^{n})_{n \in \N}.$$

\end{proof}

\begin{cor}\label{cor : limproj with parameter in R}
    If $(C^c)_{c \in \R}$ is a chain complex filtered by $\R$ such that $C^c \subset C^{c'}$ if $c \leq c'$, then for any strictly increasing function $\varphi: \Z_{\leq 0} \to \R$ such that $\lim_{n \to - \infty}\varphi(n) = -\infty$, $$\limproj \faktor{C}{C^n} = \limproj \faktor{C}{C^{\varphi(n)}}.$$ We will therefore denote $\limproj C^c = \limproj C^n.$
\end{cor}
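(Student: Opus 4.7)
The plan is to reduce to Lemma \ref{Lemme: limite projective d'un quotient de complexe} and then exploit the mutual cofinality of the two index sequences $(n)_{n \in \Z_{\leq 0}}$ and $(\varphi(n))_{n \in \Z_{\leq 0}}$ in $(\R,\leq)$ as they descend to $-\infty$.

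First I would apply Lemma \ref{Lemme: limite projective d'un quotient de complexe} separately to the projective systems $(C/C^n)_{n \in \Z_{\leq 0}}$ and $(C/C^{\varphi(n)})_{n \in \Z_{\leq 0}}$, producing concrete descriptions
\[
\limproj C/C^n \;=\; \hat{C}_{\mathrm{id}} \Big/ \bigcap_{n \in \Z_{\leq 0}} C^n,
\qquad
\limproj C/C^{\varphi(n)} \;=\; \hat{C}_{\varphi} \Big/ \bigcap_{n \in \Z_{\leq 0}} C^{\varphi(n)},
\]
where $\hat{C}_{\mathrm{id}}$ and $\hat{C}_{\varphi}$ are the spaces of formal sums $\sum_i a_i \tau_i$ in $C$ such that, for every $n \in \Z_{\leq 0}$, only finitely many indices satisfy $a_i \neq 0$ with $\tau_i$ lying outside $C^n$ (resp. outside $C^{\varphi(n)}$).

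Next I would use the hypotheses on $\varphi$ — strictly increasing with $\varphi(n) \to -\infty$ — to establish the interleaving: for every $n \in \Z_{\leq 0}$ there exists $m \in \Z_{\leq 0}$ with $\varphi(m) \leq n$, and conversely there exists $k \in \Z_{\leq 0}$ with $k \leq \varphi(n)$. Since $(C^c)_{c \in \R}$ is increasing, this translates into the chain of inclusions $C^{\varphi(m)} \subset C^n$ and $C^k \subset C^{\varphi(n)}$, interleaving the two families of subcomplexes. Consequently the finiteness conditions defining $\hat{C}_{\mathrm{id}}$ and $\hat{C}_{\varphi}$ become logically equivalent, whence $\hat{C}_{\mathrm{id}} = \hat{C}_{\varphi}$; similarly $\bigcap_n C^n = \bigcap_n C^{\varphi(n)}$, both coinciding with $\bigcap_{c \in \R} C^c$. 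Substituting these two equalities into the displayed identifications yields the asserted equality, and the definition $\limproj C^c := \limproj C^n$ is then unambiguous.

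There is no substantial obstacle here: the argument is a straightforward cofinality check combined with the concrete description provided by Lemma \ref{Lemme: limite projective d'un quotient de complexe}. The only minor point worth mentioning is that the identification is canonical and compatible with the projections to each $C/C^c$; this follows because both projective limits carry the same universal property with respect to the ambient system $(C/C^c)_{c \in \R}$, so the cofinal families of projections determine a unique isomorphism between them.
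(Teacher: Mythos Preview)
Your proposal is correct and is precisely the argument the paper leaves implicit: the corollary is stated immediately after Lemma~\ref{Lemme: limite projective d'un quotient de complexe} with no proof beyond a black square, and your cofinality comparison of the concrete descriptions $\hat{C}/\bigcap C^n$ furnished by that lemma is exactly the intended one-line deduction. There is nothing to add.
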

\begin{flushright}
    $\blacksquare$
\end{flushright}

\begin{rem}
    This corollary holds when we replace $\R$ by any cofinal set of $\R$.  
\end{rem}

Going back to topological considerations, we state two lemmas proving that a map $\varphi : Y \to Z$ between two $\R$-filtered topological spaces that suitably respects the filtrations will induce a map between the projective limits and give a condition for this map to induce a homotopy equivalence between the homology of the projective limits. 

\begin{lemme}\label{lemme: projection on proj sys induce induces iso}
    Let $A$ be a chain complex endowed with a family of subcomplexes $(A_c)_{c \in \R}$ such that $A_c \subset A_{c'}$ if $c<c'$. Consider the projective system $((A/A_c), \pi_{c,c'})$ where $\pi_{c,c'} : A/A_c \to A/A_{c'}$ is the canonical projection. Let $K >0$  and denote for any $c \in \R$, $B_c = A_{c+K}$, $\pi^B_{c,c'} = \pi_{c+K, c'+K}$. The projections $(\pi_{c,c+K})_{c \in \R}$ induce an identification of complexes $$\limproj A/A_c \cong \limproj A/B_c.$$
\end{lemme}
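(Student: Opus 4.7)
The plan is to exhibit explicit mutually inverse chain maps between $\limproj A/A_c$ and $\limproj A/B_c$, using the fact that the shift $c \mapsto c+K$ is a cofinal self-map of $\R$.

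First I would verify that the family $\psi = (\pi_{c,c+K})_{c \in \R}$ defines a morphism of projective systems from $(A/A_c, \pi_{c,c'})$ to $(A/B_c, \pi^B_{c,c'})$. This amounts to checking the commutation
$$\pi^B_{c,c'} \circ \pi_{c,c+K} = \pi_{c',c'+K} \circ \pi_{c,c'},$$
where both sides are self-maps ending in $A/A_{c'+K}$. By the transitivity $\pi_{b,d} \circ \pi_{a,b} = \pi_{a,d}$ of the canonical projections, both sides reduce to $\pi_{c,c'+K}$, and since $\pi^B_{c,c'} = \pi_{c+K,c'+K}$ by definition, the equality holds. Corollary \ref{cor: fonctoriality limproj} then provides the induced chain map
$$\Psi : \limproj A/A_c \to \limproj A/B_c, \qquad (\bar{\tau}_c)_{c \in \R} \mapsto (\pi_{c,c+K}(\bar{\tau}_c))_{c \in \R}.$$

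Next I would construct the inverse. Given $(\bar{\sigma}_c)_{c \in \R} \in \limproj A/B_c$, remark that $\bar{\sigma}_{c-K}$ lives in $A/B_{c-K} = A/A_c$. This naturally suggests setting
$$\Psi'((\bar{\sigma}_c)_{c \in \R}) = (\bar{\sigma}_{c-K})_{c \in \R}.$$
Coherence of this family under $\pi_{c,c'}$ follows immediately from coherence of $(\bar{\sigma}_c)$ under $\pi^B_{c-K,c'-K} = \pi_{c,c'}$, so $\Psi'$ is a well-defined chain map $\limproj A/B_c \to \limproj A/A_c$.

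Finally I would check that $\Psi$ and $\Psi'$ are mutually inverse. Applying $\Psi' \circ \Psi$ to $(\bar{\tau}_c)$ yields $(\pi_{c-K,c}(\bar{\tau}_{c-K}))_{c \in \R}$, which equals $(\bar{\tau}_c)$ by the coherence of $(\bar{\tau}_c)$. Applying $\Psi \circ \Psi'$ to $(\bar{\sigma}_c)$ yields $(\pi_{c,c+K}(\bar{\sigma}_{c-K}))_{c \in \R}$; since the map $\pi_{c,c+K} : A/A_c \to A/A_{c+K}$ is by definition $\pi^B_{c-K,c}$, coherence of $(\bar{\sigma}_c)$ gives $\pi^B_{c-K,c}(\bar{\sigma}_{c-K}) = \bar{\sigma}_c$, so we recover $(\bar{\sigma}_c)$. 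There is no real obstacle in this argument: the statement is simply the cofinal-invariance of projective limits, made explicit through the shift by $K$.
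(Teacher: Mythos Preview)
Your argument is correct and follows essentially the same approach as the paper's proof: both use Corollary~\ref{cor: fonctoriality limproj} to induce the map from the shift projections $(\pi_{c,c+K})_c$, and then observe that re-indexing $c \mapsto c+K$ identifies the two limits. The only difference is that you spell out the inverse $\Psi'$ and verify both compositions explicitly, whereas the paper simply writes $\pi((a_c)) = (a_{c+K})_c$ and asserts the identification of the shifted sequence with the original one.
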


\begin{proof}
    Using Corollary \ref{cor: fonctoriality limproj}, we see that the projections $(\pi_{c,c+K})_{c \in \R}$ induce the morphism $\pi: \limproj A/A_c \to \limproj A/B_c$, \begin{align*}
        \pi((a_c)_{c \in \R}) &= (\pi_{c,c+K}(a_c))_c \\
        &= (a_{c+K})_c.
    \end{align*}

    We identify the sequence $(a_{c+K})_{c \in \R}$ and $(a_c)_{c \in \R}.$
\end{proof}

\begin{lemme}\label{lemme: limite proj homotopy equivalence} 
    Let $Y,Z$ be topological spaces each filtered respectively by $(Y_c)_{c\in \R}$ and $(Z_c)_{c\in \R}$ i.e. if $c\leq c'$, then $Y_c \subset Y_{c'}$ and $Z_c \subset Z_{c'}$. Consider the projective systems $$\left(C_*(Y, Y_{c}), \pi^Y_{c,c'}\right) \textup{ and }\left(C_*(Z, Z_{c}), \pi^Z_{c,c'}\right),$$ where $\pi^Y_{c,c'}: C_*(Y,Y_c) \to C_*(Y,Y_{c'})$ and  $\pi^Z_{c,c'}: C_*(Z,Z_c) \to C_*(Z,Z_{c'})$   are defined by the canonical projections for $c<c'$.
Let $\phi: Y \to Z$ and $\psi: Z \to Y$ be homotopy equivalences that are inverses of each other, and denote by $$F_Z : [0,1] \times Z \to Z \textup{ such that } \ F_Z(0,\cdot) = \phi \circ \psi, \ F_Z(1, \cdot) = \Id_Z$$ and $$F_Y : [0,1] \times Y \to Y, \textup{ such that } F_Y(0,\cdot) = \psi \circ \phi, \ F_Y(1, \cdot) = \Id_Y,$$ the associated homotopies. Assume that there exists a constant $K>0$ such that

$$\forall c \in \R, \ \phi(Y_c) \subset Z_{c+K},  \psi(Z_c) \subset Y_{c+K}, \ F_Y( \cdot ,Y_c) \subset Y_{c+2K} \textup{ and } F_Z(\cdot,Z_c) \subset Z_{c+2K}.$$

    Then $\phi_*$ and $\psi_*$ induce chain homotopy equivalences $$\xymatrix{
    \limproj C_*(Y,Y_c) \ar@<2pt>[r]^{\phi_*} &  \limproj C_*(Z,Z_c) \ar@<2pt>[l]^{\psi_*}}.$$ 

\end{lemme}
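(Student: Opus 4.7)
The plan is to first promote $\phi$ and $\psi$ to chain maps between the two projective limits, then use the homotopies $F_Y$ and $F_Z$ to construct prism operators at each filtration level and check that these prism operators are compatible with the projective systems (up to a fixed shift), and finally pass to the limit to obtain the desired chain homotopies.

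First, because $\phi(Y_c) \subset Z_{c+K}$, the continuous map $\phi$ induces a chain map $\phi_*^{(c)} : C_*(Y,Y_c) \to C_*(Z,Z_{c+K})$ for every $c \in \R$, and these maps commute with the projections $\pi^Y_{c,c'}$ and $\pi^Z_{c+K,c'+K}$. This is a morphism of projective systems (indexed by $c$), so by Corollary \ref{cor: fonctoriality limproj} it induces a chain map $\limproj C_*(Y,Y_c) \to \limproj C_*(Z,Z_{c+K})$. Composing with the canonical identification $\limproj C_*(Z,Z_{c+K}) \cong \limproj C_*(Z,Z_c)$ of Lemma \ref{lemme: projection on proj sys induce induces iso} yields the desired map $\phi_* : \limproj C_*(Y,Y_c) \to \limproj C_*(Z,Z_c)$. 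The map $\psi_*$ is constructed symmetrically.

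Next I would build the chain homotopy between $\psi_* \circ \phi_*$ and the identity of $\limproj C_*(Y,Y_c)$. For each $c \in \R$, the homotopy $F_Y$ sends $[0,1] \times Y_c$ into $Y_{c+2K}$, so the standard cubical prism operator $P^{(c)} : C_*(Y,Y_c) \to C_{*+1}(Y,Y_{c+2K})$, defined by $P^{(c)}(\sigma) = F_{Y,*}(\Id_{[0,1]} \times \sigma)$, satisfies $\partial P^{(c)} + P^{(c)} \partial = \psi_* \circ \phi_* - \Id$ at the level of $C_*(Y,Y_c) \to C_*(Y,Y_{c+2K})$. The crucial compatibility is that for $c \leq c'$ the prism operators commute with the projections $\pi^Y_{c,c'}$ and $\pi^Y_{c+2K,c'+2K}$, which is immediate since $P^{(c)}$ is defined by a uniform formula. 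Hence $\{P^{(c)}\}$ is a morphism of projective systems, which induces a map $P : \limproj C_*(Y,Y_c) \to \limproj C_{*+1}(Y,Y_{c+2K}) \cong \limproj C_{*+1}(Y,Y_c)$ via Lemma \ref{lemme: projection on proj sys induce induces iso}. Passing the chain-homotopy relation level-by-level to the limit then yields $\partial P + P \partial = \psi_* \circ \phi_* - \Id$ in $\limproj C_*(Y,Y_c)$. The symmetric argument with $F_Z$ produces the chain homotopy between $\phi_* \circ \psi_*$ and $\Id_{\limproj C_*(Z,Z_c)}$.

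The main obstacle to watch is the bookkeeping of the identifications $\limproj C_*(Y,Y_c) \cong \limproj C_*(Y,Y_{c+K}) \cong \limproj C_*(Y,Y_{c+2K})$ from Lemma \ref{lemme: projection on proj sys induce induces iso}: one must verify that once these identifications are inserted, the composition $\psi_* \circ \phi_*$ and the prism $P$ actually land in the correct complex and that the chain-homotopy identity is preserved, which is precisely what the uniform shift $2K$ in the hypothesis $F_Y(\cdot,Y_c) \subset Y_{c+2K}$ guarantees. Once this bookkeeping is in place, no further topological input is required and the conclusion follows.
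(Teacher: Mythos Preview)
Your proposal is correct and follows essentially the same approach as the paper: construct $\phi_*$ and $\psi_*$ via the shift by $K$ and the identification of Lemma~\ref{lemme: projection on proj sys induce induces iso}, then use that $F_Y$ and $F_Z$ are homotopies of pairs $(Y,Y_c)\to(Y,Y_{c+2K})$ and $(Z,Z_c)\to(Z,Z_{c+2K})$ to obtain compatible chain homotopies at each level and pass to the limit. The paper's proof is terser---it simply asserts that the homotopies of pairs give the level-wise chain homotopies and invokes Lemma~\ref{lemme: projection on proj sys induce induces iso}---whereas you spell out the prism operator and the compatibility with the projections explicitly; this extra care is exactly what the paper's argument leaves to the reader.
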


\begin{proof}

The map $\phi : Y \to Z$ induces a map $$\phi : \limproj C_*(Y,Y_c) \to \limproj C_*(Z,Z_{c+K}) = \limproj C_*(Z,Z_c),$$    and $\psi : Z \to Y$ induces $$\psi : \limproj C_*(Z,Z_c) \to \limproj C_*(Y, Y_{c+K}) = \limproj C_*(Y, Y_c).$$

Moreover, the homotopies  $F_Y: [0,1] \times (Y, Y_c) \to (Y,Y_{c+2K})$ and $F_Z: [0,1] \times (Z, Z_c) \to (Z,Z_{c+2K})$ are homotopies of pairs and therefore $$\psi_* \circ \phi_* : C_*(Y, Y_c) \to C_*(Y, Y_{c+2K}) \textup{ is chain homotopic to }  \Id_{Y,*} = \pi^{Y}_{c,c+2K,*}: C_*(Y, Y_c) \to C_*(Y, Y_{c+2K}),$$ and $$\phi_* \circ \psi_* : C_*(Z, Z_c) \to C_*(Z, Z_{c+2K}) \textup{ is chain homotopic to }  \Id_Z = \pi^{Z}_{c,c+2K,*}: C_*(Z, Z_c) \to C_*(Z, Z_{c+2K}).$$
Lemma \ref{lemme: projection on proj sys induce induces iso} concludes the proof.

\end{proof}

\setcounter{secnumdepth}{-1}

\bibliography{mybib}
\bibliographystyle{alpha}

\end{document}